\newtheorem{theorem}{Theorem}[section]
\newtheorem{lemma}{Lemma}[section]
\newtheorem{definition}{Definition}[section]
\newtheorem{proposition}{Proposition}[section]
\newtheorem{corollary}{Corollary}[section]
\newtheorem{remark}{Remark}[section]
\newtheorem*{mainthm}{Main Theorem}
\newtheorem*{thmA}{Theorem A}
\newtheorem*{thmB}{Theorem B}
\newtheorem*{thmB'}{Theorem B'}
\newtheorem*{thmB''}{Theorem B''}
\numberwithin{equation}{section}
\newcommand{\R}{\mathbb{R}}
\newcommand{\Z}{\mathbb{Z}}
\newcommand{\eps}{\varepsilon}
\newcommand{\dist}{\textrm{dist}}
\newcommand{\E}{E}
\title{A Dichotomy for the Weierstrass-type functions}
\author{Haojie Ren and Weixiao Shen}
\date{\today}
\address{School of Mathematical Sciences, Fudan University, No 220 Handan Road, Shanghai, China 200433}
\address{Shanghai Center for Mathematical Sciences, Jiangwan Campus, Fudan University, No 2005 Songhu Road, Shanghai, China 200438}
\email{18210180009@fudan.edu.cn, wxshen@fudan.edu.cn}
\begin{document}
\setlength{\parindent}{1em}

\maketitle

\begin{abstract}
For a real analytic periodic function $\phi:\mathbb{R}\to \mathbb{R}$, an integer $b\ge 2$ and $\lambda\in (1/b,1)$, we prove the following dichotomy for the Weierstrass-type function
$W(x)=\sum\limits_{n\ge 0}{{\lambda}^n\phi(b^nx)}$: Either $W(x)$ is real analytic, or the Hausdorff dimension of its graph is equal to $2+\log_b\lambda$.
Furthermore, given $b$ and $\phi$, the former alternative only happens for finitely many $\lambda$ unless $\phi$ is constant.

\end{abstract}

\section{Introduction}
We study the fractal properties of the graphs of Weierstrass type functions
\begin{equation}\label{eqn:Wtype}
W(x)=W^{\phi}_{\lambda,b}(x)=\sum\limits_{n=0}^{\infty}{{\lambda}^n\phi(b^nx)},\,\, x \in \mathbb{R}
\end{equation}
where $b > 1$, $1/b< \lambda < 1$ and $\phi(x):\mathbb{R} \to \mathbb{R}$ is a non-constant $\mathbb{Z}$-periodic Lipschitz function. The most famous example, with $\phi(x)=\cos (2\pi x)$,
was introduced by Weierstrass, and it is a continuous nowhere differentiable function, see \cite{hardy1916weierstrass}. The graphs of Weierstrass-type and related functions are among the most studied objects in fractal geometry since the birth of this subject, see \cite{besicovitch1937sets}, \cite[Section 8.2]{Falconer} and~\cite[Chapter 5]{BP}, among many others.

The goal of this paper is to prove the following theorem.
\begin{mainthm} Let $b\ge 2$ be an integer, $\lambda\in (1/b,1)$ and let $\phi$ be a $\Z$-periodic real analytic function. Then exactly one of the following holds:
\begin{enumerate}
\item [(i)] $W$ is real analytic;
\item [(ii)] the graph of $W$ has Hausdorff dimension equal to
\begin{equation}\label{eqn:dfnD}
D=2+\log_b\lambda.
\end{equation}
\end{enumerate}
Moreover, given $b$ and  non-constant $\phi$, the first alternative only holds for finitely many $\lambda\in (1/b,1)$.
\end{mainthm}


Kaplan, Mallet-Paret and Yorke \cite{kaplan1984lyapunov} proved that in the case that $\phi$ is a trigonometric polynomial, either $W$ is a $C^1$ curve or the box dimension of the graph of $W$ is equal to $D$, without the assumption that $b$ is an integer.  Our theorem is a similar dichotomy with box dimension replaced by Hausdorff dimension which is much more difficult to compute. The price we pay here is the assumption that $b$ is an integer which enables us to approach the problem from dynamical point of view.

An immediate consequence is the following corollary which in particular recovers the main theorem in~\cite{shen2018hausdorff}.
\begin{corollary}\label{cor:cos} Let $b\ge 2$ be an integer, $\lambda\in (1/b,1)$ and let $\phi(x)=\cos (2\pi x+\theta)$, where $\theta\in \R$. Then the Hausdorff dimension of the graph of $W$ is equal to $D$.
\end{corollary}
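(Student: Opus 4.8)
The plan is to derive this as a corollary of the Main Theorem. Note first that the ``finitely many $\lambda$'' clause of the Main Theorem does not suffice here, since we want the conclusion for \emph{every} $\lambda\in(1/b,1)$; what remains is therefore to rule out alternative (i) for all such $\lambda$. Concretely, the task reduces to showing: for every integer $b\ge 2$, every $\lambda\in(1/b,1)$ and every $\theta\in\R$, the function $W=W^{\phi}_{\lambda,b}$ with $\phi(x)=\cos(2\pi x+\theta)$ fails to be real analytic.

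The main idea is to read off the Fourier coefficients of $W$. Writing $\cos(2\pi b^n x+\theta)=\tfrac12 e^{i\theta}e^{2\pi i b^n x}+\tfrac12 e^{-i\theta}e^{-2\pi i b^n x}$ and integrating the uniformly convergent series \eqref{eqn:Wtype} term by term against $e^{-2\pi i k x}$ over $[0,1]$, one sees that the only nonzero Fourier modes of $W$ occur at the frequencies $\pm b^n$, $n\ge 0$; since $b\ge 2$ these integers are pairwise distinct, so $|\widehat W(b^n)|=\tfrac12\lambda^n$ for all $n\ge 0$. Now suppose, towards a contradiction, that $W$ were real analytic. Being $\Z$-periodic it would then extend to a holomorphic function on a horizontal strip $\{z\in\mathbb{C}:|\operatorname{Im}z|<c\}$ for some $c>0$ (patch local extensions using periodicity, compactness of $[0,1]$ and the identity theorem), and shifting the contour in $\widehat W(k)=\int_0^1 W(x)e^{-2\pi i kx}\,dx$ would yield a bound $|\widehat W(k)|\le C e^{-2\pi c|k|}$ for all $k\in\Z$ and some constant $C$. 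Taking $k=b^n$ gives $\tfrac12\lambda^n\le Ce^{-2\pi c b^n}$, i.e. $\lambda^n e^{2\pi c b^n}\le 2C$ for every $n$; but the left-hand side tends to $\infty$ as $n\to\infty$, a contradiction. Hence $W$ is not real analytic, and the Main Theorem forces the Hausdorff dimension of its graph to equal $D=2+\log_b\lambda$.

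The argument is essentially self-contained given the Main Theorem, so there is no serious obstacle; the one point that deserves care is the classical equivalence between real analyticity of a periodic function and exponential decay of its Fourier coefficients (equivalently, holomorphic extension to a horizontal strip), which is what is used above. In fact one can avoid even this: since $\lambda>1/b$ one has $b\lambda>1$, so $\sum_{n\ge 0}|b^n\widehat W(b^n)|^2=\tfrac14\sum_{n\ge 0}(b\lambda)^{2n}=\infty$, whence $W\notin H^1(\R/\Z)$ and in particular $W$ is not $C^1$, let alone real analytic, which again lets us invoke the Main Theorem. Either way, the corollary in particular recovers the dimension formula of \cite{shen2018hausdorff} (the case $\theta=0$).
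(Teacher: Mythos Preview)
Your proof is correct and follows essentially the same approach as the paper: both compute the Fourier coefficients of $W$ at frequencies $b^n$ (you directly from the series, the paper via the self-affine identity \eqref{eqn:selfaffineW}) and contradict the exponential decay forced by real analyticity. Your alternative $H^1$ argument, using $\sum_n (b\lambda)^{2n}=\infty$ to rule out even $C^1$ smoothness, is a pleasant simplification that sidesteps the analyticity-to-exponential-decay step entirely.
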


\medskip
{\bf Historical remarks.}
A map $W$ as in (\ref{eqn:Wtype}) has the following remarkable property
\begin{equation}\label{eqn:selfaffineW}
W(x)=\phi(x)+\lambda W(bx),
\end{equation}
so the graph of $W$ exhibits approximate self-affinity with scales $b$ and $1/\lambda$, and it is natural to conjecture that the Hausdorff dimension of its graph is equal to $D$. However, one has to be careful since the function $W$ can be smooth for certain choices of $\lambda, b, \phi$. (This is easily seen: for any real analytic $\Z$-periodic function $W_0$ and $\phi(x)=W_0(x)-\lambda W_0(bx)$, one has $W_{\lambda,b}^\phi(x)=W_0(x)$.)
The pioneering work of Besicovitch and Ursell (\cite{besicovitch1937sets}) showed that the Hausdorff dimension of a function of the form $\sum_{n=0}^\infty b_n^{-\alpha} \phi( b_n x)$ is equal to $2-\alpha$ provided that $b_{n+1}/b_n\to\infty$ and $\log b_{n+1}/\log b_n\to 1$.
(See \cite{Baranski} for recent advances for maps of such modified form.) A map as in (\ref{eqn:Wtype}) is easily seen to be H\"older continuous of exponent $2-D$ which implies that the Hausdorff dimension of its graph is at most $D$. Many authors have studied the anti-H\"older property of these functions \cite{kaplan1984lyapunov,Reza, PU}, with the strongest form given in \cite{HL}, see Theorem \ref{thm:HL}. This anti-H\"older property implies that $W$ is not differentiable and also that the box and packing dimension of its graph are equal to $D$. Moreover, in \cite{PU}, it is proved that the Hausdorff dimension of the graph of such a $W$ is strictly greater one. In \cite{MW}, it was shown that the Hausdorff dimension of $W$ has a lower bound of the form $D-O(1/\log b)$.

The first example of maps in the form (\ref{eqn:Wtype}) for which the graph is shown to exactly have Hausdorff dimension $D$ was given by Ledrappier \cite{ledrappier1992dimension}. Using dimension theory for (non-uniformly) hyperbolic dynamical systems developed in \cite{ledrappier1985metric} and a Marstrand type projection argument, Ledrappier proved that the Hausdorff dimension of the graph of a Takagi function (taking $b=2$, $\phi(x)=\dist(x,\Z)$ in (\ref{eqn:Wtype})) is equal to $D$, provided that the Bernoulli convolution $\sum_n \pm (2\lambda)^{-n}$ has Hausdorff dimension one. The last property, studied first by Erd\"os~\cite{E}, was shown by Solomyak~\cite{Solo, PS} to hold for almost every $\lambda\in (1/2,1)$. More recently, it has been shown to hold for $\lambda$ outside a set of Hausdorff dimension zero in Hochman \cite{hochman2014self}. Mandelbrot \cite{Mandelbrot} conjectured that the Hausdorff dimension of the graph of $W$ is equal to $D$ for $\phi(x)=\cos(2\pi x)$ and all $\lambda\in (1/b,1)$.  Pushing Ledrappier's approach further, this conjecture has been proved for integral $b$, first for $\lambda$ close to $1$ in~\cite{baranski2014dimension} and then for all $\lambda\in (1/b,1)$ in~\cite{shen2018hausdorff}, in which a result of Tsujii~\cite{tsujii2001fat} also played an important role. See also \cite{Keller}.
The case $\phi(x)=\sin (2\pi x)$ has also been settled shortly after in \cite{Z}.

It had been known much earlier that the Bernoulli convolution has Hausdorff dimension less than one when $2\lambda$ is a Pisot number. So Ledrappier's approach has it limitation (as already pointed by himself).
Built upon the celebrated breakthrough~\cite{hochman2014self}, it has been shown recently in ~\cite{barany2019hausdorff} that the Hausdorff dimension of the graph of Takagi functions equal to $D$ for all $\lambda$, via analysis on entropy of convolutions of measures.

Let us mention that the box and Hausdorff dimensions of Weierstrass-type functions with random phases were obtained in respectively \cite{He} and \cite{Hunt}. See also \cite{Rom}.

See \cite{Baranskisurvey} and also \cite[Chapter 5]{BP} for more remarks on Weierstrass-type functions.

\medskip
{\bf Main findings.} We shall now be more technical and explain the main findings in this paper. Let $\mathbb{Z}_+$ denote the set of positive integers and let $\mathbb{N}$ denote the set of nonnegative integers.
Let $\varLambda=\{ 0,1,...,b-1 \}$,
$\varLambda^{\#}=\bigcup_{n=1}^{\infty} \varLambda ^n$ and $\Sigma=\varLambda^{\mathbb{Z}_+}$.
For $\textbf{j}=j_1j_2 j_3 \cdot \cdot \cdot  \in \Sigma $, define
\begin{equation}
Y(x,\textbf{j})=Y_{\lambda, b}^\phi(x,\textbf{j})=-\sum\limits_{n=1}^{\infty}{\gamma^{n}\phi^{\prime}\left(\frac x{b^n}+\frac{j_1}{b^n}+\frac{j_2}{b^{n-1}} + \cdot \cdot \cdot + \frac{j_n}b\right)},\,\, x \in \mathbb{R}
\label{Kernel}
\end{equation}
where
\begin{equation}\label{eqn:gamma}
\gamma=\frac{1}{b\lambda}\in \left(\frac{1}{b},1\right).
\end{equation}
This quantity appeared in \cite{ledrappier1992dimension} as the slopes of the strong stable manifolds of a dynamical system which has the graph of  $W|_{[0,1)}$ as an attractor. In both the approaches of ~\cite{ledrappier1992dimension} and~\cite{barany2019hausdorff}, certain separation properties of these functions $Y(x,\textbf{j})$ play an important role.

These functions $Y(x,\textbf{j})$ are indeed related to the Weierstrass-type function in a more direct way. Using the identity (\ref{eqn:selfaffineW})
one can show that if $W$ is Lipschitz, then $W'(x)=Y(x,\textbf{j})$ holds for Lebesgue a.e. $x\in \R$ and for any $\textbf{j}\in \Sigma$. In particular, we have $Y(x,\textbf{i})\equiv Y(x, \textbf{j})$ for all $\textbf{i}, \textbf{j}\in \Sigma$ in this case. See Lemma~\ref{lem:L2A}.

\begin{definition}
Given an integer $b\ge 2$ and $\lambda\in (1/b,1)$,	we say that a $\mathbb{Z}$-periodic $C^1$ function $\phi(x)$ satisfies
\begin{itemize}
\item the condition (H) if 
	    $$Y(x,\textbf{j})-Y(x,\textbf{i}) \nequiv 0, \quad \forall \, \textbf{j} \neq \textbf{i} \in \Sigma.$$
\item
the condition (H$^*$) if 
	    $$Y(x,\textbf{j})-Y(x,\textbf{i}) \equiv 0, \quad \forall \, \textbf{j},\,\, \textbf{i} \in \Sigma.$$
\end{itemize}
\end{definition}
Surprisingly, nothing happens between these two extreme cases.
\begin{thmA} Fix $b\ge 2$ integer and $\lambda\in (1/b,1)$. Assume that $\phi$ is $\Z$-periodic and $C^5$. Then exactly one of the following holds:
\begin{enumerate}
\item [(i)] $W_{\lambda,b}^{\phi}$ is $C^5$ and $\phi$ satisfies the condition (H$^*$);
\item [(ii)] $W_{\lambda, b}^{\phi}$ is not Lipschitz and $\phi$ satisfies the condition (H).
\end{enumerate}
\end{thmA}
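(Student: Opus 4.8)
The plan is to obtain the statement from four facts, only the last of which needs real work: (a) conditions (H) and (H$^{*}$) are mutually exclusive, which is clear because $\Sigma$ has more than one point; (b) (H) forces $W=W^{\phi}_{\lambda,b}$ to be non-Lipschitz; (c) (H$^{*}$) forces $W$ to be $C^{5}$; and (d) at least one of (H), (H$^{*}$) always holds. Granting these, the theorem follows: by (a) and (d) exactly one of (H), (H$^{*}$) holds, and by (b), (c) it matches alternative (ii), (i) respectively, while (i) and (ii) cannot both hold since a $C^{5}$ periodic function is Lipschitz. Fact (b) is immediate from Lemma~\ref{lem:L2A}: were $W$ Lipschitz, that lemma would give $Y(x,\textbf{i})=W'(x)=Y(x,\textbf{j})$ for Lebesgue-a.e.\ $x$ and all $\textbf{i},\textbf{j}$, hence for every $x$ by continuity of each $Y(\cdot,\textbf{j})$; that is exactly (H$^{*}$), contradicting (H).

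For the crucial fact (d) I would go over to Fourier series. Writing $\phi(x)=\sum_{k\in\Z}\hat\phi(k)e^{2\pi i kx}$ and substituting into \eqref{Kernel}, with $[\textbf{j}]_{n}:=j_{1}+j_{2}b+\cdots+j_{n}b^{n-1}$ so that the level-$n$ argument of $\phi'$ is $(x+[\textbf{j}]_{n})/b^{n}$, one gets $Y(\cdot,\textbf{j})$ as an absolutely convergent sum of characters $e^{2\pi i\omega x}$ with $\omega\in\Z[1/b]$. Reorganizing that double sum by the value of $\omega$ and using $[\textbf{j}]_{n}\equiv[\textbf{j}]_{m}\pmod{b^{m}}$ for $n\ge m$, I expect the Fourier coefficient of $Y(\cdot,\textbf{j})$ at a non-integer frequency $\omega=a/b^{m}$ (with $m\ge1$, $b\nmid a$) to equal $c_{\omega}\,e^{2\pi i a[\textbf{j}]_{m}/b^{m}}$, where
\[
c_{\omega}=-2\pi i a\,\gamma^{m}\,\Psi(a),\qquad \Psi(a):=\sum_{l\ge 0}\lambda^{-l}\hat\phi(ab^{l})
\]
(the series for $\Psi(a)$ converges since $\phi\in C^{5}$ and $\lambda b>1$), whereas at integer frequencies the coefficient does not involve $\textbf{j}$. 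Since an absolutely summable exponential sum $\sum_{\omega}a_{\omega}e^{2\pi i\omega x}$ determines each $a_{\omega}$ (recovered as $\lim_{T\to\infty}\frac{1}{2T}\int_{-T}^{T}(\,\cdot\,)e^{-2\pi i\omega x}\,dx$), the relation $Y(\cdot,\textbf{i})\equiv Y(\cdot,\textbf{j})$ becomes equivalent to the purely arithmetic condition: for every $m\ge1$ and every $a$ in $A:=\{a\in\Z:\ b\nmid a,\ \Psi(a)\neq 0\}$ one has $b^{m}\mid a([\textbf{i}]_{m}-[\textbf{j}]_{m})$. If $A=\emptyset$ this is vacuous, so $Y(\cdot,\textbf{i})\equiv Y(\cdot,\textbf{j})$ for all $\textbf{i},\textbf{j}$, i.e.\ (H$^{*}$). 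If $A\neq\emptyset$, fix $a_{0}\in A$ and any $\textbf{i}\neq\textbf{j}$ first differing at position $k$; then $[\textbf{i}]_{m}-[\textbf{j}]_{m}=b^{k-1}e_{m}$ with $e_{m}\equiv i_{k}-j_{k}\pmod b$ and $b\nmid e_{m}$ for $m\ge k$. Choosing a prime $p\mid b$ with $v_{p}(i_{k}-j_{k})<v_{p}(b)=:\alpha$ (possible since $b\nmid(i_{k}-j_{k})$), the term $i_{k}-j_{k}$ strictly minimizes the $p$-adic valuation among the summands of $e_{m}$, so $v_{p}(e_{m})=v_{p}(i_{k}-j_{k})$ is bounded in $m$ and $b^{m-k+1}\mid a_{0}e_{m}$ fails for large $m$; hence $Y(\cdot,\textbf{i})\not\equiv Y(\cdot,\textbf{j})$ whenever $\textbf{i}\neq\textbf{j}$, i.e.\ (H). This proves (d).

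For (c), assume (H$^{*}$) and let $Z$ be the common value $Y(\cdot,\textbf{j})$. Differentiating \eqref{Kernel} term by term shows $Z\in C^{4}$, since each differentiation multiplies the $n$-th term by $b^{-n}$ and $\sum_{n}\gamma^{n}b^{-4n}<\infty$ (here $\phi\in C^{5}$ is used). From \eqref{Kernel} one also has the recursion $Y(x,\textbf{j})=\gamma\bigl(Y(\tfrac{x+j_{1}}{b},j_{2}j_{3}\cdots)-\phi'(\tfrac{x+j_{1}}{b})\bigr)$, so (H$^{*}$) gives $Z(x)=\gamma\bigl(Z(\tfrac{x+j}{b})-\phi'(\tfrac{x+j}{b})\bigr)$ for every $j\in\{0,1,\dots,b-1\}$; hence $Z-\phi'$ has period $\tfrac1b$, so $Z$ has period $1$ and $\phi'(x)=Z(x)-b\lambda\,Z(bx)$. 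Integrating this over $[0,1]$ and using $b\lambda\neq 1$ gives $\int_{0}^{1}Z=0$, so $V(x):=\int_{0}^{x}Z$ is $1$-periodic, bounded and $C^{5}$, and $\bigl(V(x)-\lambda V(bx)\bigr)'=\phi'(x)$, i.e.\ $V$ satisfies \eqref{eqn:selfaffineW} up to an additive constant. By uniqueness of bounded solutions of \eqref{eqn:selfaffineW} (the difference $h$ of two bounded solutions satisfies $h(x)=\lambda^{N}h(b^{N}x)$ for all $N$, so $h\equiv 0$), $W$ equals $V$ up to a constant, whence $W\in C^{5}$.

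I expect (d) to be the main obstacle, and within it the two delicate points are the bookkeeping that makes each $Y(\cdot,\textbf{j})$ a bona fide absolutely summable exponential sum with coefficients extractable by averaging — routine given $\phi\in C^{5}$, but it is what upgrades ``equality of functions'' to ``equality of coefficients'' — and the elementary $p$-adic valuation argument, which is the only place the possibility of composite $b$ really enters. The conceptual point behind ``nothing in between'' is that one resonant integer $a_{0}$ with $\Psi(a_{0})\neq 0$ obstructs $Y(\cdot,\textbf{i})\equiv Y(\cdot,\textbf{j})$ simultaneously at every scale $m$ — a rigidity inherited from the exact self-affinity of the kernel $Y$.
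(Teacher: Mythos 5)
Your proposal is correct, and it takes a genuinely different route from the paper's. The paper also disposes of the Lipschitz case via Lemma~\ref{lem:L2A}, but then, assuming $W$ is not Lipschitz and (H) fails, it derives a contradiction through a chain of analytic--dynamical arguments: the anti-H\"older estimate of Hu--Lau (Theorem~\ref{thm:HL}) gives a quantitative lower bound for $C^2$-regulating periods (Lemma~\ref{lem:sizeqp}), hence their rationality; failure of (H) then produces a nontrivial regulating period $1/p$ with $(p,b)=1$ (Lemma~\ref{lem:pexists}); and a renormalization operator on Fourier coefficients (Proposition~\ref{prop:ren}) allows this to be iterated indefinitely, forcing $W$ to be Lipschitz after all. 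You replace all of this by a single harmonic-analysis computation: writing $Y(\cdot,\textbf{j})$ as an absolutely convergent exponential sum with frequencies in $\Z[1/b]$, the $\textbf{j}$-dependence sits entirely in the phases $e^{2\pi i a[\textbf{j}]_m/b^m}$ attached to the coefficients $-2\pi i a\gamma^m\Psi(a)$ (I verified the telescoping via $[\textbf{j}]_n\equiv[\textbf{j}]_m\pmod{b^m}$ and $\gamma b=1/\lambda$, the uniqueness of Fourier--Bohr coefficients for absolutely summable sums, and the $p$-adic valuation step, which is exactly what is needed when $b$ is composite), so the (H)/(H$^*$) dichotomy reduces to whether some $\Psi(a)$ with $b\nmid a$ is nonzero; and under (H$^*$) the cohomological equation $\phi'(x)=Z(x)-b\lambda Z(bx)$ together with uniqueness of bounded solutions of (\ref{eqn:selfaffineW}) gives $W\in C^5$. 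Your route dispenses with Theorem~\ref{thm:HL} entirely and yields the explicit criterion that (H$^*$) holds iff $\sum_{l\ge0}\lambda^{-l}\hat\phi(ab^l)=0$ for all $a$ with $b\nmid a$, i.e.\ iff $\phi$ is a coboundary $W_0(x)-\lambda W_0(bx)$; what the paper's route buys instead is the regulating-period and renormalization machinery, which does not rely on $b$-adic Fourier bookkeeping and is therefore the natural starting point for the non-integer $b$ question raised in the introduction.
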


To prove Theorem A, we introduce a concept called {\em $C^k$-regulating period} which is a real number $t$ for which $W(x+t)-W(x)$ is $C^k$.
A key estimate is that a positive $C^2$-regulating period $t$ is bounded from below in terms of the $C^2$-norm of $W(x+t)-W(x)$, provided that $W$ is not-Lipschitz. This is obtained from the anti-H\"older property established in \cite{HL, kaplan1984lyapunov}. See Lemma~\ref{lem:sizeqp}.

The proof of the main theorem is then completed by the following theorem and a theorem in~\cite{shen2018hausdorff}.
\begin{thmB}
	If a real analytic $\mathbb{Z}$-periodic function $\phi(x)$ satisfies the condition (H) for an integer $b\ge 2$ and $\lambda\in (1/b,1)$, then
		$$dim_H(\{(x,W_{\lambda,b}^{\phi}(x))\;|\;x\in[0,1)\})=D.$$
 \end{thmB}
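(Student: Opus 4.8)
The plan is to realize the graph of $W$ as an attractor of a skew-product dynamical system over the full shift on $\Lambda^{\mathbb Z_+}$, following the approach of Ledrappier and of B\'ar\'any--Hochman--Rapaport, and to push the dimension of the natural SRB-type measure up to $D$ by exploiting condition (H). Concretely, since $b$ is an integer, the inverse branches of $x\mapsto bx \bmod 1$ are the affine maps $x\mapsto (x+j)/b$, $j\in\Lambda$, and (\ref{eqn:selfaffineW}) shows that the graph map $F(x,y)=(bx\bmod 1,\ (y-\phi(x))/\lambda)$ has $\mathrm{graph}(W)$ as a repeller; dually, the contracting system is $G_j(x,y)=((x+j)/b,\ \lambda y+\phi((x+j)/b))$, and the fiberwise derivative cocycle in the stable (strong-unstable after inversion) direction has slopes governed exactly by the functions $Y(x,\mathbf j)$ in (\ref{Kernel}), with $\gamma=1/(b\lambda)$ the contraction rate. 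The first step is therefore to set up this coding carefully, identify the Ledrappier measure $\nu$ on the graph (the lift of Lebesgue measure on $[0,1)$ under $x\mapsto(x,W(x))$), and record the Ledrappier--Young / Ledrappier dimension formula expressing $\dim_H\nu$ in terms of the Lyapunov exponent $\log(1/\lambda)$, the entropy $\log b$, and the dimension of the conditional measures of $\nu$ along the strong-stable foliation — the latter being a family of self-affine-like (Bernoulli-convolution-type) measures parametrized by the $Y(x,\mathbf j)$.

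The heart of the matter is to show that these conditional "slicing" measures have dimension $1$ (equivalently, that the relevant projected Bernoulli-type convolution is exact-dimensional of full dimension), which by the dimension formula forces $\dim_H\mathrm{graph}(W)=D$. Here is where condition (H) enters and where the entropy-increase machinery of Hochman \cite{hochman2014self} (in the form used in \cite{barany2019hausdorff}) is deployed: one writes the conditional measure as an infinite convolution driven by the i.i.d. increments coming from the digits $j_n$, and the only obstruction to full dimension is an exact-overlap / algebraic-resonance phenomenon. Condition (H) says precisely that $Y(x,\mathbf j)\not\equiv Y(x,\mathbf i)$ for $\mathbf j\neq\mathbf i$, i.e. distinct symbolic histories produce genuinely distinct stable slopes; using real analyticity of $\phi$ one upgrades this non-identity to a quantitative statement (the set of $x$ where two such functions agree is finite, with controlled multiplicity), which rules out the super-exponential concentration of cylinders that Hochman's criterion forbids. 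The Entropy Increasing Lemma advertised in the preamble is presumably the tool that converts "no exact overlaps at all scales" into "entropy, hence dimension, jumps to the maximum," and I would invoke it here. I would then combine this with the a priori bound $\dim_H\mathrm{graph}(W)\le D$ (Hölder continuity, already noted in the excerpt) to conclude.

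The main obstacle, as I see it, is exactly the verification that condition (H) implies the hypothesis of the entropy-increase theorem: one must transfer the pointwise/identical-vanishing dichotomy for the analytic functions $Y(\cdot,\mathbf j)$ into a parameter-free separation estimate for the associated self-affine measures, uniformly over the "base point" $x$ and over pairs of cylinders of length $n$, with the error controlled well enough (sub-exponentially) to feed Hochman's machinery. Real analyticity is what makes this plausible — an analytic function that is not identically zero has only finitely many zeros on a period — but turning "finitely many zeros of each difference $Y(\cdot,\mathbf j)-Y(\cdot,\mathbf i)$" into a uniform-in-$n$ bound on the measure of the bad set of $x$, and handling the non-stationarity of the base dynamics along the fiber, will require a delicate transversality-type argument in the spirit of \cite{shen2018hausdorff} and \cite{tsujii2001fat}. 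A secondary technical point is to make the Ledrappier dimension formula rigorous in this non-uniformly hyperbolic setting (integrability of the cocycle, exact dimensionality of $\nu$), but this is by now fairly standard and the cited works \cite{ledrappier1992dimension, ledrappier1985metric, barany2019hausdorff} provide the template.
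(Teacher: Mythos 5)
Your proposal follows essentially the same route as the paper: reduce via Ledrappier's dimension formula to showing that the projections $\pi_{\mathbf j}\mu$ along the strong-stable foliation have dimension $1$, then run the B\'ar\'any--Hochman--Rapaport entropy-increase argument, with condition (H) plus real analyticity upgraded (by compactness of the family $\Gamma_{\mathbf u}-\Gamma_{\mathbf v}$ and the self-similar rescaling relation) to the quantitative transversality needed to build the partitions of the space of maps $\pi_{\mathbf j}\circ g_{\mathbf i}$ and to verify Hochman's hypotheses. The obstacles you flag — converting the qualitative non-vanishing in (H) into uniform-in-$n$ separation estimates, and establishing entropy porosity/exact dimensionality — are exactly the technical content of the paper's Sections 4--6.
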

Theorem B is obtained by modifying the argument of~\cite{barany2019hausdorff} where the dimension of planar self-affine measures are studied which in particular shows that the Hausdorff dimension of $W$ is equal to $D$ in the case $\phi(x)=\dist(x,\Z)$ and $b=2$. 
The strong separation property (H) and the real analytic assumption compensate the non-linearity we have to face.

Indeed, let $\mu$ denote the lift of the standard Lebesgue measure on $[0,1)$ to the graph of $W|_{[0,1)}$. By \cite{ledrappier1992dimension}, $\mu$ and its projections $\pi_{\textbf{j}}\mu$ along the strong unstable manifold of a dynamical system $F$ (which keeps the graph of $W|_{[0,1}$ invariant) are exact dimensional and that $\dim (\pi_{\textbf{j}}\mu)$ is equal to a constant $\alpha$ for typical $\textbf{j}\in\Sigma$, see \S\ref{subsec:Ledrappiertheory}. We need to show that $\alpha=1$. The measure $\pi_{\textbf{j}}\mu$ can be decomposed into measures of similar form in smaller scales, see (\ref{eqn:self`affine'}). Assuming the contrary, we shall apply Hochman's criterion on entropy increase (\cite{hochman2014self}) to obtain a contradiction.  An important step is to introduce a suitable sequence of partitions for the space $\mathcal{X}$ of the transformations involved, see (\ref{FunctionSet}). For the case $\phi=\dist(x, \Z)$, the set $\mathcal{X}$ is a subset of $\mathbb{A}_{2,1}$, the space of affine maps from $\R^2$ to $\R$, and a sequence of suitable partitions were constructed in \cite{barany2019hausdorff} using a rescaling-invariant metric in the space $\mathbb{A}_{2,1}$. Although we do not have such a metric in our nonlinear case, we deduce a strong separation property of maps in $\mathcal{X}$ from the condition (H) under the assumption that $\phi$ is real analytic, see \S\ref{sec:separation}.  With this strong separation property, we construct a sequence of partitions of $\mathcal{X}$ explicitly, see \S\ref{sec:partitionX}.

\begin{proof}[Proof of the Main Theorem] By Theorems A and B, we know that either (i) or (ii) holds.
To show the last statement, we apply Theorem from~\cite{shen2018hausdorff}, which asserts that for $\lambda$ close to $1/b$ (i.e. $\gamma$ close to $1$), the graph of $W_{\lambda, b}^\phi$ has Hausdorff dimension $D>1$. Assume by contradiction that there are infinitely many $\lambda_k\in (1/b, 1)$ such that $W_{\lambda_k, b}^\phi$ satisfies (i). Then $\lambda_k$ are bounded away from $1$ and
$$Y_{\lambda_k, b}^\phi(x, 000\cdots)\equiv Y_{\lambda_k, b}^\phi (x,100\cdots),$$ that is,
\begin{equation}\label{eqn:many=}
\sum_{n=1}^\infty \gamma^{n-1} \left(\phi'(x/b^n)-\phi'((x+1)/b^n)\right)=0
\end{equation}
for all $\lambda=\lambda_k$. For each $x\in \R$, the left hand side of (\ref{eqn:many=}) is a power series in $\gamma$ with radius of convergence at least one. It has infinitely many zeros compactly contained in the unit disk, so
$$\phi'(x/b^n)=\phi'((x+1)/b^n).$$
It follows that $\phi'$ is a constant, hence $\phi$ is a constant, a contradiction!
\end{proof}

\begin{proof}[Proof of Corollary~\ref{cor:cos}] By the Main Theorem, it suffices to show that $W$ is not real analytic. Arguing by contradiction, assume that $W$ is real analytic. Let $W(x)=\sum_{n\in \mathbb{Z}} a_n e^{2\pi i nx}$ be the Fourier series expansion of the $\mathbb{Z}$-periodic real analytic function $W$. Then $|a_n|$ is exponentially small in $|n|$. However, comparing the Fourier coefficients of both sides of the identity (\ref{eqn:selfaffineW}),
we obtain that $a_{b^k}=(\lambda^k+1)e^{i\theta}/2$ for all $k\ge 1$, absurd!
\end{proof}

\medskip
{\bf Problems.}
\begin{enumerate}
\item Let $b>1$ be non-integeral, $\lambda\in (1/b,1)$ and $\phi(x)=\cos(2\pi x)$. Does $W=W_{\lambda, b}^\phi$ have a $C^k$ regulating period, $1\le k\le \infty$? If the answer is yes and $T>0$ is a $C^k$ regulating period, then we can interpret the graph of $W|_{[0,T)}$ as an invariant repeller of the smooth dynamical system $(x,y)\mapsto (bx\mod T, (y-\cos(2\pi x))/\lambda +W(bx\mod T)-W(bx))$ and apply the corresponding dimension theory. If the answer is no, then it would be interesting to study the oscillation of the functions $W(x+T)-W(x)$ for $T>0$.
\item Is the $D$-dimensional Hausdorff measure of the graph of $W$ equal to zero, even assuming $b$ is an integer greater than one? It is well-known that the graph of $W|_J$, for any bounded interval $J$, has finite $D$-dimensional Hausdorff measure.

    In \cite{PU}, the case $\phi(x)$ the Rademacher function and $b=2$ were considered. That is
    $$\phi(x)=\left\{\begin{array}{ll}
    1 &\mbox{ if } \{x\}\in [0,1/2),\\
    -1 &\mbox{ if } \{x\}\in [1/2,1),
    \end{array}
    \right.
    $$
    where $\{x\}\in [0,1)$ denote the fractional part of $x$. In this case, it was proved that the $D$-dimensional Hausdorff measure of the graph of $W|_{[0,1)}$ is a positive real number if and only if the Bernoulli convolution $\sum_n\pm \lambda^n$ is absolutely continuous with respect to the Lebesgue measure and its density is in the class $L^\infty$. It is conceivable that for general $\phi$ and $b$, the problem is related to the joint essential boundedness of the densities of the occupation measures of $W(x)-\Gamma_{\textbf{u}}(x)$, $\textbf{u}\in \Sigma$. See \S\ref{sec:PreB} for the definition of $\Gamma_{\textbf{u}}$.
\end{enumerate}

\medskip
{\bf Organization.} We prove Theorem A in \S\ref{sec:thmA}. The rest of the paper is devoted to the proof of Theorem B. In \S\ref{sec:PreB}, we recall some results from the Ledrappier-Young theory and state Theorem B' which is a reduced form of Theorem B. The rest of the paper is then devoted to the proof of Theorem B' and an outline can be found at the end of \S\ref{subsec:transitionformlula}.

\medskip
{\bf Acknowledgment.} We would like to thank the participants of the dynamical systems seminar in the Shanghai Center for Mathematical Sciences, an in particular, Guohua Zhang for suggesting the name of regulating period. WS is supported by NSFC grant No. 11731003.

\section{The conditions (H) and (H$^\ast$)}\label{sec:thmA}
Throughout we fix an integer $b\ge 2$ and $\lambda\in (1/b, 1)$.
For a $\Z$-periodic and continuous function $\phi: \R\to \R$, define $W=W^{\phi}=W_{\lambda, b}^{\phi}$ as in (\ref{eqn:Wtype}).

\begin{theorem} \label{thm:dicho}
Assume that $\phi$ is $\Z$-periodic and of class $C^5$. Then exactly one of the following holds:
\begin{enumerate}
\item [(i)] $W$ is $C^5$ and $\phi$ satisfies the condition (H$^\ast$);
\item [(ii)] $W$ is not Lipschitz and $\phi$ satisfies the condition (H).
\end{enumerate}
\end{theorem}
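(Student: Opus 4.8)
The plan is to establish first that the two alternatives are mutually exclusive, and then that at least one of them holds. The exclusivity is the easy part: if $W$ is Lipschitz, then by Lemma~\ref{lem:L2A} (referenced in the excerpt) we have $W'(x) = Y(x,\textbf{j})$ for a.e.\ $x$ and every $\textbf{j}\in\Sigma$, so $Y(x,\textbf{i}) \equiv Y(x,\textbf{j})$ for all $\textbf{i},\textbf{j}$, which is precisely (H$^*$); hence (H) fails. So (i) and (ii) cannot both hold, and it remains to show that if (H$^*$) holds then $W$ is $C^5$, while if (H$^*$) fails (equivalently, if $W$ is not Lipschitz — this equivalence is what we must prove) then in fact (H) holds, i.e.\ the negation of (H$^*$) is already the strong statement (H).

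The heart of the matter is the implication: \emph{$W$ not Lipschitz $\Longrightarrow$ (H)}, together with its converse packaging \emph{(H$^*$) $\Longrightarrow$ $W\in C^5$}. For the latter, I would argue that (H$^*$) forces $W$ to be smooth: under (H$^*$) the common value $Y(x) := Y(x,\textbf{j})$ is independent of $\textbf{j}$, and one checks from the defining series \eqref{Kernel} together with the functional relation coming from \eqref{eqn:selfaffineW} that $Y$ is a fixed point of a contraction on $C^4$ (using $\phi\in C^5$ and $\gamma<1$), so $Y\in C^4$; then one verifies $W$ is a primitive of $Y$ up to the obvious correction, giving $W\in C^5$. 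For the main implication, I would use the contrapositive route via the key estimate advertised in the excerpt: the lower bound on positive $C^2$-regulating periods (Lemma~\ref{lem:sizeqp}), which rests on the anti-Hölder property from \cite{HL,kaplan1984lyapunov}. Concretely, suppose (H) fails, so there exist $\textbf{i}\neq\textbf{j}$ with $Y(x,\textbf{i})\equiv Y(x,\textbf{j})$. Writing out \eqref{Kernel} for these two sequences and letting $n$ be the first index where $i_n\neq j_n$, one extracts — after peeling off the first $n-1$ identical terms and rescaling — an identity of the form $Y(x,\textbf{i}')\equiv Y(x,\textbf{j}')$ where now $\textbf{i}',\textbf{j}'$ differ in the \emph{first} coordinate. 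This should translate, via the relation between $Y(\cdot,\textbf{j})$ and difference quotients of $W$, into the statement that some $W(x+t)-W(x)$ with $t = (i_1'-j_1')/b^{\,\cdot}$ (a nonzero rational with bounded denominator structure) is $C^2$; iterating the self-affinity \eqref{eqn:selfaffineW} produces $C^2$-regulating periods of the form $t b^{-m} \to 0$ with controlled $C^2$-norm, and Lemma~\ref{lem:sizeqp} then forces $W$ to be Lipschitz. This shows: not (H) $\Rightarrow$ $W$ Lipschitz $\Rightarrow$ (H$^*$); equivalently not (H$^*$) $\Rightarrow$ (H), and combined with the smoothness implication above, not (H$^*$) $\Rightarrow$ $W$ not Lipschitz. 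So the trichotomy {(H$^*$) and $W\in C^5$} / {(H) and $W$ not Lipschitz} / {neither} collapses to the first two cases.

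The main obstacle I anticipate is the passage from the algebraic identity $Y(x,\textbf{i})\equiv Y(x,\textbf{j})$ to a genuine $C^k$-regulating period of $W$ with \emph{quantitative} control — this is where one needs to be careful about which combination of $\phi'$-terms telescopes into a derivative of a shifted copy of $W$, and about tracking the $C^2$-norm through the rescaling so that Lemma~\ref{lem:sizeqp} applies (the naive telescoping gives a translate of $W$ by a rational, but one must check the resulting function is actually $C^2$, not merely that its formal derivative series converges). A secondary subtlety is the bookkeeping to reduce a disagreement at position $n$ to a disagreement at position $1$ while preserving the $C^2$ conclusion; this uses that $\phi$, hence $\phi'$, is $\Z$-periodic so the shared initial terms can be absorbed into the argument. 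Everything else — the exclusivity of (i) and (ii), and the contraction-mapping argument for smoothness under (H$^*$) — is routine given $\phi\in C^5$ and $\gamma = 1/(b\lambda) < 1$.
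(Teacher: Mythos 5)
Your reduction to the single implication ``$W$ not Lipschitz and not (H) $\Rightarrow$ contradiction'' matches the paper, as does the use of Lemma~\ref{lem:L2A} for exclusivity and for the Lipschitz case. But the route you propose to the contradiction does not work, and it misses the actual engine of the paper's proof. First, the regulating period that the failure of (H) produces is not of the form $(i_1'-j_1')/b^{m}$: any such number is a \emph{trivial} regulating period ($W(x+mb^{-n})-W(x)$ is always a finite sum of translates of $\lambda^k\phi(b^k\cdot)$, hence $C^5$ for every $C^5$ $\phi$), so it carries no information. What one actually extracts (Lemma~\ref{lem:pexists}) is that $W(x+r_n)-W(x+s_n)$ is $C^k$ with \emph{uniformly bounded} $E_k$, where $r_n,s_n$ encode the full length-$n$ prefixes of $\textbf{i},\textbf{j}$; the Key Estimate then forces $t_n=\mathrm{dist}(r_n-s_n,\Z)$ to stay away from $0$, a limit point $t$ is rational by Corollary~\ref{cor:qprational} (which needs Dirichlet's theorem, a step absent from your plan), and the lower bound on the $t_n$ is precisely what shows $t$ has \emph{no finite $b$-adic expansion}, yielding a period $1/p$ with $p>1$ and $(p,b)=1$. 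Second, and more fatally, your closing step ``periods $tb^{-m}\to 0$ with controlled $C^2$-norm, so Lemma~\ref{lem:sizeqp} forces $W$ Lipschitz'' cannot succeed: differentiating $W(x+s)-W(x)=\phi(x+s)-\phi(x)+\lambda\bigl(W(bx+bs)-W(bx)\bigr)$ twice gives $E_2(tb^{-m})\lesssim(\lambda b^2)^m\bigl(E_2(t)+1\bigr)$, and since $b^{D}=\lambda b^2$ the quantity $E_2(s)|s|^{D}$ is essentially invariant under $s\mapsto s/b$; the Key Estimate is therefore never violated by shrinking a period this way (as indeed it must not be, since trivial periods $mb^{-n}\to 0$ always exist).

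What replaces this in the paper is the renormalization argument of \S\ref{subsec:ren}, which your plan omits entirely: given $p\in\mathscr{P}(\phi)$ one splits $\phi$ into the Fourier modes divisible by $p$ and the rest, proves that the complementary part has Lipschitz $W$ (Proposition~\ref{prop:ren}), shows that ``not Lipschitz'' and ``not (H)'' pass to the renormalization $\mathcal{R}_p\phi$, and iterates to obtain $p_k\to\infty$ in $\mathscr{P}(\phi)$ with $W^{\widetilde{\mathcal{R}}_{p_k}\phi}\to 0$ pointwise while $W-W^{\widetilde{\mathcal{R}}_{p_k}\phi}$ is uniformly Lipschitz; only then does one conclude that $W$ is Lipschitz, the desired contradiction. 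A minor further point: your contraction-mapping argument for ``(H$^*$) $\Rightarrow W\in C^5$'' is circular as stated, since $Y(\cdot,\textbf{j})$ is automatically $C^4$ whenever $\phi\in C^5$ (each differentiated series converges geometrically), and the real issue is identifying $-Y$ with $W'$, which presupposes that $W$ is Lipschitz; fortunately this sub-argument is not needed once the case division is made on whether $W$ is Lipschitz, as in Lemma~\ref{lem:L2A}.
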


\begin{remark}
At the cost of more technicality, the theorem can be proved under a weaker assumption that $\phi$ is $C^3$.
\end{remark}
The main idea of the proof is to analyze the regulating periods of $W$ defined as follows.

\begin{definition}
For each $k\in \mathbb{Z}_+$, we say that $t\in\R$ is a {\em $C^k$-regulating period} of $W=W^\phi$ if $W(x+t)-W(x)$ is a $C^{k}$ function. In this case, we put
\begin{equation}
\E_k(t)=\sup_{x\in \R} |(W(x+t)-W(x))^{(k)}|<\infty.
\end{equation}
%
\end{definition}
It is easy to see that for a given $k$ the set of all $C^k$-regulating periods of $W$ form an additive subgroup of $\R$. If $\phi$ is $C^k$, then every number of the form $mb^{-n}$, where $n\in \mathbb{Z}_+$ and $m\in \Z$, is a regulating period of $W$, which are called {\em trivial} regulating period.  If $W$ is $C^k$, then the subgroup is equal to $\R$.

It is fairly easy to show that if $W$ is Lipschitz and $\phi$ is $C^k$ then $W$ is $C^k$, and the condition (H$^\ast$) holds,
see Lemma~\ref{lem:L2A}.
Assuming that $W$ is not Lipschitz, we prove an lower bound of $|t|$ in terms of $E(t)$ (Lemma~\ref{lem:sizeqp}) and show that every $C^2$-regulating period is rational (Corollary~\ref{cor:qprational}).

Assuming by contradiction that $W$ is not Lipchitz and $\phi$ fails to satisfy the condition (H). We show that violation of the condition (H) yields a non-trivial regulating period of the form $1/p$, where $p$ is an integer greater than $1$ and co-prime with $b$. Given such an integer $p$, we define a renormalization (in \S\ref{subsec:ren}) of $\phi$ as follows:
$$\mathcal{R}_p(\phi)=\sum_{k\in \Z} c_{kp} e^{2\pi i k x},$$
where $c_m$ denotes the $m$-th Fourier coefficient of $\phi$. The properties that $W^{\phi}$ is not Lipschitz and $\phi$ does not satisfy the condition (H) are inherited by the renormalization $\mathcal{R}_p(\phi)$ (Proposition~\ref{prop:ren}). Hence we can repeat the procedure infinitely often. However, this would imply that $W$ is Lipschitz, a contradiction!

We start with the following easy observation.
\begin{lemma}\label{lem:L2A} If $W$ is Lipschitz and $\phi$ is $C^k$ for some $k\in \mathbb{Z}_+$, then $W$ is $C^k$ and $Y(x,\textbf{i})\equiv Y(x,\textbf{j})$ for all $\textbf{i}, \textbf{j}\in \Sigma$.
\end{lemma}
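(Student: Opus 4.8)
If $W$ is Lipschitz and $\phi$ is $C^k$ for some $k\in\mathbb{Z}_+$, then $W$ is $C^k$ and $Y(x,\mathbf{i})\equiv Y(x,\mathbf{j})$ for all $\mathbf{i},\mathbf{j}\in\Sigma$.

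The plan is to differentiate the functional equation $W(x)=\phi(x)+\lambda W(bx)$ along the $b$ inverse branches of $x\mapsto bx$, iterate the resulting identity, and pass to the limit, exploiting that the coefficients $\gamma^n$ decay since $\gamma=1/(b\lambda)\in(1/b,1)$.

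First I would record the (immediate) fact that $W$ is $\Z$-periodic: since $\phi$ is $\Z$-periodic and $b^n\in\Z$, each term $\lambda^n\phi(b^n x)$ is $\Z$-periodic. Hence for $j\in\{0,\dots,b-1\}$ we have $W\bigl(b\cdot\tfrac{x+j}{b}\bigr)=W(x+j)=W(x)$, so plugging $\tfrac{x+j}{b}$ into (\ref{eqn:selfaffineW}) yields the inverse-branch identity $W(x)=\lambda^{-1}\bigl(W(\tfrac{x+j}{b})-\phi(\tfrac{x+j}{b})\bigr)$. Now $W$ Lipschitz implies $W$ is differentiable a.e.\ with $W'\in L^\infty$; since each affine map $x\mapsto\tfrac{x+j}{b}$ is bi-Lipschitz and therefore preserves Lebesgue-null sets, differentiating the inverse-branch identity gives, for a.e.\ $x$ and every $j$,
\[
W'(x)=\gamma\, W'\!\Bigl(\tfrac{x+j}{b}\Bigr)-\gamma\,\phi'\!\Bigl(\tfrac{x+j}{b}\Bigr).
\]
Fixing $\mathbf{j}=j_1j_2\cdots\in\Sigma$ and iterating this identity along the digits $j_1,j_2,\dots$ (valid for a.e.\ $x$, as only countably many null sets are excluded in the process, each the preimage of a fixed null set under a bi-Lipschitz affine map), one obtains, with $m_n=j_1+j_2b+\cdots+j_nb^{n-1}$,
\[
W'(x)=-\sum_{n=1}^{N}\gamma^{n}\phi'\!\Bigl(\tfrac{x+m_n}{b^{n}}\Bigr)+\gamma^{N}W'\!\Bigl(\tfrac{x+m_N}{b^{N}}\Bigr).
\]
Since $\gamma<1$ and $\|W'\|_\infty<\infty$, the remainder tends to $0$ uniformly in $x$, so $W'(x)=Y(x,\mathbf{j})$ for a.e.\ $x$, for each fixed $\mathbf{j}$. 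In particular $Y(\cdot,\mathbf{i})=Y(\cdot,\mathbf{j})$ a.e.; as both sides are continuous in $x$ (the defining series for $Y$ converges uniformly, $\phi'$ being bounded and $\gamma<1$), we conclude $Y(x,\mathbf{i})\equiv Y(x,\mathbf{j})$, which is the second assertion.

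For the regularity statement, specialize to $\mathbf{j}=000\cdots$, so that $W'(x)=-\sum_{n\ge1}\gamma^{n}\phi'(x/b^{n})$ a.e. When $\phi\in C^k$, this series may be differentiated termwise up to order $k-1$: the $j$-th derivative of the $n$-th term is $\gamma^{n}b^{-nj}\phi^{(j+1)}(x/b^{n})$, and for $0\le j\le k-1$ one has $\sum_n\gamma^{n}b^{-nj}\|\phi^{(j+1)}\|_\infty\le\|\phi^{(j+1)}\|_\infty\sum_n\gamma^n<\infty$, so by induction $Y(\cdot,000\cdots)\in C^{k-1}$. Finally, $W$ is absolutely continuous with $W'=Y(\cdot,000\cdots)$ a.e.; since the right-hand side is continuous, the fundamental theorem of calculus upgrades this to $W\in C^1$ with $W'=Y(\cdot,000\cdots)$ everywhere, whence $W\in C^k$.

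There is no essential difficulty here. The only points demanding a little care are the a.e.\ differentiation of the composed functions arising in the iteration — handled by the remark that the affine reparametrizations are bi-Lipschitz, so the countable union of exceptional sets remains null — and the interchange of differentiation and summation in the last paragraph, which is justified by the geometric factor $\gamma^{n}$ dominating the bounded derivatives $\phi^{(j+1)}$.
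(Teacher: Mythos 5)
Your proof is correct and follows essentially the same route as the paper: differentiate the self-affinity relation along inverse branches, iterate to get $W'(x)=Y(x,\mathbf{j})$ a.e.\ for every $\mathbf{j}$ (the vanishing remainder coming from $\gamma<1$ and $W'\in L^\infty$), deduce $Y(\cdot,\mathbf{i})\equiv Y(\cdot,\mathbf{j})$ by continuity, and recover $W\in C^k$ by integrating the $C^{k-1}$ function $Y(\cdot,\mathbf{0})$. Your explicit handling of the null-set bookkeeping and of the termwise differentiation is slightly more detailed than the paper's, but the argument is the same.
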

\begin{proof} Assume $W$ is Lipschitz. Then there exists a constant $C>0$ such that for Lebesgue a.e. $x\in \R$, $W'(x)$ exists and $|W'(x)|\le C$. From $W(x)=\phi(x)+\lambda W(bx)$, we obtain that
$$W'(x)=\phi'(x)+\gamma^{-1} W'(bx), a.e.$$
It follows that for a.e. $x\in \R$, if $(x_{-n})_{n=0}^\infty$ is a backward orbit of $x_0$ then for any $n\ge 0$, $W'(x_{-n})$ exists, $|W'(x_{-n})|\le C$, and
$$W'(x_{-n-1})=\phi'(x_{-n-1})+\gamma^{-1} W'(x_{-n}).$$
Therefore for any $\textbf{i}, \textbf{j}\in \Sigma$, $$Y(x, \textbf{i})=Y(x,\textbf{j})=-W'(x)$$
holds for a.e. $x\in \R$. 
Since $Y(x,\textbf{i})$ and $Y(x,\textbf{j})$ are $C^{k-1}$ functions, this implies that $Y(x,\textbf{i})\equiv Y(x,\textbf{j})$.  As $W(x)=W(0)-\int_0^x Y(t,\textbf{0}) dt$ is the integral of a $C^{k-1}$ function, $W$ is $C^k$.
\end{proof}

So we need to show that $\phi$ satisfies the condition (H) when $W$ is not Lipschitz. We shall use the following result due to Hu and Lau, see \cite[Theorem 4.1]{HL}. See also Kaplan, Mallet-Parret and York~\cite{kaplan1984lyapunov} for the case that $\phi$ is a trigonometric polynomial.
\begin{theorem}\label{thm:HL}
Assume that $\phi$ is Lipschitz but $W$ is not Lipschitz. Then there exists $c>0$ and $\kappa>0$ such that for any $\delta\in (0,1)$ and any $x\in \R$ there exists $y\in \R$ such that $c\delta< y-x<\delta$ and $|W(y)-W(x)|\ge \kappa |y-x|^\alpha$, where $\alpha=2-D$.
\end{theorem}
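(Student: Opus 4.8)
The plan is to reduce Theorem~\ref{thm:HL} to the following scale‑uniform oscillation estimate, and then to prove that estimate:
\[(\star)\qquad\exists\,\eta>0,\ \delta_0\in(0,1):\qquad \mathrm{osc}(W;I):=\sup_{u,v\in I}|W(u)-W(v)|\ \ge\ \eta\,|I|^{\alpha}\quad\text{for every interval }I\text{ with }|I|\le\delta_0.\]
That $(\star)$ implies the theorem is routine bookkeeping: given $x\in\R$ and $\delta\in(0,1)$, pick a closed interval $I$ lying strictly inside $(x+c\delta,x+\delta)$ of length $\asymp\min(\delta,\delta_0)$ (with $c:=\delta_0/2$), apply $(\star)$ to produce $u,v\in I$ with $|W(u)-W(v)|\ge\eta|I|^{\alpha}$, and take $y\in\{u,v\}$ with $|W(y)-W(x)|\ge\tfrac12\eta|I|^{\alpha}$; since $|y-x|<\delta<1$ and $|I|\asymp|y-x|$, this yields $|W(y)-W(x)|\ge\kappa|y-x|^{\alpha}$ for a suitable constant $\kappa=\kappa(\eta,\delta_0,\alpha)>0$.

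I would prove $(\star)$ by contradiction, combining the approximate self‑affinity of $W$ with the fact --- where integrality of $b$ enters --- that $W$ is itself $\Z$‑periodic (indeed $W(x+1)-W(x)=\sum_{m\ge0}\lambda^m(\phi(b^mx+b^m)-\phi(b^mx))=0$). If $(\star)$ fails, there are intervals $I_k=[x_k,x_k+\delta_k]$ with $\delta_k\to0$ and $\eta_k:=\mathrm{osc}(W;I_k)/\delta_k^{\alpha}\to0$. Choose $n_k$ with $b^{-n_k}\le\delta_k<b^{1-n_k}$, so $t_k:=b^{n_k}\delta_k\in[1,b)$, and write $W=W_{n_k}+\lambda^{n_k}W(b^{n_k}\,\cdot\,)$ with $W_{n_k}=\sum_{m=0}^{n_k-1}\lambda^m\phi(b^m\,\cdot\,)$. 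Substituting $u=x_k+b^{-n_k}s$ and dividing by $\lambda^{n_k}$ gives, for $s\in[0,t_k]$,
\[\lambda^{-n_k}\bigl(W(x_k+b^{-n_k}s)-W(x_k)\bigr)=r_k(s)+\bigl(W(\xi_k+s)-W(\xi_k)\bigr),\qquad \xi_k:=b^{n_k}x_k,\]
where $r_k(s):=\lambda^{-n_k}\bigl(W_{n_k}(x_k+b^{-n_k}s)-W_{n_k}(x_k)\bigr)$ satisfies $r_k(0)=0$ and, from $\mathrm{Lip}(W_{n_k})\le\mathrm{Lip}(\phi)\sum_{m<n_k}(\lambda b)^m<\mathrm{Lip}(\phi)(\lambda b)^{n_k}/(\lambda b-1)$, the uniform bound $\mathrm{Lip}(r_k)\le C_1:=\mathrm{Lip}(\phi)/(\lambda b-1)$. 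Since $b^{-\alpha}=\lambda$, and since $x_k+b^{-n_k}s\in I_k$ for $s\in[0,1]$ (as $b^{-n_k}\le\delta_k$), for such $s$ the left‑hand side is at most $\lambda^{-n_k}\mathrm{osc}(W;I_k)=\eta_k t_k^{\alpha}\le b^{\alpha}\eta_k=:\epsilon_k\to0$.

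It then follows that $|W(u)-W(v)|\le C_1|u-v|+2\epsilon_k$ for all $u,v\in[\xi_k,\xi_k+1]$; by $\Z$‑periodicity of $W$ this persists on every translate $[\xi_k+m,\xi_k+m+1]$ ($m\in\Z$), and a splitting at the (at most one) point of $\xi_k+\Z$ lying between $u$ and $v$ upgrades it to $|W(u)-W(v)|\le C_1|u-v|+4\epsilon_k$ whenever $|u-v|\le1$. As the right‑hand side no longer involves $\xi_k$, letting $k\to\infty$ gives $|W(u)-W(v)|\le C_1|u-v|$ for all $|u-v|\le1$, hence, chaining in unit steps, $W$ is $C_1$‑Lipschitz on $\R$ --- contradicting the hypothesis. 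This proves $(\star)$ and therefore the theorem.

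The main obstacle --- and the reason a bounded number of iterations of $W=\phi+\lambda W(b\,\cdot\,)$ does not suffice --- is the ``smooth part'' $W_{n}$: the decomposition displays $W$ on an interval of length $b^{-n}$ as $\lambda^{n}$ times a unit‑scale translate of $W$ plus a perturbation of size $\asymp C_1\lambda^{n}$ with $C_1=\mathrm{Lip}(\phi)/(\lambda b-1)$, and $C_1$ may well exceed $\mathrm{osc}(W;[0,1])$ (for instance when $\lambda$ is close to $1/b$ --- precisely a regime where $W$ is non‑Lipschitz). So one cannot bound the scale‑$b^{-n}$ oscillation directly in terms of the unit‑scale oscillation: the perturbation could, in principle, cancel the main term at any single fixed scale. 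The contradiction argument circumvents this, since it requires no quantitative unit‑scale oscillation bound --- only that, after rescaling, $W$ becomes asymptotically $C_1$‑Lipschitz over a full period, which $\Z$‑periodicity then promotes to global Lipschitz continuity. (This is also where integrality of $b$ is essential to the approach: for non‑integer $b$, $W$ fails to be periodic and the final limit passage would have to be replaced by a recurrence argument on the symbolic coding.)
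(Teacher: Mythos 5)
Your argument is correct, but note that the paper does not prove this statement at all: Theorem~\ref{thm:HL} is imported verbatim from Hu--Lau \cite[Theorem 4.1]{HL} (with \cite{kaplan1984lyapunov} cited for the trigonometric-polynomial case), so there is no in-paper proof to match. What you have supplied is a self-contained replacement, and I checked it line by line: the reduction of the pointwise statement to the scale-uniform oscillation bound $(\star)$ is sound (the interval of length $\asymp\min(\delta,\delta_0)$ placed inside $(x+c\delta,x+\delta)$ does yield $\kappa=\tfrac{\eta}{2}(\delta_0/4)^{\alpha}$); the identity $W=W_{n}+\lambda^{n}W(b^{n}\cdot)$, the uniform bound $\mathrm{Lip}(r_k)\le \mathrm{Lip}(\phi)/(\lambda b-1)$, and the use of $b^{-\alpha}=\lambda$ to turn $\lambda^{-n_k}\mathrm{osc}(W;I_k)$ into $\eta_k t_k^{\alpha}\to 0$ are all right; and the passage from a bound on one period $[\xi_k,\xi_k+1]$ to all of $\R$ via $\Z$-periodicity, followed by $k\to\infty$, legitimately forces $W$ to be $C_1$-Lipschitz, contradicting the hypothesis. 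The comparison worth recording is this: your proof is a soft compactness/rescaling argument whose key structural input is that $W$ itself is $\Z$-periodic, which is exactly where integrality of $b$ enters; the original Hu--Lau argument is more quantitative and does not route through periodicity (their framework allows non-integer scaling), so your proof is less general than the cited theorem but entirely adequate under the paper's standing assumption that $b\ge 2$ is an integer, and it has the virtue of making transparent why non-Lipschitzness upgrades automatically to a uniform anti-H\"older lower bound at all small scales.
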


\subsection{Regulating periods}
\begin{lemma}[Key Estimate]\label{lem:sizeqp} Suppose that $\phi$ is Lipschitz but $W$ is not Lipschitz. Then there exist constants $t_0>0$ and $C_0>0$ such that
if $t\in \R\setminus\{0\}$ is a $C^2$-regulating period of $W$, then either $|t|>t_0$ or $E_2(t) |t|^D \ge C_0.$
\end{lemma}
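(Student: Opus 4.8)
The plan is to argue by contradiction using the anti-Hölder property (Theorem 1.3). Suppose the conclusion fails; then for every $n$ there is a nonzero $C^2$-regulating period $t_n$ with $|t_n|\le 1/n$ and $E_2(t_n)|t_n|^D < 1/n$. Write $g_n(x)=W(x+t_n)-W(x)$, so $g_n$ is $C^2$ with $\sup|g_n''|=E_2(t_n)$. The idea is that on a scale comparable to $|t_n|$, the function $W$ is, up to a $C^2$ error controlled by $E_2(t_n)|t_n|^2$, periodic with period $t_n$; but the anti-Hölder oscillation of $W$ at scale $|t_n|$ is of order $|t_n|^\alpha$ with $\alpha=2-D$, and $|t_n|^\alpha$ is much larger than $E_2(t_n)|t_n|^2$ precisely when $E_2(t_n)|t_n|^D$ is small. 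Pushing this comparison around the orbit $x,x+t_n,x+2t_n,\ldots$ will force a contradiction.

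The key steps, in order, are as follows. First, I would fix $n$, abbreviate $t=t_n$, $E=E_2(t)$, and assume WLOG $t>0$ (replacing $x$ by $x+t$ if needed). Taylor's theorem with the bound $\sup|g''|=E$ gives, for any $x$ and any integer $m\ge 1$,
\begin{equation*}
\Bigl| \bigl(W(x+mt)-W(x)\bigr) - m\,g(x) - \tbinom{m}{2} t\, g'(x)\Bigr| \le C\, m^3 t^2 E
\end{equation*}
(more simply, $|W(x+mt)-W(x)|\le m\sup|g| \le m(\sup|g'|)\cdot\text{(something)}$, but the second-order expansion is what I actually need). Second, I would use the first-order control: since $g$ is $C^2$ on a period and $W$ is bounded (it is continuous and $\Z$-periodic-ish — indeed $W(x+1)-W(x)$ is a trivial regulating period when $\phi$ is Lipschitz, so $W$ has at worst linear growth, hence is bounded on bounded sets), one shows $\sup|g'|\lesssim \sqrt{E\cdot\sup|g|}\lesssim \sqrt{E}$ by the Landau–Kolmogorov inequality on an interval of unit length, and similarly $\sup|g|$ itself is not too large. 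The point is to get: for $m$ up to roughly $t^{-1}$, $|W(x+mt)-W(x)| \le C(\,m\sup|g| + m^2 t \sqrt{E\sup|g|} + m^3 t^2 E\,)$, and each term is $\ll (mt)^\alpha$ once $E t^D\to 0$, because $mt \lesssim 1$ and $\alpha<2$.

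Third — the crux — I would invoke Theorem 1.3 with $\delta = t$: there is $y\in(x+ct, x+t)$ with $|W(y)-W(x)|\ge \kappa (y-x)^\alpha \ge \kappa (ct)^\alpha$. But $y-x\in(ct,t)$ is comparable to $t$, and I claim $|W(y)-W(x)|$ is controlled by the same estimates: writing $y-x = \theta t$ with $\theta\in(c,1)$, and using the regulating-period structure together with the $C^2$ bound on $g$ — concretely, $W(y)-W(x) = \bigl(W(y)-W(y-t)\bigr) + \bigl(W(y-t)-W(x)\bigr) = g(y-t)+\bigl(W(x+(\theta-1)t+t)-W(x)\bigr)$, which I unwind down to scale... no: more cleanly, I note $W(y)-W(x)$ where $0<y-x<t$ need NOT be small, so this naive approach stalls. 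The honest route is: apply Theorem 1.3 at scale $\delta$ slightly below $t$ to get a large oscillation, then iterate the regulating period to compare $W$ on $[x,x+t)$ with $W$ on a translated copy, deriving that $W$ restricted to an interval of length $t$ must have oscillation $\ge \kappa (ct)^\alpha$ at nearby points, yet the $C^2$ bound forces the graph of $g$ (hence the "defect from periodicity") to be flat to order $Et^2$; chaining $\lfloor 1/t\rfloor$ such periods accumulates a genuine linear-in-$(1/t)$ discrepancy unless $\sup|g|\gtrsim t^\alpha$, i.e. $E_2(t)|t|^D = (\sup|g''|)|t|^D \gtrsim$ (a lower bound coming from $\sup|g|\gtrsim t^\alpha$ combined with Landau–Kolmogorov giving $\sup|g''|\gtrsim \sup|g|/t^2 \gtrsim t^{\alpha-2}=t^{-D}$). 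This last chain of inequalities is exactly $E_2(t)|t|^D\gtrsim C_0$, the desired conclusion.

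The main obstacle I anticipate is making the third step rigorous: the anti-Hölder estimate controls oscillation of $W$ from below only at a specific pair of points at each scale, whereas I need to transfer that into a lower bound on $\sup|g|=\sup_x|W(x+t)-W(x)|$ uniformly. The resolution should be to apply Theorem 1.3 not once but along the orbit $x_k=x+kt$, $k=0,1,\ldots,N$ with $N\asymp t^{-1}$: either some $|W(x_{k+1})-W(x_k)|=|g(x_k)|$ is already $\gtrsim t^\alpha$ (done), or else all the $g(x_k)$ are small, in which case $W(x+Nt)-W(x)=\sum_k g(x_k)$ plus the curvature correction $\sum_k \binom{\cdot}{\cdot}$ is small, contradicting that $x+Nt-x\asymp 1$ and Theorem 1.3 (applied at scale $\asymp 1$, where $W$ genuinely oscillates by a definite amount since it is non-Lipschitz, hence certainly non-constant and non-affine). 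Balancing these two cases yields the dichotomy "$|t|>t_0$ or $E_2(t)|t|^D\ge C_0$"; the threshold $t_0$ and constant $C_0$ come out of the implied constants in Theorem 1.3 and the Landau–Kolmogorov inequality.
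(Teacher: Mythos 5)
Your overall strategy is the paper's: use the anti-H\"older property to bound $\sup|g|$ from below by a power of $t$, then use the $C^2$ bound to show that $g(x)=W(x+t)-W(x)$ is nearly constant over many consecutive periods and telescope to a contradiction. You also correctly identify the main difficulty (converting the pointwise anti-H\"older bound into a bound on $\sup|g|$), and your use of Landau's inequality to control $\sup|g'|$ is a legitimate substitute for the paper's trick of anchoring at a maximum of $|g|$ where $g'=0$. However, two steps as written do not close.

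First, your chaining for the lower bound on $\sup|g|$ is run at the wrong scale. Applying Theorem~\ref{thm:HL} at scale $\asymp 1$ and telescoping over $N\asymp 1/t$ periods, the inequality $|W(x_N)-W(x_0)|\le N\sup|g|$ yields only $\sup|g|\gtrsim 1/N\asymp t$, and since $\alpha=2-D\in(0,1)$ this is much weaker than the needed $\sup|g|\gtrsim t^\alpha$. Your either/or variant fails for the same reason: if every $|g(x_k)|\le \eps t^\alpha$, the sum of $N\asymp 1/t$ such terms is only bounded by $\eps t^{\alpha-1}$, which is \emph{large}, so no contradiction with the unit-scale oscillation arises. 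The repair (this is what the paper does) is to apply Theorem~\ref{thm:HL} at scale $\delta=mt$ for a \emph{fixed} large $m$ with $m^\alpha\ge 2K/(\kappa c^\alpha)$, where $K$ is the H\"older constant of $W$: the point $y$ it produces is rounded to the nearest $x_{m'}$ at cost $Kt^\alpha$ (H\"older upper bound), this cost is dominated by the main term $\kappa (cmt)^\alpha$, and telescoping over only $m'\le m$ periods gives $\sup|g|\ge Kt^\alpha/m$.

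Second, the concluding inequality $\sup|g''|\gtrsim \sup|g|/t^2$ is not given by Landau--Kolmogorov: those inequalities bound intermediate derivatives from above and cannot bound the top derivative from below by the function. What actually forces $E\gtrsim \Delta/t^2$ (with $\Delta=\sup|g|$) is: if $Et^2\ll\Delta$, then $|g(x_j)|\ge\Delta/2$ with a fixed sign for all $j\le J\asymp\sqrt{\Delta/(Et^2)}$, hence $|W(x_J)-W(x_0)|\ge J\Delta/2$; comparing with the H\"older \emph{upper} bound $K(Jt)^\alpha$ and with $\Delta\ge Ct^\alpha$ gives $J^{1-\alpha}\le 2K/C$, so $J$ is bounded and $Et^D\gtrsim 1$. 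You never invoke the modulus of continuity $|W(x)-W(y)|\le K|x-y|^\alpha$, only boundedness of $W$; with boundedness alone one gets $J\Delta\lesssim 1$ and hence only $Et^D\gtrsim t^{2\alpha}\to 0$, which does not prove the lemma. The exponent $\alpha$ on the upper side is essential to make the constants uniform in $t$.
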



\begin{proof} Let $c, \kappa$ be as in Theorem~\ref{thm:HL} and let $K>0$ be such that $|W(x)-W(y)|\le K|x-y|^\alpha$ for all $x, y\in \R$. We may assume that $t>0$. Let $$f(x)=W(x+t)-W(x)$$ and choose $x_0$ such that
$$|W(x_0+t)-W(x_0)|=\max_{x\in \R}|W(x+t)-W(x)|=:\Delta.$$
Note that $f'(x_0)=0$. Write $x_j=x_0+jt$ for each $j\in \mathbb{Z}$.


{\bf Claim.} There exist constants $t_1>0$ and $C>0$ such that either $t\ge  t_1$ or $\Delta\ge C t^\alpha$.

To prove this claim, fix a large positive integer $m$ such that
\begin{equation}\label{eqn:tlow1}
m^{\alpha}\ge 2K/(\kappa c^\alpha).
\end{equation}
Assume that $t< 1/m$. 
By Theorem~\ref{thm:HL}, there exists $y$  such that $c mt <y-x_0<mt$ and $|W(y)-W(x_0)|\ge \kappa |y-x_0|^\alpha$.
Let $m'$ be minimal such that $x_{m'}\ge y$. Then $cm<m'\le m$, and
\begin{multline}\label{eqn:Wtmt0low}
|W(x_{m'})-W(x_0)|\ge |W(y)-W(x_0)|-|W(y)-W(x_{m'})|\\
\ge \kappa c^\alpha m^{\alpha} t^{\alpha}- K t^{\alpha}\ge K t^{\alpha},
\end{multline}
where we have used (\ref{eqn:tlow1}) for the last inequality.
On the other hand, by maximality of $x_0$, we have
$$|W(x_{m'})-W(x_0)|=\left|\sum_{j=0}^{m'-1}(W(x_{j+1})-W(x_j))\right|\le m' \Delta\le m \Delta.$$
Together with (\ref{eqn:Wtmt0low}), this implies that
\begin{equation}\label{eqn:delta_0low}
\Delta\ge K t^{\alpha}/ m.
\end{equation}
Thus the claim holds with $t_1=1/m$ and $C=K/m$.

Now let us assume that $t< t_1$, so that $\Delta\ge Ct^\alpha$. Let $J=\left[\sqrt{\frac{C t^\alpha}{(E+1) t^2}}\right]$, where $E=E_2(t)$.
For any $0\le j\le J$,
since
$$f(x_j)-f(x_0)=\int_{x_0}^{x_j} \int_{x_0}^x f''(y) dy dx,$$
we obtain $$|f(x_j)-f(x_0)|\le \frac{E}{2} (x_j-x_0)^2= \frac{E}{2} j^2 t^{2}\le \frac{Ct^\alpha}{2}\le \frac{\Delta}{2},$$
and hence $|f(x_j)|\ge \Delta/2$ and $f(x_j)f(x_0)>0$.
Therefore,  for all $0\le k\le J$, 
$$|W(x_{k})-W(x_0)|=\sum_{j=0}^{k-1} |f(x_j)|\ge k \Delta/2.$$
Since
$$|W(x_J)-W(x_0)|\le K|x_J-x_0|^\alpha\le K J^\alpha t^\alpha,$$
we obtain
\begin{equation}\label{eqn:tlow3}
\Delta\le 2K t^\alpha/ J^{1-\alpha} .
\end{equation}
Together with $\Delta\ge C t^\alpha$, this implies that $J$ is bounded from above, hence $(E+1)t^{2-\alpha}$ is bounded away from zero. Thus either $t$ or $t^DE_2(t)$ is bounded away from zero.
%
\end{proof}

\begin{corollary} \label{cor:qprational}
If $\phi$ is Lipschitz but $W$ is not Lipschitz, then every $C^2$-regulating period of $W$ is rational.
\end{corollary}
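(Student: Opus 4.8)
The plan is to exploit the additive group structure of the set of $C^2$-regulating periods together with the Key Estimate (Lemma~\ref{lem:sizeqp}). Write $G$ for the set of all $C^2$-regulating periods of $W$; as already observed, $G$ is an additive subgroup of $\R$. Moreover, since $\phi$ is $\Z$-periodic and $b\in\Z$, we have $W(x+1)\equiv W(x)$, whence $\Z\subseteq G$ and $E_2(m)=0$ for every $m\in\Z$. The structural fact I would use is that $E_2$ is subadditive on $G$: from
$$W(x+s+t)-W(x)=\bigl(W((x+t)+s)-W(x+t)\bigr)+\bigl(W(x+t)-W(x)\bigr)$$
one gets $\bigl(W(\cdot+s+t)-W\bigr)''=\bigl(W(\cdot+s)-W\bigr)''(\cdot+t)+\bigl(W(\cdot+t)-W\bigr)''$, hence $E_2(s+t)\le E_2(s)+E_2(t)$ for $s,t\in G$; combined with $E_2(-t)=E_2(t)$ and $E_2|_{\Z}=0$ this gives $E_2(nt-m)\le |n|\,E_2(t)$ for all $n,m\in\Z$.

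Now suppose, for contradiction, that $t\in G$ is irrational. By Dirichlet's approximation theorem there are integers $p_k,q_k$ with $q_k\to\infty$ and $0<|q_k t-p_k|<1/q_k$. Set $t_k:=q_k t-p_k\in G\setminus\{0\}$, so $0<|t_k|<1/q_k\to 0$. For all large $k$ one has $|t_k|<t_0$, so Lemma~\ref{lem:sizeqp} yields $E_2(t_k)\,|t_k|^D\ge C_0$, i.e. $E_2(t_k)\ge C_0|t_k|^{-D}>C_0\,q_k^{\,D}$. On the other hand, subadditivity gives $E_2(t_k)\le q_k\,E_2(t)$. Combining the two bounds, $C_0\,q_k^{\,D-1}<E_2(t)$ for all large $k$. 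Since $\lambda>1/b$ forces $D=2+\log_b\lambda>1$, the left-hand side tends to $\infty$ as $q_k\to\infty$, contradicting $E_2(t)<\infty$. Hence every $C^2$-regulating period of $W$ is rational.

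The argument is short once the ingredients are lined up; the only point needing care is the bookkeeping that lets the integer part $p_k$ be absorbed for free — because $E_2$ vanishes on $\Z$ — so that the upper bound on $E_2(t_k)$ grows only linearly in $q_k$, whereas the anti-Hölder input through the Key Estimate forces it to grow like $q_k^{D}$ with $D>1$. I do not expect any genuine obstacle beyond assembling these observations.
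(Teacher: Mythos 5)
Your proof is correct and follows essentially the same route as the paper: both use the subadditivity bound $E_2(nt-m)\le nE_2(t)$, the Key Estimate (Lemma~\ref{lem:sizeqp}) applied to the small periods $q_kt-p_k$, and Dirichlet's approximation theorem, with $D>1$ supplying the contradiction. The only difference is cosmetic (you extract the Dirichlet subsequence first, the paper bounds $\dist(nt,\Z)$ from below for all $n$ and then invokes Dirichlet).
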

\begin{proof}
Arguing by contradiction, assume that $W$ has a $C^2$-regulating period $t\in \R\setminus \mathbb{Q}$.
Then for each $n\ge 1$, $t_n:=\text{dist}(nt,\mathbb{Z})$ is a non-zero $C^2$-regulating period, and
$$E(t_n)=E(nt)\le nE(t),$$
so by Lemma~\ref{lem:sizeqp}, $|t_n|$ has a lower bound of the form $C n^{-1/D}$, where $D>1$ and $C>0$.
This contradicts with Dirichlet's theorem which asserts that for each irrational real number $t$ and any positive integer $Q$, there is an integer $q$ with $1\le q\le Q$ such that $\text{dist}(qt, \mathbb{Z})< 1/Q$.
\end{proof}

\begin{lemma}\label{lem:pexists} Assume that $\phi$ is $C^k$ for some integer $k\ge 2$ and does not satisfy the condition (H).  Assume also that $W$ is not Lipschitz. Then there is an integer $p>1$ such that $(p, b)=1$, and such that $1/p$ is a $C^k$-regulating period of $W$. 
\end{lemma}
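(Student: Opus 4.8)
The plan is to extract the integer $p$ from a minimal nontrivial $C^k$-regulating period. Since $\phi$ is $C^k$ with $k\ge 2$, all the trivial regulating periods $mb^{-n}$ ($m\in\Z$, $n\in\Z_+$) belong to the additive group $G$ of $C^k$-regulating periods of $W$. Because $\phi$ fails condition (H), there exist distinct $\textbf{i},\textbf{j}\in\Sigma$ with $Y(x,\textbf{i})\equiv Y(x,\textbf{j})$; I first want to convert this identity into the existence of a nontrivial element of $G$. The idea is that $Y(x,\textbf{i})-Y(x,\textbf{j})$ differs from a derivative of $W(x+t)-W(x)$-type expression: comparing the defining series (\ref{Kernel}) for two sequences that agree after finitely many coordinates produces a finite sum of terms $\gamma^n\phi'(\cdot)$ which is a genuine $C^{k-1}$ function, while the identity forces the ``tails'' (which are the non-smooth parts, being the negatives of $W'$ precomposed with affine maps up to smooth error) to coincide. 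Tracking the affine arguments, the two tails are $W'$ evaluated at $x/b^N + a$ and $x/b^N + a'$ for suitable $N$ and rationals $a,a'$ with $a\ne a'$; hence $W(x+t)-W(x)$ is $C^k$ for $t = b^N(a-a')$, a nonzero rational. So $G$ contains a nonzero element, and being an additive subgroup of $\R$ containing all $mb^{-n}$, $G$ is either all of $\R$ or of the form $\tfrac1q\Z[1/b]$ for some integer $q\ge 1$; the case $G=\R$ means $W$ is $C^k$, hence Lipschitz, contradicting our hypothesis. Thus $G=\tfrac1q\Z[1/b]$ with $q\ge 2$ (if $q=1$ then again $\Z[1/b]\subseteq G$ forces... actually $q=1$ gives $G=\Z[1/b]$ which is consistent with $W$ non-Lipschitz only if it contains no new period — but we just produced a nontrivial rational $t$, which after clearing powers of $b$ has a denominator with a prime factor $p\nmid b$, forcing $q>1$).

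Next I would pin down $p$. Write $q = b^s p$ with $(p,b)=1$; then $1/p \in \tfrac1q\Z[1/b] = G$ because $1/p = b^s/q$, so $1/p$ is a $C^k$-regulating period. It remains to rule out $p=1$. If $p=1$ then every $C^k$-regulating period lies in $\Z[1/b]$, i.e. is of the form $mb^{-n}$ — but then the nonzero rational period $t=b^N(a-a')$ produced above would itself be of this form. The point is that the rationals $a,a'$ arising from truncating the sequences $\textbf{i},\textbf{j}$ are of the form $\sum_{r} j_r b^{-r}$, hence already in $\Z[1/b]$, so that alone does not force $p>1$. Therefore I need a sharper source of a period with denominator prime to $b$: I would instead argue that condition (H) failing for *some* pair is equivalent, via Lemma~\ref{lem:L2A}-style reasoning and the group structure, to $Y(x,\textbf{0})\equiv Y(x,\textbf{u})$ for some $\textbf{u}\ne\textbf{0}$, and by iterating the functional equation $Y(x,j\textbf{k}) = -\gamma\phi'((x+j)/b) + \gamma Y((x+j)/b,\textbf{k})$ one reduces to the case where $\textbf{0}$ and $\textbf{u}$ differ only in the first coordinate, say $\textbf{u}=u\textbf{0}$ with $0<u<b$. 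Then the identity becomes exactly equation (\ref{eqn:many=})-type: $W((x+u)/b)\,$-shifted expressions, i.e. $1/u$ (or $u/b$) modulo smooth corrections is a regulating period. If $\gcd(u,b)=d>1$ one can still extract a period $1/p$ with $(p,b)=1$ and $p>1$ unless $u$ is itself a power-of-$b$ multiple, but $0<u<b$ rules that out, giving $p>1$.

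The main obstacle I anticipate is the second paragraph: cleanly passing from ``condition (H) fails for the pair $\textbf{i}\neq\textbf{j}$'' to ``$1/p$ is a regulating period with $(p,b)=1$ and $p>1$'', because one must (a) correctly identify which part of $Y(x,\textbf{i})-Y(x,\textbf{j})$ is the smooth finite sum and which is the genuinely rough $W'$-tail, keeping track of the affine change of variables, and (b) ensure the resulting period is not accidentally trivial, which is where the coprimality and the bound $0<u<b$ must be used decisively. The additive-subgroup structure of regulating periods (an easy observation already noted in the text) and Corollary~\ref{cor:qprational} (forcing rationality, hence the ``$\tfrac1q\Z[1/b]$ or $\R$'' trichotomy) do most of the bookkeeping once the nontrivial period is in hand.
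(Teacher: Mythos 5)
Your proposal has a genuine gap at the decisive step: you never actually produce a $C^k$-regulating period outside $\Z[1/b]$, and your proposed fix does not work. You correctly observe that the periods obtained by truncating $\textbf{i}$ and $\textbf{j}$ are $b$-adic rationals $r_n-s_n$, hence trivial periods, so they cannot by themselves yield $p>1$ with $(p,b)=1$. Your replacement --- reducing to $Y(x,\textbf{0})\equiv Y(x,u\textbf{0})$ with $0<u<b$ and reading off ``$1/u$ or $u/b$'' as a period --- fails twice over: the reduction to sequences differing only in the first coordinate with zero tails is not justified (the functional equation only lets you shift off a common prefix, as one does to arrange $i_1\ne j_1$; it does not let you replace the tails by $\textbf{0}$), and even granting it, $u/b$ is a trivial period while $1/u$ (e.g.\ $u=1$) gives nothing. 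Also, your description of $Y(x,\textbf{j})$ as having a ``non-smooth $W'$-tail'' is off: when $\phi$ is $C^k$ the series (\ref{Kernel}) converges in $C^{k-1}$, so every $Y(\cdot,\textbf{j})$ is itself $C^{k-1}$; the non-smooth object is $W$, not $Y$.

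The missing idea is quantitative and rests on the Key Estimate (Lemma~\ref{lem:sizeqp}). The identity $Y(\cdot,\textbf{i})\equiv Y(\cdot,\textbf{j})$ implies that $F_n(x)=W(x+r_n)-W(x+s_n)$ satisfies $F_n'(x)=\sum_{\ell\ge1}\gamma^\ell\bigl(\phi'(b^{-\ell}x+s_{n+\ell})-\phi'(b^{-\ell}x+r_{n+\ell})\bigr)$, so $E_k(t_n)$ is bounded uniformly in $n$; Lemma~\ref{lem:sizeqp} then forces $t_n=\mathrm{dist}(r_n-s_n,\Z)$ to be bounded away from $0$. Passing to a subsequential limit $r_{n_i}\to r$, $s_{n_i}\to s$ yields a nonzero rational regulating period $t=\mathrm{dist}(r-s,\Z)$ with $b^m(r-s)\notin\Z$ for all $m\ge 0$ (again because $t_{n-m}$ is bounded below), i.e.\ $t$ has no finite $b$-adic expansion; its denominator therefore has a prime factor $p\nmid b$, and the group structure of the set of periods gives $1/p$. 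Without this compactness-plus-lower-bound argument there is no source of a denominator prime to $b$. (A smaller point: a subgroup of $\Q$ containing $\Z[1/b]$ need not have the form $\tfrac1q\Z[1/b]$, so your trichotomy for $G$ is also not quite right, though that is not where the proof breaks.)
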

\begin{proof}
Since $\phi$ does not satisfies the condition (H), there exist $\textbf{i}, \textbf{j}\in \Sigma$ with $\textbf{i}\not=\textbf{j}$
and such that $Y(x, \textbf{i})\equiv Y(x, \textbf{j})$. Without loss of generality, we may assume that $i_1\not=j_1$. Let $r_n=(i_1+i_2b+\cdots+i_n b^{n-1})/b^n$, $s_n=(j_1+j_2b+\cdots +j_n b^{n-1})/b^n$. Then $r_n\not=s_n$ for any $n\ge 1$.
For each $n$, $r_n-s_n$ and $t_n:=\text{dist}(r_n-s_n,\Z)$ are $C^k$-regulating periods of $W$. We first prove

{\bf Claim.} $\sup_{n=1}^\infty E_k(t_n)<\infty$.

Indeed, for each $n\ge 1$,
$$-Y(b^nx,\textbf{i})=\sum_{m=1}^\infty \gamma^{m} \phi'(b^{n-m}x +r_m)=\sum_{m=1}^n \gamma^{m} \phi'(b^{n-m} x+r_m)+\gamma^n\sum_{\ell=1}^\infty \gamma^{\ell} \phi'(b^{-\ell} x +r_{n+\ell}),$$
$$-Y(b^nx,\textbf{j})=\sum_{m=1}^\infty \gamma^{m} \phi'(b^{n-m}x +s_m)=\sum_{m=1}^n \gamma^{m} \phi'(b^{n-m} x+s_m)+\gamma^n\sum_{\ell=1}^\infty \gamma^{\ell} \phi'(b^{-\ell} x +s_{n+\ell}),$$
and hence
\begin{multline}\label{eqn:derexc}
\sum_{m=0}^n \gamma^{m} \phi'(b^{n-m} x+r_m)-\sum_{m=0}^n \gamma^{m} \phi'(b^{n-m} x+s_m)\\
=\gamma^n\left(\sum_{\ell=1}^\infty \gamma^{\ell} \phi'(b^{-\ell} x +s_{n+\ell})-\sum_{\ell=1}^\infty \gamma^{\ell} \phi'(b^{-\ell} x +s_{n+\ell})\right),
\end{multline}
where $r_0=s_0=0$.
Let
$$F_n(x)=\sum_{m=0}^{n} \lambda^m \phi(b^m (x+r_n))-\sum_{m=0}^{n} \lambda^m \phi(b^m(x+s_n))=W(x+r_n)-W(x+s_n).$$
Then,
\begin{align*}
F_n'(x)& =\sum_{m=0}^{n} \gamma^{-m} \phi'(b^m(x+r_n))-\sum_{m=0}^{n} \gamma^{-m} \phi'(b^m(x+s_n))\\
& =\gamma^{-n}\sum_{m=0}^{n} \gamma^{n-m} \left(\phi'(b^m x+r_{n-m})-\phi'(b^m x+s_{n-m}) \right)\\
& =\sum_{\ell=1}^\infty \gamma^{\ell} \left(\phi'(b^{-\ell} x+s_{n+\ell})-\phi'(b^{-\ell} x+r_{n+\ell})\right),
\end{align*}
where the second equality holds because for any $0\le m\le n$, $b^m r_n\equiv r_{n-m}, b^m s_n\equiv s_{n-m}\mod 1$, and the last equality follows from (\ref{eqn:derexc}).
As $E_k(t_n)=\sup_{x\in \R} |F_n^{(k)}(x)|$, it is bounded from above by a constant. The claim is proved.

Note that $t_n\not=0$, so by Lemma~\ref{lem:sizeqp}, $t_n$ is bounded away from zero. Now take $n_i\to\infty$ so that $r_{n_i}\to r$ and $s_{n_i}\to s$. As the proof of the claim shows, $F_n'$ lies in a compact family of $C^{k-1}$ functions, so $W(x+r)-W(x+s)$ is $C^k$. Therefore, $t=\text{dist}(r-s, \mathbb{Z})=\lim_{n_i\to\infty} t_{n_i}$ is a $C^k$-regulating period of $W$. By Corollary~\ref{cor:qprational}, $t\in \mathbb{Q}$. Since $t_n$ is bounded away from zero and for $n>m$,
$$b^m(r_{n}-s_{n})=(r_{n-m}-s_{n-m})\mod 1=\pm t_{n-m}\mod 1,$$
we obtain that $b^m(r-s)\not\in \Z$ for all integers $m\ge 0$. Therefore, $t$ does not have a finite $b$-adic expansion. So we can write $t$ in the form $q_1/p_1$ with $p_1\ge 1$, $(q_1, p_1)=1$ such that $p_1$ has a prime factor $p$ with $p\not\mid b$. As $W(x+1/p)-W(x)$ is a finite sum of translations of $W(x+r)-W(x+s)$, hence $C^k$, we obtain that $1/p$ is a $C^k$-regulating period of $W$.
\end{proof}
\subsection{Renormalization}\label{subsec:ren}
For each $C^5$ function $\phi:\R\to \R$ of period $1$ and any integer $p>1$, let
$$\widetilde{\mathcal{R}}_p \phi(x)=\sum_{k\in \Z} c_{kp} e^{2\pi i kp x},$$
and $$\mathcal{R}_p\phi(x) =\sum_{k\in \Z} c_{kp} e^{2\pi i k x}=\widetilde{\mathcal{R}}_p\phi(x/p),$$
where $c_k$ is the $k$-th Fourier coefficient of $\phi$. As $c_k=O(k^{-5})$, $\widetilde{\mathcal{R}}_p\phi(x)$ and $\mathcal{R}_p\phi(x)$ are $C^3$ functions.

Let $$\mathscr{P}(\phi)=\{p\in \Z_+: p>1, (p,b)=1, 1/p\mbox{ is a $C^3$-regulating period of } W^\phi\}.$$
For each $p\in\mathscr{P}(\phi)$, we call $\mathcal{R}_p\phi$ (resp. $\widetilde{\mathcal{R}}_p\phi$) a {\em renormalization} (resp. {\em pre-renormalzation}) of $\phi$.

The main properties of the renormalization is stated in the following proposition.
\begin{proposition} \label{prop:ren}
Assume that $\phi$ is $C^5$. Let $p\in \mathscr{P}(\phi)$. Then the following hold:
\begin{enumerate}
\item [(1)] For $\mathcal{S}_p(\phi)=\phi-\widetilde{R}_p\phi$, $W^{\mathcal{S}_p\phi}$ is $C^3$ and
$$\sup_{x\in \R} |(W^{\mathcal{S}_p\phi})'(x)|\le C,$$
where $C>0$ is a constant depending only on $\phi$.
\item [(2)] $W^{\phi}$ is Lipschitz if and only if $W^{\mathcal{R}_p\phi}$ is Lipschitz.
\item [(3)] $\phi$ satisfies the condition (H) if and only if so does $\mathcal{R}_p\phi$. 
\item [(4)] If $q\in \mathscr{P}(\mathcal{R}_p\phi)$ then $pq\in \mathscr{P}(\phi)$.
\end{enumerate}
\end{proposition}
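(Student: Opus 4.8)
The plan is to deduce all four statements from two elementary identities together with one structural fact supplied by Lemma~\ref{lem:L2A}. By orthogonality of the characters of $\Z/p\Z$, $\widetilde{\mathcal{R}}_p\phi(x)=\frac1p\sum_{j=0}^{p-1}\phi(x+j/p)$, hence $\mathcal{S}_p\phi(x)=\phi(x)-\frac1p\sum_{j=0}^{p-1}\phi(x+j/p)$. Since $(b,p)=1$, for every $n$ the map $j\mapsto b^nj\bmod p$ permutes $\{0,1,\dots,p-1\}$, so $\sum_{j=0}^{p-1}\phi(b^nx+j/p)=\sum_{j=0}^{p-1}\phi\bigl(b^n(x+j/p)\bigr)$; multiplying by $\lambda^n$ and summing over $n\ge0$ gives
\[
W^{\mathcal{S}_p\phi}(x)=-\frac1p\sum_{j=1}^{p-1}\bigl(W^{\phi}(x+j/p)-W^{\phi}(x)\bigr).
\]
As $1/p$ is a $C^3$-regulating period of $W^{\phi}$ and these form an additive group, each $j/p$ is one too, so every summand on the right is $C^3$; hence $W^{\mathcal{S}_p\phi}$ is $C^3$. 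For the bound, $\mathcal{S}_p\phi$ is $C^1$ with $\|(\mathcal{S}_p\phi)'\|_\infty\le2\|\phi'\|_\infty$ (as $\widetilde{\mathcal{R}}_p\phi$ is an average of translates of $\phi$), and $W^{\mathcal{S}_p\phi}$ is $\Z$-periodic and $C^1$, hence Lipschitz; applying the computation in the proof of Lemma~\ref{lem:L2A} to $\mathcal{S}_p\phi$ gives $(W^{\mathcal{S}_p\phi})'(x)=-Y^{\mathcal{S}_p\phi}(x,\textbf{0})=\sum_{n\ge1}\gamma^n(\mathcal{S}_p\phi)'(x/b^n)$ for all $x$, whence $\|(W^{\mathcal{S}_p\phi})'\|_\infty\le\tfrac{2\gamma}{1-\gamma}\|\phi'\|_\infty=:C$, depending only on $\phi$. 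This settles (1).

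For parts (2) and (4) I would use $W^{\phi}=W^{\mathcal{S}_p\phi}+W^{\widetilde{\mathcal{R}}_p\phi}$ together with the scaling identity $W^{\widetilde{\mathcal{R}}_p\phi}(x)=W^{\mathcal{R}_p\phi}(px)$, which is immediate from $\widetilde{\mathcal{R}}_p\phi(x)=\mathcal{R}_p\phi(px)$. Since $W^{\mathcal{S}_p\phi}$ is $C^3$ by (1), it is Lipschitz and the function $W^{\mathcal{S}_p\phi}(x+t)-W^{\mathcal{S}_p\phi}(x)$ is $C^3$ for every $t$; therefore $W^{\phi}$ is Lipschitz iff $W^{\widetilde{\mathcal{R}}_p\phi}$ is iff $W^{\mathcal{R}_p\phi}$ is (the last step because $x\mapsto px$ is a linear automorphism of $\R$), which is (2). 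For (4), if $q\in\mathscr{P}(\mathcal{R}_p\phi)$ then
\[
W^{\phi}(x+\tfrac1{pq})-W^{\phi}(x)=\bigl(W^{\mathcal{S}_p\phi}(x+\tfrac1{pq})-W^{\mathcal{S}_p\phi}(x)\bigr)+\bigl(W^{\mathcal{R}_p\phi}(px+\tfrac1q)-W^{\mathcal{R}_p\phi}(px)\bigr),
\]
where the first summand is $C^3$ and the second is $C^3$ because $1/q$ is a $C^3$-regulating period of $W^{\mathcal{R}_p\phi}$ and precomposition with $x\mapsto px$ preserves $C^3$-smoothness; since $pq>1$ and $(pq,b)=1$, this gives $pq\in\mathscr{P}(\phi)$.

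For part (3) the goal is to show that (H) holds for $\phi$ iff for $\widetilde{\mathcal{R}}_p\phi$ iff for $\mathcal{R}_p\phi$. Since $Y^{\psi}$ is linear in $\psi$, $Y^{\phi}=Y^{\mathcal{S}_p\phi}+Y^{\widetilde{\mathcal{R}}_p\phi}$; because $W^{\mathcal{S}_p\phi}$ is Lipschitz by (1), Lemma~\ref{lem:L2A} yields $Y^{\mathcal{S}_p\phi}(x,\textbf{j})\equiv Y^{\mathcal{S}_p\phi}(x,\textbf{i})$ for all $\textbf{i},\textbf{j}$, so $Y^{\phi}(x,\textbf{j})-Y^{\phi}(x,\textbf{i})=Y^{\widetilde{\mathcal{R}}_p\phi}(x,\textbf{j})-Y^{\widetilde{\mathcal{R}}_p\phi}(x,\textbf{i})$, which gives the first equivalence. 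For the second, I would introduce the bijection $T_p\colon\Sigma\to\Sigma$ determined by the congruences $\sum_{m=1}^{n}(T_p\textbf{j})_mb^{m-1}\equiv p\sum_{m=1}^{n}j_mb^{m-1}\pmod{b^n}$, $n\ge1$ (these are consistent, since reducing the $(n{+}1)$-st one modulo $b^n$ recovers the $n$-th, and $T_p$ is a bijection precisely because $(p,b)=1$; it is simply multiplication by $p$ on $b$-adic expansions). With $\rho_n(\textbf{j})=\sum_{m=1}^{n}j_mb^{m-n-1}$ denoting the phase in (\ref{Kernel}), one has $\rho_n(T_p\textbf{j})\equiv p\,\rho_n(\textbf{j})\pmod1$; combining this with $(\widetilde{\mathcal{R}}_p\phi)'(x)=p\,(\mathcal{R}_p\phi)'(px)$ and the $1$-periodicity of $(\mathcal{R}_p\phi)'$ gives
\[
Y^{\widetilde{\mathcal{R}}_p\phi}(x,\textbf{j})=p\,Y^{\mathcal{R}_p\phi}(px,T_p\textbf{j}).
\]
Consequently $Y^{\widetilde{\mathcal{R}}_p\phi}(x,\textbf{j})-Y^{\widetilde{\mathcal{R}}_p\phi}(x,\textbf{i})\equiv0$ iff $Y^{\mathcal{R}_p\phi}(y,T_p\textbf{j})-Y^{\mathcal{R}_p\phi}(y,T_p\textbf{i})\equiv0$, and since $T_p$ is a bijection this is the second equivalence, finishing (3).

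Everything but (3) is bookkeeping with the two identities above; (3) is where the real content lies. The delicate points there are: the reduction from $\phi$ to $\widetilde{\mathcal{R}}_p\phi$, which rests on $Y^{\mathcal{S}_p\phi}$ being independent of the symbolic coordinate — exactly the Lipschitz property of $W^{\mathcal{S}_p\phi}$ from (1) fed through Lemma~\ref{lem:L2A}; and the passage from $\widetilde{\mathcal{R}}_p\phi$ to $\mathcal{R}_p\phi$, which needs the bijection $T_p$ and the congruence $\rho_n(T_p\textbf{j})\equiv p\,\rho_n(\textbf{j})$, both of which use the coprimality $(p,b)=1$ in an essential way.
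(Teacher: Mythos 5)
Your proof is correct. Parts (2) and (4) coincide with the paper's argument (decompose $W^{\phi}=W^{\mathcal{S}_p\phi}+W^{\widetilde{\mathcal{R}}_p\phi}$ and use $W^{\widetilde{\mathcal{R}}_p\phi}(x)=W^{\mathcal{R}_p\phi}(px)$), and part (3) is also the paper's argument with one ingredient inlined: your bijection $T_p$ and the identity $Y^{\widetilde{\mathcal{R}}_p\phi}(x,\textbf{j})=p\,Y^{\mathcal{R}_p\phi}(px,T_p\textbf{j})$ are precisely the content of the rescaling lemma (Lemma~\ref{lem:rescale}(ii)), which the paper invokes at that point. The genuinely different step is (1). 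There the paper compares Fourier coefficients to show only that the increment $W^{\mathcal{S}_p\phi}(x+1/p)-W^{\mathcal{S}_p\phi}(x)$ equals $W^{\phi}(x+1/p)-W^{\phi}(x)$, and must then separately upgrade this to smoothness of $W^{\mathcal{S}_p\phi}$ itself via the decay of the Fourier coefficients $a_m$ with $p\nmid m$. Your route is shorter: the averaging formula $\widetilde{\mathcal{R}}_p\phi=\frac1p\sum_{j=0}^{p-1}\phi(\cdot+j/p)$, combined with the fact that $j\mapsto b^nj\bmod p$ permutes the residues (the only place $(p,b)=1$ enters here), lets the average commute with $\phi\mapsto W^{\phi}$ and exhibits $W^{\mathcal{S}_p\phi}$ directly as a finite linear combination of the functions $W^{\phi}(x+j/p)-W^{\phi}(x)$, each $C^3$ because regulating periods form an additive group. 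This bypasses the Fourier-decay step entirely. The derivative bound is then obtained as in the paper from the backward series $\sum_{n\ge1}\gamma^n(\mathcal{S}_p\phi)'(x/b^n)$; the sign you carry over from Lemma~\ref{lem:L2A} is immaterial since only absolute values are used, and your constant $\tfrac{2\gamma}{1-\gamma}\|\phi'\|_\infty$ depends only on $\phi$ as required.
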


We need a lemma to prove the proposition.
\begin{lemma}[Rescaling]\label{lem:rescale} Let $\tau$ be a $C^k$ function of period $1$ for some $k\in \mathbb{Z}_+$, and let $\tilde{\tau}(x)=\tau(px)$, where $p\ge 2$ is an integer with $(p,b)=1$. Then,
\begin{enumerate}
\item[(i)] $t$ is a $C^k$-regulating period of $W^{\tau}$ if and only if $\frac{t}{p}$ is a $C^k$-regulating period of $W^{\tilde{\tau}}$.
\item[(ii)] $\tau$ satisfies the condition (H) if and only if so does $\tilde{\tau}$.
\end{enumerate}
\end{lemma}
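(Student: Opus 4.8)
The plan is to reduce both parts of the lemma to a single substitution identity. Directly from the defining series (\ref{eqn:Wtype}),
\[
W^{\tilde\tau}(x)=\sum_{n\ge 0}\lambda^n\tau(pb^nx)=\sum_{n\ge 0}\lambda^n\tau\big(b^n(px)\big)=W^{\tau}(px),
\]
so that for every $t\in\R$ one has $W^{\tilde\tau}(x+t/p)-W^{\tilde\tau}(x)=W^{\tau}(px+t)-W^{\tau}(px)$. Since $x\mapsto px$ is a $C^\infty$ diffeomorphism of $\R$, the function on the left is $C^k$ in $x$ if and only if $y\mapsto W^{\tau}(y+t)-W^{\tau}(y)$ is $C^k$. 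That is precisely part (i), and it uses only that $p\neq 0$, not the coprimality hypothesis.

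For part (ii) I would establish the analogous identity for the kernels (cf.\ (\ref{Kernel})). Writing $\theta_n(\textbf{j})=b^{-n}\big(j_1+j_2b+\cdots+j_nb^{n-1}\big)$, so that $Y^{\phi}(x,\textbf{j})=-\sum_{n\ge 1}\gamma^n\phi'\big(b^{-n}x+\theta_n(\textbf{j})\big)$, and using $\tilde\tau'(y)=p\,\tau'(py)$, one gets
\[
Y^{\tilde\tau}(x,\textbf{j})=-p\sum_{n\ge 1}\gamma^n\tau'\big(b^{-n}(px)+p\,\theta_n(\textbf{j})\big).
\]
The key step is to identify $\textbf{j}=j_1j_2\cdots\in\Sigma$ with the $b$-adic integer $J=\sum_{m\ge 1}j_mb^{m-1}\in\Z_b$ and to let $\textbf{k}=k_1k_2\cdots\in\Sigma$ be the sequence of base-$b$ digits of $pJ$. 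Because the first $n$ digits of $pJ$ are determined by $pJ\bmod b^n$, hence by $J\bmod b^n$, one checks that $p\,\theta_n(\textbf{j})\equiv\theta_n(\textbf{k})\pmod 1$ for every $n$; since $\tau'$ has period $1$, this yields term by term
\[
Y^{\tilde\tau}(x,\textbf{j})=p\,Y^{\tau}(px,\textbf{k}).
\]

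It remains to invoke the hypothesis $(p,b)=1$: it says exactly that $p$ is coprime to every prime factor of $b$, hence a unit in the ring $\Z_b$, so multiplication by $p$ is a bijection of $\Z_b$ and therefore $\textbf{j}\mapsto\textbf{k}$ is a bijection of $\Sigma$. Writing $\textbf{l}$ for the image of $\textbf{i}$, we get $\textbf{j}\neq\textbf{i}\iff\textbf{k}\neq\textbf{l}$ and
\[
Y^{\tilde\tau}(x,\textbf{j})-Y^{\tilde\tau}(x,\textbf{i})=p\big(Y^{\tau}(px,\textbf{k})-Y^{\tau}(px,\textbf{l})\big),
\]
which, after the substitution $y=px$, vanishes identically in $x$ exactly when $Y^{\tau}(y,\textbf{k})-Y^{\tau}(y,\textbf{l})$ does; hence $\tilde\tau$ satisfies (H) if and only if $\tau$ does. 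The only point that is not purely formal is the reindexing $\textbf{j}\mapsto\textbf{k}$: one must check it is well defined on all of $\Sigma$ and that coprimality makes it bijective — this is exactly where the $b$-adic picture does the work, and everything else is a change of variables.
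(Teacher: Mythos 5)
Your proposal is correct and follows essentially the same route as the paper: part (i) via the identity $W^{\tilde\tau}(x)=W^{\tau}(px)$, and part (ii) via the digit correspondence $p(j_1+j_2b+\cdots+j_nb^{n-1})\equiv k_1+k_2b+\cdots+k_nb^{n-1}\pmod{b^n}$ and the resulting identity $Y^{\tilde\tau}(x,\textbf{j})-Y^{\tilde\tau}(x,\textbf{i})=p\big(Y^{\tau}(px,\textbf{k})-Y^{\tau}(px,\textbf{l})\big)$. Your $b$-adic formulation merely makes explicit the bijectivity of the reindexing that the paper asserts from $(p,b)=1$.
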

\begin{proof} It is straightforward to check that $W^{\tilde{\tau}}(x)=W^{\tau}(px)$ for all $x\in \R$, so
$$W^{\tilde{\tau}}(x+t/p)-W^{\tilde{\tau}}(x)=W^{\tau}(px+t)-W^{\tau}(px).$$ The statement (i) follows.
To prove (ii), we observe that for $\tilde{u}_j,\tilde{v}_j\in \{0,1,\ldots,b-1\}$, $j=1,2,\ldots$, there exists $u_j, v_j\in \{0,1,\ldots, b-1\}$, $j=1,2,\ldots$, such that
$$p(\tilde{u}_1+\tilde{u}_2 b+\cdots \tilde{u}_n b^{n-1})=u_1+u_2b +\cdots +u_{n} b^{n-1} \mod b^n,$$
$$p(\tilde{v}_1+\tilde{v}_2 b+\cdots \tilde{v}_n b^{n-1})=v_1+v_2b +\cdots +v_{n} b^{n-1} \mod b^n.$$
Vice versa, since $(p,b)=1$, given $u_1,u_2,\cdots, v_1, v_2,\cdots$, we can find $\tilde{u}_1,\tilde{u}_2,\cdots, \tilde{v}_1,\tilde{v}_2,\cdots$ so that the above properties hold. Moreover, $u_1u_2\cdots=v_1v_2\cdots$ if and only if $\tilde{u}_1\tilde{u}_2\cdots=\tilde{v}_1\tilde{v}_2\cdots$.
Since
$$Y^{\tilde{\tau}}(x, \tilde{u}_1\tilde{u}_2\cdots)-Y^{\tilde{\tau}}(x,\tilde{v}_1\tilde{v}_2\cdots)=p(Y^\tau(px, u_1u_2\cdots)-Y^\tau(px, v_1v_2\cdots)),$$
the statement follows.
\end{proof}

\begin{proof}[Proof of Proposition~\ref{prop:ren}] (1)  Note $W^\phi(x)=W^{\widetilde{\mathcal{R}}_p\phi}(x)+W^{\mathcal{S}_p\phi}(x)$.
For each $m\in\Z$, let $a_m=\int_0^1 W^\phi(x) e^{-2\pi i mx} dx$ be the $m$-th Fourier coefficient of $W^\phi(x)$.
Note that
$$
\int_0^1 W^{\widetilde{R}_p\phi}(x) e^{-2\pi i mx} dx
=\left\{
\begin{array}{ll}
a_m & \mbox{ if } p\mid m\\
0  &\mbox{ if } p\not\mid m,
\end{array}
\right.
$$
and
$$
\int_0^1 W^{\mathcal{S}_p\phi}(x) e^{-2\pi i mx} dx=\left\{
\begin{array}{ll}
0 &\mbox{ if } p\mid m,\\
a_m &\mbox{ if } p\not\mid m.
\end{array}
\right.$$
Thus for all $m\in \Z$,
$$(1-e^{2\pi i m/p})\int_0^1 W^\phi (x) e^{-2\pi imx} dx=(1-e^{2\pi i m/p})\int_0^1 W^{\mathcal{S}_p\phi}(x) e^{-2\pi imx} dx,$$
i.e.,
$$\int_0^1 (W^\phi(x+1/p)-W^\phi(x))e^{-2\pi im x} dx= \int_0^1 (W^{\mathcal{S}_p\phi}(x+1/p)-W^{\mathcal{S}_p\phi}(x)) e^{-2\pi i mx} dx.$$
Therefore $W^{\mathcal{S}_p\phi}(x+1/p)-W^{\mathcal{S}_p\phi}(x)=W^\phi (x+1/p)-W^\phi (x)$ is $C^3$.
Furthermore, $|a_m|$, $p\not\mid m$, is of order $m^{-3}$. For $p\mid m$, the $m$-th Fourier coefficient of $W^{\mathcal{S}_p\phi}$ is zero, and for $p\not\mid m$, it is $a_m$. Thus $W^{\mathcal{S}_p\phi}(x)$ is $C^1$.
As in Lemma~\ref{lem:L2A}, we have 
$(W^{\mathcal{S}_p\phi})'(x)=-\sum_{n=1}^\infty \gamma^n (\mathcal{S}_p\phi)'(x/b^n)$. So $W^{\mathcal{S}_p\phi}$ is $C^3$. Since
$$|(\mathcal{S}_p\phi)'(y)|=|\sum_{p\not\mid m} c_m m e^{2\pi imy}|\le \sum_{m\in \Z} |mc_m|=:C_0<\infty,$$
$\sup_x |(W^{\mathcal{S}_p\phi})'(x)|\le C_0\gamma/(1-\gamma)=:C$.

(2) Since $W^{\widetilde{\mathcal{R}}_p\phi}(x)=W^{\mathcal{R}_p\phi}(px)$, $W^{\mathcal{R}_p\phi}(x)$ is Lipschitz if and only if so is $W^{\widetilde{\mathcal{R}}_p\phi}$. By (1), $W^\phi(x)-W^{\widetilde{\mathcal{R}}_p\phi}(x)$ is Lipschitz. So the statement holds.

(3) Since $W^{\mathcal{S}_p\phi}(x)=W^\phi(x)-W^{\widetilde{\mathcal{R}}_p\phi}(x)$ is Lipschitz, by Lemma~\ref{lem:L2A}, $Y^{\mathcal{S}_p\phi}(x,\textbf{i})\equiv Y^{\mathcal{S}_p\phi}(x,\textbf{j})$ for any $\textbf{i}, \textbf{j}\in \Sigma$. So $\phi$ satisfies the condition (H) if and only if so does $\widetilde{\mathcal{R}}_p\phi$. By Lemma~\ref{lem:rescale} (ii), $\widetilde{\mathcal{R}}_p\phi$ satisfies the condition (H) if and only if so does ${\mathcal{R}}_p\phi$.

(4) Since $\mathcal{R}_p\phi$ is $C^3$, by Lemma~\ref{lem:rescale} (i), $pq\in \mathscr{P}(\widetilde{\mathcal{R}}_p\phi)$. Since $W^{\mathcal{S}_p\phi}$ is $C^3$, this implies that $pq\in \mathscr{P}(\phi)$.
\end{proof}

We shall now complete the proof of Theorem~\ref{thm:dicho}.
\begin{proof}[Completion of proof of Theorem~\ref{thm:dicho}]
If $W$ is Lipschitz, then (i) holds by Lemma~\ref{lem:L2A}. Assume now that $W$ is not Lipschitz and let us prove that (ii) holds. Arguing by contradiction, assume that $\phi$ does not satisfies the condition (H).
By Lemma~\ref{lem:pexists}, $\mathscr{P}(\phi)$ is not empty.

Given $p\in\mathscr{P}(\phi)$, by Proposition~\ref{prop:ren}, $W^{\mathcal{R}_p\phi}$ is not Lipschitz and $\mathcal{R}_p\phi$ does not satisfies the condition (H). So by Lemma~\ref{lem:pexists}, there is $q\in \mathscr{P}(\mathcal{R}_p\phi)$. By Proposition~\ref{prop:ren} (4), $pq\in \mathscr{P}(\phi)$. By definition, $p, q\ge 2$, so $pq>p$. Therefore, $\mathscr{P}(\phi)$ is an infinite set.

Let $p_1<p_2<\cdots$ be the elements of $\mathscr{P}$. Then clearly $W^{\widetilde{\mathcal{R}}_{p_k}\phi}(x)\to 0$ holds for all $x\in \R$. By Proposition~\ref{prop:ren} (1), $\sup|(W^{\widetilde{\mathcal{R}}_{p_k}\phi}-W)'(x)|\le C$. It follows that $W$ is Lipschitz, a contradiction!
\end{proof}

\section{Preliminaries for  the proof of Theorem B}\label{sec:PreB}
In the remainder of the paper, we shall prove Theorem B. So fix an integer $b\ge 2$ and $\lambda\in (1/b,1)$ and assume that $\phi$ is a $\mathbb{Z}$-periodic analytic function which satisfies the condition (H).
We start with recalling some basic facts from the Ledrappier-Young theory.

A probability measure $\omega$ in a metric space $X$ is called {\em exact-dimensional} if there exists a constant $\alpha\ge 0$ such that for $\omega$-a.e. $x$,
$$\lim_{r\to 0} \frac{\log \omega(B(x,r))}{\log r}=\alpha.$$
In this case, we write $\dim \omega=\alpha$.
By the mass distribution principle, this implies that for any Borel subset $E$ of $X$ with $\omega(E)>0$, we have $\dim_H(E)\ge \alpha.$

\subsection{Ledrappier's Theorem}\label{subsec:Ledrappiertheory}
Let $\mu$ denote the pushforward of the Lebesgue measure in $[0,1)$ to the graph of $W$ by $x\mapsto (x, W(x))$. To complete the proof of Theorem B, it suffices to show that $\dim (\mu)\ge D$, since it is well-known that $W(x)$ is a $C^{2-D}$ function and hence the Hausdorff dimension of its graph is at most $D$. 

The graph of $W|_{[0,1)}$ is invariant under the dynamical system
$$F: [0,1)\times \R\to [0,1)\times \R, \, (x,y)\mapsto \left(bx \mod 1, \frac{y-\phi(x)}{\lambda}\right)$$
and $\mu$ is an invariant probability measure. The Ledrappier-Young's dimension theory of dynamical systems applies in this setting, which
relates the dimension of $\mu$ with its projection along some dynamical defined flows. We shall now recall the results obtained in Ledrappier~\cite{ledrappier1992dimension}.

As before let $\varLambda=\{0,1,\ldots, b-1\}$ and let $\Sigma=\varLambda^{\mathbb{Z}_+}$.  Let $\sigma: \Sigma\to\Sigma$ denote the shift map $(i_1i_2\cdots)\mapsto (i_2i_3\cdots)$. Let $\nu$ denote the even distributed probability measure on $\varLambda$ and let $\nu^{\mathbb{Z}_+}$ denote the product (Bernoulli) measure on $\Sigma$.
For each $i\in \varLambda$, define
\begin{equation}
\label{IFS}
 g_i(x,y)=\bigg(\frac{x+i}b, \lambda y +\phi\left(\frac{x+i}b\right)\bigg).
\end{equation}
Define the `inverse' of $F$ as
$$G:[0,1)\times \R \times \Sigma\to [0,1)\times \R\times \Sigma,\,\, (x,y,\textbf{i})\mapsto (g_{i_1}(x,y),\sigma(\textbf{i})).$$
Then
\begin{equation}\label{eqn:muinv}
\mu=\frac{1}{b}\sum_{i=0}^{b-1} g_i \mu.
\end{equation}
Direct computation shows that
$$Dg_{i_1}(x,y)\left(\begin{array}{ll}
 1\\
 Y(x, \textbf{i})
\end{array} \right)=\frac{1}{b}\left(
\begin{array}{ll}
1\\
Y((x+i_1)/b, \sigma(\textbf{i}))
\end{array}
\right).$$
So $Dg_{i_n}g_{i_{n-1}}\cdots g_{i_1}$ contracts the vector $(1, Y(x,\textbf{i}))$ at the exponential rate $-\log b$.
Let $$\Gamma_{\textbf{i}}(x)=\int_0^x Y(t,\textbf{i})dt.$$
So for each $y$, $x\mapsto y+\Gamma_{\textbf{i}}(x)$ is the integral curve of the vector filed $(1,Y(x,\textbf{i}))$ which passes through $(0,y)$. For each $\textbf{i}\in \Sigma$, this defines a foliation in $[0,1)\times \R$ whose leaves are ``parallel'' to each other.
For $\textbf{i} \in \Sigma$,  define
\begin{equation}
\label{ProjectionFunction}
\pi_{\textbf{i}}(x,y) = y -\Gamma_{\textbf{i}}(x),
\,\,\, (x,y) \in [0,1)\times \mathbb{R}.
\end{equation}
So $\pi_{\textbf{i}}$ is the projection of $(x,y)$ into the line $x=0$ along the foliation $\{y+\Gamma_{\textbf{i}}(x)\}_{y\in \R}$.
We call $\pi_{\textbf{i}}$ the {\em flow projection function} with respect to $\textbf{i}$.

The following result is a part of \cite[Proposition 2]{ledrappier1992dimension} which
serves as our starting point to calculate the Hausdorff dimension of the graph of $W$.

\begin{theorem}\label{thm:ledrappier}
	If $\phi:\mathbb{R} \to \mathbb{R}$ is a $\mathbb{Z}$-periodic continuous piecewise $C^2$ function, then
	\begin{enumerate}
	\item[(1)] $\mu$ is exact dimensional;
	\item[(2)] there is a constant $\alpha\in [0,1]$ such that for $\nu^{\mathbb{Z}_+}$-a.e. $\textbf{i}\in \Sigma$, $\pi_{\textbf{j}}\mu$ is exact dimensional and $\dim(\pi_{\textbf{j}}\mu)=\alpha$.
	\item[(3)]
    \begin{equation}
	\label{LedrapperYoungF}
	\dim(\mu)=1+(D-1)\alpha.
	\end{equation}
    \end{enumerate}
\end{theorem}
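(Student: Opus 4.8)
Since Theorem~\ref{thm:ledrappier} is, as stated, part of \cite[Proposition~2]{ledrappier1992dimension}, the plan is not to give a new argument but to recall how it is obtained from the Ledrappier--Young dimension theory, and to check that the hypotheses we are working under are covered. The first step is to set up the dynamics so that a ``strong unstable'' structure is available. The map $F$ is $b$-to-one, so backward orbits and the unstable structure are organized by the natural extension, canonically coordinatized by $[0,1)\times\R\times\Sigma$: the itinerary $\textbf{i}=i_1i_2\cdots$ records which inverse branch $g_{i_n}$ is applied at the $n$-th step, and $G$ (introduced just before (\ref{eqn:muinv})) realizes the inverse-limit dynamics. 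The relevant $G$-invariant measure $\widehat\mu$ is the one with marginals $\mu$ on the first two coordinates and $\nu^{\mathbb{Z}_+}$ on $\Sigma$, which exists and is unique by (\ref{eqn:muinv}). The forward map $F$ is expanding: its derivative has eigenvalue $b$ along the line spanned by $(1,Y(x,\textbf{i}))$ --- a direction that is only defined once the whole itinerary is fixed, by (\ref{Kernel}) --- and eigenvalue $1/\lambda$ along a complementary direction; since $\lambda\in(1/b,1)$ one has $\chi_1:=\log b>\chi_2:=-\log\lambda>0$, so the $(1,Y)$-direction is the strong unstable one, uniformly dominated over the weak one.

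The second step identifies the geometric objects entering the Ledrappier--Young machine. The strong-unstable subbundle integrates to the foliation whose leaves are the graphs $\{(x,c+\Gamma_{\textbf{i}}(x)):x\in[0,1)\}$, $c\in\R$; these are $C^1$ curves with Lipschitz derivative $Y(\cdot,\textbf{i})$ as soon as $\phi$ is piecewise $C^2$, since the series (\ref{Kernel}) then converges in the Lipschitz norm. The projection $\pi_{\textbf{i}}$ of (\ref{ProjectionFunction}) is exactly the holonomy of this foliation onto the fibre $\{x=0\}$, so $\pi_{\textbf{i}}\mu$ is the transverse measure attached to $\widehat\mu$. With this dictionary, the Ledrappier--Young theory for (piecewise) $C^2$ expanding maps \cite{ledrappier1985metric}, in the form carried out for this setting in \cite{ledrappier1992dimension}, yields exact-dimensionality of $\widehat\mu$, hence of $\mu$, and of the transverse measures $\pi_{\textbf{i}}\mu$. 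That $\dim(\pi_{\textbf{i}}\mu)$ equals a constant $\alpha\in[0,1]$ for $\nu^{\mathbb{Z}_+}$-a.e.\ $\textbf{i}$ follows because $\textbf{i}\mapsto\dim(\pi_{\textbf{i}}\mu)$ is Borel and $\sigma$-invariant --- applying $g_{i_1}^{-1}$ carries the $\pi_{\textbf{i}}$-foliation to the $\pi_{\sigma\textbf{i}}$-foliation up to an affine rescaling of the $y$-coordinate --- together with ergodicity of $(\sigma,\nu^{\mathbb{Z}_+})$; the bound $\alpha\le 1$ holds because the transverse fibre is one-dimensional.

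Finally, the dimension formula (\ref{LedrapperYoungF}) is the Ledrappier--Young relation specialized to this two-exponent situation. The constant $1$ reflects $h_\mu(F)/\chi_1=\log b/\log b=1$, where $h_\mu(F)=\log b$ because $(F,\mu)$ is measurably isomorphic, via $(x,y)\mapsto x$, to the doubling-type map $(x\mapsto bx\bmod 1,\mathrm{Leb})$; the weight on $\alpha$ is the ratio $(\chi_1-\chi_2)/\chi_1=1+\log_b\lambda=D-1$ of the gap between the two Lyapunov exponents to the larger one, which is how $D-1$ enters. The points requiring genuine care --- all handled in \cite{ledrappier1992dimension} --- are the non-invertibility of $F$, which forces the passage to the natural extension and the verification that strong-unstable manifolds depend measurably only on the itinerary; the merely piecewise-$C^2$ regularity, which rules out the smooth Ledrappier--Young theorem and requires its low-regularity version together with the entropy bookkeeping that produces the exact coefficient $1$; and the identification of the conditional measures of $\mu$ that makes $\pi_{\textbf{i}}\mu$ the genuine transverse factor. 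Since our standing assumption that $\phi$ is real analytic is far stronger than piecewise $C^2$, Ledrappier's result applies directly; the substantive work of the paper --- proving that the constant $\alpha$ equals $1$ --- comes afterwards.
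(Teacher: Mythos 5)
Your proposal is correct and matches the paper's treatment: the paper gives no proof of this theorem, citing it as part of \cite[Proposition 2]{ledrappier1992dimension}, and your sketch is an accurate recollection of how that result (natural extension, strong-unstable foliation by the curves $c+\Gamma_{\textbf{i}}$, holonomy $\pi_{\textbf{i}}$, entropy $\log b$, and the two-exponent Ledrappier--Young formula giving $1+(D-1)\alpha$) applies under the standing hypotheses. Nothing further is needed.
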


Therefore, Theorem B is reduced to the following
\begin{thmB'} Fix an integer $b\ge 2$ and $\lambda\in (1/b,1)$. Assume that $\phi$ is a real analytic $\Z$-periodic function which satisfies the condition (H). Then $\alpha=1$, where $\alpha$ is the constant in Theorem~\ref{thm:ledrappier}.
\end{thmB'}

\subsection{A transition formula}\label{subsec:transitionformlula}
We shall follow the strategy in \cite{barany2019hausdorff}, built on~\cite{hochman2014self}, to prove Theorem B'. For $\textbf{i}=i_1i_2\cdots i_n\in \varLambda^n$, write $g_{\textbf{i}}=g_{i_1}\circ g_{i_2}\circ \cdots g_{i_n}$. By iterating the formula (\ref{eqn:muinv}), we obtain
$$\mu=\frac{1}{b^n}\sum_{\textbf{i}\in \varLambda^n}\ g_{\textbf{i}}\mu$$
and hence for each $\textbf{j}\in \Sigma$, $\pi_{\textbf{j}}\mu$ decomposes into measures on small scales as
\begin{equation}\label{eqn:self`affine'}
\pi_{\textbf{j}}\mu=\frac{1}{b^n}\sum_{\textbf{i}\in \varLambda^n} \pi_{\textbf{j}}\circ g_{\textbf{i}} \mu.
\end{equation}
This resembles the case of self-similar/self-affine measures, as the maps $\pi_{\textbf{j}} g_{\textbf{i}}$ satisfies the following transition rule, which implies that each of the measure in the right hand side of (\ref{eqn:self`affine'}) is a translated rescaling of a measure of the form $\pi_{\textbf{i}}\mu$.

Recall that $\varLambda^\#=\bigcup_{n=1}^\infty \varLambda^n$. For each $\textbf{i}=i_1i_2\cdots i_n\in \varLambda^\#$, set $|\textbf{i}|=n$ and 
\begin{equation}\label{eqn:bfi'}
\textbf{i}^*=i_ni_{n-1}\cdots i_1.
\end{equation}
\begin{lemma}\label{lem:TransformA}
For any $\textbf{j}\in\Sigma$ and $\textbf{i}\in\varLambda^\#$,
\begin{equation}
\label{TransformA}
\pi_{\textbf{j}}\,g_{\textbf{i}}\,(x,y)=\lambda^{|\textbf{i}|}\,\pi_{{\textbf{i}}^*\textbf{j}}(x,y)+\pi_{\textbf{j}}g_{\textbf{i}}\,(0)
\end{equation}
\end{lemma}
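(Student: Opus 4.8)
The plan is to verify the identity \eqref{TransformA} by a direct computation, reducing it to two elementary facts: a composition rule for the maps $g_{\textbf i}$ expressed in terms of $b$-adic digits, and the transformation behaviour of the kernel $Y(x,\textbf j)$ (equivalently of $\Gamma_{\textbf j}$) under the substitution $x\mapsto (x+i)/b$. First I would record the closed form of $g_{\textbf i}$ for $\textbf i=i_1i_2\cdots i_n\in\varLambda^n$: iterating \eqref{IFS} gives
\begin{equation}
g_{\textbf i}(x,y)=\left(\frac{x}{b^n}+r(\textbf i),\ \lambda^n y+\sum_{m=0}^{n-1}\lambda^m\phi\!\left(\frac{x}{b^{n-m}}+r_m(\textbf i)\right)\right),
\end{equation}
where $r_m(\textbf i)$ is the appropriate finite $b$-adic number built from $i_1,\dots,i_{n-m}$ and $r(\textbf i)=r_0(\textbf i)$. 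The second coordinate, as a function of $x$, is exactly $\Gamma$-type: its derivative in $x$ is $-\frac1{b^n}Y_{\text{partial}}$, and one checks that $\lambda^n y+\sum_{m}\lambda^m\phi(\cdots)$ has $x$-derivative $\lambda^n\big(y\text{-indep.}\big)$ matching a slope field that, after $n$ backward steps, is governed by the shifted index $\textbf i^*\textbf j$.

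The core step is the slope-field identity. Recall $\pi_{\textbf j}(x,y)=y-\Gamma_{\textbf j}(x)$ with $\Gamma_{\textbf j}'=Y(\cdot,\textbf j)$, and the one-step relation coming from $Dg_i$: namely $Dg_i(x,y)\binom{1}{Y(x,\textbf i)}=\frac1b\binom{1}{Y((x+i)/b,\sigma\textbf i)}$, which after integrating gives, for a single digit,
\begin{equation}
\pi_{\textbf j}\,g_i(x,y)=\lambda\,\pi_{i\textbf j}(x,y)+\pi_{\textbf j}g_i(0).
\end{equation}
Indeed $\pi_{\textbf j}g_i(x,y)=\lambda y+\phi((x+i)/b)-\Gamma_{\textbf j}((x+i)/b)$, and differentiating in $x$ gives $\frac1b\phi'((x+i)/b)-\frac1bY((x+i)/b,\textbf j)=\lambda\big(-Y(x,i\textbf j)\big)=\lambda\,\frac{d}{dx}\pi_{i\textbf j}(x,y)$, where the middle equality is precisely the defining recursion $Y(x,i\textbf j)=\gamma\big(Y((x+i)/b,\textbf j)-\phi'((x+i)/b)\big)$ obtained by unwinding \eqref{Kernel} with $\gamma=1/(b\lambda)$; so the two sides of the displayed equation differ by a constant, and evaluating at $x=0$ (where $\Gamma_{i\textbf j}(0)=0$) pins the constant to $\pi_{\textbf j}g_i(0)$.

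Finally I would promote the one-digit identity to $\textbf i=i_1\cdots i_n$ by induction on $n=|\textbf i|$, using $g_{\textbf i}=g_{i_1}\circ g_{i_2\cdots i_n}$: by the inductive hypothesis applied to $i_2\cdots i_n$ with the shifted index, $\pi_{\textbf j}g_{i_2\cdots i_n}(x,y)=\lambda^{n-1}\pi_{(i_2\cdots i_n)^*\textbf j}(x,y)+\text{const}$, i.e.\ it is a rescaled-plus-translated flow projection; feeding this through one more application of the single-digit rule for $g_{i_1}$, and noting $(i_2\cdots i_n)^*i_1=i_n\cdots i_2 i_1=\textbf i^*$, multiplies the factor by another $\lambda$ and leaves the total additive term equal to $\pi_{\textbf j}g_{\textbf i}(0)$ after re-collecting constants and evaluating at $x=0$. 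The only mildly delicate point — and the place I expect to spend the most care — is bookkeeping the digit reversal: each composition step prepends a digit to the index of $\pi$ on the right, so after $n$ steps the accumulated index is the \emph{reversed} word $\textbf i^*=i_ni_{n-1}\cdots i_1$ followed by $\textbf j$, which is exactly what \eqref{eqn:bfi'} and \eqref{TransformA} assert; getting the order right (rather than $\textbf i\textbf j$) is the whole content of the lemma, so I would state the single-digit step as "$\pi_{\textbf j}g_i(x,y)=\lambda\pi_{i\textbf j}(x,y)+\pi_{\textbf j}g_i(0)$" very explicitly and then let the reversal emerge mechanically from the associativity $g_{i_1\cdots i_n}=g_{i_1}\circ(g_{i_2}\circ\cdots\circ g_{i_n})$.
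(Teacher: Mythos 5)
Your proposal is correct and follows essentially the same route as the paper: reduce to the single-digit case $\pi_{\textbf{j}}g_i=\lambda\pi_{i\textbf{j}}+\pi_{\textbf{j}}g_i(0)$ by induction on $|\textbf{i}|$ (with the digit reversal emerging from $g_{\textbf{i}}=g_{i_1}\circ g_{i_2\cdots i_n}$), and verify that case by direct computation from the definitions of $Y$, $\Gamma$ and $\pi$. The only cosmetic difference is that you match $x$-derivatives via the one-step recursion $Y(x,i\textbf{j})=\gamma\bigl(Y((x+i)/b,\textbf{j})-\phi'((x+i)/b)\bigr)$ and then fix the constant at $x=0$, whereas the paper performs the equivalent change of variables inside the integral defining $\Gamma_{\textbf{j}}$.
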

\begin{proof}
	By induction it suffices to consider the case $\textbf{i}=i\in\varLambda$.
	According to definition, we have
\begin{multline*}	
	\pi_{\textbf{j}}g_i(x,y)=\pi_{\textbf{j}}\left(\frac{x+i}{b},\lambda y+\phi\left(\frac{x+i}{b}\right)\right)\\
=\lambda y+\phi\left(\frac{x+i}{b}\right)+\int_0^{\frac{x+i}{b}}\sum\limits_{n=1}^{\infty}{\gamma^{n}\phi^{\prime}\left(\frac s{b^n}+\frac{j_1}{b^n} + \cdot \cdot \cdot + \frac{j_n}b\right)} ds\\
		=\lambda y+\lambda\int_0^x\gamma\phi^{\prime}(\frac{u+i}{b})du+\lambda\int_0^x\sum\limits_{n=1}^{\infty}{\gamma^{n+1}\phi^{\prime}\left(\frac u{b^{n+1}}+\frac i{b^{n+1}} + \cdot \cdot \cdot + \frac{j_n}b\right)}du+\pi_{\textbf{j}}g_{\textbf{i}}\,(0).
\end{multline*}	
\end{proof}

To apply the argument in~\cite{barany2019hausdorff}, we need to show the following:
\begin{enumerate}
\item [(i)] Most of the measures in the right hand side of (\ref{eqn:self`affine'}) has certain {\em entropy porous} property. This will be done in \S\ref{sec:entpor} and is similar to the corresponding part of \cite{barany2019hausdorff}.
\item [(ii)] Maps in the space
\begin{equation}
\label{FunctionSet}
\mathcal{X} =\{\,\pi_{\textbf{j}} \circ g_{\textbf{i}}\;\big| \;\textbf{j}\in \Sigma,\,\; \textbf{i} \in \Lambda^{\#}\},
\end{equation}
satisfy a suitable separation condition. This will be done in \S\ref{sec:separation} and our argument uses essentially the real analytic assumption on $\phi$. This separation property enables us to define a sequence of suitable partitions of $\mathcal{X}$ in \S\ref{sec:partitionX}.
\end{enumerate}
After these preparations, the proof of Theorem B' will be given in \S\ref{sec:pfThmB'}.

\subsection{Entropy of measures}
We shall recall definition and basic properties of entropy of measures which is a basic tool for the proof of Theorem B'.

Consider a probability space $(\Omega, \mathcal{B}, \omega)$. A {\em (countable) partition} $\mathcal{Q}$ is a countable collection of pairwise disjoint measurable subsets of $\Omega$ whose union is equal to $\Omega$. We use $\mathcal{Q}(x)$ to denote the member of $\mathcal{Q}$ which contains $x$. If $\omega(\mathcal{Q}(x))>0$, then we call the conditional measure
$$\omega_{\mathcal{Q}(x)}(A)=\omega_{x,\mathcal{Q}}(A)=\frac{\omega(A\cap \mathcal{Q}(x))}{\omega(\mathcal{Q}(x))}$$
a {\em $\mathcal{Q}$-component} of $\omega$.
We define the {\em entropy}
$$H(\omega, \mathcal{Q})=\sum_{Q\in\mathcal{Q}} -\omega(Q) \log_b \omega(Q),$$
where the common convention $0\log 0=0$ is adopted.
Given another countable partition $\mathcal{P}$, we define the {\em condition entropy} as
$$H(\omega, \mathcal{Q}|\mathcal{P})=\sum_{P\in \mathcal{P}, \omega(P)>0} \omega(P) H(\omega_P, \mathcal{Q}).$$
When $\mathcal{Q}$ is a {\em refinement} of $\mathcal{P}$, i.e.,
$\mathcal{Q}(x)\subset\mathcal{P}(x)$ for each $x\in \Omega$, we have
$$H(\omega, \mathcal{Q}|\mathcal{P})=H(\omega, \mathcal{Q})-H(\omega, \mathcal{P}).$$

We shall consider the case where there is a sequence of partitions $\mathcal{Q}_i$, $i=1,2,\cdots$, such that $\mathcal{Q}_{i+1}$ is a refinement of $\mathcal{Q}_i$. In this situation, we shall write $\omega_{x,i}=\omega_{x,\mathcal{Q}_i}$, and call it a {\em $i$-th component measure of } $\omega$. For a finite set $I$ of positive integers, suppose that for each $i\in I$, there is a random variable $f_i$ defined over $(\Omega, \mathcal{B}(\mathcal{Q}_i), \omega)$, where $\mathcal{B}(\mathcal{Q}_i)$ is the sub-$\sigma$-algebra of $\mathcal{B}$ which is generated by $\mathcal{Q}_i$. Then we shall use the following notation
$$\mathbb{P}_{i\in I} (B_i)=\mathbb{P}_{i\in I}^\omega(B_i):=\frac{1}{\# I} \sum_{i\in I} \omega(B_i),$$
where $B_i$ is an event for $f_i$. If $f_i$'s are $\mathbb{R}$-valued random variable, we shall also use the notation
$$\mathbb{E}_{i\in I} (f_i)=\mathbb{E}^\omega_{i\in I}(f_i):=\frac{1}{\# I} \sum_{i\in I} \mathbb{E}(f_i).$$
For example, we have
$$H(\omega, \mathcal{Q}_{m+n}|\mathcal{Q}_n)=\mathbb{E}(H(\omega_{x, n}, \mathcal{Q}_{m+n}))=\mathbb{E}_{i=n} (H(\omega_{x,i},\mathcal{Q}_{i+m})).$$
These notations were used extensively in ~\cite{hochman2014self} and \cite{barany2019hausdorff}.

In particular, we shall often consider the case $\Omega=\mathbb{R}$ and $\mathcal{B}$ the Borel $\sigma$-algebra. Let $\mathcal{L}_n$ denote the partition of $\mathbb{R}$ into $b$-adic intervals of level $n$, i.e., the intervals $[j/b^n, (j+1)/b^n)$, $j\in \mathbb{Z}$. Let $\mathscr{P}(\R)$ denote the collection of all Borel probability measures in $\R$. For an exact dimensional probability measure $\omega\in \mathscr{P}(\mathbb{R})$, its dimension is closely related to the entropy, as shown in the following fact which is
\cite[Theorem 4.4]{Young}. See also~\cite[Theorem 1.3]{FLR}.
\begin{proposition}\label{prop:Young}
	If $\omega \in \mathscr{P}(\mathbb{R})$ is exact dimensional, then
		$$\dim(\omega)= \lim\limits_{n\to\infty}\frac{1}{n} H(\omega,\mathcal{L}_n).$$
\end{proposition}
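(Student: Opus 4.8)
The plan is to prove separately the two inequalities
$$\liminf_{n\to\infty}\tfrac1n H(\omega,\mathcal{L}_n)\ \ge\ \alpha\qquad\text{and}\qquad \limsup_{n\to\infty}\tfrac1n H(\omega,\mathcal{L}_n)\ \le\ \alpha,\qquad \alpha:=\dim(\omega).$$
Throughout one may assume that $\omega$ has compact support — this holds in all our applications, and some hypothesis of this type is needed, since the asserted identity fails whenever $H(\omega,\mathcal{L}_0)=\infty$. Write $I_n(x)=\mathcal{L}_n(x)$, so that $H(\omega,\mathcal{L}_n)=\int_{\R}\varphi_n\,d\omega$ with $\varphi_n(x):=-\log_b\omega(I_n(x))\ge 0$.

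The lower bound is easy. Since $I_n(x)$ has length $b^{-n}$ and contains $x$, one has $I_n(x)\subseteq B(x,b^{-(n-1)})$, hence $\varphi_n(x)\ge -\log_b\omega(B(x,b^{-(n-1)}))$; by exact-dimensionality $\tfrac1n\big(-\log_b\omega(B(x,b^{-(n-1)}))\big)\to\alpha$ for $\omega$-a.e.\ $x$, so $\liminf_n\varphi_n(x)/n\ge\alpha$ a.e., and since $\varphi_n\ge0$, Fatou's lemma gives $\liminf_n\tfrac1n H(\omega,\mathcal{L}_n)\ge\alpha$.

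The heart of the matter is the upper bound, and the one real difficulty is the \emph{edge effect}: a ball $B(x,b^{-n})$ generically meets two adjacent cells of $\mathcal{L}_n$, so $\omega(I_n(x))$ may be much smaller than $\omega(B(x,b^{-n}))$. I would neutralize this by averaging over translated grids. For $t\in[0,b^{-n})$, let $\mathcal{L}_n^t$ be the partition of $\R$ into the intervals $[jb^{-n}+t,(j+1)b^{-n}+t)$ ($j\in\Z$), and let $I_n^t(x)$ be the cell of $\mathcal{L}_n^t$ containing $x$. Because each cell of one grid meets at most two cells of the other, $H(\omega,\mathcal{L}_n)\le H(\omega,\mathcal{L}_n\vee\mathcal{L}_n^t)\le H(\omega,\mathcal{L}_n^t)+1$, and averaging in $t$ yields
$$H(\omega,\mathcal{L}_n)\ \le\ 1+\int_{\R}\Big(b^n\!\int_0^{b^{-n}}-\log_b\omega(I_n^t(x))\,dt\Big)d\omega(x).$$
A direct change of variables shows that, for fixed $x$, as $t$ runs uniformly over $[0,b^{-n})$ the cell $I_n^t(x)$ runs uniformly over the intervals $[x-u,x-u+b^{-n})$, $u\in[0,b^{-n})$, and each such interval contains $B\big(x,\min(u,b^{-n}-u)\big)$. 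Fixing a small $\eps>0$ and setting $A_N=\{x:\omega(B(x,r))\ge r^{\alpha+\eps}\text{ for all }0<r\le b^{-N}\}$, it follows that for $x\in A_N$ and $n>N$,
$$b^n\!\int_0^{b^{-n}}-\log_b\omega(I_n^t(x))\,dt\ \le\ (\alpha+\eps)\,b^n\!\int_0^{b^{-n}}\log_b\frac{1}{\min(u,b^{-n}-u)}\,du\ =\ (\alpha+\eps)\big(n+O_b(1)\big);$$
the key point is that the logarithmic singularity at the endpoints is integrable, so the edge effect costs only a bounded amount in the average.

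It then remains to assemble the pieces. By exact-dimensionality $\omega\big(\bigcup_N A_N\big)=1$, so choose $N$ with $\omega(A_N)>1-\eps$. In the displayed integral, the part over $A_N$ is at most $(\alpha+\eps)(n+O_b(1))$ by the estimate above; the part over $A_N^c$ is handled by swapping the $t$- and $x$-integrals and using, for each $t$, the inequality $-\omega(I\cap A_N^c)\log_b\omega(I)\le-\omega(I\cap A_N^c)\log_b\omega(I\cap A_N^c)$ together with the trivial bound $H(\rho,\mathcal{L}_n^t)\le n+O(1)$ for probability measures $\rho$ supported in a fixed bounded interval; since $\omega(A_N^c)<\eps$, this part is at most $\eps n+O_\eps(1)$. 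Hence $H(\omega,\mathcal{L}_n)\le(\alpha+2\eps)n+O_{\eps,b}(1)$, giving $\limsup_n\tfrac1n H(\omega,\mathcal{L}_n)\le\alpha+2\eps$, and letting $\eps\to0$ and combining with the lower bound proves the proposition. The only genuinely delicate ingredient is the translation-averaging estimate; the remaining steps are routine entropy manipulations.
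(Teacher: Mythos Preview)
The paper does not give its own proof of this proposition; it simply records it as a known fact, citing \cite[Theorem~4.4]{Young} and \cite[Theorem~1.3]{FLR}. So there is no argument in the paper to compare yours against.

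Your proof is correct under the compact-support hypothesis you flag (which is satisfied by all the measures $\pi_{\textbf{j}}\mu$ to which the proposition is actually applied). The lower bound via Fatou is routine. For the upper bound, the translation-averaging device is one of the standard ways to neutralise the edge effect; the key observation that the logarithmic singularity $\log_b(1/\min(u,b^{-n}-u))$ is integrable, so that the averaged information $\mathbb{E}_t[-\log_b\omega(I_n^t(x))]$ exceeds $-\log_b\omega(B(x,b^{-n}))$ by only $O(1)$ rather than by something growing with $n$, is exactly right. Your handling of the bad set $A_N^c$ via the monotonicity $-\omega(I\cap A_N^c)\log_b\omega(I)\le -\omega(I\cap A_N^c)\log_b\omega(I\cap A_N^c)$ followed by the trivial entropy bound for a compactly supported probability measure is clean and gives the desired $\eps n+O_\eps(1)$.
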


These notations $\mathbb{P}_{i\in I}(B_i)$, $\mathbb{E}_{i\in I} (f_i)$ will also apply to the case where $\Omega=\mathcal{X}$, $\mathcal{B}$ is the collection of all subsets of $\mathcal{X}$, and $\omega$ is a discrete measure.

In the following, we collect a few well-known facts about entropy and conditional entropy.

\begin{lemma}[Concavity]\label{lem:concave}
Consider a measurable space $(\Omega, \mathcal{B})$ which is endowed with partitions $\mathcal{Q}$ and $\mathcal{P}$ such that $\mathcal{P}$ is a refinement of $\mathcal{Q}$. Let $\omega, \omega'$ be probability measures in $(\Omega, \mathcal{B})$. The for any $t\in (0,1)$,
$$tH(\omega,\mathcal{Q})+(1-t)H(\omega',\mathcal{Q})\le H(t\omega+(1-t)\omega',\mathcal{Q}),$$
$$tH(\omega,\mathcal{P}|\mathcal{Q})+(1-t)H(\omega',\mathcal{P}|\mathcal{Q})\le H(t\omega+(1-t)\omega',\mathcal{P}|\mathcal{Q}).$$
\end{lemma}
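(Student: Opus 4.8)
The plan is to reduce both inequalities to pointwise (termwise) concavity of an elementary real function, summed over the atoms of the relevant partition.

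First I would treat the unconditional inequality. Writing $\nu=t\omega+(1-t)\omega'$, for each atom $Q\in\mathcal{Q}$ we have $\nu(Q)=t\omega(Q)+(1-t)\omega'(Q)$, so it suffices to apply concavity of the function $h(x)=-x\log_b x$ on $[0,1]$ (whose second derivative $-1/(x\ln b)$ is negative) to get
$$h(\nu(Q))\ge t\,h(\omega(Q))+(1-t)\,h(\omega'(Q)),$$
and then sum over $Q\in\mathcal{Q}$, using the convention $0\log_b 0=0$ (note $h$ extends continuously to $x=0$). Summing a valid inequality over a countable index set is legitimate since all terms are nonnegative.

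For the conditional inequality, the key is to rewrite the conditional entropy in a form amenable to the same argument. Since $\mathcal{P}$ refines $\mathcal{Q}$, each $P\in\mathcal{P}$ lies in a unique atom of $\mathcal{Q}$, which I denote $\mathcal{Q}(P)$; unwinding the definition of $H(\omega,\mathcal{P}\mid\mathcal{Q})$ and of the conditional measures $\omega_Q$ gives
$$H(\omega,\mathcal{P}\mid\mathcal{Q})=\sum_{P\in\mathcal{P}} g\bigl(\omega(P),\,\omega(\mathcal{Q}(P))\bigr),\qquad g(p,q):=-p\log_b\frac{p}{q},$$
where $g$ is defined on the closed convex region $\Delta=\{(p,q):0\le p\le q\le 1\}$ with $g(0,q):=0$. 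The core of the argument is then the claim that $g$ is concave on $\Delta$. On the interior one computes the Hessian
$$\begin{pmatrix}-1/(p\ln b) & 1/(q\ln b)\\[2pt] 1/(q\ln b) & -p/(q^2\ln b)\end{pmatrix},$$
whose determinant is $0$ and whose trace is negative, so the Hessian is negative semidefinite; since $g$ is continuous on the closed convex set $\Delta$ (the continuous extension at $p=0$ follows from $p\log_b(q/p)\to 0$ as $p\to 0^+$ with $p\le q$), $g$ is concave on all of $\Delta$. Applying this to $\bigl(\nu(P),\nu(\mathcal{Q}(P))\bigr)=t\bigl(\omega(P),\omega(\mathcal{Q}(P))\bigr)+(1-t)\bigl(\omega'(P),\omega'(\mathcal{Q}(P))\bigr)$ — which lies in $\Delta$ because $P\subset\mathcal{Q}(P)$ — and summing over $P\in\mathcal{P}$ yields the second inequality. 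One should note in passing that atoms with $\omega(\mathcal{Q}(P))=0$ contribute $0$ to $H(\omega,\mathcal{P}\mid\mathcal{Q})$ and cause no difficulty. (Alternatively, the first inequality is the special case of the second in which the coarser partition is the trivial partition $\{\Omega\}$, but it is cleaner to record it directly.)

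I do not expect a genuine obstacle here: the only point requiring any care is verifying the joint concavity of $g$ together with its behaviour on the boundary $p=0$ of $\Delta$, and the interchange of the countable sum with the termwise inequality, both of which are routine.
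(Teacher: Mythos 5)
Your proof is correct. The paper states this lemma without proof, as one of several well-known facts about entropy, so there is no argument to compare against; your reduction of the first inequality to concavity of $h(x)=-x\log_b x$ and of the second to the joint concavity of $g(p,q)=-p\log_b(p/q)$ on $\{0\le p\le q\le 1\}$ (equivalently, the convexity of relative entropy / the log-sum inequality) is the standard and complete justification, and your handling of the boundary cases and of the countable sums is sound.
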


\begin{lemma} \label{lem:affinetransform}
Let $\omega\in \mathcal{P}(\mathbb{R})$. There is a constant $C>0$ such that for any affine map  $f(x)=ax+c$, $a, c\in \R$, $a\not=0$ and for any $n\in\mathbb{N}$ we have
$$\left|H(f\omega,\,\mathcal{L}_{n+[\log_b |a|]})-H(\omega,\,\mathcal{L}_{n})\right|\le C.$$
\end{lemma}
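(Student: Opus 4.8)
\textbf{Proof plan for Lemma~\ref{lem:affinetransform}.}

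The plan is to reduce everything to two elementary observations about how the dyadic (here $b$-adic) partitions $\mathcal{L}_n$ interact with affine maps, namely translations and pure scalings, and then to combine them. First I would record the standard fact that entropy is quasi-invariant under translations: if $\tau_c(x)=x+c$, then for every probability measure $\omega$ and every $n$, $|H(\tau_c\omega,\mathcal{L}_n)-H(\omega,\mathcal{L}_n)|\le C_1$ for an absolute constant $C_1$ (in fact $C_1=\log_b(\text{something like }2^d)$ in dimension $d$, here $d=1$ so it is just a small explicit constant). The reason is that the partition $\tau_{-c}\mathcal{L}_n$ refines, and is refined by, a bounded number of translates of $\mathcal{L}_n$, so the two entropies differ by a bounded conditional entropy term; this is a routine application of the subadditivity and monotonicity of $H(\cdot,\cdot)$ under refinement, together with the fact that a level-$n$ interval meets at most $2$ intervals of any translate of $\mathcal{L}_n$.

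Second I would handle the pure scaling $S_a(x)=ax$ with $a\neq 0$. Here the key point is that $S_a$ maps the partition $\mathcal{L}_{n+[\log_b|a|]}$ to a partition whose atoms are intervals of length $|a|\,b^{-n-[\log_b|a|]}=b^{\{\log_b|a|\}}\cdot b^{-n}\in[b^{-n},b^{1-n})$ (plus a sign flip if $a<0$, which does not affect entropy since $x\mapsto -x$ preserves $\mathcal{L}_n$ up to a translation by the interval length, already covered by the translation step). Thus $S_a\mathcal{L}_{n+[\log_b|a|]}$ and $\mathcal{L}_n$ are mutually refined by at most a bounded number ($\le b+1$) of translates of each other, so again $|H(S_a\omega,\mathcal{L}_n)-H(\omega,\mathcal{L}_{n+[\log_b|a|]})|\le C_2$ for an absolute $C_2$. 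Combining with the translation estimate applied to write $f=\tau_c\circ S_a$ and $S_a\omega$ in place of $\omega$, I get
$$\left|H(f\omega,\mathcal{L}_{n+[\log_b|a|]})-H(\omega,\mathcal{L}_n)\right|\le C_1+C_2=:C.$$
Wait — I must be careful with the index bookkeeping: $f\omega=\tau_c S_a\omega$, so $H(f\omega,\mathcal{L}_{n+[\log_b|a|]})$ differs from $H(S_a\omega,\mathcal{L}_{n+[\log_b|a|]})$ by at most $C_1$ (translation), and the latter differs from $H(\omega,\mathcal{L}_n)$ by at most $C_2$ (scaling); chaining these gives the claim with $C=C_1+C_2$.

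The only mildly delicate point, and the one I would state carefully rather than the genuinely hard part (there is no genuinely hard part here), is the elementary combinatorial lemma underlying both steps: \emph{if $\mathcal{P},\mathcal{P}'$ are two countable partitions of $\R$ such that every atom of $\mathcal{P}$ meets at most $N$ atoms of $\mathcal{P}'$ and vice versa, then $|H(\omega,\mathcal{P})-H(\omega,\mathcal{P}')|\le\log_b N$ for every $\omega$}. This follows from $H(\omega,\mathcal{P})\le H(\omega,\mathcal{P}\vee\mathcal{P}')=H(\omega,\mathcal{P})+H(\omega,\mathcal{P}'\mid\mathcal{P})$ together with the bound $H(\omega,\mathcal{P}'\mid\mathcal{P})\le\log_b N$, because conditioned on an atom of $\mathcal{P}$ the partition $\mathcal{P}'$ has at most $N$ nonempty cells; then symmetrize. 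Here one takes $\mathcal{P}=S_a\mathcal{L}_{n+[\log_b|a|]}$ (resp. a translate of $\mathcal{L}_n$) and $\mathcal{P}'=\mathcal{L}_n$, with $N$ bounded by an absolute constant depending only on $b$, since both partitions consist of half-open intervals of comparable lengths (ratios in $[b^{-1},b]$). This gives the uniform constant $C$ independent of $a$, $c$, $n$, and $\omega$, as required.
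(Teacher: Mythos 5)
Your overall strategy --- factor $f$ as a translation composed with a pure scaling, and control each step by the elementary fact that two partitions which mutually refine one another with multiplicity at most $N$ have entropies differing by at most $\log_b N$ --- is the standard one; the paper states this lemma without proof among its "well-known facts", so there is no in-paper argument to compare against. Your combinatorial lemma, your translation step, and your scaling computation $\left|H(S_a\omega,\mathcal{L}_m)-H(\omega,\mathcal{L}_{m+[\log_b|a|]})\right|\le C_2$ (valid for every $m$) are all correct.

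The gap is in the final chaining, where you silently swap which of the two entropies carries the shifted index: you invoke the scaling estimate in the form $\left|H(S_a\omega,\mathcal{L}_{n+[\log_b|a|]})-H(\omega,\mathcal{L}_{n})\right|\le C_2$, but substituting $m=n+[\log_b|a|]$ into what you actually proved yields $H(S_a\omega,\mathcal{L}_{n+[\log_b|a|]})\approx H(\omega,\mathcal{L}_{n+2[\log_b|a|]})$, not $H(\omega,\mathcal{L}_n)$. This step cannot be repaired, because the statement as printed is false: for $\omega$ Lebesgue on $[0,1)$ and $f(x)=b^kx$ one has $H(f\omega,\mathcal{L}_{n+k})=n+2k$ while $H(\omega,\mathcal{L}_n)=n$, an unbounded discrepancy. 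The printed "$+$" is a sign typo. The true statement --- and the one the paper actually uses, e.g.\ in (\ref{eqn:jiij}), where $a=\lambda^{\hat n}\approx b^{-n}$ must raise the level by $n\approx -[\log_b|a|]$, and in Lemma~\ref{lem:entporous2}, where the shift appears as $-[\log_b|a|]$ --- is $\left|H(f\omega,\mathcal{L}_{n-[\log_b|a|]})-H(\omega,\mathcal{L}_n)\right|\le C$. Reading your scaling estimate at $m=n-[\log_b|a|]$ and applying your translation estimate at that same level proves exactly this corrected version, so the remedy is to fix the statement (and note the discrepancy), not to force the indices to match the printed one.
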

%

\begin{lemma}\label{lem:pfclose}
Given a probability space $(\Omega, \mathcal{B}, \omega)$, if $f,g:\Omega\to\mathbb{R}$ are measurable and $\sup_x|f(x)-g(x)|\le b^{-n}$ then
$$\left|H(f\omega, \mathcal{L}_{n})-H(g\omega, \mathcal{L}_{n})\right|\le C,$$
where $C$ is an absolute constant.
\end{lemma}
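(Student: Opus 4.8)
The plan is to realize $f\omega$ and $g\omega$ as the two marginals of a single coupling and to compare their $\mathcal{L}_n$-entropies through a common refinement living on $\R^2$. Concretely, I would let $\pi$ be the image of $\omega$ under the map $x\mapsto(f(x),g(x))$, i.e.\ $\pi(E)=\omega(\{x:(f(x),g(x))\in E\})$ for Borel $E\subseteq\R^2$; this is a Borel probability measure carried by the strip $\{(u,v):|u-v|\le b^{-n}\}$, precisely because of the hypothesis $\sup_x|f(x)-g(x)|\le b^{-n}$. Let $\mathcal{A}_1$ be the partition of $\R^2$ into the cylinders $I\times\R$, $I\in\mathcal{L}_n$, let $\mathcal{A}_2$ be the partition into $\R\times I$, $I\in\mathcal{L}_n$, and let $\mathcal{A}=\mathcal{A}_1\vee\mathcal{A}_2$. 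By the definition of pushforward and the convention $0\log 0=0$ one has $H(\pi,\mathcal{A}_1)=H(f\omega,\mathcal{L}_n)$ and $H(\pi,\mathcal{A}_2)=H(g\omega,\mathcal{L}_n)$ (unaffected by atoms of zero mass).

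Next I would apply the chain rule for the join $\mathcal{A}=\mathcal{A}_1\vee\mathcal{A}_2$ in the two symmetric ways, using that $\mathcal{A}$ refines both $\mathcal{A}_1$ and $\mathcal{A}_2$:
$$H(\pi,\mathcal{A})=H(\pi,\mathcal{A}_1)+H(\pi,\mathcal{A}\mid\mathcal{A}_1)=H(\pi,\mathcal{A}_2)+H(\pi,\mathcal{A}\mid\mathcal{A}_2).$$
Since both conditional entropies are non-negative, this already forces $|H(f\omega,\mathcal{L}_n)-H(g\omega,\mathcal{L}_n)|\le\max\{H(\pi,\mathcal{A}\mid\mathcal{A}_1),\,H(\pi,\mathcal{A}\mid\mathcal{A}_2)\}$ (and the inequality persists even if the entropies are infinite, since then $H(\pi,\mathcal{A}_i)\le H(\pi,\mathcal{A})\le H(\pi,\mathcal{A}_i)+\log_b 3$ for $i=1,2$, from the bound below). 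The geometric heart of the proof is the observation that if $u\in[j/b^n,(j+1)/b^n)$ and $|u-v|\le b^{-n}$, then $v\in[(j-1)/b^n,(j+2)/b^n)$, which meets exactly three atoms of $\mathcal{L}_n$. Hence, conditioned on any atom $Q$ of $\mathcal{A}_1$, the conditional measure $\pi_Q$ is supported on at most three atoms of $\mathcal{A}$, so $H(\pi_Q,\mathcal{A})\le\log_b 3$, and therefore $H(\pi,\mathcal{A}\mid\mathcal{A}_1)\le\log_b 3$; symmetrically $H(\pi,\mathcal{A}\mid\mathcal{A}_2)\le\log_b 3$. Since $b\ge 2$, the lemma follows with the absolute constant $C=\log_2 3$.

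I do not expect a genuine obstacle here; the only points needing a moment of care are bookkeeping ones: the half-open convention for $b$-adic intervals must be used so that the ``three atoms'' count is exactly right (the extreme case $v=u+b^{-n}$ with $u$ near the right endpoint of its atom still lands strictly below $(j+2)/b^n$), and one must remember that passing to conditional measures and counting supports is legitimate even when the ambient entropies are infinite. A more computational alternative would bound the difference directly by estimating how much $\mathcal{L}_n$-mass can migrate between adjacent atoms when $f$ is replaced by $g$ and invoking a quantitative modulus of continuity for $t\mapsto-t\log t$, but the coupling argument above is cleaner and avoids any such estimate.
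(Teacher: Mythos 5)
Your argument is correct. The paper states Lemma~\ref{lem:pfclose} without proof, as one of a list of ``well-known facts,'' so there is no authorial proof to compare against; your coupling argument (pushing $\omega$ forward under $x\mapsto(f(x),g(x))$, noting the image lies in the strip $\{|u-v|\le b^{-n}\}$, and applying the chain rule to the join of the two coordinate copies of $\mathcal{L}_n$) is the standard way to establish it, and your ``three atoms'' count is exactly right for the half-open $b$-adic convention, giving the explicit constant $\log_b 3\le\log_2 3$. The only cosmetic remark is that the lemma is only ever invoked when the entropies involved are finite, so the care you take with the infinite case, while harmless, is not needed.
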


%
%

\section{Entropy porosity}\label{sec:entpor}
This section is devoted to analysis of entropy porosity of the projected measures $\pi_{\textbf{j}}\mu$. This property will be used in applying Hochman's criterion to obtain entropy growth under convolution.

 \begin{definition}[Entropy porous]
 	Let $\omega \in \mathscr{P}(\mathbb{R})$. We say that $\omega$ is $(h,\delta,m)$-entropy porous from scale $n_1$ to $n_2$ if
 	$$\mathbb{P}^\omega_{n_1\le i \le n_2} \left(\frac{1}{m} H (\omega_{x,i},\mathcal{L}_{i+m})<h+\delta\right) >1-\delta.$$
 \end{definition}

The main result of this section is the following Theorem~\ref{thm:entporous}. Before the statement of the theorem, we need to introduce a notation.


{\bf Notation.} For each integer $n\ge 0$, let $\hat{n}$ be the unique integer such that
\begin{equation}\label{eqn:n'}
\lambda^{\hat{n}}\le b^{-n}< \lambda^{\hat{n}-1}.
\end{equation}

In particular, $\hat{0}=0$. With this notation, there is a constant $C_0>0$ such that for any $\textbf{j}\in \Sigma$, $\textbf{i}\in \varLambda^{\hat{n}}$ and any $m\in \mathbb{N}$,
\begin{equation}\label{eqn:jiij}
\left|H(\pi_{\textbf{j}}g_{\textbf{i}}\mu, \mathcal{L}_{n+m})-H(\pi_{{\textbf{i}}^*\textbf{j}}\mu, \mathcal{L}_m)\right|\le C_0.
\end{equation}
Indeed, by Lemma~\ref{lem:TransformA}, $\pi_{\textbf{j}}g_{\textbf{i}}\mu$ is equal to the pushforward of $\pi_{{\textbf{i}}^*\textbf{j}}\mu$ by a map $\lambda^{|\textbf{i}|}x+c$, for some $c\in \R$. So the statement follows from Lemma~\ref{lem:affinetransform}.

\begin{theorem}\label{thm:entporous}
Fix an integer $b\ge 2$ and $\lambda\in (1/b,1)$. Assume that $\phi:\mathbb{R} \to \mathbb{R}$ is a $\mathbb{Z}$-periodic piecewise $C^2$ function such that $W=W_{\lambda,b}^\phi$ is not a Lipschitz function. Then
for any $\eps>0$, $m\ge M(\eps),$ $k\ge K(\eps,m)$ and $n\ge N(\eps,m,k)$, the following holds:	For any $\textbf{j}\in\varSigma$ and $\textbf{u}\in \varLambda^{\hat{t}}$, $t\in \mathbb{N}$,
	$$\nu \left(\left\{\textbf{i}=(i_1i_2\cdots)\in \Sigma:
	\begin{matrix}
	\pi_{\textbf{j}}g_{\textbf{u}} g_{i_1i_2\cdots i_{\hat{n}}}\mu \mbox{ is } (\alpha,\epsilon,m)-\mbox{entropy} \\
	 \mbox{ porous from scale } t+n+1\mbox{ to }t+n+k
	\end{matrix}
	\right\}\right) > 1-\varepsilon.
	$$
\end{theorem}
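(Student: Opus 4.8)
The plan is to reduce the statement to a single assertion about the measure $\mu$ itself, namely that for Lebesgue-a.e.\ $x\in[0,1)$ the "vertical fibre entropy" of $\mu$ at $x$ under the partitions $\mathcal{L}_i$ grows at the typical rate $\alpha$, and that this happens \emph{uniformly} in a suitable averaged sense. Concretely, recall that $\pi_{\textbf{j}}\mu$ is exact dimensional of dimension $\alpha$ for $\nu^{\mathbb{Z}_+}$-a.e.\ $\textbf{j}$; by Proposition~\ref{prop:Young}, $\frac1N H(\pi_{\textbf{j}}\mu,\mathcal{L}_N)\to\alpha$. The first step is to upgrade this scalar convergence to the entropy-porous statement \emph{for the measure $\mu$ sliced along a generic flow foliation}: I would show that for $\nu^{\mathbb{Z}_+}$-a.e.\ $\textbf{j}$, the measure $\pi_{\textbf{j}}\mu$ is $(\alpha,\eps,m)$-entropy porous from scale $1$ to $k$ once $m$ and $k$ are large (this is exactly the Hochman-style observation that an exact-dimensional measure in $\R$ whose global dimension is $\alpha$ has most of its scale-$i$ components of local entropy close to $\alpha$; see the corresponding lemma in \cite{barany2019hausdorff}). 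The point here is the standard fact that for exact dimensional $\omega$, $\mathbb{E}_{1\le i\le N}\big(\tfrac1m H(\omega_{x,i},\mathcal{L}_{i+m})\big)\to\dim\omega$ as $N\to\infty$, combined with the upper bound $\frac1m H(\omega_{x,i},\mathcal{L}_{i+m})\le 1$ and a Markov inequality, upgraded via the martingale convergence for components.

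The second step is to transfer this from $\pi_{\textbf{j}}\mu$ to $\pi_{\textbf{j}}g_{\textbf{u}}g_{i_1\cdots i_{\hat n}}\mu$. By the transition formula (Lemma~\ref{lem:TransformA}) applied twice, $\pi_{\textbf{j}}g_{\textbf{u}}g_{\textbf{w}}\mu$ (with $\textbf{w}=i_1\cdots i_{\hat n}$) is an affine image $x\mapsto\lambda^{|\textbf{u}|+\hat n}x+c$ of $\pi_{(\textbf{u}\textbf{w})^*\textbf{j}}\mu=\pi_{\textbf{w}^*\textbf{u}^*\textbf{j}}\mu$. Using $\lambda^{\hat n}\asymp b^{-n}$ (the defining relation \eqref{eqn:n'}) and Lemma~\ref{lem:affinetransform}, up to a bounded additive error and a bounded shift of the scale index, the component entropies of $\pi_{\textbf{j}}g_{\textbf{u}}g_{\textbf{w}}\mu$ at scales $t+n+1,\dots,t+n+k$ coincide with those of $\pi_{\textbf{w}^*\textbf{u}^*\textbf{j}}\mu$ at scales $1,\dots,k$ (absorbing the bounded error into $\eps$ by taking $m$ large, as in \eqref{eqn:jiij}); so it suffices to know that for $\nu$-most $\textbf{w}\in\varLambda^{\hat n}$, the word $\textbf{w}^*\textbf{u}^*\textbf{j}$ begins with a "good" prefix making $\pi_{\textbf{w}^*\textbf{u}^*\textbf{j}}\mu$ entropy porous from scale $1$ to $k$. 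Since reversing $\textbf{w}$ and prepending a fixed finite block $\textbf{u}^*$ is a measure-preserving, finite-range operation on the first coordinates, and $\nu$ on $\varLambda^{\hat n}$ is the marginal of the Bernoulli measure $\nu^{\mathbb{Z}_+}$, the $\nu$-measure of bad $\textbf{w}$'s is controlled by the $\nu^{\mathbb{Z}_+}$-measure of $\textbf{j}'$ for which $\pi_{\textbf{j}'}\mu$ is not $(\alpha,\eps,m)$-entropy porous from scale $1$ to $k$, which is $<\eps$ by Step~1, once $\hat n$ (hence $n$) is large enough that the prefix of length determined by $k$ and $m$ is "seen" — here one uses that entropy porosity from scale $1$ to $k$ depends, up to small error, only on the first $O(k+m)$ symbols of $\textbf{j}'$, so the dependence on the tail (which is where $\textbf{j}$ lives after the long block $\textbf{w}^*\textbf{u}^*$) is negligible.

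The main obstacle, and the step requiring genuine care, is making the "entropy porosity from scale $1$ to $k$ depends essentially on a bounded prefix" claim precise and uniform, so that the quantifier order $\eps\to m\to k\to n$ in the theorem is respected. One has to check that the map $\textbf{j}\mapsto H(\pi_{\textbf{j}}\mu,\mathcal{L}_\ell)$ and its component versions are, for $\ell\le k+m$, uniformly continuous in $\textbf{j}$ in the sense that changing $\textbf{j}$ beyond symbol $O(k+m)$ moves these entropies by $o(1)$; this follows because $\pi_{\textbf{j}}$ depends on $\textbf{j}$ only through $\Gamma_{\textbf{j}}$, which depends on the first $N$ symbols of $\textbf{j}$ up to an error $O(\gamma^N)$ in $C^0$ (from the exponentially decaying tail in \eqref{Kernel}), combined with Lemma~\ref{lem:pfclose}. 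Feeding this into the a.e.-convergence of Step~1 via Egorov's theorem gives a set of $\textbf{j}$ of measure $>1-\eps$ on which the convergence is uniform, hence a finite $k=K(\eps,m)$ and $N=N(\eps,m,k)$ that work simultaneously, and then the Bernoulli/finite-range transfer of Step~2 closes the argument. The hypothesis that $W$ is not Lipschitz enters only to guarantee, via Ledrappier's theory and the structure of $\mu$, that the relevant entropies are well-defined and that $\alpha\in[0,1]$ is the common value — it is the anchor that makes Step~1 meaningful — but the bulk of the work is the uniformization and the bookkeeping of scales.
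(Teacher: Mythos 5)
There is a genuine gap at the heart of your Step~1. Entropy porosity is a \emph{one-sided concentration} statement: you must show that at most a $\delta$-fraction of the scale-$i$ components have normalized entropy \emph{exceeding} $\alpha+\delta$. Knowing only that the average component entropy tends to $\alpha$ and that each component entropy is bounded by $1$ does not give this: Markov's inequality yields $\mathbb{P}(a_i\ge\alpha+\delta)\le\alpha/(\alpha+\delta)$, which is close to $1$ for small $\delta$, not close to $0$. (A measure of dimension $\alpha\in(0,1)$ could a priori have half its components looking one-dimensional and half looking atomic.) The paper's proof closes this gap with two ingredients you do not supply. First, Lemma~\ref{lem:uc} and Proposition~\ref{prop:uc}: the family $\{\pi_{\textbf{j}}\mu\}_{\textbf{j}}$ is jointly uniformly continuous across scales, which is deduced from non-atomicity of every $\pi_{\textbf{j}}\mu$ --- and \emph{this} is where the hypothesis that $W$ is not Lipschitz is really used (an atom would force $W=\Gamma_{\textbf{j}}+y_0$ on a set of positive measure, hence, by a density-point and compactness argument, $W$ would be $C^1$). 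Your remark that the non-Lipschitz hypothesis ``enters only to guarantee that the relevant entropies are well-defined'' is therefore off the mark. Second, Lemma~\ref{lem:boundsinm}: for most $\textbf{i}\in\varLambda^{\hat n}$ the small pieces $\pi_{\textbf{j}}g_{\textbf{i}}\mu$ of the decomposition (\ref{eqn:self`affine'}) carry normalized entropy \emph{at least} $\alpha-\delta$ at scale $n+m$. It is the combination ``total entropy $\approx\alpha$'' \emph{plus} ``the measure splits into tiny pieces each already contributing entropy $\ge\alpha-\delta$'' \emph{plus} uniform continuity across scales that forces porosity, via Lemma~\ref{lem:entporous1} (Lemma~3.7 of \cite{barany2019hausdorff}); without the lower bound on the pieces the implication is simply false.

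Your Step~2 (transferring porosity from $\pi_{\textbf{w}^*\textbf{u}^*\textbf{j}}\mu$ to its affine image $\pi_{\textbf{j}}g_{\textbf{u}}g_{\textbf{w}}\mu$ via Lemma~\ref{lem:TransformA} and Lemma~\ref{lem:affinetransform}) matches the paper's use of Lemma~\ref{lem:entporous2}, and your uniformization of the quantifiers is in the right spirit, though the paper achieves it more simply: continuity of $\textbf{j}\mapsto\frac1m H(\pi_{\textbf{j}}\mu,\mathcal{L}_m)$ (which again needs non-atomicity) plus convergence in measure and approximation of the open set $\Omega_m$ by cylinders, rather than Egorov together with a ``porosity depends only on a bounded prefix'' claim. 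But as written, the proposal does not contain a correct mechanism for producing entropy porosity in the first place.
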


We shall follow the argument in~\cite[Section 3]{barany2019hausdorff} to prove this theorem. In particular, we shall use (\ref{eqn:self`affine'}) to decompose a measure $\pi_{\textbf{j}}\mu$ as a convex combination of measures of the form $\pi_{\textbf{j}} g_{\textbf{i}}\mu$.

%
%

\subsection{Uniform continuity across scales}
Following~\cite{barany2019hausdorff}, we say that a measure $\omega\in\mathscr{P}(\R)$ is {\em uniformly continuous across scales} if for every $\varepsilon>0$ there exists $\delta>0$ such that for any $x\in\mathbb{R}$ and $r\in(0,1]$, we have
\begin{equation}\label{eqn:ucas}
\omega(B(x,\delta r))\le\varepsilon\omega(B(x, r)).
\end{equation}
A family $\mathcal{M}$ of measures in  $\mathscr{P}(\R)$ is called {\em jointly uniformly continuous across scales} if for every $\varepsilon>0$ there exists $\delta>0$ such that (\ref{eqn:ucas}) holds for any $\omega\in \mathcal{M}$, any $x\in \R$ and any $r\in (0,1)$.

\begin{lemma}\label{lem:uc}
Under the assumption of Theorem~\ref{thm:entporous},
for any $\varepsilon>0$ there exists $\delta=\delta(\varepsilon)>0$ such that for any $\textbf{j}\in \Sigma$ and any $y\in \mathbb{R}$,
$$\pi_{\textbf{j}}\mu\,\big({B}(y,\delta)\big)<\varepsilon.$$
\end{lemma}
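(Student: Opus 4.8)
The plan is to reduce the statement to a uniform small-ball estimate for the Lebesgue measure on $[0,1)$ under the flow projection, exploiting that $\pi_{\textbf j}\mu$ is exactly the pushforward of Lebesgue measure on $[0,1)$ by $x\mapsto \pi_{\textbf j}(x,W(x))=W(x)-\Gamma_{\textbf j}(x)$. Write $h_{\textbf j}(x)=W(x)-\Gamma_{\textbf j}(x)$. Then for $y\in\R$ and $\delta>0$,
$$\pi_{\textbf j}\mu\bigl(B(y,\delta)\bigr)=\bigl|\{x\in[0,1): |h_{\textbf j}(x)-y|<\delta\}\bigr|,$$
so it suffices to show that the occupation measures of the functions $h_{\textbf j}$, $\textbf j\in\Sigma$, are \emph{jointly} non-atomic in a quantitative, uniform way: for every $\eps>0$ there is $\delta>0$ with $|\{x: |h_{\textbf j}(x)-y|<\delta\}|<\eps$ for all $\textbf j$ and all $y$. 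The key point is that $\Gamma_{\textbf j}'=Y(\cdot,\textbf j)$ is bounded uniformly in $\textbf j$ (indeed $|Y(x,\textbf j)|\le \sum_{n\ge1}\gamma^n\|\phi'\|_\infty=\gamma(1-\gamma)^{-1}\|\phi'\|_\infty$), so the family $\{\Gamma_{\textbf j}\}$ is uniformly Lipschitz, hence uniformly bounded and equicontinuous on $[0,1]$; by Arzel\`a--Ascoli, $\{h_{\textbf j}:\textbf j\in\Sigma\}$ is a relatively compact subset of $C([0,1])$.

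The heart of the matter is therefore: each $h_{\textbf j}$ has a non-atomic occupation measure, i.e.\ $|\{x\in[0,1):h_{\textbf j}(x)=y\}|=0$ for every $y$, and this can be upgraded to a uniform estimate over the compact family. For the first part, suppose for contradiction that $h_{\textbf j}$ is constant on a set $A\subset[0,1)$ of positive measure. On a density point of $A$ one has $h_{\textbf j}'(x)=0$, i.e.\ $W'(x)=\Gamma_{\textbf j}'(x)=Y(x,\textbf j)$, on a positive measure set. But (again by the self-affine identity $W(x)=\phi(x)+\lambda W(bx)$, exactly as in the proof of Lemma~\ref{lem:L2A}) wherever $W'$ exists it satisfies $W'(x)=Y(x,\textbf i)$ for \emph{all} $\textbf i\in\Sigma$; combined with the condition (H) — which gives $\textbf i\in\Sigma$ with $Y(\cdot,\textbf i)\not\equiv Y(\cdot,\textbf j)$, and in fact $Y(\cdot,\textbf i)-Y(\cdot,\textbf j)$ real analytic and not identically zero, hence with at most countably many zeros — we would get that $W$ is differentiable on at most a measure-zero set unless $W$ is non-Lipschitz. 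Since $W$ is assumed non-Lipschitz under the standing hypotheses of Theorem~\ref{thm:entporous}, I should instead argue directly: if $|\{x:|h_{\textbf j}(x)-y|<\delta\}|$ failed to go to $0$ uniformly, there would be $\eps_0>0$, a sequence $\delta_k\to 0$, points $y_k$, and $\textbf j^{(k)}$ with $|\{x:|h_{\textbf j^{(k)}}(x)-y_k|<\delta_k\}|\ge\eps_0$; passing to a subsequence, $h_{\textbf j^{(k)}}\to h$ uniformly for some $h=h_{\textbf j^{(\infty)}}$ with $\textbf j^{(\infty)}$ a limit of $\textbf j^{(k)}$ in $\Sigma$ (using continuity of $\textbf j\mapsto h_{\textbf j}$ in the product topology, which follows from dominated convergence in the series for $Y$), and $y_k\to y_\infty$; then $|\{x:h(x)=y_\infty\}|\ge\eps_0$, so $h$ is constant on a positive-measure set. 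Now derive the contradiction: on a density point $x_0$ of that level set, $h'(x_0)=0$, so $W'(x_0)=Y(x_0,\textbf j^{(\infty)})$; but the anti-Hölder property (Theorem~\ref{thm:HL}) forces $W$ to be non-differentiable \emph{everywhere}, contradicting the existence of $x_0$.

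The main obstacle is making the final contradiction clean: one must know that $W'$ fails to exist at \emph{every} point (not just a.e.), which is exactly what Theorem~\ref{thm:HL} delivers — at any $x$ there are points $y$ arbitrarily close with $|W(y)-W(x)|\ge\kappa|y-x|^\alpha$ and $\alpha=2-D<1$, so the difference quotient is unbounded. Thus $h_{\textbf j}'$ exists nowhere either (since $\Gamma_{\textbf j}$ is Lipschitz, $h_{\textbf j}=W-\Gamma_{\textbf j}$ inherits non-differentiability everywhere), so a density point of a positive-measure level set cannot exist, giving the contradiction and completing the proof. A secondary technical point is verifying continuity of $\textbf j\mapsto\Gamma_{\textbf j}$ in $C([0,1])$: since changing $\textbf j$ beyond coordinate $N$ perturbs each summand of $Y(x,\textbf j)$ only for $n\ge N$, and those are bounded by $\sum_{n\ge N}\gamma^n\|\phi'\|_\infty\to 0$, this is immediate.
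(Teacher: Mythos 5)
Your reduction (writing $\pi_{\textbf{j}}\mu$ as the occupation measure of $h_{\textbf{j}}=W-\Gamma_{\textbf{j}}$) and your compactness step (Arzel\`a--Ascoli on the uniformly Lipschitz family $\Gamma_{\textbf{j}}$, so that failure of the uniform bound produces a single $\textbf{j}^{(\infty)}$ and $y_\infty$ with $|\{x: h_{\textbf{j}^{(\infty)}}(x)=y_\infty\}|>0$) are correct and match the paper's setup, which runs the same compactness argument at the level of the measures. The gap is in your final contradiction. You assert that at a density point $x_0$ of the level set $A=\{x: h(x)=y_0\}$ one has $h'(x_0)=0$, and then contradict nowhere-differentiability of $W$. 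But a continuous function that is constant on a positive-measure set need \emph{not} be differentiable at density points of that set: constancy on $A$ only controls the difference quotients along $A$, while off $A$ they can blow up, so only an \emph{approximate} derivative is forced to vanish, and Theorem~\ref{thm:HL} (which produces just one bad point $y$ per scale) does not rule out approximate differentiability. So the sentence ``a density point of a positive-measure level set cannot exist'' for a nowhere differentiable function is simply false as a general principle, and your contradiction does not go through as written.

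The step can be repaired, but it needs a quantitative argument rather than an appeal to nowhere-differentiability: at a density point $x_0$, for small $\delta$ Theorem~\ref{thm:HL} gives $y\in(x_0+c\delta,x_0+\delta)$ with $|W(y)-W(x_0)|\ge\kappa(c\delta)^\alpha$; high density of $A$ gives $y'\in A$ with $|y-y'|\le\eps(\delta)\,\delta$, whence $|W(y)-W(y')|\le K(\eps(\delta)\delta)^\alpha$ by the H\"older upper bound, while $|W(y')-W(x_0)|=|\Gamma_{\textbf{j}}(y')-\Gamma_{\textbf{j}}(x_0)|\le L\delta$ since $h(y')=h(x_0)$ and $\Gamma_{\textbf{j}}$ is Lipschitz; dividing by $\delta^\alpha$ and letting $\delta\to0$ yields $\kappa c^\alpha\le 0$, a contradiction. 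The paper avoids this entirely by a different mechanism: it blows up at the density point using the self-affine relation of Lemma~\ref{lem:TransformA} ($\pi_{\textbf{j}}g_{\textbf{i}_n}=\lambda^n\pi_{\textbf{i}_n^*\textbf{j}}+\mathrm{const}$), so that the rescaled level sets have relative measure tending to $1$; passing to a limit gives $W\equiv\Gamma_{\textbf{j}_\infty}+y_\infty$ on all of $[0,1)$, i.e.\ $W$ is $C^1$, contradicting that $W$ is not Lipschitz. You should either adopt the quantitative density-point argument above or the paper's rescaling argument; as it stands your proof is incomplete at its crucial step.
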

\begin{proof}
Arguing by contradiction, assume that this is false. Since the family of probability measures $\pi_{\textbf{j}}\mu$ is compact in the weak star topology, it follows that there exists $\textbf{j}\in \Sigma$ and $y_0\in \mathbb{R}$ such that $\pi_{\textbf{j}}\mu$ has an atom at $y_0$. This means that the set
$$X=\{x\in [0,1): W(x)=\Gamma_{\textbf{j}}(x)+y_0\}$$
has positive Lebesgue measure. Let $x_0$ be a Lebesgue density point of $X$ and let $J_n$ be the $b$-adic interval of level $n$ which contains $x_0$. Then $|J_n\cap X|/|J_n|\to 1$ as $n\to\infty$.	

Let $i_n\in \varLambda$, $n=1,2,\ldots$, be such that $b^n x_0\in [i_n/b, (i_n+1)/b)\mod 1$. Let $\textbf{i}_n=i_1i_2\cdots i_n$. Then for each $n$,
$g_{\textbf{i}_n}$ maps $[0,1)\times \R$ onto $J_n\times \R$. By Lemma~\ref{lem:TransformA},
$$\pi_{\textbf{j}}g_{\textbf{i}_n}(x,y)=\lambda^n\pi_{\textbf{j}_n} (x,y) +\pi_{\textbf{j}}g_{\textbf{i}_n}(0,0),$$
where $\textbf{j}_n=\textbf{i}_n^*\textbf{j}$. Note that $g_{\textbf{i}_n}(x, W(x))=(S_n(x), W(S_n(x))$, where
$$S_n(x)= \frac{i_n+i_{n-1}b+\cdots+i_1b^{n-1}}{b^n}+\frac{x}{b^n}.$$
Thus for $x\in S_n^{-1}(X\cap J_n)\subset [0,1)$, we have
$$W(x)-\Gamma_{\textbf{j}_n}(x)=y_n:=(y_0-\pi_{\textbf{j}}g_{\textbf{i}_n}(0,0))/\lambda^n.$$
Thus $$|\{x\in [0,1): W(x)=\Gamma_{\textbf{j}_n}(x)+y_n\}|=|X\cap J_n|/||J_n|\to 1,$$
as $n\to\infty$.
In particular, this implies that the sequence $y_n$ is bounded.
%
Let $n_k$ be a subsequence such that $\textbf{j}_{n_k}\to \textbf{j}_\infty$ and
$y_{n_k}\to y_\infty$ in respectively $\Sigma$ and $\R$. Then for Lebesgue a.e. $x\in [0,1)$, $W(x)\in \Gamma_{\textbf{j}_\infty}(x)+y_\infty$. By continuity, it follows that $W(x)=\Gamma_{\textbf{j}_\infty}(x)+y_\infty$ is a $C^1$ function, a contradiction!
\end{proof}

\begin{proposition}\label{prop:uc}
Under the assumption of Theorem~\ref{thm:entporous},
the family of measures $\{\pi_{\textbf{j}}\mu\}_{\textbf{j}\in\Sigma}$ is jointly uniformly continuous across scales.
\end{proposition}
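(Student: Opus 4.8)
\textbf{Proof proposal for Proposition~\ref{prop:uc}.}

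The plan is to derive the joint uniform continuity from the single-scale estimate of Lemma~\ref{lem:uc} by exploiting the self-affine decomposition (\ref{eqn:self`affine'}) together with the transition formula of Lemma~\ref{lem:TransformA}. Recall that Lemma~\ref{lem:uc} gives, for each $\eps>0$, a $\delta=\delta(\eps)>0$ such that $\pi_{\textbf{j}}\mu(B(y,\delta))<\eps$ for all $\textbf{j}\in\Sigma$ and all $y\in\R$. I want to bootstrap this ``one scale'' statement, valid only near scale $1$, to all scales $r\in(0,1)$, uniformly in $\textbf{j}$.

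First I would set up the rescaling mechanism. Given $r\in(0,1)$, pick $n=n(r)$ with $\lambda^{n}\le r<\lambda^{n-1}$ (so $b^{-n}$ and $r$ are comparable up to a fixed constant). For a point $y$ in the support of $\pi_{\textbf{j}}\mu$, write $y=\pi_{\textbf{j}}(x_0,W(x_0))$ for some $x_0\in[0,1)$, and let $\textbf{i}_n=i_1\cdots i_n\in\varLambda^n$ be the address of $x_0$ at level $n$, so that $g_{\textbf{i}_n}$ maps $[0,1)\times\R$ onto $J_n\times\R$ with $J_n\ni x_0$ the level-$n$ $b$-adic interval. Decomposing (\ref{eqn:self`affine'}) at level $n$,
\begin{equation*}
\pi_{\textbf{j}}\mu=\frac{1}{b^n}\sum_{\textbf{i}\in\varLambda^n}\pi_{\textbf{j}}g_{\textbf{i}}\mu,
\end{equation*}
the preimage under each $\pi_{\textbf{j}}g_{\textbf{i}}$ of a small ball is controlled via Lemma~\ref{lem:TransformA}: $\pi_{\textbf{j}}g_{\textbf{i}}(x,y)=\lambda^{n}\pi_{\textbf{i}^*\textbf{j}}(x,y)+\pi_{\textbf{j}}g_{\textbf{i}}(0)$, so $\pi_{\textbf{j}}g_{\textbf{i}}\mu$ restricted to $B(y,r)$ is a rescaled copy (by factor $\lambda^n\asymp r$) of $\pi_{\textbf{i}^*\textbf{j}}\mu$ restricted to a ball of radius $\asymp r/\lambda^n\asymp 1$. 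The key geometric point is that only a bounded number (depending only on $b$ and $\lambda$, not on $r$) of the images $\pi_{\textbf{j}}g_{\textbf{i}}(\mathrm{supp}\,\mu)$ can meet a ball of radius $r$: since $\pi_{\textbf{j}}g_{\textbf{i}}\mu$ is supported on a set of diameter $\asymp r$, and the translates $\pi_{\textbf{j}}g_{\textbf{i}}(0)$ for varying $\textbf{i}$ with a common first coordinate behave like the graph values at $b$-adically spaced points — more precisely, the map $\textbf{i}\mapsto\pi_{\textbf{j}}g_{\textbf{i}}(0,0)$ lands in a set that is itself the image of $\mathrm{supp}(\pi_{\textbf{j}}\mu)$ under finitely many affine pieces. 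I would make this precise by noting that $\pi_{\textbf{j}}\mu$ is supported on a fixed compact set and that the pieces $\pi_{\textbf{j}}g_{\textbf{i}}\mu$, $\textbf{i}\in\varLambda^n$, can be grouped so that within each group the supports are separated by $\gtrsim r$, there being $O(1)$ groups meeting any fixed $r$-ball.

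Granting that, the estimate runs as follows: for any $y$ and $0<\rho<r<1$, applying the level-$n(r/\rho)$ decomposition relative to the piece carrying most of the mass near $y$, one gets
\begin{equation*}
\pi_{\textbf{j}}\mu(B(y,\rho))\le C\max_{\textbf{i}}\pi_{\textbf{i}^*\textbf{j}}\mu\bigl(B(y',C\rho/r)\bigr)\cdot\frac{\#\{\textbf{i}\in\varLambda^{n}:\ \mathrm{supp}(\pi_{\textbf{j}}g_{\textbf{i}}\mu)\cap B(y,r)\ne\emptyset\}}{b^{n}},
\end{equation*}
and one compares the numerator with a corresponding lower bound for $\pi_{\textbf{j}}\mu(B(y,r))$ coming from the same decomposition: $\pi_{\textbf{j}}\mu(B(y,r))\ge \frac{1}{b^n}\pi_{\textbf{i}_*^*\textbf{j}}\mu(B(y'',c))$ for the index $\textbf{i}_*=\textbf{i}_n$ above, and $\pi_{\textbf{i}_*^*\textbf{j}}\mu$ of a ball of fixed radius $c$ is bounded below by a constant independent of $\textbf{j}$, again by compactness. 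Choosing $\rho=\delta(\eps')r$ for a suitable $\eps'$ and invoking Lemma~\ref{lem:uc} on $\pi_{\textbf{i}^*\textbf{j}}\mu$ at the rescaled scale then yields $\pi_{\textbf{j}}\mu(B(y,\delta r))\le\eps\,\pi_{\textbf{j}}\mu(B(y,r))$ with $\delta$ depending only on $\eps$.

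I expect the main obstacle to be the counting/separation step: showing that the number of level-$n$ pieces $\pi_{\textbf{j}}g_{\textbf{i}}\mu$ whose supports meet a given $r$-ball is bounded independently of $r$ and $\textbf{j}$. This is where one must use that $\pi_{\textbf{j}}$ is projection along a Lipschitz foliation with uniformly bounded slopes (the functions $Y(\cdot,\textbf{j})$ are uniformly bounded, since $\phi'$ is bounded and $\gamma<1$), so that the images $g_{\textbf{i}}(\mathrm{graph}\,W)$ for distinct level-$n$ words $\textbf{i}$ project to sets whose positions in the $y$-coordinate are essentially the values of $W$ at the left endpoints of the intervals $J_{\textbf{i}}$, rescaled by $\lambda^n$; controlling their overlaps reduces to the (uniform) modulus of continuity of $W$. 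An alternative, cleaner route — which I would actually prefer to write up — is to cite the general principle from~\cite{barany2019hausdorff}: a family $\mathcal{M}\subset\mathscr{P}(\R)$ that is compact, consists of non-atomic measures, and is closed (up to bounded affine rescalings) under the decomposition (\ref{eqn:self`affine'}) is automatically jointly uniformly continuous across scales; the non-atomicity is exactly Lemma~\ref{lem:uc}, the bounded-rescaling closure is exactly Lemma~\ref{lem:TransformA} combined with the compactness of $\{\pi_{\textbf{i}^*\textbf{j}}\mu\}$, and the rest is the soft argument above. Either way, no new idea beyond Lemma~\ref{lem:uc} and the transition formula is needed; the proposition is a packaging statement.
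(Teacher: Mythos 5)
Your overall strategy --- decompose $\pi_{\textbf{j}}\mu$ at a level $n$ with $\lambda^{-n}r\asymp\mathrm{const}$, use Lemma~\ref{lem:TransformA} to see each piece $\pi_{\textbf{j}}g_{\textbf{i}}\mu$ as a $\lambda^n$-rescaled copy of some $\pi_{\textbf{i}^*\textbf{j}}\mu$, and invoke Lemma~\ref{lem:uc} at the rescaled scale --- is exactly the paper's. But the way you organize the comparison introduces a genuine gap: you bound $\pi_{\textbf{j}}\mu(B(y,\rho))$ from above by (number of pieces meeting the ball)$\times b^{-n}\times(\text{small factor})$ and $\pi_{\textbf{j}}\mu(B(y,r))$ from below by a \emph{single} piece, so you are forced to claim that only $O(1)$ of the $b^n$ pieces can meet an $r$-ball. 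That claim is false. Each piece has mass $b^{-n}$ and support of diameter $\asymp\lambda^n\asymp r$; if at most $N_0$ pieces met any $r$-ball, then $\pi_{\textbf{j}}\mu(B(y,r))\le N_0 b^{-n}\asymp r^{s}$ with $s=\log b/\log(1/\lambda)>1$ (because $\lambda>1/b$), and a nonzero measure on $\R$ cannot satisfy a uniform Frostman bound with exponent $>1$. In fact the expected number of pieces meeting a typical $r$-ball is $\asymp b^n\pi_{\textbf{j}}\mu(B(y,r))\to\infty$. So the step you yourself flag as ``the main obstacle'' is not merely hard --- it is unprovable, and the whole ratio estimate collapses with it.

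The fix (which is the paper's proof) is to avoid counting altogether by proving the doubling inequality \emph{for each component separately} and then summing over $\textbf{i}\in\varLambda^n$: with $M$ a uniform bound on the supports, $\kappa=\lambda\delta(1/2)/(3M)$ and $n$ chosen so that $3M\le\lambda^{-n}r<3\lambda^{-1}M$, one shows $\pi_{\textbf{j}}g_{\textbf{i}}\mu(B(x,\kappa r))\le\tfrac12\,\pi_{\textbf{j}}g_{\textbf{i}}\mu(B(x,r))$ for every $\textbf{i}$. Indeed, writing $\pi_{\textbf{j}}g_{\textbf{i}}\mu(B(x,R))=\pi_{\textbf{i}^*\textbf{j}}\mu(B(x(\textbf{i}),\lambda^{-n}R))$: if $|x(\textbf{i})|\ge 2M$ the small ball misses the support entirely and the left side is $0$; if $|x(\textbf{i})|<2M$ the big rescaled ball contains all of $[-M,M]$, so the right side equals $\tfrac12$, while the small rescaled ball has radius $<\delta$ and so mass $<\tfrac12$ by Lemma~\ref{lem:uc}. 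Summing and iterating the resulting doubling inequality gives the proposition. The counting quantity you worry about appears implicitly in both numerator and denominator and cancels; no separation of the pieces is needed. Your fallback of citing a black-box ``general principle'' from \cite{barany2019hausdorff} does not close the gap either: no such citable lemma covers this nonlinear setting, and the soft justification you sketch for it is the same flawed counting argument.
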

\begin{proof}
It suffices to prove that there is $\kappa>0$ such that for any $\textbf{j}\in \Sigma$, any $x\in \R$ and any $r\in (0,1]$,
\begin{equation}\label{eqn:doubling}
\pi_{\textbf{j}}\mu(B(x,\kappa r)\le \frac{1}{2} \pi_{\textbf{j}}\mu (B(x, r)).
\end{equation}
To this end, let $\delta= \delta(1/2)>0$ be given by the previous lemma and let $M>\delta$ be a constant such that $\pi_{\textbf{j}}\mu$ is supported in $[-M, M]$ for each $\textbf{j}\in \Sigma$. Put $\kappa=\lambda\delta/(3M)$.
Given $r\in (0,1)$, choose $n=n(r)\in \mathbb{N}$ such that
$$3 M\le \lambda^{-n} r<3\lambda^{-1} M.$$
Note that $\lambda^{-n} \kappa r<\delta<M.$
We shall
show that for each $\textbf{i}=i_1i_2\cdots i_n\in \varLambda^n$,
\begin{equation}\label{eqn:compdoub}
\pi_{\textbf{j}} g_{\textbf{i}} \mu(B(x, \kappa r))\le \frac{1}{2} \pi_{\textbf{j}}g_{\textbf{i}} \mu (B(x,r)).
\end{equation}
Once this is proved, (\ref{eqn:doubling}) follows from (\ref{eqn:self`affine'}).

To prove (\ref{eqn:compdoub}), we first apply Lemma~\ref{lem:TransformA} and obtain $x(\textbf{i})\in \mathbb{R}$, such that for any $R>0$,
$$\pi_{\textbf{j}}g_{\textbf{i}}\mu (B(x,R))=\pi_{\textbf{i}^*\textbf{j}}(B(x(\textbf{i}), \lambda^{-n} R)).$$
If $|x(\textbf{i})|\ge 2M$, then $B(x(\textbf{i}), \lambda^{-n} \kappa r)$ is disjoint from $[-M, M]$ since $\lambda^{-n}\kappa r\le M$. Thus the left hand side of (\ref{eqn:compdoub}) is zero and hence the inequality holds. Assume now that $|x(\textbf{i})|< 2M$. Then
$$B(x(\textbf{i}), \lambda^{-n}r)\supset [-M, M],$$ so
the right hand side of (\ref{eqn:compdoub}) is equal to $1/2$. On the other hand,
$$B(x(\textbf{i}), \lambda^{-n}\kappa r)\subset B(x(\textbf{i}), \delta).$$
Thus the left hand side of (\ref{eqn:compdoub}) is at most $1/2$ and hence the inequality holds.
\end{proof}

\begin{corollary} $\alpha>0$.
\end{corollary}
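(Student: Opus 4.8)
The plan is to extract from Proposition~\ref{prop:uc} --- concretely, from the explicit doubling inequality (\ref{eqn:doubling}) established in its proof --- a uniform positive lower bound for the lower local dimension of every measure $\pi_{\textbf{j}}\mu$, and then read off $\alpha>0$ from the exact dimensionality supplied by Theorem~\ref{thm:ledrappier}(2).

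First I would fix the constant $\kappa\in(0,1)$ produced in the proof of Proposition~\ref{prop:uc}, for which
$$\pi_{\textbf{j}}\mu\big(B(x,\kappa r)\big)\le\tfrac12\,\pi_{\textbf{j}}\mu\big(B(x,r)\big)\qquad\text{for all }\textbf{j}\in\Sigma,\ x\in\R,\ r\in(0,1].$$
Iterating this bound $n$ times, starting from $r=1$, gives $\pi_{\textbf{j}}\mu\big(B(x,\kappa^{n})\big)\le 2^{-n}$ for every integer $n\ge0$, every $x\in\R$ and every $\textbf{j}\in\Sigma$ (here $\kappa^{n-1}\le 1$ at each step, so the hypothesis $r\in(0,1]$ is always met). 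Hence, given $\rho\in(0,1]$ and choosing the integer $n\ge0$ with $\kappa^{n+1}<\rho\le\kappa^{n}$, one has $\pi_{\textbf{j}}\mu\big(B(x,\rho)\big)\le 2^{-n}$, so that
$$\frac{\log\pi_{\textbf{j}}\mu\big(B(x,\rho)\big)}{\log\rho}\ \ge\ \frac{n\log 2}{(n+1)\log(1/\kappa)}.$$
Letting $\rho\to0$ (equivalently $n\to\infty$) therefore yields
$$\liminf_{\rho\to0}\frac{\log\pi_{\textbf{j}}\mu(B(x,\rho))}{\log\rho}\ \ge\ \frac{\log 2}{\log(1/\kappa)}\ >\ 0,$$
and this lower bound is uniform over $x\in\R$ and $\textbf{j}\in\Sigma$.

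To conclude, I would take $\textbf{j}$ in the full-measure set of Theorem~\ref{thm:ledrappier}(2): then $\pi_{\textbf{j}}\mu$ is exact dimensional with $\dim(\pi_{\textbf{j}}\mu)=\alpha$, so $\log\pi_{\textbf{j}}\mu(B(x,\rho))/\log\rho\to\alpha$ for $\pi_{\textbf{j}}\mu$-a.e.\ $x$, and the lower bound just obtained forces $\alpha\ge\log 2/\log(1/\kappa)>0$. (Alternatively, one may simply invoke the mass distribution principle together with $\dim_H(\pi_{\textbf{j}}\mu)=\alpha$.) I expect no genuine obstacle here: the substantive work is already contained in Proposition~\ref{prop:uc}, and the only points requiring a little care are keeping track of signs in the logarithmic estimate and noting that the doubling constant $\kappa$ is independent of $x$ and of $\textbf{j}$.
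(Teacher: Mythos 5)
Your argument is correct and is essentially the paper's own proof: both extract from Proposition~\ref{prop:uc} the uniform doubling bound $\pi_{\textbf{j}}\mu(B(x,\kappa^n))\le 2^{-n}$, deduce a positive uniform lower bound on the local dimension, and conclude via the exact dimensionality in Theorem~\ref{thm:ledrappier}(2). The only (immaterial) difference is that you bound the $\liminf$ while the paper states the bound for the $\limsup$; either suffices since $\pi_{\textbf{j}}\mu$ is exact dimensional.
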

\begin{proof} By Proposition~\ref{prop:uc}, there is $\delta>0$ such that $\pi_{\textbf{j}}\mu(B(y, \delta^n ))\le 2^{-n}\pi_{\textbf{j}}\mu (B(y,1))$ for any $\textbf{j}\in \Sigma$, $n\in \mathbb{Z}_+$ and $y\in \R$. It follows that
$$\limsup_{r\to 0} \frac{\log \pi_{\textbf{j}}\mu (B(y,r))}{\log r}\ge \log_2 \delta^{-1}>0.$$
Thus $\alpha>0$.
\end{proof}

\subsection{Entropy porosity of $\pi_{\textbf{j}}\mu$ }
In this subsection we complete the proof of Theorem~\ref{thm:entporous}.
\begin{lemma}\label{lem:boundsinm}
For any $\varepsilon>0, m\ge M(\varepsilon), n\ge N(\varepsilon,m)$,
		$$\inf\limits_{\textbf{j}\in\Sigma}\mathbb{\nu}^{n}\left(\left\{\textbf{i}\in \varLambda^{n}: \alpha-\varepsilon<\frac{1}{m}
H(\pi_{\textbf{i}\textbf{j}}\mu, \mathcal{L}_{m})<\alpha+\varepsilon\right\}\right) >1-\varepsilon.$$
\end{lemma}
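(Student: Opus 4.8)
The plan is to deduce Lemma~\ref{lem:boundsinm} from the Ledrappier-type statement in Theorem~\ref{thm:ledrappier}(2) together with Proposition~\ref{prop:Young} and the entropy comparison estimate~(\ref{eqn:jiij}). The point is that $\dim(\pi_{\textbf{j}}\mu)=\alpha$ holds for $\nu^{\mathbb{Z}_+}$-a.e.\ $\textbf{j}$, and by Proposition~\ref{prop:Young} exact dimensionality converts this into the statement that $\frac{1}{N}H(\pi_{\textbf{j}}\mu,\mathcal{L}_N)\to\alpha$ for those $\textbf{j}$. We must upgrade this almost-everywhere, asymptotic-in-$N$ convergence to the uniform-in-$\textbf{j}$, finite-scale statement claimed, where the averaging is over the first $n$ symbols $\textbf{i}\in\varLambda^n$ and the scale $m$ is fixed (and small relative to $n$).

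First I would reduce to a statement about a single typical projection measure. Let $F:\Sigma\times\Sigma\to\Sigma\times\Sigma$ (or rather the relevant skew product) and note that $\pi_{\textbf{i}\textbf{j}}\mu$, up to an affine rescaling by $\lambda^{n}$, is $\pi_{\textbf{i}^*\textbf{j}}\mu$ with $|\textbf{i}|=n$; reversing the word is just a relabelling, so averaging $\frac1m H(\pi_{\textbf{i}\textbf{j}}\mu,\mathcal{L}_m)$ over $\textbf{i}\in\varLambda^n$ against $\nu^n$ is the same as looking at $\frac1m H(\pi_{\sigma^n\textbf{k}}\mu,\mathcal{L}_m)$ — no wait, more precisely, I would introduce the Bernoulli measure $\nu^{\mathbb{Z}_+}$ on $\Sigma$, fix $\textbf{j}$, and consider the function $\textbf{k}\mapsto \frac1m H(\pi_{\textbf{k}}\mu,\mathcal{L}_m)$ composed with the concatenation map $\textbf{i}\mapsto \textbf{i}\textbf{j}$ restricted to cylinders. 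The key identity is that, because $\mu=b^{-n}\sum_{\textbf{i}\in\varLambda^n} g_{\textbf{i}}\mu$, one has (using~(\ref{eqn:jiij}) with $\hat n$ replaced by $n$, i.e.\ the analogous affine estimate for $b$-adic scales directly since $g_{\textbf{i}}$ contracts by $b^{-n}$ in the first coordinate)
$$
\frac{1}{N}H(\pi_{\textbf{j}}\mu,\mathcal{L}_N)=\mathbb{E}_{\textbf{i}\in\varLambda^n}\!\left[\frac{1}{N}H\big(\pi_{\textbf{i}^*\textbf{j}}\mu,\mathcal{L}_{N-n}\big)\right]+O\!\left(\frac{n}{N}\right),
$$
valid once $N\ge n$; this is just the decomposition~(\ref{eqn:self`affine'}) plus Lemma~\ref{lem:affinetransform} and the almost-additivity of entropy across $b$-adic scales. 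Hence, sending $N\to\infty$ along $N=n+mk$ and using $\dim(\pi_{\textbf{j}}\mu)=\alpha$ for a.e.\ $\textbf{j}$, together with the fact (from Proposition~\ref{prop:uc} and its corollary, or directly from exact-dimensionality and compactness of the family $\{\pi_{\textbf{j}}\mu\}$) that $\frac1m H(\pi_{\textbf{k}}\mu,\mathcal{L}_m)$ is bounded by $1+o_m(1)$ and that the convergence $\frac1N H(\pi_{\textbf{k}}\mu,\mathcal{L}_N)\to\dim(\pi_{\textbf{k}}\mu)$ is "uniform enough" — I would extract from this a Chebyshev/Markov argument: the average over $\textbf{i}\in\varLambda^n$ of $\frac1m H(\pi_{\textbf{i}\textbf{j}}\mu,\mathcal{L}_m)$ is within $\varepsilon$ of $\alpha$, so the $\nu^n$-measure of the set where it lies outside $(\alpha-\varepsilon,\alpha+\varepsilon)$ is small. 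For the lower-bound tail I would use that $\dim(\pi_{\textbf{j}}\mu)=\alpha$ cannot exceed $\alpha$ on average, and for the upper-bound tail I would use superadditivity of conditional entropy (a one-scale quantity cannot be much larger than the asymptotic dimension when summed up — formally, $\frac1m H(\omega,\mathcal{L}_m)\le \frac1{m}H(\omega,\mathcal{L}_m|\mathcal{L}_0)$ telescopes) combined again with the fact the long-scale average equals $\alpha$.

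The cleanest route, and the one I would actually write, replaces the hand-waving about uniformity by invoking the following standard device: let $\Phi(\textbf{k})=\limsup_{N\to\infty}\frac1N H(\pi_{\textbf{k}}\mu,\mathcal{L}_N)$; by Proposition~\ref{prop:Young} and Theorem~\ref{thm:ledrappier}(2), $\Phi=\alpha$ a.e. Using Egorov's theorem, for any $\eta>0$ there is a set $\Sigma_0\subset\Sigma$ with $\nu^{\mathbb{Z}_+}(\Sigma_0)>1-\eta$ and an $N_0$ such that $|\frac1N H(\pi_{\textbf{k}}\mu,\mathcal{L}_N)-\alpha|<\eta$ for all $\textbf{k}\in\Sigma_0$ and $N\ge N_0$. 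Then for $n$ large (so that $\nu^n$ of good prefixes is close to $1$ — here one needs that membership in $\Sigma_0$ is "mostly determined by a long prefix", which follows since $\Sigma_0$ can be taken to be a union of cylinders up to $\eta$-error), the averaging identity above, telescoped across scales $N_0, N_0+m, N_0+2m,\dots$, forces most of the one-scale quantities $\frac1m H(\pi_{\textbf{i}^*\textbf{j}}\mu,\mathcal{L}_m)$ to be within $\varepsilon$ of $\alpha$. I expect the main obstacle to be exactly this passage from an \emph{asymptotic, a.e.}\ statement to a \emph{uniform in $\textbf{j}$, finite-$n$, finite-$m$} statement: one has to be careful that the "bad" set of prefixes is controlled independently of $\textbf{j}$, which is where one uses that $\pi_{\textbf{i}\textbf{j}}\mu$ depends on $\textbf{j}$ only through the tail (an affine perturbation of size $\lambda^n$, cf.\ Lemma~\ref{lem:pfclose}) together with the compactness and joint uniform continuity across scales of $\{\pi_{\textbf{j}}\mu\}$ from Proposition~\ref{prop:uc}. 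Modulo that, the rest is the routine telescoping of conditional entropies and a Chebyshev inequality.
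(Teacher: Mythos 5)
Your proposal correctly identifies the two difficulties (fixed scale $m$ versus asymptotic dimension, and uniformity over the tail $\textbf{j}$), but the machinery you assemble does not close either of them. The central gap is the uniformity in $\textbf{j}$. Egorov plus ``approximate $\Sigma_0$ by a union of cylinders up to $\eta$-error in measure'' only controls $\nu^{\mathbb{Z}_+}$-almost every point of each cylinder; but the lemma requires the specific points $\textbf{i}\textbf{j}$ for an \emph{arbitrary} fixed tail $\textbf{j}$, and for each fixed $\textbf{j}$ the set $\{\textbf{i}\textbf{j}:\textbf{i}\in\varLambda^n\}$ has $\nu^{\mathbb{Z}_+}$-measure zero, so it can sit entirely inside the exceptional part of every cylinder. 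What is needed --- and what the paper supplies --- is a \emph{topological} statement: the function $h_m(\textbf{k})=\frac1m H(\pi_{\textbf{k}}\mu,\mathcal{L}_m)$ is continuous on $\Sigma$ (this uses the weak-star continuity of $\textbf{k}\mapsto\pi_{\textbf{k}}\mu$ together with the non-atomicity of $\pi_{\textbf{k}}\mu$ from Lemma~\ref{lem:uc}, so that the masses of the $b$-adic intervals vary continuously). Then $\Omega_m=\{\textbf{k}:|h_m(\textbf{k})-\alpha|<\eps\}$ is \emph{open}, and an open set of measure $>1-\eps/2$ contains a union of depth-$N$ cylinders of measure $>1-\eps$; for $n\ge N$ and any tail $\textbf{j}$, every $\textbf{i}$ whose cylinder lies in $\Omega_m$ satisfies $\textbf{i}\textbf{j}\in\Omega_m$, which is exactly the claim. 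You do gesture at a viable substitute (the perturbation bound of Lemma~\ref{lem:pfclose} applied to $\sup_x|\Gamma_{\textbf{i}\textbf{j}}-\Gamma_{\textbf{i}\textbf{j}'}|=O(\gamma^{n})$, which yields $|h_m(\textbf{i}\textbf{j})-h_m(\textbf{i}\textbf{j}')|\le C/m$ once $n$ is large relative to $m$), and that route can be made to work, but you never carry it out; as written, the measure-theoretic approximation does not deliver the ``for every $\textbf{j}$'' statement.

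The second gap is the concentration step. Knowing that the average over $\textbf{i}\in\varLambda^n$ of $\frac1m H(\pi_{\textbf{i}\textbf{j}}\mu,\mathcal{L}_m)$ is close to $\alpha$ does not give two-sided concentration by Chebyshev: individual terms may exceed $\alpha$ (the only a priori bound is roughly $1$), and your proposed fixes --- ``$\dim$ cannot exceed $\alpha$ on average'' for one tail and telescoping of conditional entropies for the other --- do not bound individual fixed-scale entropies. The paper avoids this entirely: the two-sided estimate $|h_m(\textbf{k})-\alpha|<\eps$ holds pointwise for $\nu^{\mathbb{Z}_+}$-a.e.\ $\textbf{k}$ and all large $m$, directly from Theorem~\ref{thm:ledrappier}(2) and Proposition~\ref{prop:Young}; a.e.\ convergence then gives convergence in measure, which is all that is needed before the open-set-versus-cylinders argument. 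I would drop the decomposition/telescoping identity here altogether (it is used later, in \S\ref{sec:entpor}, but not in this lemma) and instead prove: (i) continuity of $h_m$, (ii) $\nu^{\mathbb{Z}_+}(\Omega_m)\to1$ as $m\to\infty$, (iii) the cylinder argument above.
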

\begin{proof} Denote $h_m(\textbf{j})=\frac{1}{m} H(\pi_{\textbf{j}}\mu, \mathcal{L}_m)$.
Let us first show that $h_m$ is continuous in $\textbf{j}\in\Sigma$.
Indeed, the supports of $\text{supp}(\pi_{\textbf{j}}\mu)$ are uniformly bounded and $\textbf{j}\mapsto \pi_{\textbf{j}}\mu$ is continuous in the weak star topology. Since $\pi_{\textbf{j}}\mu$ has no atom, for any $I\in \mathcal{L}_m$, $\textbf{j}\mapsto \pi_{\textbf{j}}\mu(I)$ is continuous.  Thus $$\frac{1}{m} H (\pi_{\textbf{j}}\mu, \mathcal{L}_m)=\frac1m\sum\limits_{I\in\mathcal{L}_m,\,I\subseteq[0,1]} h\big(\,\pi_{\textbf{j}}\mu(I)\big)$$ is continuous in $\textbf{j}$,
where $h(t)=t\log_b\frac1t$ is a continuous function in $[0,\infty)$.

Since $h_m$ converges to $\alpha$ $\nu^{\mathbb{Z}_+}$-a.e., the sequence $\{h_m\}$ also converges to $\alpha$ in measure, i.e.
$$\Omega_m:=\left\{\textbf{j}\in \Sigma: \left|\frac{1}{m} H(\pi_{\textbf{j}}\mu, \mathcal{L}_m)-\alpha\right|<\eps\right\}$$
satisfies $\nu^{\mathbb{Z}_+}(\Omega_m)\to 1$ as $m\to\infty$. So there exists $M(\varepsilon)$ such that when $m\ge M(\varepsilon)$, $\nu^{\mathbb{Z}_+}(\Omega_m)>1-\varepsilon/2$.

Fix such an $m\ge M(\eps)$. As $\Omega_m$ is an open subset of $\Sigma$, there exists $N:=N(m,\eps)$ such that the union $X_N$ of the $N$-th cylinders completely contained in $\Omega_m$  has $\nu^{\mathbb{Z}_+}$-measure greater than $1-\eps$. For each $n\ge N$, $X_n\supset X_N$. The lemma follows.
\end{proof}

We shall need the following two lemmas which are respectively Lemma 3.7 and Lemma 3.10 in \cite{barany2019hausdorff}.

\begin{lemma}\label{lem:entporous1}
For any $\varepsilon>0$, there exists $\delta>0$ such that the following holds.
Let $m,\ell\in\mathbb{N}$ and $k>k(m,\ell)$ be given, and suppose that $\tau\in\mathscr{P}(\mathbb{R})$ is a measure and $\beta>0$ is a constant such that for a $(1-\delta)$-fraction of $1\le t\le k$, we can write $\tau$ as a convex combination $\tau=p_0\tau_0+\sum\limits_{i\ge1}p_i\tau_i,\,\tau_i\in\mathscr{P}(\mathbb{R}),p_0<\delta$ so as to satisfy the following three conditions 
\begin{enumerate}
\item [(1)] $\frac{1}{m} H(\tau_i,\mathcal{L}_{t+m})\ge\beta,\;i\ge 1$.
\item [(2)] $\textrm{diam}(\textrm{supp}(\tau_i))\le b^{-(t+\ell)},\;i\ge 1$.
\item [(3)] $\tau(I)<\delta\tau(J)$ whenever $I\subseteq J$ are concentric intervals, $|I|=b^{-\ell}|J|=b^{-(t+\ell)}$.
\end{enumerate}
Assume further that $\left|\frac{1}{k}H(\tau,\mathcal{L}_k)-\beta\right|<\delta$.
Then $\tau$ is $(\beta,\varepsilon,m)$-entropy porous from scale $1$ to $k$.
\end{lemma}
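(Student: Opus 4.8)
The plan is to follow the telescoping-sum argument for entropy across scales, which reduces the claim to checking that, on average over $1\le t\le k$, the single-scale entropy $\tfrac1m H(\tau_{x,t},\mathcal{L}_{t+m})$ rarely exceeds $\beta+\varepsilon$. First I would record the standard decomposition
$$\frac{1}{k}H(\tau,\mathcal{L}_{k})=\frac{1}{k}\sum_{t=0}^{k-1}H(\tau,\mathcal{L}_{t+1}\mid \mathcal{L}_t)
=\mathbb{E}_{0\le t\le k-1}\!\left(\frac{1}{m}H(\tau_{x,t},\mathcal{L}_{t+m})\right)+O\!\left(\frac{m}{k}\right),$$
valid once $k$ is large compared to $m$; here the error comes from grouping the scales into blocks of length $m$ and bounding the overlap terms by $m$, exactly as in \cite[\S3]{barany2019hausdorff}. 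Combined with the hypothesis $\left|\tfrac1k H(\tau,\mathcal{L}_k)-\beta\right|<\delta$, this gives $\mathbb{E}_{0\le t\le k-1}\!\left(\tfrac1m H(\tau_{x,t},\mathcal{L}_{t+m})\right)\le \beta+2\delta$ for $k\ge k(m,\ell)$.

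Next I would obtain a matching lower bound on most components to force concentration near $\beta$. For each of the $(1-\delta)$-fraction of good $t$, write $\tau=p_0\tau_0+\sum_{i\ge1}p_i\tau_i$ as in the statement. Condition (2) says each $\tau_i$, $i\ge1$, is supported on an interval of length $b^{-(t+\ell)}$, so it is a convex combination of at most $b^\ell$ of the level-$(t+m)$... actually it lies inside $O(1)$ cells of $\mathcal{L}_{t+\ell}$, hence its $\mathcal{L}_{t+m}$-components coincide (up to negligibly many) with the global $\mathcal{L}_{t+m}$-components of $\tau$ restricted there; more precisely, by concavity (Lemma~\ref{lem:concave}) and condition (3), which controls the mass of $\tau$ on nested intervals $|I|=b^{-\ell}|J|=b^{-(t+\ell)}$, one shows
$$\frac{1}{m}H(\tau_{x,t},\mathcal{L}_{t+m})\ \ge\ \sum_{i\ge1}\frac{\tau_{x,t}(\mathrm{supp}\,\tau_i)}{?}\,\frac{1}{m}H(\tau_i,\mathcal{L}_{t+m})-o(1)\ \ge\ \beta-o_\delta(1)$$
for a $(1-O(\sqrt\delta))$-fraction of $x$ (in $\tau$-measure), using condition (1). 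Thus for good $t$ the random variable $\tfrac1m H(\tau_{x,t},\mathcal{L}_{t+m})$ is $\ge \beta-o_\delta(1)$ with probability $\ge 1-o_\delta(1)$; the point of condition (3) together with $p_0<\delta$ is precisely to ensure that the ``bad'' component $\tau_0$ and the boundary effects of chopping $\tau$ into the supports of the $\tau_i$ carry negligible mass.

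Finally I would combine the two bounds by a Markov/Chebyshev argument. Let $f_t(x)=\tfrac1m H(\tau_{x,t},\mathcal{L}_{t+m})\in[0,1]$. We have shown $\mathbb{E}_{0\le t\le k-1}\mathbb{E}(f_t)\le\beta+2\delta$, while for a $(1-\delta)$-fraction of $t$ we have $\mathbb{P}(f_t\ge\beta-\eta(\delta))\ge 1-\eta(\delta)$ with $\eta(\delta)\to0$ as $\delta\to0$. Averaging, $\mathbb{E}_{0\le t\le k-1}\mathbb{E}((f_t-\beta)_-)\le \eta(\delta)+\delta$, hence $\mathbb{E}_{0\le t\le k-1}\mathbb{E}((f_t-\beta)_+)\le 2\delta+\eta(\delta)+\delta$, and a further Markov inequality gives $\mathbb{P}_{1\le t\le k}(f_t<\beta+\varepsilon)>1-\varepsilon$ provided $\delta$ was chosen small enough in terms of $\varepsilon$; shifting the index range from $\{0,\dots,k-1\}$ to $\{1,\dots,k\}$ costs only $O(1/k)$. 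This is exactly the definition of $\tau$ being $(\beta,\varepsilon,m)$-entropy porous from scale $1$ to $k$. The main obstacle is the second step: carefully justifying the lower bound $\tfrac1m H(\tau_{x,t},\mathcal{L}_{t+m})\gtrsim\beta$ for typical components, i.e.\ transferring the per-piece entropy hypothesis (1) on the $\tau_i$ to the genuine $\mathcal{L}_{t+m}$-components of $\tau$, which requires conditions (2) and (3) to guarantee that a typical component sees essentially one $\tau_i$ and inherits its entropy up to $o_\delta(1)$ errors — this is the place where the bookkeeping in \cite{barany2019hausdorff} is most delicate, and I would import it essentially verbatim.
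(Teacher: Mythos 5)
The paper does not prove this lemma at all: it is quoted verbatim as Lemma 3.7 of \cite{barany2019hausdorff}, so there is no internal proof to compare against, and your outline is in effect a reconstruction of the proof in that source. The reconstruction is correct in structure: telescoping $\tfrac1kH(\tau,\mathcal{L}_k)$ into component entropies gives the upper bound $\mathbb{E}_{t}\mathbb{E}\bigl(\tfrac1mH(\tau_{x,t},\mathcal{L}_{t+m})\bigr)\le\beta+2\delta+O(m/k)$; conditions (1)--(3) give the pointwise-for-most-components lower bound (condition (2) puts each $\tau_i$ inside a single $\mathcal{L}_t$-cell up to boundary-straddlers, condition (3) summed over the lattice $b^{-t}\Z$ shows the straddlers carry total mass $O(\delta)$, and concavity then yields $\tfrac1mH(\tau_{x,t},\mathcal{L}_{t+m})\ge q_{I}\beta$ with $q_I=\sum_{i:\,\mathrm{supp}\,\tau_i\subset I}p_i/\tau(I)$, which is $\ge 1-\sqrt{\delta}$ off a set of $\tau$-measure $O(\sqrt{\delta})$ by Markov); and the final Markov step on $(f_t-\beta)_+$ closes the argument. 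The only soft spot is the displayed inequality containing the ``?'' placeholder -- the correct weight is the one just described -- but this is bookkeeping rather than a gap in the idea.
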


\begin{lemma}\label{lem:entporous2}
For every $\varepsilon>0$ there exists  $\delta>0$ with the following property.
Let $\ell\in\mathbb{N}$ and $m>m(\varepsilon,\ell)$, and let $\tau\in\boldsymbol{\mathscr{P}}(\mathbb{R})$ be a measure such that $\tau(I)<\frac{\delta}2\tau(J)$ whenever $I\subseteq J$ are concentric intervals, $|I|=b^{-\ell}|J|=2b^{-(k+\ell)}$ for every $k\in\mathbb{N}$. Let $n>n(m,\ell)$ and suppose that $\tau$ is $(\alpha,\delta,m)$-entropy porous from scales $n_1\,to\,n_2=n_1+n$. Then
for any $f(x)=ax+c$, $a\in \R\setminus \{0\}$ and $c\in\mathbb{R}$, $f\tau$ is $(\alpha,\varepsilon,m)$-entropy porous from scales $n_1-[\log_b |a|]$ to $n_2-[\log_b |a|]$.
\end{lemma}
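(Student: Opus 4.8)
The plan is to transport the entropy porosity of $\tau$ through the affine map $f$, exploiting that $f$ carries the dyadic filtration to a comparable one with a level shift of $s:=[\log_b|a|]$. First I would reduce to $a>0$: a reflection $x\mapsto-x$ permutes the cells of every $\mathcal{L}_n$ (off a null set), and by Lemma~\ref{lem:affinetransform} a translation changes each $H(\cdot,\mathcal{L}_n)$ by only $O(1)$; every such additive error is divided by $m$ in the end, hence harmless once $m$ is large. Put $j_1=n_1-s$, $j_2=n_2-s$, so $j_2-j_1=n$, and for $j\in\{j_1,\dots,j_2\}$ set $i:=j+s$. Since $f^{-1}$ maps a cell $J'\in\mathcal{L}_j$ to an interval of length $|a|^{-1}b^{-j}\in(b^{-i-1},b^{-i}]$, the partition $f^{-1}(\mathcal{L}_j)$ consists of intervals comparable to the cells of $\mathcal{L}_i$: every cell of one of the two partitions meets at most a constant $C_b$ cells of the other.

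Next I would fix $j$ and $i=j+s$ and reduce the claim to one scale. The level-$j$ components of $f\tau$ are the measures $(f\tau)_{y,j}=f\bigl((\tau|_{\widetilde I})^{\mathrm{norm}}\bigr)$, where $\widetilde I:=f^{-1}(\mathcal{L}_j(y))\in f^{-1}(\mathcal{L}_j)$ and $(\cdot)^{\mathrm{norm}}$ is renormalization; by Lemma~\ref{lem:affinetransform} (applied to $f$, which magnifies $\widetilde I$ of size $\asymp b^{-i}$ to $\mathcal{L}_j(y)$ of size $b^{-j}$),
$$\bigl|H\bigl((f\tau)_{y,j},\mathcal{L}_{j+m}\bigr)-H\bigl((\tau|_{\widetilde I})^{\mathrm{norm}},\mathcal{L}_{i+m}\bigr)\bigr|\le C.$$
Thus it suffices to show that, after averaging over $j_1\le j\le j_2$, a $(1-\varepsilon)$-fraction of the cells $\widetilde I$ (weighted by $\tau$) satisfy $\frac1mH\bigl((\tau|_{\widetilde I})^{\mathrm{norm}},\mathcal{L}_{i+m}\bigr)<\alpha+\varepsilon$. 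Writing $\widetilde I$ as the disjoint union of its intersections with the $r\le C_b$ cells $I_1,\dots,I_r\in\mathcal{L}_i$ it meets and using the standard bound $H\bigl(\sum_l p_l\omega_l,\mathcal{Q}\bigr)\le\log_bC_b+\sum_lp_lH(\omega_l,\mathcal{Q})$ for $C_b$-term convex combinations,
$$H\bigl((\tau|_{\widetilde I})^{\mathrm{norm}},\mathcal{L}_{i+m}\bigr)\le\log_bC_b+\sum_{l=1}^r p_l\,H\bigl((\tau|_{\widetilde I\cap I_l})^{\mathrm{norm}},\mathcal{L}_{i+m}\bigr),\qquad p_l=\frac{\tau(\widetilde I\cap I_l)}{\tau(\widetilde I)}.$$
Whenever $\widetilde I\supseteq I_l$ the corresponding term equals $H(\tau_{x,i},\mathcal{L}_{i+m})$ with $x\in I_l$, which is $<(\alpha+\delta)m$ as soon as $I_l$ is a ``good'' dyadic cell; and the hypothesis says precisely that, after averaging over $n_1\le i\le n_2$, a $(1-\delta)$-fraction of the $\tau$-mass lies in good cells.

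It then remains to control the at most two ``boundary'' terms, those with $\widetilde I\cap I_l\subsetneq I_l$, coming from the cells $I_l$ that contain an endpoint of $\widetilde I$ in their interior. This is where the uniform non-atomicity $\tau(I)<\tfrac\delta2\tau(J)$ enters: applying it to the doubled $\mathcal{L}_i$-cell $J$ centered at a level-$i$ dyadic boundary point $z$ and the concentric interval $I$ of length $b^{-\ell}|J|\asymp b^{-(i+\ell)}$ — which is exactly what the normalization $|J|=2b^{-(k+\ell)}$ encodes — gives $\tau\bigl(B(z,b^{-(i+\ell)})\bigr)<\tfrac\delta2\,\tau\bigl(B(z,b^{-i})\bigr)$; summing over the level-$i$ boundary points carrying mass (each point of $\mathbb{R}$ lying in boundedly many balls $B(z,b^{-i})$) shows that the total $\tau$-mass within $b^{-(i+\ell)}$ of the level-$i$ grid is $O(\delta)$. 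Hence, outside an exceptional set of $\tau$-mass $O(\delta)$, every relevant boundary overlap $\widetilde I\cap I_l$ has length $\ge b^{-(i+\ell)}$, so $(\tau|_{\widetilde I\cap I_l})^{\mathrm{norm}}$ is the restriction of the good component $\tau_{x,i}$ to a sub-interval at most $\ell$ dyadic levels finer; by the chain rule $H(\tau_{x,i},\mathcal{L}_{i+m})\ge\sum_{Q\in\mathcal{L}_{i+\ell}}\tau_{x,i}(Q)H\bigl((\tau_{x,i})_Q,\mathcal{L}_{i+m}\bigr)$, so on the bulk these too have entropy $<(\alpha+\delta)m+O_\ell(1)$. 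Combining the three types of contributions and absorbing the additive constants ($\log_bC_b$, $C$, $O_\ell(1)$), which are negligible after division by the large parameter $m$, I would conclude that for every $j$ in range a $(1-O(\delta))$-fraction of the $f\tau$-mass has $\frac1mH((f\tau)_{y,j},\mathcal{L}_{j+m})<\alpha+\delta+o_m(1)$; averaging over $j_1\le j\le j_2$ and then choosing $\delta=\delta(\varepsilon)$ small, $m>m(\varepsilon,\ell)$ and $n>n(m,\ell)$ large yields the $(\alpha,\varepsilon,m)$-entropy porosity of $f\tau$ from $j_1$ to $j_2$.

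The hard part is the boundary analysis: making the slicing of $\widetilde I$ into aligned and boundary pieces fully quantitative, and invoking the non-atomicity hypothesis at exactly the right scale — matching the ``$2b^{-(k+\ell)}$'' neighborhoods to the width-$b^{-i}$ neighborhoods of the level-$i$ grid — so that thin slivers carry only $O(\delta)$ of the mass while the surviving boundary pieces cost only $O_\ell(1)$ in entropy. The remaining bookkeeping (restricting a good component a bounded number of dyadic levels, and replacing $\mathcal{L}$ by the comparable partition $f^{-1}(\mathcal{L})$, each perturbs $H(\cdot,\mathcal{L}_{\bullet+m})$ only by $O_\ell(1)$) is routine but must be handled with care because of the non-dyadic cell boundaries.
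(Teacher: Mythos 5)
A preliminary remark: the paper does not prove this statement at all --- it is quoted as Lemma 3.10 of B\'ar\'any--Hochman--Rapaport \cite{barany2019hausdorff} --- so your argument has to stand on its own, and it does not.

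The gap sits in your decomposition of $\widetilde I=f^{-1}(\mathcal{L}_j(y))$ over the cells of $\mathcal{L}_i$, $i=j+s$. First, since $|\widetilde I|=|a|^{-1}b^{-j}\in(b^{-(i+1)},b^{-i}]$, the case ``$\widetilde I\supseteq I_l$'' essentially never occurs: $\widetilde I$ is at most as long as a level-$i$ cell, so it meets at most two cells of $\mathcal{L}_i$ and generically \emph{both} intersections are proper. Your main term --- the one you control directly from the porosity hypothesis --- is empty, and all of the mass lands in what you call boundary terms. Second, your bound on a boundary term is not valid. The chain-rule inequality $H(\tau_{x,i},\mathcal{L}_{i+m})\ge\sum_{Q}\tau_{x,i}(Q)H((\tau_{x,i})_Q,\mathcal{L}_{i+m})$ controls only the \emph{average} of the sub-pieces' entropies, not each one, and $\widetilde I\cap I_l$ is not a union of cells of $\mathcal{L}_{i+\ell}$ anyway. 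What concavity actually gives is $H\bigl((\tau|_{\widetilde I\cap I_l})^{\mathrm{norm}},\mathcal{L}_{i+m}\bigr)\le\frac{\tau(I_l)}{\tau(\widetilde I\cap I_l)}H(\tau_{I_l},\mathcal{L}_{i+m})$, and after multiplying by your weight $p_l=\tau(\widetilde I\cap I_l)/\tau(\widetilde I)$ the uncontrolled ratio $\tau(I_l)/\tau(\widetilde I)$ survives. Nothing in the hypotheses bounds this ratio: the non-atomicity condition gives upper bounds on small concentric intervals, not a doubling property, so $I_l$ may be arbitrarily heavier than $\widetilde I$. Concretely, a restriction of a good component to a subinterval of length $\ge b^{-(i+\ell)}$ carrying, say, $70\%$ of the component's mass can have normalized entropy $\tfrac{10}{7}(\alpha+\delta)$, so ``long overlap'' buys you nothing; this is exactly the scenario the lemma has to exclude, and your argument does not engage with it.

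The workable route keeps your overall plan but decomposes $\widetilde I$ over $\mathcal{L}_{i+\ell}$ rather than $\mathcal{L}_i$, and works in mass-weighted average over the cells $\widetilde I$ before applying Markov. Writing $H(\tau_{\widetilde I},\mathcal{L}_{i+m})\le(\ell+1)+\sum_{A}\tau_{\widetilde I}(A)H(\tau_{A\cap\widetilde I},\mathcal{L}_{i+m})$ with $A$ ranging over $\mathcal{L}_{i+\ell}$, all but the two extreme $A$'s satisfy $A\subseteq\widetilde I$, so $\tau_{A\cap\widetilde I}=\tau_A$ is a genuine component of $\tau$ \emph{at scale $i+\ell$}, controlled by the porosity hypothesis applied at that scale (since $H(\tau_A,\mathcal{L}_{i+m})\le H(\tau_A,\mathcal{L}_{i+\ell+m})$, and the bad $A$'s carry total $\tau$-mass $O(\delta)$ at most scales). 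The two partial cells are bounded trivially by $H\le m+O(1)$, and the non-atomicity with $k=i$ gives $\tau(A)<\tfrac{\delta}{2}\tau(B(c_A,b^{-i}))$, whence $\sum_{\widetilde I}\tau(A_{\mathrm{left}}(\widetilde I))+\tau(A_{\mathrm{right}}(\widetilde I))=O_b(\delta)$; crucially the normalizing factor $1/\tau(\widetilde I)$ cancels against the weight $\tau(\widetilde I)$ in this sum, which is what your version fails to arrange. Markov over $(\widetilde I,i)$ then yields the conclusion for $m>m(\varepsilon,\ell)$ and $n>n(m,\ell)$. Your reduction to $a>0$ and the use of Lemma~\ref{lem:affinetransform} to pass between $(f\tau)_{y,j}$ and $(\tau|_{\widetilde I})^{\mathrm{norm}}$ are fine; it is the single-component comparison across the two misaligned partitions that is broken.
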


\begin{lemma}\label{lem:entporous3}
Under the assumption of Theorem~\ref{thm:entporous}, for any $\varepsilon>0$, there exists $\delta>0$ such that if 	
$m\ge M(\varepsilon)$ and $k\ge K(\varepsilon,m)$ and if $\left|\frac{1}{k} H(\pi_{\textbf{j}}\mu, \mathcal{L}_k)-\alpha\right|<\frac{\delta}{2}$, then $\pi_{\textbf{j}}\mu$ is $(\alpha, \varepsilon, m)$-entropy porous from scale $1$ to $k$.
\end{lemma}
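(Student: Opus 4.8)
The plan is to deduce Lemma~\ref{lem:entporous3} from the abstract entropy-porosity criterion Lemma~\ref{lem:entporous1}, applied to the measure $\tau=\pi_{\textbf{j}}\mu$. Its three structural hypotheses will be supplied, respectively, by the self-affine decomposition~(\ref{eqn:self`affine'}) of $\pi_{\textbf{j}}\mu$ into affine copies of the measures $\pi_{\textbf{i}^*\textbf{j}}\mu$ (condition (2), the diameter bound, via Lemma~\ref{lem:TransformA}), by the concentration of component entropies near $\alpha$ coming from Lemma~\ref{lem:boundsinm} combined with the scale-matching estimate~(\ref{eqn:jiij}) (condition (1), the entropy lower bound), and by joint uniform continuity across scales, Proposition~\ref{prop:uc} (condition (3), the non-concentration bound). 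Since we only need $(\alpha,\varepsilon,m)$-porosity, it is enough to obtain $(\beta,\varepsilon,m)$-porosity for some $\beta\le\alpha$ with $\alpha-\beta=O(\varepsilon)$, because the event $\{\tfrac1m H(\tau_{x,i},\mathcal{L}_{i+m})<\beta+\varepsilon\}$ is contained in $\{\tfrac1m H(\tau_{x,i},\mathcal{L}_{i+m})<\alpha+\varepsilon\}$ when $\beta\le\alpha$; this relaxes condition (1) to a lower bound of the form $\alpha-O(\varepsilon)$, which is all Lemma~\ref{lem:boundsinm} can deliver.

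More precisely, given $\varepsilon>0$ I would let $\delta_1=\delta_1(\varepsilon)$ be the constant from Lemma~\ref{lem:entporous1}, fix a level-gap $\ell$ large enough that $b^{-\ell}$ is dominated by the uniform-continuity constant that Proposition~\ref{prop:uc} attaches to $\delta_1$, choose an auxiliary $\varepsilon'$ with $\varepsilon'\ll\delta_1$, and set $\delta=\delta_1/4$. Fix $M$ with $\mathrm{supp}(\pi_{\textbf{i}^*\textbf{j}}\mu)\subset[-M,M]$ for all $\textbf{i},\textbf{j}$, and $c_1$ with $2M\le b^{c_1}$. For a level $t$, put $s=t+\ell+c_1$, so $\lambda^{\hat s}\le b^{-s}$ and $\hat s>t$ by~(\ref{eqn:n'}) and $\lambda>1/b$, and put $m'=m-\ell-c_1$, so $s+m'=t+m$. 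For every $t$ with $\hat s\ge N(\varepsilon',m')$ I decompose $\pi_{\textbf{j}}\mu$ by~(\ref{eqn:self`affine'}) at depth $\hat s$; by Lemma~\ref{lem:TransformA} each $\pi_{\textbf{j}}g_{\textbf{i}}\mu$ with $\textbf{i}\in\varLambda^{\hat s}$ is an affine image of $\pi_{\textbf{i}^*\textbf{j}}\mu$ with ratio $\lambda^{\hat s}$, hence has support of diameter $\le 2M\lambda^{\hat s}\le b^{-(t+\ell)}$, which is condition (2). Calling $\textbf{i}$ \emph{good} when $\tfrac1{m'}H(\pi_{\textbf{i}^*\textbf{j}}\mu,\mathcal{L}_{m'})>\alpha-\varepsilon'$ and using that word reversal preserves $\nu^{\hat s}$, Lemma~\ref{lem:boundsinm} (with entropy-scale $m'$ and word length $\hat s$) shows the non-good $\textbf{i}$ carry $\nu^{\hat s}$-mass $<\varepsilon'$; I take $\tau_0$ to be the normalized sum over non-good $\textbf{i}$ (so $p_0<\varepsilon'<\delta_1$) and the individual $\pi_{\textbf{j}}g_{\textbf{i}}\mu$, $\textbf{i}$ good, as the $\tau_i$, $i\ge1$. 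Then~(\ref{eqn:jiij}) together with goodness gives $\tfrac1m H(\tau_i,\mathcal{L}_{t+m})\ge\tfrac{m'}{m}(\alpha-\varepsilon')-\tfrac{C_0}{m}\ge\alpha-3\varepsilon'=:\beta$ once $m$ is large in terms of $\ell,c_1,C_0,\varepsilon'$, which is condition (1); condition (3) is exactly Proposition~\ref{prop:uc} for the chosen $\ell$. Finally, the only excluded $t\in[1,k]$ are those with $\hat s<N(\varepsilon',m')$, a proportion $<\delta_1$ once $k\ge K(\varepsilon,m)$ (also large enough to beat $k(m,\ell)$ from Lemma~\ref{lem:entporous1}), and the hypothesis $|\tfrac1k H(\pi_{\textbf{j}}\mu,\mathcal{L}_k)-\alpha|<\delta/2$ gives $|\tfrac1k H(\pi_{\textbf{j}}\mu,\mathcal{L}_k)-\beta|<\delta/2+3\varepsilon'<\delta_1$. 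Then Lemma~\ref{lem:entporous1} yields that $\pi_{\textbf{j}}\mu$ is $(\beta,\varepsilon,m)$-entropy porous from scale $1$ to $k$, hence $(\alpha,\varepsilon,m)$-entropy porous since $\beta\le\alpha$.

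I expect the only real difficulty to be the bookkeeping rather than any new idea: one must verify that the shift $\hat s\leftrightarrow s=t+\ell+c_1$, $m\leftrightarrow m'=m-\ell-c_1$ correctly aligns the component diameter scale $b^{-(t+\ell)}$ with the entropy-measurement scale $b^{-(t+m)}$ in condition (1), and that the chain of requirements ``$m$ large'', ``$k$ large'', ``$\hat s\ge N(\varepsilon',m')$ for all but a $\delta_1$-fraction of $t\le k$'' is simultaneously realizable with the claimed dependencies $\delta=\delta(\varepsilon)$, $M(\varepsilon)$, $K(\varepsilon,m)$. The two substantive inputs -- the abstract porosity criterion and the concentration of component entropies at $\alpha$ -- are Lemmas~\ref{lem:entporous1} and~\ref{lem:boundsinm}, already established; the hypothesis that $W$ is non-Lipschitz enters the argument only through Proposition~\ref{prop:uc} (and Lemma~\ref{lem:uc}).
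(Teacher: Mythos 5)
Your proposal is correct and follows essentially the same route as the paper's proof: both apply Lemma~\ref{lem:entporous1} to $\tau=\pi_{\textbf{j}}\mu$, supplying condition (1) from Lemma~\ref{lem:boundsinm} together with the scale-matching estimate~(\ref{eqn:jiij}), condition (2) from the decomposition~(\ref{eqn:self`affine'}) and Lemma~\ref{lem:TransformA}, and condition (3) from Proposition~\ref{prop:uc}, then pass from $(\beta,\varepsilon,m)$- to $(\alpha,\varepsilon,m)$-porosity using $\beta\le\alpha$. The only difference is cosmetic bookkeeping: the paper normalizes all supports into $[0,1]$ and offsets only by $\ell$ (taking $t=n-\ell$), whereas you carry the support bound $2M$ explicitly via the extra shift $c_1$.
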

\begin{proof}
(1) Assume without loss of generality that $\pi_{\textbf{j}}\mu$ is supported in $[0,1]$ for all $\textbf{j}\in \Sigma$.
Fix $\eps>0$. Let $\delta>0$ be so small that the conclusion of Lemma~\ref{lem:entporous1} holds and $\delta<2\alpha$. Let $\beta=\alpha-\delta/2>0.$
By Proposition~\ref{prop:uc}, there exists $\ell\in \mathbb{N}$, such that for any  $\textbf{j}\in\Sigma$, we have
\begin{equation}\label{eqn:ucc}
\pi_{\textbf{j}}\mu(I)<\frac{\delta}2\pi_{\textbf{j}}\mu(J)
\end{equation}
whenever $I\subseteq J$ are concentric intervals with $1\ge |I|=b^{-\ell}|J|$.
	
By Lemma~\ref{lem:boundsinm}, when $m\ge M(\eps)$ and $n\ge N(\eps, m)$, we have 
 \begin{equation}
 \label{3.9.1}
 \nu^{\hat{n}}\left(\left\{\textbf{i}\in \varLambda^{\hat{n}}: \alpha-\frac{\delta} 6 < \frac{1}{m}H(\pi_{\textbf{i}\textbf{j}}\mu, \mathcal{L}_{m})< \alpha+\frac{\delta}6 \right\}\right) >1-\delta
 \end{equation}
Increasing $M(\eps)$ if necessary, we may assume that $M(\eps)>6\max(C_0, \ell)/\delta$, where $C_0$ is as in (\ref{eqn:jiij}).

Fix $m>M(\eps)$ and assume $k>K(\eps, m):=N(\eps, m)/\delta$.  Let us show that for any $N(\eps,m)<n\le k$, and for $t=n-\ell$, the measure $\tau=\pi_{\textbf{j}}\mu$ can be written in the form $\sum p_i \tau_i$ with the properties (1)-(3) in Lemma~\ref{lem:entporous1}.

Indeed, since $m> 6C_0/\delta$, by (\ref{eqn:jiij}), for any $\textbf{i}\in \varLambda^{\hat{n}}$,
$$\left|\frac{1}{m} H(\pi_{\textbf{j}}g_{\textbf{i}}\mu, \mathcal{L}_{n+m})-\frac{1}{m} H(\pi_{{\textbf{i}}^*\textbf{j}}\mu, \mathcal{L}_m)\right|
\le \frac{C_0}{m}<\frac{\delta}{6}.$$
So (\ref{3.9.1}) implies that the set
$$\mathcal{I}_n=\left\{\textbf{i}\in \varLambda^{\hat{n}}:
\frac{1}{m}H(\pi_{\textbf{j}}g_{\textbf{i}}\mu, \mathcal{L}_{m+n})>\alpha-\frac{\delta}{3}\right\}$$
has cardinality greater than $(1-\delta) b^{\hat{n}}$. We define $\tau_1,\tau_2,\ldots$ to be equal to $\pi_{\textbf{j}}g_{\textbf{i}}\mu$ with $\textbf{i}\in \mathcal{I}_n$, $p_1=p_2=\cdots= b^{-\hat{n}}$ and define $p_0=1-\#\mathcal{I}_n b^{-\hat{n}}$ and
$\tau_0$ to be the average of $\pi_{\textbf{j}}g_{\textbf{i}}\mu$ for those $\textbf{i}\in \varLambda^{\hat{n}}\setminus \mathcal{I}_n$. Then $\tau=p_0\tau_0+p_1\tau_1+\cdots $ and $p_0<\delta$. Moreover,
\begin{enumerate}
\item [(1)] For each $i=1,2,\ldots$,
$$\frac{1}{m}H(\tau_i,\mathcal{L}_{t+m})\ge \frac{1}{m} \left(H(\tau_i, \mathcal{L}_{n+m})-\ell \right)> \alpha-\frac{\delta}{2}=\beta.$$
\item [(2)] Since we assume that all the $\pi_{\textbf{j}}\mu$ are supported in $[0,1]$ and $\lambda^{\hat{n}}\le b^{-n}$ by definition of $\hat{n}$, by Lemma~\ref{lem:TransformA}, each of $\tau_1,\tau_2, \cdots$ is supported in an interval of length $b^{-n}\le b^{-(t+\ell)}$.
\item [(3)] The property (3) follows from (\ref{eqn:ucc}).
\end{enumerate}
Since
$$\left|\frac{1}{k} H(\tau, \mathcal{L}_k)-\beta\right|\le \left|\frac{1}{k} H(\tau, \mathcal{L}_k)-\alpha\right|+|\alpha-\beta|<\delta,$$
by Lemma~\ref{lem:entporous1}, we obtain that $\tau$ is $(\beta, \eps, m)$-entropy porous from scale $1$ to $k$, hence it is $(\alpha, \eps,m)$- entropy porous from scale $1$ to $k$.
\end{proof}

\begin{proof} [Proof of Theorem~\ref{thm:entporous}]
By Lemma~\ref{lem:TransformA}, $$\pi_{\textbf{j}}g_{\textbf{w}} g_{i_1i_2\cdots i_{\hat{n}}}=\lambda^{\hat{n}+|w|} \pi_{i_{\hat{n}}\cdots {i_1}{\textbf{w}}^*\textbf{j}}\mu+\text{Constant}.$$
So by Lemma~\ref{lem:entporous2}, it suffices to prove that when $m>M(\eps)$, $k\ge K(\eps, m)$ and $n\ge N(\eps, m,k)$,  for any $\textbf{h}\in\Sigma$,
\begin{equation}\label{eqn:entpoH}
\nu\left(\left\{\textbf{i}\in \Sigma: \pi_{i_1i_2\cdots i_{\hat{n}}\textbf{h}}\mu \text{ is } (\alpha, \eps, m)-\text{entropy porous from scale } 1 \text{ to } k\right\}\right)>1-\eps.
\end{equation}
Given $\eps>0$, let $\delta$, $M(\eps)$ and $K(\eps, m)$ be given by Lemma~\ref{lem:entporous3}. For this $\delta>0$, by  Lemma~\ref{lem:boundsinm},  when $k\ge K(\delta)$ and $n\ge N(\delta, k)$, 
$$\nu\left(\left\{\textbf{i}\in \Sigma: \left|\frac{1}{k} H(\pi_{i_{\hat{n}}i_{\hat{n}-1}\cdots i_1\textbf{h}}\mu,\mathcal{L}_k)-\alpha \right|<\frac{\delta}{2}\right\}\right)>1-\delta.$$
Therefore, when $m\ge M(\eps)$, $k\ge \max(K(\eps, m), K(\delta))$ and $n\ge N(\delta,k)$, (\ref{eqn:entpoH}) holds.
\end{proof}

%
%
\section{Transversality}\label{sec:separation}
In this section, we deduce from the condition (H) some quantified estimates. These estimates will be used to construct a sequence of partitions $\mathcal{L}_n^\#$ of the space $\mathcal{X}$ in the next section which in turn is used in the last section to prove Theorem B.
The main result of this section is summarized in the following theorem.

\begin{theorem}\label{thm:estimate}
Suppose that a real analytic $\mathbb{Z}$-periodic function $\phi(x)$ satisfies the condition (H) for some integer $b\ge 2$ and $\lambda\in (1/b,1)$. Then there exist positive integers $\ell_0, Q_0$ and a constant $\rho_0>0$ with the following property.
For any $\textbf{u},\,\textbf{v}\in\Sigma$ with $u_n\ne v_n$,
\begin{equation}\label{eqn:Gamma1stder}
\sup_{x\in [0,1)} |\Gamma'_{\textbf{u}}(x)-\Gamma'_{\textbf{v}}(x)|\ge \rho_0 b^{-Q_0n},
\end{equation}
and
\begin{equation}
\sum_{\substack{I\in \mathcal{L}_{\ell_0}\\ I\subset [0,1)}}
\inf_{x\in I} |\Gamma'_{\textbf{u}}(x)-\Gamma'_{\textbf{v}}(x)|\ge \rho_0 \sup_{x\in [0,1]} |\Gamma'_{\textbf{u}}(x)-\Gamma'_{\textbf{v}}(x)|.
\end{equation}
\end{theorem}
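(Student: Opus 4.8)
The plan is to extract quantitative estimates from the condition (H) using the real-analyticity of $\phi$, exploiting that each $\Gamma'_{\mathbf u}-\Gamma'_{\mathbf v}=Y(\cdot,\mathbf u^{\bullet})-Y(\cdot,\mathbf v^{\bullet})$ (up to reindexing) extends to a holomorphic function on a fixed complex strip. First I would record that since $\phi$ is real analytic and $\mathbb Z$-periodic, there is $r>0$ so that $\phi$ extends holomorphically and boundedly to the strip $\{|\mathrm{Im}\,z|<r\}$; because $\gamma<1$, each of the series defining $Y(x,\mathbf j)$ and hence $\Gamma'_{\mathbf j}$ converges on this strip and is uniformly bounded there, independently of $\mathbf j$. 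Thus $H_{\mathbf u,\mathbf v}(x):=\Gamma'_{\mathbf u}(x)-\Gamma'_{\mathbf v}(x)$ is holomorphic and uniformly bounded on the strip for all pairs $\mathbf u,\mathbf v$.

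For the first inequality \eqref{eqn:Gamma1stder}: if $u_n\neq v_n$, then $H_{\mathbf u,\mathbf v}$ depends on the tails beyond level $n$ only through a factor of size comparable to $\gamma^n$ (more precisely, truncating the defining series at level $n$ introduces an error $O(\gamma^n)$). The leading part is a nonzero trigonometric-type expression coming from the mismatch at coordinate $n$; because $\phi$ is non-constant and (H) holds, this expression is not identically zero, and — here is where analyticity is essential — a nonzero real-analytic $1$-periodic function has its sup-norm bounded below by a universal multiple of, say, its $L^2$ norm, which in turn I can bound below using a single nonzero Fourier coefficient of $\phi$. Scaling this through the $n$ levels of rescaling that produce $\Gamma'_{\mathbf u}-\Gamma'_{\mathbf v}$ from the base mismatch gives a lower bound of the form $\rho_0 b^{-Q_0 n}$; alternatively, one argues by compactness and contradiction over the finitely many possible "first $n$ symbols" patterns, using that a limit of normalized differences would be a nonzero analytic function whose sup-norm cannot degenerate. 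The exponent $Q_0$ absorbs the comparison between $\gamma^n$ and $b^{-n}$ together with any polynomial-in-$n$ losses from the analytic estimate.

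For the second inequality: this is a "reverse Jensen / sampling" statement saying the sum of infima of $|H_{\mathbf u,\mathbf v}|$ over level-$\ell_0$ $b$-adic intervals controls a definite fraction of $\|H_{\mathbf u,\mathbf v}\|_\infty$. I would prove this by a normal-families argument: consider the family $\{H_{\mathbf u,\mathbf v}/\|H_{\mathbf u,\mathbf v}\|_\infty\}$, which is uniformly bounded and equicontinuous (indeed uniformly real-analytic) on $[0,1)$; for any such normalized function $h$ with $\|h\|_\infty=1$ on $[0,1)$, a uniform modulus-of-continuity estimate (from the holomorphic bound via Cauchy estimates) shows that on the dyadic interval containing a near-maximum point, $|h|$ stays above $1/2$, so choosing $\ell_0$ large enough (depending only on $r$ and the uniform bound, hence only on $\phi,b,\lambda$) that interval alone contributes $\geq \tfrac12 b^{-\ell_0}\cdot\tfrac12$ to the sum after accounting for interval length — and one packages the constant into $\rho_0$. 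I would shrink $\rho_0$ at the end so that both displayed inequalities hold simultaneously.

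The main obstacle I anticipate is the first inequality: one must verify that the "leading mismatch" term genuinely does not degenerate as $n\to\infty$ and as the other symbols vary, i.e. that condition (H) — an $\not\equiv 0$ statement — upgrades to a uniform quantitative lower bound with an explicit exponential rate in $n$. The real-analytic hypothesis is exactly what makes this possible (it fails for merely $C^k$ functions, where the relevant differences can be made arbitrarily flat), and the technical heart is converting "nonvanishing analytic function" into a controllable constant via Fourier coefficients of $\phi$ and then tracking how this constant propagates through the $n$-fold renormalization that relates $\Gamma'_{\mathbf u}-\Gamma'_{\mathbf v}$ to the base-level mismatch.
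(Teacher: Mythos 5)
Your overall architecture (holomorphic extension to a fixed strip, compactness, rescaling, and a ``near the maximum the function stays large'' step) points in the right direction, but both halves of the argument are missing the single quantitative input the proof actually turns on: a \emph{uniform bound on the order of vanishing} over the compact family $\mathcal{F}_1=\{\Gamma_{\textbf{u}}-\Gamma_{\textbf{v}}:u_1\neq v_1\}$. The paper first proves (Lemma~\ref{lem:derbound}, by compactness of $\mathcal{F}_1$, analyticity and (H)) that there are $\eps_1>0$ and an integer $Q_1$ such that every $f\in\mathcal{F}_1$ satisfies $|f^{(k)}(x)|\ge\eps_1$ for some $k\le Q_1$ at \emph{every} $x\in[0,1]$, and then uses the exact self-similarity $\Gamma'_{\textbf{u}}-\Gamma'_{\textbf{v}}=\gamma^{n-1}\,(\Gamma'_{\textbf{u}'}-\Gamma'_{\textbf{v}'})\circ S$, with $S$ an affine contraction by $b^{-(n-1)}$ onto a $b$-adic interval, to pass from the first-mismatch level $n$ down to $\mathcal{F}_1$. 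The point your first-inequality argument glosses over is that after this rescaling the base function is only sampled on an interval of length $b^{-(n-1)}$, where its supremum can be as small as the global supremum times $b^{-Kn}$ with $K$ the maximal order of a zero; this \emph{exponential} (not polynomial) loss is the true source of the exponent $Q_0$. Neither a Fourier-coefficient/$L^2$ lower bound for the sup over a full period nor a truncation-at-level-$n$ argument can detect it; indeed the truncation error is of genuine size $O(\gamma^{n+1})$, which is comparable to or larger than the leading term on the relevant small interval, so that route does not close at all.

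The same gap undermines your second inequality. The normalized family $H/\|H\|_{\infty,[0,1)}$ is not shown to be uniformly equicontinuous by Cauchy estimates: Cauchy controls $\|H'\|_\infty$ by the supremum of $H$ over a complex strip, which for analytic functions is not dominated by the supremum over $[0,1)$ (compare $\eps\sin(Nx)$), and the bound you actually need, $\|H'\|_{\infty}\le C\,\|H\|_{\infty,[0,1)}$ uniformly in $n$ and in the pair $(\textbf{u},\textbf{v})$, is a Bernstein--Markov type inequality on intervals of length $b^{-(n-1)}$ that again requires the order-of-vanishing bound $Q_1$. The paper obtains the needed control by showing that on each interval of a fixed $b$-adic partition some derivative $f^{(k)}$, $k\le Q_1$, is ``$k$-regular'' (its modulus varies by at most a factor $2$), and then descending from $f^{(k)}$ to $f'$ by elementary induction (Lemmas~\ref{lem:valuebd0} and~\ref{lem:bdlocal2global}). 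Once you insert the uniform-$Q_1$ lemma and the exact rescaling identity, your sketch of both inequalities can be completed; without them it does not close.
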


For the proof, we observe that for any integer $k\ge 0$, the family $\Gamma_{\textbf{u}}^{(k)}$, $\textbf{u}\in \Sigma$, is compact with respect to the topology of uniform convergence in $\R$. Together with the condition (H), this implies the maps in
\begin{equation}\label{eqn:funclassFn}
\mathcal{F}_n:=\{\Gamma_{\textbf{u}}-\Gamma_{\textbf{v}}: u_n\not=v_n \text{ and } u_j=v_j \text{ for } 1\le j<n\}
\end{equation}
are uniformly separated with constants depending on $n$. In order to quantify the dependence of the constants in $n$, we shall use
%
the following fact frequently, which can be checked directly by definition of $\Gamma$:

{\em If $\textbf{u}=(u_m)_{m=1}^\infty, \textbf{v}=(v_m)_{m=1}^\infty\in \Sigma$ and $u_1=v_1$, $u_2=v_2$, $\ldots$, $u_{n-1}=v_{n-1}$ but $u_n\not=v_n$, where $n\in \mathbb{Z}_+$, then for any $k\ge 1$,
\begin{equation}\label{eqn:unnot=vn}
\Gamma_{\textbf{u},\textbf{v}}^{(k)}(x)
=\left(\frac{\gamma}{b^{k-1}}\right)^{n-1}\Gamma^{(k)}_{\sigma^{n-1}(\textbf{u}),\sigma^{n-1}(\textbf{v})}
\left(\frac{x+u_1+\cdots+u_{n-1}b^{n-2}}{b^{n-1}}\right),
\end{equation}
where $\Gamma_{\textbf{u},\textbf{v}}=\Gamma_{\textbf{u}}-\Gamma_{\textbf{v}}$.}

%
%

\begin{definition} For an integer $k\ge 0$, we say that a map $\psi: [a,b)\to \R$ is {\em $k$-regular} if $\psi$ is $C^k$ and
$$\sup_{x\in [a,b)} |\psi^{(k)}(x)|\le  2\inf_{x\in [a,b)} |\psi^{(k)}(x)|.$$
\end{definition}

\begin{lemma}\label{lem:derbound}
There exists a constant $\eps_1>0$ and a positive integer $Q_1$ such that for any $f\in \mathcal{F}_1$,
\begin{equation}\label{eqn:C1low}
\sup_{x\in [0,1]} |f'(x)|\ge \eps_1,
\end{equation}
and for any $x\in [0,1]$, there exists $k\in \{1,2,\ldots, Q_1\}$ such that
\begin{equation}\label{eqn:someder}
|f^{(k)}(x)|\ge \eps_1.
\end{equation}
\end{lemma}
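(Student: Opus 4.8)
The plan is to prove both assertions simultaneously by exploiting the compactness of the families $\{\Gamma_{\mathbf u}^{(k)}\}_{\mathbf u \in \Sigma}$ together with the condition (H). For the lower bound \eqref{eqn:C1low}, I would argue by contradiction: if it fails there are $f_j = \Gamma_{\mathbf u^{(j)}} - \Gamma_{\mathbf v^{(j)}} \in \mathcal F_1$ with $\sup_{[0,1]}|f_j'| \to 0$. Since $\mathbf u^{(j)}, \mathbf v^{(j)}$ range over the compact set $\Sigma$ with $u_1^{(j)} \neq v_1^{(j)}$ (and $\varLambda$ is finite, so after passing to a subsequence $u_1^{(j)} \equiv u$, $v_1^{(j)} \equiv v$ are constant with $u \neq v$), we may pass to a subsequence along which $\mathbf u^{(j)} \to \mathbf u$, $\mathbf v^{(j)} \to \mathbf v$ in $\Sigma$. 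The functions $\Gamma_{\mathbf w}$ depend continuously on $\mathbf w$ in the $C^1$ topology (differentiate \eqref{ProjectionFunction} and \eqref{Kernel} term by term; the series for $\Gamma_{\mathbf w}' = -Y(\cdot,\mathbf w)$ converges uniformly and continuously in $\mathbf w$ because $\gamma < 1$ and $\phi'$ is continuous). Hence $f_j \to f := \Gamma_{\mathbf u} - \Gamma_{\mathbf v}$ in $C^1$, so $f' \equiv 0$ on $[0,1]$, i.e. $Y(x,\mathbf u) \equiv Y(x,\mathbf v)$ on $[0,1)$ with $u_1 \neq v_1$; by periodicity this extends to all of $\R$, contradicting (H). Note $f$ is a genuine element of $\mathcal F_1$ since $u \neq v$, so this is exactly a violation of (H), not the degenerate case.

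For the second assertion \eqref{eqn:someder}, I would again argue by contradiction, but now I need to use that $\phi$ is real analytic, not merely $C^1$. Suppose that for every positive integer $Q$ there is $f_Q \in \mathcal F_1$ and $x_Q \in [0,1]$ with $|f_Q^{(k)}(x_Q)| < 1/Q$ for all $1 \le k \le Q$. Pass to a subsequence so that $x_Q \to x_* \in [0,1]$, $\mathbf u^{(Q)} \to \mathbf u$, $\mathbf v^{(Q)} \to \mathbf v$ with $u_1 \neq v_1$. The key point is that $\mathcal F_1$ lies in a compact family in the $C^\infty$ topology (indeed in a complex-analytic sense): since $\phi$ is real analytic and $\Z$-periodic, $\phi$ extends holomorphically to a strip $|\mathrm{Im}\, z| < r$, and the series $\Gamma_{\mathbf w}(z) = \int_0^z Y(s,\mathbf w)\,ds$ — or rather its derivative $-Y(z,\mathbf w) = \sum_n \gamma^n \phi'(\cdots)$ — converges uniformly on a (possibly smaller) strip, uniformly in $\mathbf w \in \Sigma$, giving a uniform bound $|\Gamma_{\mathbf w}^{(k)}(x)| \le C \cdot k!\, R^{-k}$ on $[0,1]$ for constants $C, R > 0$ independent of $\mathbf w$. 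Consequently $f := \Gamma_{\mathbf u} - \Gamma_{\mathbf v}$ is real analytic with radius of convergence at least $R$ at every point, and by a normal-families / Montel argument $f_Q \to f$ in $C^\infty_{\mathrm{loc}}$, hence $f^{(k)}(x_*) = 0$ for every $k \ge 1$. Therefore $f'$ is real analytic with all derivatives vanishing at $x_*$, so $f' \equiv 0$, and again $Y(x,\mathbf u) \equiv Y(x,\mathbf v)$ with $u_1 \neq v_1$, contradicting (H). Taking $\eps_1$ to be the minimum of the two constants produced completes the proof.

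\textbf{Main obstacle.} The delicate point is establishing the uniform-in-$\mathbf w$ holomorphic extension and the attendant Cauchy-type derivative bounds $|\Gamma_{\mathbf w}^{(k)}(x)| \le C\, k!\, R^{-k}$. One must check that the terms $\phi'\!\left(\tfrac{z}{b^n} + \tfrac{j_1}{b^n} + \cdots + \tfrac{j_n}{b}\right)$ in \eqref{Kernel} remain in the domain of holomorphy of $\phi'$ for $z$ in a fixed complex neighborhood of $[0,1]$ — which they do, since dividing by $b^n \ge b$ shrinks the imaginary part — and that the factor $\gamma^n$ with $\gamma < 1$ secures uniform convergence; the translates $j_k/b^{n-k+1}$ are harmless by periodicity. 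Once this uniform analyticity is in hand, the contradiction arguments are routine compactness. A secondary subtlety is bookkeeping the reduction \eqref{eqn:unnot=vn} is not needed here (it is for $\mathcal F_n$, $n \ge 2$), so Lemma~\ref{lem:derbound} concerns only $\mathcal F_1$ and the constants $\eps_1, Q_1$ are genuinely absolute; the passage to general $n$ will be handled later via \eqref{eqn:unnot=vn}.
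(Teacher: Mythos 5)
Your argument is essentially the paper's: both assertions are proved by contradiction using compactness of $\mathcal{F}_1$ in the $C^k$ topologies (which you justify, correctly, via the uniform holomorphic extension of $Y(\cdot,\mathbf{w})$ to a complex strip and Cauchy estimates — a point the paper asserts without detail) together with real analyticity and condition (H), including the key observation that $u_1\neq v_1$ persists in the limit so the limit function genuinely lies in $\mathcal{F}_1$. One small correction: in the first part, the passage from $f'\equiv 0$ on $[0,1]$ to $Y(x,\mathbf{u})\equiv Y(x,\mathbf{v})$ on all of $\R$ must be justified by real analyticity of $f'$, not by \emph{periodicity} — the functions $Y(\cdot,\mathbf{u})$ are not $\Z$-periodic (replacing $x$ by $x+1$ shifts the digit sequence $\mathbf{u}$) — which is exactly how the paper concludes and is immediate from the analyticity you establish anyway.
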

\begin{proof} This follows from the fact that $\mathcal{F}_1$ is compact with respect to the topology of uniform convergence in the $C^k$ sense.
 More precisely, if (\ref{eqn:C1low}) fails, then there exists $f_m\in \mathcal{F}_1$ such that $\sup_{x\in [0,1]} |f_m'(x)|<1/m$. Passing to a subsequence we may assume that there exists $f\in \mathcal{F}_1$ such that $\sup_{x\in \R} |f'_m(x)-f'(x)|\to 0$. Then $f'(x)=0$ for all $x\in [0,1]$. Since $f$ is real analytic and $f(0)=0$, this implies that $f\equiv 0$. However, $\mathcal{F}_1$ does not contain the zero function by the condition (H), a contradiction.

Similarly, if (\ref{eqn:someder}) fails, then there exists $f_m\in \mathcal{F}_1$ and $x_m\in [0,1]$ such that $|f_m^{(k)}(x_m)|<1/m$, for each $m=1,2,\ldots$ and $k=1,2,\ldots, m$. Passing to a subsequence, there exists $x_0\in [0,1]$ and $f\in \mathcal{F}_1$ such that $x_m\to x_0$ and $\max_{x\in [0,1]}|f_m^{(k)}(x)-f^{(k)}(x)|\to 0$ as $m\to\infty$, for each $k=1,2,\ldots$. It follows that $f^{(k)}(x_0)=0$ for all $k\ge 1$. As $f$ is real analytic and $f(0)=0$, this implies that $f\equiv 0$, a contradiction.
\end{proof}

\begin{lemma}\label{lem:decombd}
Let $\eps_1, Q_1$ be as in Lemma~\ref{lem:derbound}. There exist $\ell_1\in \mathbb{N}$ such that for any $f\in \mathcal{F}_n$, $n=1,2,\ldots$, and any $I\in \mathcal{L}_{\ell_1}$ with $I\subset [0,1)$, $f: I\to \mathbb{R}$ is $k$-regular and
$$\sup_{x\in I} |f^{(k)}(x)|\ge \eps_1\left(\gamma b^{1-k}\right)^{n-1}$$
for some $k\in \{1,2,\ldots, Q_1\}$.
\end{lemma}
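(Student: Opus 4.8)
Here is how I would prove Lemma~\ref{lem:decombd}.

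\textbf{Reduction to $n=1$.} The plan is first to use the scaling identity (\ref{eqn:unnot=vn}) to reduce everything to the base case. Given $f=\Gamma_{\textbf{u},\textbf{v}}\in\mathcal{F}_n$, write $\tilde f=\Gamma_{\sigma^{n-1}\textbf{u},\sigma^{n-1}\textbf{v}}\in\mathcal{F}_1$; then (\ref{eqn:unnot=vn}) gives $f^{(k)}(x)=(\gamma b^{1-k})^{n-1}\tilde f^{(k)}(\Psi_n(x))$, where $\Psi_n$ is the affine map $x\mapsto (x+u_1+\cdots+u_{n-1}b^{n-2})/b^{n-1}$ sending the unit interval (indeed each $\ell_1$-adic subinterval of it) into an interval of length $b^{-(n-1)}$ inside $[0,1)$. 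The factor $(\gamma b^{1-k})^{n-1}$ accounts exactly for the claimed lower bound, and $k$-regularity on an interval is preserved under affine reparametrization of the domain; so it suffices to find $\ell_1$ such that for \emph{every} $f\in\mathcal{F}_1$ and every $I$ in $\mathcal{L}_{\ell_1}$ contained in $[0,1)$, there is some $k\in\{1,\dots,Q_1\}$ with $f:I\to\R$ being $k$-regular and $\sup_I|f^{(k)}|\ge\eps_1$. (Strictly, one must note $\Psi_n(I)$ need not be a dyadic interval of the same level, but since $\mathcal{F}_1$ enjoys uniform modulus-of-continuity bounds on \emph{all} derivatives up to order $Q_1+1$, the argument below works for \emph{all} subintervals of $[0,1)$ of length $\le b^{-\ell_1}$, which is what I will actually prove.)

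\textbf{The base case via compactness and uniform equicontinuity of derivatives.} By Lemma~\ref{lem:derbound}, for each $x\in[0,1]$ and each $f\in\mathcal{F}_1$ there is $k=k(f,x)\in\{1,\dots,Q_1\}$ with $|f^{(k)}(x)|\ge\eps_1$. The family $\{f^{(k)}:f\in\mathcal{F}_1,\ 1\le k\le Q_1+1\}$ is uniformly bounded and uniformly equicontinuous on $[0,1]$ (it is a continuous image of the compact set $\Sigma\times\Sigma$, or directly: $\Gamma_{\textbf{u}}^{(k+1)}$ has a uniformly convergent defining series). Hence there is $\eta>0$ such that $|y-x|\le\eta$ implies $|f^{(j)}(y)-f^{(j)}(x)|<\eps_1/4$ for all $f\in\mathcal{F}_1$ and all $1\le j\le Q_1+1$. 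Now pick $\ell_1$ with $b^{-\ell_1}\le\eta$. Given $I$ of length $\le b^{-\ell_1}$, pick its left endpoint (or any point) $x_0$ and let $k=k(f,x_0)$, so $|f^{(k)}(x_0)|\ge\eps_1$. For all $x\in I$ we then have $|f^{(k)}(x)|\ge\eps_1-\eps_1/4=3\eps_1/4$ and $|f^{(k)}(x)|\le|f^{(k)}(x_0)|+\eps_1/4\le \sup_I|f^{(k)}|$; combining, $\sup_I|f^{(k)}|\le\sup_I|f^{(k)}|$ trivially, and more importantly $\sup_I|f^{(k)}|\le |f^{(k)}(x_0)|+\eps_1/4$ while $\inf_I|f^{(k)}|\ge|f^{(k)}(x_0)|-\eps_1/4$; since $|f^{(k)}(x_0)|\ge\eps_1$ we get $\sup_I|f^{(k)}|/\inf_I|f^{(k)}|\le (|f^{(k)}(x_0)|+\eps_1/4)/(|f^{(k)}(x_0)|-\eps_1/4)\le (1+1/4)/(1-1/4)=5/3\le 2$, so $f$ is $k$-regular on $I$. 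Also $\sup_I|f^{(k)}|\ge 3\eps_1/4$. Replacing $\eps_1$ by $3\eps_1/4$ (harmless, just rename the constant) gives the base case exactly in the form stated.

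\textbf{Assembling.} Feeding the base case through the reduction of the first paragraph with this $\ell_1$ and this (renamed) $\eps_1$, $Q_1$ yields the lemma: for $f\in\mathcal{F}_n$ and $I\in\mathcal{L}_{\ell_1}$, $I\subset[0,1)$, there is $k\in\{1,\dots,Q_1\}$ with $f|_I$ being $k$-regular (inherited from $\tilde f$ on $\Psi_n(I)$, an interval of length $b^{-(n-1)}b^{-\ell_1}\le b^{-\ell_1}\le\eta$) and $\sup_I|f^{(k)}|=(\gamma b^{1-k})^{n-1}\sup_{\Psi_n(I)}|\tilde f^{(k)}|\ge\eps_1(\gamma b^{1-k})^{n-1}$.

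\textbf{Main obstacle.} The only genuinely delicate point is the bookkeeping in the reduction: one must make sure the compactness/equicontinuity input is applied on \emph{all} short subintervals of $[0,1)$ (not merely dyadic ones of a fixed level), because the affine images $\Psi_n(I)$ are generally not dyadic. This is why I stated the base case for arbitrary intervals of length $\le b^{-\ell_1}$; with that formulation everything goes through mechanically. A secondary point is to choose the equicontinuity threshold using derivatives up to order $Q_1+1$ (not just $Q_1$), which is exactly what lets the single value $k=k(f,x_0)$ selected at one point of $I$ work uniformly over all of $I$ and simultaneously deliver $k$-regularity.
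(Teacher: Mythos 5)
Your proof is correct and follows essentially the same route as the paper's: settle the case $n=1$ by compactness and uniform equicontinuity of the derivatives of the family $\mathcal{F}_1$ for a sufficiently deep level $\ell_1$, then transfer to general $n$ via the scaling identity (\ref{eqn:unnot=vn}). The only cosmetic differences are that you prove the base case for all subintervals of length at most $b^{-\ell_1}$ instead of observing that the affine image of $I$ is a $b$-adic interval of level $\ell_1+n-1$, hence contained in some $J\in\mathcal{L}_{\ell_1}$, and that your lower bound degrades to $3\eps_1/4$ — an analogous, unacknowledged loss occurs in the paper's own argument, so this is harmless.
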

\begin{proof}
For $n=1$, there is a constant $C_1>0$ such that for each $f\in \mathcal{F}_1$, $|f^{(k)}(x)|\le C_1$ for any $k=2,3,\ldots, Q_1+1$ and any $x\in [0,1)$. So there is an integer $\ell_1\ge 1$ such that for each $f\in \mathcal{F}_1$ and each $I\in \mathcal{L}_{\ell_1}$ with $I\subset [0,1)$, $f:I\to\R$ is $k$-regular and $\sup_{x\in I} |f^{(k)}(x)|\ge \eps_1$ for some $k\in \{1,2,\ldots, Q_1\}$.

For general $n$, this follows from (\ref{eqn:unnot=vn}). Indeed,
there is $u_1, u_2,\ldots, u_{n-1}$ and a map $f_1\in \mathcal{F}_1$ such that
$$f'(x)= \gamma^{n-1} f'_1\left(\frac{x+u_1+\cdots+u_{n-1}b^{n-2}}{b^{n-1}}\right).$$
For any $I\in \mathcal{L}_{\ell_1}$, there is $J\in \mathcal{L}_{\ell_1}$ such that
$$x\in I\Rightarrow \frac{x+u_1+\cdots+u_{n-1}b^{n-2}}{b^{n-1}}\in J.$$
Since $f_1$ is $k$-regular in $J$ for some $k\in \{1,2,\ldots, Q_1\}$, $f$ is $k$-regular in $I$ for the same $k$.
\end{proof}

\begin{lemma}\label{lem:valuebd0}
For any integer $k\ge 1$, there exist $\delta_k>0$ and $\tau_k>0$ such that the following holds. Let $\psi:[0,1)\to \R$ be a $C^k$ function such that $|\psi^{(k)}(x)|\ge 1$ for all $x\in [0,1)$. Then there exists a subinterval $J$ of $[0,1)$ such that $|J|\ge \delta_k$ and such that $|\psi'(x)|>\tau_k$ for all $x\in J$.
\end{lemma}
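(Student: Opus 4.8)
The plan is to argue by induction on $k$. For $k=1$ the statement is immediate: take $J=[0,1)$, $\delta_1=1$ and $\tau_1=\tfrac{1}{2}$, since $|\psi'|=|\psi^{(k)}|\ge 1$ everywhere. For $k\ge 2$ I will \emph{descend} through the derivatives $\psi^{(k)},\psi^{(k-1)},\dots,\psi^{(1)}$, at each stage converting a uniform lower bound on $|\psi^{(j+1)}|$ over some interval into a (smaller) uniform lower bound on $|\psi^{(j)}|$ over a (shorter) subinterval.

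The engine is an elementary remark about $C^1$ functions: if $\chi$ is $C^1$ on an interval $I$ and $|\chi'|\ge c>0$ throughout $I$, then $\chi'$ has constant sign on $I$ (it is continuous and nonvanishing on the connected set $I$), so $\chi$ is strictly monotone and $|\chi(x)-\chi(y)|\ge c|x-y|$ for $x,y\in I$; hence for any $\eta>0$ the set $\{x\in I:|\chi(x)|\le\eta\}$ is a single subinterval of length at most $2\eta/c$, and therefore $\{x\in I:|\chi(x)|>\eta\}$ contains a subinterval of length at least $\tfrac{1}{2}\big(|I|-2\eta/c\big)$. Starting from $I_0=[0,1)$, on which $|\psi^{(k)}|\ge 1$, I apply this remark successively with $\chi=\psi^{(k-1)},\psi^{(k-2)},\dots,\psi'$, choosing $\eta$ at each stage to be one eighth of the product of the current lower bound and the current interval length; this keeps $|I|-2\eta/c$ equal to three quarters of the current length, so after $k-1$ steps I arrive at a subinterval $J=I_{k-1}\subset[0,1)$ of length bounded below by an explicit positive constant $\delta_k$ (of order $(3/8)^{k-1}$) on which $|\psi'|$ exceeds an explicit positive constant $\tau_k$ (of order $8^{-(k-1)}(3/8)^{(k-1)(k-2)/2}$); both depend only on $k$.

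I do not expect a genuine obstacle here; the argument is bookkeeping. The two points that require a word of care are: (a) at each stage one must know that $\psi^{(j+1)}$ keeps a constant sign on the current interval, so that $\psi^{(j)}$ is monotone there and its sublevel set $\{|\psi^{(j)}|\le\eta\}$ is an honest interval rather than a union of intervals — this is immediate from continuity and nonvanishing on a connected set; and (b) one must ensure that the quantity $|I|-2\eta/c$ stays positive along the iteration, which is precisely why $\eta$ is taken to be a fixed fraction of $c\,|I|$, at the harmless price of letting $\delta_k$ and $\tau_k$ decay like a double exponential in $k$.
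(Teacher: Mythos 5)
Your proof is correct and rests on the same idea as the paper's: descending through the derivatives $\psi^{(k)},\dots,\psi'$, using at each stage that a function whose derivative is bounded away from zero in modulus is strictly monotone, so it can be small only on a short subinterval whose complement still contains an interval of definite length. The paper packages this as a formal induction on $k$ with an affine rescaling of the surviving quarter-interval back to $[0,1)$, whereas you iterate directly and track the constants explicitly, but the two arguments are essentially identical.
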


\begin{proof} We prove by induction on $k$. The starting step $k=1$ is trivial. Now assume that the lemma is true for $k<m$, $m\ge 2$. Let us prove it for the case $k=m$. Assume without loss of generality that $\psi^{(m)}(x)\ge 1$ for all $x\in [0,1)$ and that $\psi^{(m-1)}(1/2)\ge 0$. Then $\psi^{(m-1)}(x)\ge \frac{1}{4}$ for all $x\in [3/4, 1)$. Consider the function $\varphi(x)=4^m\psi((x+3)/4)$. Then $\varphi^{(m-1)}(x)\ge 1$ for all $x\in [0,1)$. By the induction hypothesis, there is a subinterval $J_{m-1}$ of $[0,1)$ such that $|J_{m-1}|\ge \delta_{m-1}$ and $|\varphi'(x)|\ge \tau_{m-1}$ for all $x\in J_{m-1}$. Put $J_m=\{(x+3)/4: x\in J_{m-1}\}$, $\delta_m=\delta_{m-1}/4$ and $\tau_m=(1/4)^{m-1}\tau_{m-1}$. Then $|J_m|\ge \delta_m$ and $|\psi'(x)|\ge \tau_m$ for all $x\in J_m$.
\end{proof}

\begin{lemma}\label{lem:bdlocal2global} Assume that $f: [a,b)\to \R$ is $k$-regular for some positive integer $k$.
	Then there exists $\delta_k>0$, $\rho_k>0$ depending only on $k$ and an interval $J\subset [a,b)$ with $|J|>\delta_k (b-a)$ such that
$$\inf_{x\in J}|f'(x)|\ge \rho_k \sup_{x\in [a,b)} |f'(x)|.$$
%
\end{lemma}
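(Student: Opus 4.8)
The plan is to reduce everything by an affine rescaling to a normalized situation and then invoke Lemma~\ref{lem:valuebd0}. First I would normalize: replacing $f$ by $\tilde f(u) = c\,f(a + (b-a)u)$ for a suitable constant $c>0$, we may assume $[a,b) = [0,1)$, since both the conclusion (an interval $J$ of length $\ge \delta_k(b-a)$ on which $|f'|\ge \rho_k \sup|f'|$) and the hypothesis of $k$-regularity are invariant under such an affine change of variable in the domain, and multiplying $f$ by a positive constant affects neither side of the desired inequality nor $k$-regularity. So it suffices to treat $f:[0,1)\to\R$ that is $k$-regular.

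Next I would normalize the $k$-th derivative. Set $M = \sup_{x\in[0,1)}|f^{(k)}(x)|$; by $k$-regularity $M>0$ (if $f^{(k)}\equiv 0$ then a $k$-regular function has $f^{(k)}$ identically zero, which is the degenerate case — but then the statement is vacuous or handled trivially, so assume $M>0$) and $\inf_{x}|f^{(k)}(x)| \ge M/2$. Since $f^{(k)}$ is continuous and does not vanish, it has constant sign, so $|f^{(k)}(x)| \ge M/2$ for all $x\in[0,1)$. Now consider $\psi = (2/M) f$, so that $|\psi^{(k)}(x)|\ge 1$ for all $x\in [0,1)$. Apply Lemma~\ref{lem:valuebd0} with this $k$: there is a subinterval $J\subset [0,1)$ with $|J|\ge \delta_k$ and $|\psi'(x)| > \tau_k$ for all $x\in J$, i.e. $|f'(x)| > \tfrac{M}{2}\tau_k$ for all $x\in J$.

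Finally I would convert the lower bound $\tfrac{M}{2}\tau_k$ on $|f'|$ over $J$ into a lower bound in terms of $\sup_{[0,1)}|f'|$. Here I need an upper bound of the form $\sup_{x\in[0,1)}|f'(x)| \le C_k M$ with $C_k$ depending only on $k$. This is where I expect to use $k$-regularity in an essential way a second time: because $|f^{(k)}|$ is squeezed between $M/2$ and $M$ on all of $[0,1)$, one controls $f'$ by integrating. Concretely, writing a Taylor-with-remainder / repeated-integration argument, $f^{(k-1)}(x) = f^{(k-1)}(x_0) + \int_{x_0}^x f^{(k)}$, and choosing $x_0$ to be a point where $f^{(k-1)}$ is small (e.g. a zero of $f^{(k-1)}$ if one exists in $[0,1)$, or else $f^{(k-1)}$ has constant sign and $|f^{(k-1)}|$ is itself bounded below, pushing the argument down one order), one gets $\sup|f^{(k-1)}| \le c\,M$; iterating down to $f'$ gives $\sup_{[0,1)}|f'| \le C_k M$. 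Then $\inf_{x\in J}|f'(x)| \ge \tfrac{\tau_k}{2} M \ge \tfrac{\tau_k}{2C_k}\sup_{[0,1)}|f'|$, so the lemma holds with $\rho_k = \tau_k/(2C_k)$ and the same $\delta_k$ as in Lemma~\ref{lem:valuebd0}.

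The main obstacle is the last step: establishing the clean bound $\sup_{[0,1)}|f'| \le C_k M$ with $C_k$ depending only on $k$ and not on $f$. The subtlety is that a $k$-regular function need not have any zero of $f', f'', \ldots, f^{(k-1)}$ in $[0,1)$, so one cannot simply "integrate from a zero." The resolution is that if, say, $f^{(j)}$ has no zero in $[0,1)$ for some $j<k$, then $f^{(j)}$ has constant sign there; combined with the fact that $|f^{(k)}|\in[M/2,M]$ forces $f^{(j)}$ to be monotone with derivative of size comparable to $M$ (after at most $k-j$ further integrations), one shows $|f^{(j)}|$ is at least of order $M$ somewhere, hence — being of constant sign and with controlled total variation — of order $M$ throughout, so no anomalously large value can occur. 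Making this dichotomy argument fully rigorous and uniform in $f$ is the one genuinely fiddly point; everything else is bookkeeping with the affine normalizations.
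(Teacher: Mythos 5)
Your normalization and the application of Lemma~\ref{lem:valuebd0} are exactly what the paper does, but the final step contains a genuine gap: the inequality $\sup_{[0,1)}|f'|\le C_k M$ with $M=\sup|f^{(k)}|$ is simply \emph{false} for $k$-regular functions. Take $f(x)=Lx+x^k/k!$ on $[0,1)$ with $L$ huge: this is $k$-regular with $M=1$, yet $\sup|f'|\ge L$ is unbounded. Your proposed "resolution" (constant sign of $f^{(j)}$ plus controlled total variation forces $|f^{(j)}|$ to be of order $M$ throughout) does not exclude this: constant sign and small total variation are perfectly compatible with a large constant value, so "of order $M$ somewhere" cannot be upgraded to "at most $C_kM$ everywhere". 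Note that the lemma is still true for this $f$, precisely because $f'$ is then nearly constant — which points to the correct repair.

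The paper's fix has two ingredients you are missing. First, one may assume that $f$ is not $k'$-regular for any $1\le k'<k$ (otherwise replace $k$ by the least such $k'$; the case $k'=1$ gives the conclusion immediately with $J=[a,b)$ and $\rho=1/2$). Second, under this reduction one bounds only the \emph{oscillation} of $f'$, not its supremum: with $\sup|f^{(k)}|=1$, the mean value theorem gives $\mathrm{osc}(f^{(k-1)})\le 1$, and the failure of $(k-1)$-regularity then forces $\sup|f^{(k-1)}|<2$ (else $\inf|f^{(k-1)}|\ge\sup|f^{(k-1)}|-1\ge\tfrac12\sup|f^{(k-1)}|$, a contradiction); iterating down yields $\sup_{x,y}|f'(x)-f'(y)|\le 2^{k-1}$. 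An oscillation bound suffices for the conclusion, since with $\iota:=\inf_J|f'|\ge\tau_k$ one has $\sup|f'|\le\iota+2^{k-1}$ and hence $\inf_J|f'|/\sup|f'|\ge\tau_k/(\tau_k+2^{k-1})$, which is the paper's $\rho_k$. As written, your argument does not close, and no argument via $\sup|f'|\le C_kM$ can.
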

\begin{proof}
Without loss of generality, we may assume that $[a,b)=[0,1)$ and $$\sup_{0\le x<1} |f^{(k)}(x)|=1.$$
(Otherwise, we consider $\lambda_1 f(\lambda_2 x+c)$ instead of $f$ for suitable choices of $\lambda_1, \lambda_2>0$ and $c\in \R$.) We may assume that for each $1\le k'<k$, $f:[0,1)\to \R$ is not $k'$-regular, i.e.
	\begin{equation}\label{eqn:smallordder}%
	\sup_{x\in [0,1)}|f^{(k')}(x)|> 2 \inf_{x\in [0,1)} |f^{(k')}(x)|,
	\end{equation}
for otherwise we may work on $k'$ instead of $k$.
By the mean value theorem,
	$$|f^{(k-1)}(x)-f^{(k-1)}(y)|\le |x-y|\le 1$$
	for each $x,y\in [0,1)$. By (\ref{eqn:smallordder}), we have $\sup_{x\in [0,1)} |f^{(k-1)}(x)|<2.$
	But then by the mean value theorem again
	$$|f^{(k-2)}(x)-f^{(k-2)}(y)|\le 2 |x-y|\le 2$$
	holds for all $x,y\in [0,1]$. Once again by (\ref{eqn:smallordder}), we obtain
	$\sup_{x\in [0,1)} |f^{(k-2)}(x)|\le 4.$
	Repeating the process, 
	$$\sup_{x,y\in [0,1)}|f'(x)-f'(y)|\le 2^{k-1}.$$
    On the other hand, by Lemma~\ref{lem:valuebd0}, there exists $\delta_k>0$, $\tau_k>0$ and 	an interval $J$ with $|J|\ge \delta_k$ such that  $|f'(x)|>\tau_k$ for all $x\in J$. The lemma follows by taking $\rho_k=\tau_k/(2^{k-1}+\tau_k)$.
\end{proof}

\begin{proof}[Proof of Theorem~\ref{thm:estimate}]
By Lemma~\ref{lem:decombd} and Lemma~\ref{lem:valuebd0}, we obtain the first inequality. By Lemma~\ref{lem:decombd} and Lemma~\ref{lem:bdlocal2global}, we obtain the second inequality.
\end{proof}

\section{The partitions $\mathcal{L}_i^{\mathcal{X}}$ of the space $\mathcal{X}$}\label{sec:partitionX}
In this section, we construct a nested sequence of partitions $\mathcal{L}_i^{\mathcal{X}}$ of the space $\mathcal{X}$ in (\ref{FunctionSet}) and prove a few key properties of these partitions. The separation properties given in Theorem~\ref{thm:estimate} play a central role in the proofs.

Throughout we fix an integer $b\ge 2$ and $\lambda\in (1/b, 1)$ and we assume that $\phi:\R\to \R$ is a real analytic $\Z$-periodic function that satisfies the condition (H).

Recall that by Lemma~\ref{lem:TransformA},
for any $\textbf{j}\in\Sigma$ and $\textbf{i}\in\varLambda^\#$,
$$\pi_{\textbf{j}}g_{\textbf{i}}(x,y)
=\lambda^{|\textbf{i}|}(y-\Gamma_{{\textbf{i}}^*\textbf{j}}(x))+ \pi_{\textbf{j}}g_{\textbf{i}}(0).$$
So each member of $\mathcal{X}$ can be written in the form $\lambda^{t}(y-\psi(x))+c,$ where $t\in\mathbb{N},c\in\mathbb{R}$ and $\psi(x)\in C^\omega(\R)$ with $\psi(0)=0$. We shall call $|\textbf{i}|$ the {\em height} of the map $\pi_{\textbf{j}}g_{\textbf{i}}$.
Define $\overline{\pi}:\mathcal{X}\to\mathbb{N}\times\mathbb{R}^{M+1}$ by
$$\lambda^{t}(y+\psi(x))+c\to\bigg(t,\,\psi(\frac1M),\,\psi(\frac2M),\ldots,\;\psi(1),\;c\bigg),$$
where $M=b^{\ell_0}$ and $\ell_0$  comes from Theorem~\ref{thm:estimate}.

\begin{definition} For each integer $n\ge 1$,
$\mathcal{L}_n^{\mathcal{X}}$ consists of non-empty subsets of $\mathcal{X}$ of the following form
$$\overline{\pi}^{-1}\left(\{t\}\times I_1 \times I_2 \times\ldots\times I_M \times J\right),$$
where $t\in \mathbb{N}, I_1,I_2,\cdots, I_M\in \mathcal{L}_n, J\in \mathcal{L}_{n+[t\log_b 1/\lambda]}.$
The partition $\mathcal{L}_0^{\mathcal{X}}$ consists of non-empty subsets of $\mathcal{X}$ of the following form
$$\overline{\pi}^{-1}\left(\{t\}\times\mathbb{R}\times\ldots\times\mathbb{R}\times J\right),$$
where $t\in \mathbb{N}, J\in \mathcal{L}_{[t\log_b 1/\lambda]}.$
\end{definition}

\begin{lemma}\label{lem:boundspart}
There exists $A>0$ such that any $i\ge 0$, each element of $\mathcal{L}_i^{\mathcal{X}}$ contains at most $A$ elements of $\mathcal{L}_{i+1}^{\mathcal{X}}$.
\end{lemma}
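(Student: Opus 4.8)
The plan is to count, for a fixed element $A_i$ of $\mathcal{L}_i^{\mathcal{X}}$, how many elements of $\mathcal{L}_{i+1}^{\mathcal{X}}$ it can contain, by examining how the defining data $(t, I_1,\dots,I_M,J)$ of a cell refines when we pass from level $i$ to level $i+1$. The key point is that the height coordinate $t$ is \emph{not} refined: a cell of $\mathcal{L}_{i+1}^{\mathcal{X}}$ contained in $A_i$ must have the same value of $t$. So I would first reduce to showing: for fixed $t$, the number of tuples $(I_1',\dots,I_M',J')$ with $I_j' \in \mathcal{L}_{i+1}$, $J' \in \mathcal{L}_{i+1+[t\log_b 1/\lambda]}$, $I_j' \subset I_j$ (or $I_j'\subset\R$, with no constraint, in the case $i=0$), $J'\subset J$, that actually \emph{arise} as $\overline{\pi}$-images of maps in $\mathcal{X}\cap A_i$, is bounded by an absolute constant $A$.

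Next I would use the fact that each $b$-adic interval of level $n$ contains exactly $b$ intervals of level $n+1$. A priori this would give $b^{M+1}$ choices, which is a constant — so in fact the crude bound $A = b^{M+1}$ already works, \emph{provided} we are in the generic case where all $M+1$ coordinates get subdivided by one $b$-adic level. The one subtlety is the case $i=0$, where the first $M$ coordinates are unconstrained (the preimage of $\R^M$); but passing to $i=1$ still only constrains each $\psi(j/M)$ to lie in \emph{one} level-$1$ interval among $b$ choices (and only finitely many such intervals meet the compact range $\{\psi = \Gamma_{\mathbf u} : \mathbf u\in\Sigma\}$ evaluated at $j/M$, which is another way to see the count is bounded, uniformly in which level-$0$ cell we started from). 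Similarly the $J$-coordinate jumps from level $[t\log_b 1/\lambda]$ to level $1+[t\log_b 1/\lambda]$, a single $b$-adic refinement, so $b$ choices. Hence for every $i\ge 0$ the number of children is at most $b^{M+1}$, and we may take $A = b^{M+1}$.

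I expect the only real point requiring care — and thus the "main obstacle," though it is mild — is making sure the bound is genuinely \emph{uniform in $i$ and in the choice of cell}, including the transitional step $i=0\to i=1$ where the structure of the partition changes (unconstrained coordinates becoming constrained). One clean way to handle this uniformly is to observe that $\mathcal{L}_{i+1}^{\mathcal{X}}$ refines $\mathcal{L}_i^{\mathcal{X}}$ (which the paper asserts these partitions are nested), that the refinement in each of the $M+1$ real coordinates is by exactly one $b$-adic level, and that the height coordinate is common to a cell and its children; therefore the number of children of any cell is at most the number of $(M+1)$-tuples obtained by choosing, in each coordinate, one of at most $b$ sub-intervals, i.e. at most $b^{M+1}$. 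Setting $A=b^{M+1}$ completes the proof. I would write this up in a few lines, emphasizing the non-refinement of $t$ and the single-level $b$-adic refinement of the remaining coordinates.
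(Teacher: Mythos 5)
Your approach is the same as the paper's: the height coordinate $t$ is common to a cell and its children, and for $i\ge 1$ each of the $M+1$ interval coordinates refines into exactly $b$ sub-intervals, giving the bound $b^{M+1}$. The one place where your write-up goes wrong is the claim that $A=b^{M+1}$ also covers the step $i=0\to i=1$: there the first $M$ coordinates jump from $\R$ (unconstrained) to level-$1$ intervals, so the relevant count per coordinate is not ``one of $b$ sub-intervals'' but the number of level-$1$ $b$-adic intervals meeting the range of $\{\Gamma_{\textbf{u}}(j/M):\textbf{u}\in\Sigma\}$; this is finite by the uniform boundedness of the $\Gamma_{\textbf{u}}$ (exactly the observation in your own parenthetical, and the paper's argument), but it can exceed $b$, so one must enlarge $A$ beyond $b^{M+1}$ rather than assert that constant works uniformly. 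Since the lemma only asks for the existence of some $A$, this is a quantitative slip, not a gap in the argument.
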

\begin{proof} When $i\ge 1$, the statement holds with $A=b^{M+1}$. Since $\Gamma_{\textbf{j}}(x)$ is uniformly bounded in $[0,1)$, $\textbf{j}\in\Sigma$, for each $t\in \mathbb{N}$, there are only finitely many members of $\mathcal{L}_1^{\mathcal{X}}$ whose elements have height $t$. So enlarging $A$, we can guarantee that the statement holds also for the case $i=0$.
\end{proof}
\begin{lemma}\label{lem:constantR}
There exists $R>0$ such that if $\pi_{\textbf{j}}g_{\textbf{u}}$ and $\pi_{\textbf{j}}g_{\textbf{v}}$ belong to the same element of $\mathcal{L}_i^{\mathcal{X}}$, where $\textbf{j}\in \Sigma$, $\textbf{u}, \textbf{v}\in \varLambda^{\hat{n}}$, and $i\ge 1$, then for any $x\in [0,1)$ and $y\in \R$,
$$|\pi_{\textbf{j}}g_{\textbf{u}}(x,y)-\pi_{\textbf{j}}g_{\textbf{v}}(x,y)|\le R b^{-(n+i)}.$$
\end{lemma}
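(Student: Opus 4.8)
The plan is to unpack the form of the maps $\pi_{\textbf{j}}g_{\textbf{u}},\pi_{\textbf{j}}g_{\textbf{v}}$ via Lemma~\ref{lem:TransformA}, reduce the estimate to a uniform bound on $\Gamma_{\textbf{v}^*\textbf{j}}-\Gamma_{\textbf{u}^*\textbf{j}}$, and then obtain that bound from the transversality estimate (\ref{eqn:Gamma1stder}) and the companion inequality in Theorem~\ref{thm:estimate}. Since $\textbf{u},\textbf{v}\in\varLambda^{\hat{n}}$ have the same length, Lemma~\ref{lem:TransformA} gives
$$\pi_{\textbf{j}}g_{\textbf{u}}(x,y)-\pi_{\textbf{j}}g_{\textbf{v}}(x,y)=\lambda^{\hat{n}}\bigl(\Gamma_{\textbf{v}^*\textbf{j}}(x)-\Gamma_{\textbf{u}^*\textbf{j}}(x)\bigr)+\bigl(\pi_{\textbf{j}}g_{\textbf{u}}(0)-\pi_{\textbf{j}}g_{\textbf{v}}(0)\bigr),$$
so in particular this difference does not depend on $y$. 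Write $g=\Gamma_{\textbf{v}^*\textbf{j}}-\Gamma_{\textbf{u}^*\textbf{j}}$ and $c=\pi_{\textbf{j}}g_{\textbf{u}}(0)-\pi_{\textbf{j}}g_{\textbf{v}}(0)$, and note $g(0)=0$. I would then read off from the definition of $\mathcal{L}_i^{\mathcal{X}}$ and of $\overline{\pi}$ that, since $\pi_{\textbf{j}}g_{\textbf{u}}$ and $\pi_{\textbf{j}}g_{\textbf{v}}$ lie in a common element, $\Gamma_{\textbf{u}^*\textbf{j}}(k/M)$ and $\Gamma_{\textbf{v}^*\textbf{j}}(k/M)$ lie in a common member of $\mathcal{L}_i$ for $k=1,\dots,M$ (where $M=b^{\ell_0}$), hence $|g(k/M)|\le b^{-i}$ for $k=0,1,\dots,M$; and $\pi_{\textbf{j}}g_{\textbf{u}}(0),\pi_{\textbf{j}}g_{\textbf{v}}(0)$ lie in a common member of $\mathcal{L}_{i+[\hat{n}\log_b(1/\lambda)]}$, hence $|c|\le b^{-i-[\hat{n}\log_b(1/\lambda)]}$. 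Because $\lambda^{\hat{n}}\le b^{-n}$ gives $\hat{n}\log_b(1/\lambda)\ge n$ and $n\in\mathbb{N}$, this yields $|c|\le b^{-(n+i)}$.

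It then remains only to estimate $\lambda^{\hat{n}}\sup_{x\in[0,1)}|g(x)|$. If $g\equiv 0$ this is $0$; otherwise $\textbf{v}^*\textbf{j}$ and $\textbf{u}^*\textbf{j}$ first disagree at some position $m\le\hat{n}$, so $g\in\mathcal{F}_m$ and the second inequality of Theorem~\ref{thm:estimate} applies to $g$. For each $I=[k/M,(k+1)/M)\in\mathcal{L}_{\ell_0}$ with $I\subset[0,1)$, the mean value theorem together with $|g(k/M)|,|g((k+1)/M)|\le b^{-i}$ produces a point of $I$ at which $|g'|\le 2Mb^{-i}=2b^{\ell_0-i}$, so $\inf_{x\in I}|g'(x)|\le 2b^{\ell_0-i}$. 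Summing over the $M=b^{\ell_0}$ such intervals and using Theorem~\ref{thm:estimate} gives $\rho_0\sup_{x\in[0,1)}|g'(x)|\le 2b^{2\ell_0-i}$. Since $g(0)=0$, this forces $\sup_{x\in[0,1)}|g(x)|\le\sup_{x\in[0,1)}|g'(x)|\le(2b^{2\ell_0}/\rho_0)\,b^{-i}$, hence $\lambda^{\hat{n}}\sup_{x\in[0,1)}|g(x)|\le(2b^{2\ell_0}/\rho_0)\,b^{-(n+i)}$. Combining with $|c|\le b^{-(n+i)}$ proves the lemma with $R=2b^{2\ell_0}/\rho_0+1$.

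The argument is essentially bookkeeping once the right inequality is in hand, and I do not expect a genuine analytic obstacle. The only points requiring care are: (i) correctly translating membership in a common element of $\mathcal{L}_i^{\mathcal{X}}$ into the pointwise bounds $|g(k/M)|\le b^{-i}$ and $|c|\le b^{-i-[\hat{n}\log_b(1/\lambda)]}$, keeping track of the level shift $[\hat{n}\log_b(1/\lambda)]$ in the last coordinate and checking $[\hat{n}\log_b(1/\lambda)]\ge n$; and (ii) confirming that $g=\Gamma_{\textbf{v}^*\textbf{j}}-\Gamma_{\textbf{u}^*\textbf{j}}$ lies in $\mathcal{F}_m$ for the first disagreement index $m$, so that Theorem~\ref{thm:estimate} is applicable. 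The conceptual content is entirely in Theorem~\ref{thm:estimate}, whose second inequality is exactly what upgrades "$g'$ is small at one sample point in each $\mathcal{L}_{\ell_0}$-interval" into "$g'$ is uniformly small", while the prefactor $\lambda^{\hat{n}}\le b^{-n}$ converts an $O(b^{-i})$ bound on $g$ into the required $O(b^{-(n+i)})$ bound.
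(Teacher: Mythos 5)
Your proposal is correct and follows essentially the same route as the paper's proof: unpack via Lemma~\ref{lem:TransformA}, read the bounds $|g(k/M)|\le b^{-i}$ and $|c|=O(b^{-(n+i)})$ off the definition of $\mathcal{L}_i^{\mathcal{X}}$, use the mean value theorem to bound $\inf_I|g'|$ on each $\mathcal{L}_{\ell_0}$-interval, upgrade to a bound on $\sup|g'|$ (hence on $\sup|g|$, since $g(0)=0$) via the second inequality of Theorem~\ref{thm:estimate}, and multiply by $\lambda^{\hat n}\le b^{-n}$. Your bookkeeping of the level shift $[\hat n\log_b(1/\lambda)]\ge n$ is in fact slightly more explicit than the paper's.
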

\begin{proof} By definition of the partition $\mathcal{L}_i^{\mathcal{X}}$, we have $$|\pi_{\textbf{j}}g_{\textbf{u}}(\textbf{0})-\pi_{\textbf{j}}g_{\textbf{v}}(\textbf{0})|=O(b^{-(n+i)})$$ and
$$|\Gamma_{{\textbf{u}}^*\textbf{j}}(k/M)-\Gamma_{{\textbf{v}}^*\textbf{j}}(k/M)|\le b^{-i}$$
for each $1\le k\le M$. Note that the last inequality also holds for $k=0$ since then the left hand side is equal to $0$.
For each $I\in \mathcal{L}_{\ell_0}$ with $I\subset [0,1)$ there exists $0\le k<M$ such that $I=[k/M, (k+1)/M)$. Thus
$$\inf_{x\in I}|\Gamma'_{{\textbf{u}}^*\textbf{j}}(x)-\Gamma'_{{\textbf{v}}^*\textbf{j}}(x)|\le 2b^{-i}M.$$
By Theorem~\ref{thm:estimate}, it follows that
\begin{equation}\label{eqn:Gamma'diff}
\sup_{x\in [0,1)}|\Gamma'_{{\textbf{u}}^*\textbf{j}}(x)-\Gamma'_{{\textbf{v}}^*\textbf{j}}(x)|\le 2\rho_0^{-1}M b^{-i},
\end{equation}
hence
\begin{equation}\label{eqn:Gammadiff}
\sup_{x\in [0,1)}|\Gamma_{{\textbf{u}}^*\textbf{j}}(x)-\Gamma_{{\textbf{v}}^*\textbf{j}}(x)|\le 2\rho_0^{-1}M b^{-i}.
\end{equation}
Since
$$\pi_{\textbf{j}}g_{\textbf{u}}(x,y)-\pi_{\textbf{j}}g_{\textbf{v}}(x,y)=-\lambda^{\hat{n}} (\Gamma_{{\textbf{u}}^*\textbf{j}}(x)-\Gamma_{{\textbf{v}}^*\textbf{j}}(x))+
\pi_{\textbf{j}}g_{\textbf{u}}(\textbf{0})-\pi_{\textbf{j}}g_{\textbf{v}}(\textbf{0})$$
the lemma follows.
\end{proof}

\begin{lemma}\label{lem:constantC}
There exists a constant $C\in \mathbb{Z}_+$ 	such that for any  $\textbf{u}\ne\textbf{v}\in\varLambda^n$, $n\ge 1$, and $\textbf{j}\in\Sigma$,
$\mathcal{L}_{Cn}^{\mathcal{X}}(\pi_{\textbf{j}}g_{\textbf{u}})\ne\mathcal{L}_{Cn}^{\mathcal{X}}(\pi_{\textbf{j}}g_{\textbf{v}})$.
\end{lemma}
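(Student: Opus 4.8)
\emph{Proof plan.} The plan is to play the lower bound \eqref{eqn:Gamma1stder} of Theorem~\ref{thm:estimate} against the upper bound on $\sup_{x\in[0,1)}|\Gamma'_{\textbf{u}^*\textbf{j}}(x)-\Gamma'_{\textbf{v}^*\textbf{j}}(x)|$ that is forced by membership in a common cell of $\mathcal{L}_i^{\mathcal{X}}$ --- the latter being exactly the estimate already derived inside the proof of Lemma~\ref{lem:constantR}. Matching the two scales then pins down $i$, and choosing $C$ large makes $Cn$ exceed that bound for every $n$.

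First I would record the lower bound. Given $\textbf{u}\neq\textbf{v}$ in $\varLambda^n$, the words $\textbf{u}^*$ and $\textbf{v}^*$ differ in some position, so the sequences $\textbf{u}^*\textbf{j}$ and $\textbf{v}^*\textbf{j}$ in $\Sigma$ first disagree in some coordinate $m$ with $1\le m\le n$. Hence \eqref{eqn:Gamma1stder} applies and gives
\[
\sup_{x\in[0,1)}\bigl|\Gamma'_{\textbf{u}^*\textbf{j}}(x)-\Gamma'_{\textbf{v}^*\textbf{j}}(x)\bigr|\ \ge\ \rho_0 b^{-Q_0 m}\ \ge\ \rho_0 b^{-Q_0 n}.
\]

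Next I would derive the upper bound. Suppose $\pi_{\textbf{j}}g_{\textbf{u}}$ and $\pi_{\textbf{j}}g_{\textbf{v}}$ lie in the same element of $\mathcal{L}_i^{\mathcal{X}}$ with $i\ge 1$. Writing $\pi_{\textbf{j}}g_{\textbf{w}}(x,y)=\lambda^{|\textbf{w}|}\bigl(y-\Gamma_{\textbf{w}^*\textbf{j}}(x)\bigr)+\pi_{\textbf{j}}g_{\textbf{w}}(0)$, the definition of $\mathcal{L}_i^{\mathcal{X}}$ forces $\bigl|\Gamma_{\textbf{u}^*\textbf{j}}(k/M)-\Gamma_{\textbf{v}^*\textbf{j}}(k/M)\bigr|\le b^{-i}$ for $1\le k\le M$ (and trivially also for $k=0$, where both sides vanish). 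By the mean value theorem, $\inf_{x\in I}\bigl|\Gamma'_{\textbf{u}^*\textbf{j}}(x)-\Gamma'_{\textbf{v}^*\textbf{j}}(x)\bigr|\le 2Mb^{-i}$ for every $I\in\mathcal{L}_{\ell_0}$ with $I\subset[0,1)$; summing over the $M$ such intervals and applying the second inequality of Theorem~\ref{thm:estimate} yields, exactly as in the proof of Lemma~\ref{lem:constantR},
\[
\sup_{x\in[0,1)}\bigl|\Gamma'_{\textbf{u}^*\textbf{j}}(x)-\Gamma'_{\textbf{v}^*\textbf{j}}(x)\bigr|\ \le\ C_1 b^{-i},
\]
for a constant $C_1>0$ depending only on $\phi$ (through $M=b^{\ell_0}$ and $\rho_0$).

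Comparing the two displays, membership of $\pi_{\textbf{j}}g_{\textbf{u}}$ and $\pi_{\textbf{j}}g_{\textbf{v}}$ in a common cell of $\mathcal{L}_i^{\mathcal{X}}$ forces $\rho_0 b^{-Q_0 n}\le C_1 b^{-i}$, i.e. $i\le Q_0 n+\log_b(C_1/\rho_0)$. Hence it suffices to fix one positive integer $C$ with $Cn>Q_0 n+\log_b(C_1/\rho_0)$ for all $n\ge 1$; since the right-hand side is linear in $n$ with slope $Q_0$, any integer $C$ larger than $Q_0+1+|\log_b(C_1/\rho_0)|$ works, and for this $C$ we obtain $\mathcal{L}_{Cn}^{\mathcal{X}}(\pi_{\textbf{j}}g_{\textbf{u}})\neq\mathcal{L}_{Cn}^{\mathcal{X}}(\pi_{\textbf{j}}g_{\textbf{v}})$. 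The whole argument is mechanical once Theorem~\ref{thm:estimate} is available; the only point needing mild care is the uniformity in $n$: because the bound on $i$ carries an additive constant, $C$ has to be chosen large enough to absorb it already at $n=1$, not merely asymptotically.
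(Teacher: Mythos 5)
Your proposal is correct and is essentially the paper's own argument: the same lower bound from (\ref{eqn:Gamma1stder}), the same upper bound $O(b^{-i})$ on $\sup_x|\Gamma'_{\textbf{u}^*\textbf{j}}-\Gamma'_{\textbf{v}^*\textbf{j}}|$ extracted from the proof of Lemma~\ref{lem:constantR} via the second inequality of Theorem~\ref{thm:estimate}, and the same choice of $C$ making $\rho_0 b^{-Q_0 n}$ exceed that bound at level $i=Cn$ for every $n\ge 1$. The only difference is cosmetic: you re-derive the membership estimate and track the additive constant explicitly, whereas the paper simply cites the computation leading to (\ref{eqn:Gamma'diff}).
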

\begin{proof} Choose $C\in \mathbb{Z}_+$ such that $$\rho_0 b^{-Q_0n}> 2\rho_0^{-1}M b^{-Cn}$$
holds for all $n\ge 1$.
Since $\textbf{u}$ and $\textbf{v}$ are distinct elements of $\varLambda^n$, by Theorem~\ref{thm:estimate},
$$\sup_{x\in [0,1)}|\Gamma'_{{\textbf{u}}^*\textbf{j}}(x)-\Gamma'_{{\textbf{v}}^*\textbf{j}}(x)|\ge \rho_0 b^{-Q_0 n}> 2\rho_0^{-1}M b^{-Cn}.$$
As in the proof of (\ref{eqn:Gamma'diff}), we see that $\pi_{\textbf{j}}g_{\textbf{u}}$ and $\pi_{\textbf{j}}g_{\textbf{v}}$ cannot belong to the same element of $\mathcal{L}_{Cn}^{\mathcal{X}}$.
\end{proof}

For a discrete probability measure $\eta$ in the space $\mathcal{X}$ and a Borel probability measure $\mu$ in $\mathbb{R}$, let $\eta.\mu$ denote the Borel probability  measure in $\mathbb{R}$ such that for any Borel subset of $\mathbb{R}$,  
$$\eta.\mu(A)=\eta\times\mu(\{(\Psi, x)\in\mathcal{X}\times \mathbb{R}: \Psi(x)\in A\}).$$
\begin{lemma}\label{lem:entinceta} For any $\eps>0$, there exists $p>0$ and $\delta_*>0$ such that the following holds if $i$ and $k$ are sufficiently large. If $\eta$ is a probability measure supported in an element of $\mathcal{L}_i^{\mathcal{X}}$ such that each element in the support of $\eta$ has height $\hat{n}$ and such that
$$\frac{1}{k}H\big(\eta,\mathcal{L}_{i+k}^{\mathcal{X}}\big)>\eps,$$
then
$$\nu^{\hat{i}}\left(\left\{\textbf{u}\in \Sigma^{\hat{i}}: \frac{1}{k}H\big(\eta.\big(\delta_{g_{\textbf{u}}(\textbf{0})}\big),\mathcal{L}_{i+k+n}\big)\ge \delta_*\right\}\right)>p.$$
\end{lemma}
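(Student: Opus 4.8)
## Proof proposal for Lemma 5.7

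The plan is to reduce the statement to an application of Hochman's entropy-increase theorem (the version used in \cite{barany2019hausdorff}, for convolutions with measures on $\R$), after recasting the fibered sum $\eta.(\delta_{g_{\textbf{u}}(\textbf{0})})$ as a convolution. First I would unwind the definitions: for a fixed $\textbf{u}\in\varLambda^{\hat i}$, the point $g_{\textbf{u}}(\textbf{0})=(x_{\textbf u},y_{\textbf u})\in[0,1)\times\R$ depends only on $\textbf u$, and each $\Psi=\pi_{\textbf j}g_{\textbf i}$ in the support of $\eta$ has height $\hat n$, so by Lemma~\ref{lem:TransformA} $\Psi(x_{\textbf u},y_{\textbf u})=\lambda^{\hat n}\big(y_{\textbf u}-\Gamma_{\textbf i^*\textbf j}(x_{\textbf u})\big)+\pi_{\textbf j}g_{\textbf i}(\textbf 0)$. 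Thus $\eta.(\delta_{g_{\textbf u}(\textbf 0)})$ is the pushforward under $\Psi\mapsto\Psi(x_{\textbf u},y_{\textbf u})$ of $\eta$; it has the form of a measure governed by the ``profile'' $\textbf i\mapsto(\Gamma_{\textbf i^*\textbf j}(x_{\textbf u}),\pi_{\textbf j}g_{\textbf i}(\textbf 0))$, i.e.\ essentially an affine image of the projection of $\eta$ under the evaluation-at-$x_{\textbf u}$ map. The hypothesis $\frac1k H(\eta,\mathcal L^{\mathcal X}_{i+k})>\eps$ says that $\eta$ has definite entropy at scales $b^{-(i+\hat n)}$ through $b^{-(i+k+\hat n)}$ in the $\mathcal X$-coordinates, which by Lemma~\ref{lem:constantR} (and the definition of $\overline\pi$) means the vector of values $(\Gamma_{\textbf i^*\textbf j}(k/M))_{k=1}^M$ together with the constant term carry that entropy.

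The key step is to show that, \emph{for a positive $\nu^{\hat i}$-fraction of $\textbf u$}, evaluation at $x_{\textbf u}$ does not collapse this entropy: the map $\textbf i\mapsto(\Gamma_{\textbf i^*\textbf j}(x_{\textbf u}),\pi_{\textbf j}g_{\textbf i}(\textbf 0))\mapsto\Psi(x_{\textbf u},y_{\textbf u})$ must, with positive probability over $\textbf u$, retain a definite proportion of the entropy of $\eta$. This is exactly where Theorem~\ref{thm:estimate} (the transversality/separation estimates) enters. Concretely: if $\Psi_1=\pi_{\textbf j}g_{\textbf u_1}$ and $\Psi_2=\pi_{\textbf j}g_{\textbf u_2}$ lie in \emph{different} atoms of $\mathcal L^{\mathcal X}_{i}$ but we want them separated as points after evaluation, we need $|\Psi_1(x,y)-\Psi_2(x,y)|$ to be comparable to $\sup_x|\Gamma'_{\textbf u_1^*\textbf j}-\Gamma'_{\textbf u_2^*\textbf j}|$ for $x$ in a set of definite measure; the second inequality of Theorem~\ref{thm:estimate}, namely $\sum_{I\in\mathcal L_{\ell_0}}\inf_{x\in I}|\Gamma'_{\textbf u}-\Gamma'_{\textbf v}|\ge\rho_0\sup_x|\Gamma'_{\textbf u}-\Gamma'_{\textbf v}|$, guarantees precisely this: there is an interval $I\in\mathcal L_{\ell_0}$ on which the difference of derivatives is comparable to its sup, hence (integrating) the difference of the $\Gamma$'s themselves is $\gtrsim\rho_0 b^{-\ell_0}\sup|\Gamma'|$ on a subinterval, so the separation of $\Gamma_{\textbf u_1^*\textbf j}(x_{\textbf u})$ and $\Gamma_{\textbf u_2^*\textbf j}(x_{\textbf u})$ is comparable to the $\mathcal X$-distance for $\textbf u$ ranging over a set of $x_{\textbf u}$ of positive measure. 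Since the points $\{g_{\textbf u}(\textbf 0):\textbf u\in\varLambda^{\hat i}\}$ have first coordinates that equidistribute in $[0,1)$ as $i$ grows (they are $b$-adic-type points, and $\hat i$-th coordinates run over $\varLambda^{\hat i}$), a positive $\nu^{\hat i}$-fraction of $\textbf u$ has $x_{\textbf u}$ in that good set. For such $\textbf u$, the evaluation map is bi-Lipschitz-on-average at the relevant scales between the $\mathcal L^{\mathcal X}$-partitions and the $\mathcal L_{\bullet}$-partitions of $\R$ shifted by $\hat n$, so $\frac1k H(\eta.(\delta_{g_{\textbf u}(\textbf 0)}),\mathcal L_{i+k+n})\ge c\cdot\frac1k H(\eta,\mathcal L^{\mathcal X}_{i+k})-O(1/k)\ge\delta_*$ once $k$ is large, with $\delta_*=c\eps/2$ and $p$ the (positive) lower bound on the fraction of good $\textbf u$.

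The remaining bookkeeping is routine: one checks that Lemma~\ref{lem:affinetransform} and Lemma~\ref{lem:pfclose} let one pass between the ``value at the $M$ sample points $k/M$'' coordinates of $\overline\pi$ and the actual value at $x_{\textbf u}$ — this is legitimate because, by the $C^1$-control coming from compactness of $\{\Gamma_{\textbf j}'\}$ and from (\ref{eqn:Gamma'diff})-style bounds in Lemma~\ref{lem:constantR}, the value $\Gamma_{\textbf u^*\textbf j}(x_{\textbf u})$ is determined up to error $O(b^{-i})$ by the sampled values, so passing from one to the other changes entropy at scale $b^{-(i+k)}$ by $O(1)$, negligible after dividing by $k$. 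The main obstacle, as I see it, is the \textbf{transversality/non-degeneracy step}: controlling, uniformly over the pairs $(\textbf u_1,\textbf u_2)$ that $\eta$ can distinguish and over $\textbf j$, that the separation survives evaluation at $x_{\textbf u}$ for a \emph{set of $\textbf u$ of positive measure independent of} $i,k,\textbf j$, and the minor equidistribution point that $\{x_{\textbf u}\}$ covers the good set with positive frequency. Everything upstream of that (unwinding $\eta.\mu$ as a pushforward, the affine-invariance and closeness lemmas for entropy, the concavity estimates) is standard and follows the template of \cite[\S3]{barany2019hausdorff}; Theorem~\ref{thm:estimate} is exactly the tool that substitutes for the rescaling-invariant metric on $\mathbb A_{2,1}$ available in the linear (Takagi) case.
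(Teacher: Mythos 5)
You have correctly identified the two main ingredients (the transversality estimate of Theorem~\ref{thm:estimate} and the fact that the points $g_{\textbf{u}}(\textbf{0})$, $\textbf{u}\in\varLambda^{\hat i}$, run exactly over the $b$-adic points of level $\hat i$), but the central step of your argument has a genuine gap. You claim that pairwise separation --- for each pair $\Psi_1,\Psi_2$ in distinct atoms of $\mathcal{L}_{i+k}^{\mathcal X}$ there is a \emph{pair-dependent} set of evaluation points of definite measure where $|\Psi_1-\Psi_2|$ is comparable to their $\mathcal X$-distance --- implies that for a positive fraction of $\textbf{u}$ the single evaluation $\Psi\mapsto\Psi(g_{\textbf{u}}(\textbf{0}))$ is ``bi-Lipschitz-on-average'' and hence retains a fixed proportion $c$ of the entropy. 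This does not follow. Evaluation at one point is a map from the $(M+1)$-parameter family $\mathcal X$ to $\R$, and its fibers can have unbounded multiplicity at the relevant scale: the measure $\eta$ may concentrate on maps whose profiles $\Gamma_{\textbf{i}^*\textbf{j}}$ differ only away from the chosen point, in which case all the entropy collapses there. Pairwise transversality with a good set that varies with the pair does not rule this out (you cannot intersect the good sets over all pairs, and even a Fubini count of separated pairs does not control the $\eta$-mass of a single large fiber), and you flag this step as the ``main obstacle'' without actually closing it.

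The paper's proof closes it differently: it evaluates at $M_1=b^{\ell_0+1}$ points \emph{simultaneously}. For each block $\mathcal X_T=\{T/b^{\hat i}+j/M_1:0\le j<M_1\}$ it considers $F(\Psi)=(\Psi(\widetilde z_0),\dots,\Psi(\widetilde z_{M_1-1}))\in\R^{M_1}$ and proves $H(\eta,\mathcal L^{\mathcal X}_{i+k})\le H(F\eta,\mathcal L^{\R^{M_1}}_{i+k+n})+\widetilde C$; bounded multiplicity holds for this \emph{vector-valued} map precisely because if all $M_1$ values nearly agree then every cell of $\mathcal L_{\ell_0}$ contains an interval $[\widetilde x_{j-1},\widetilde x_j)$ on which $\inf|\Gamma'_{\textbf{u}^{(1)}}-\Gamma'_{\textbf{u}^{(2)}}|$ is small, whence Theorem~\ref{thm:estimate} forces $\sup|\Gamma'_{\textbf{u}^{(1)}}-\Gamma'_{\textbf{u}^{(2)}}|$ and then the constants to be small, i.e.\ the two maps lie in boundedly many atoms of $\mathcal L^{\mathcal X}_{i+k}$. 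Subadditivity $H(F\eta,\cdot)\le\sum_j H(\eta.\delta_{\widetilde z_j},\cdot)$ then pigeonholes at least one good point in each block, giving $p=1/M_1$ and $\delta_*=\eps/(2M_1)$. So the correct conclusion is not that a positive fraction of evaluation points each retain a fixed proportion of the entropy by transversality alone, but that out of every $M_1$ well-spread points at least one retains a $1/M_1$-fraction; this is the idea missing from your proposal.
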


\begin{proof}
Let $M_1=b^{\ell_0+1}$, where $\ell_0$ is as in Theorem~\ref{thm:estimate} and assume $\hat{i}>\ell_0$.  It suffices to prove that for each integer $0\le T<b^{\hat{i}-\ell_0-1}$, there exists at least one element $x$ of
$$\mathcal{X}_T=\left\{\frac{T}{b^{\hat{i}}} +\frac{j}{M_1}: 0\le j<M_1, j\in \mathbb{Z}\right\}$$
such that
\begin{equation}\label{eqn:entxi}
\frac{1}{k}H(\eta.\delta_{(x,W(x))}, \mathcal{L}_{i+k+n})>\frac{\eps}{2M_1},
\end{equation}
Indeed, once this proved, the desired estimate holds with $\delta_*=\eps/(2M_1)$ and $p=1/M_1$.

So let us fix $T$.  Write $\widetilde{x}_j=\frac{T}{b^{\hat{i}}}+\frac{j}{M_1}$, $0\le j<M_1$ and let $\widetilde{z}_j=\big(\widetilde{x}_j,W(\widetilde{x}_j)\big)$.
Define $F: \text{supp} (\eta)\to\mathbb{R}^{M_1}$, by
$$F(\Psi)=\big(\Psi(\widetilde{z}_0),\Psi(\widetilde{z}_1),\ldots,\Psi(\widetilde{z}_{M_1-1})\big).$$

{\bf Claim.} There exists a constant $\widetilde{C}$ such that
$$H\big(\eta,\mathcal{L}_{i+k}^{\mathcal{X}}\big)
\le H\big( F\eta,\mathcal{L}_{i+k+n}^{\mathbb{R}^{M_1}}\big)+\widetilde{C}.$$

To prove this claim, take $I\in\mathcal{L}_{i+k+n}^{\mathbb{R}^{M_1}}$.
It suffices to show that the cardinality of the set $\{J\in\mathcal{L}_{i+k}^{\mathcal{X}}\big| J\cap F^{-1}(I)\ne\emptyset\mbox{ and }J\cap \text{supp}(\eta)\ne\emptyset\}$ is uniformly bounded.
For any $\Psi^{(m)}\in \text{supp}(\eta)$ with $F(\Psi^{(m)})\in I$, $m=1,2$, write $\Psi^{(m)}(x,y)=\lambda^{\hat{n}}(y-\Gamma_{\textbf{u}^{(m)}}(x))+c^{(m)}$.
For each $1\le j<M_1$, 
$$\left|\left(\Psi^{(2)}(\widetilde{z}_j)-\Psi^{(1)}(\widetilde{z}_j)\right)-\left(\Psi^{(2)}(\widetilde{z}_{j-1})-\Psi^{(1)}(\widetilde{z}_{j-1})\right)
\right|=O(b^{-(i+k+n)})$$
which means that
$$\lambda^{\hat{n}}\left|\left(\Gamma_{\textbf{u}^{(2)}}-\Gamma_{\textbf{u}^{(1)}}\right)(\widetilde{x}_{j})
-\left(\Gamma_{\textbf{u}^{(2)}}-\Gamma_{\textbf{u}^{(1)}}\right)(\widetilde{x}_{j-1})\right|=O(b^{-(i+k+n)}),$$
i.e.
$$\left|\left(\Gamma_{\textbf{u}^{(2)}}-\Gamma_{\textbf{u}^{(1)}}\right)(\widetilde{x}_{j})-
\left(\Gamma_{\textbf{u}^{(2)}}-\Gamma_{\textbf{u}^{(1)}}\right)(\widetilde{x}_{j-1})\right|=O(b^{-(i+k)}).$$
Therefore,
$$\inf_{x\in [\widetilde{x}_{j-1}, \widetilde{x}_j)} |\Gamma'_{\textbf{u}^{(2)}}(x)-\Gamma'_{\textbf{u}^{(1)}}(x)|= O(b^{-(i+k)}).$$
For each element $L$ of $\mathcal{L}_{\ell_0}$ which is contained in $[0,1)$
there exists $1\le j<M_1$ such that $[\widetilde{x}_{j-1}, \widetilde{x}_j)\subset L$.
So $$\inf_{x\in L }|\Gamma'_{\textbf{u}^{(2)}}(x)-\Gamma'_{\textbf{u}^{(1)}}(x)|= O(b^{-(i+k)}).$$
By Theorem~\ref{thm:estimate}, it follows that  $$\sup_{x\in[0,1]}\bigg|\big(\Gamma_{\textbf{u}^{(2)}}-\Gamma_{\textbf{u}^{(1)}}\big)'(x)\bigg|={O}(b^{-(i+k)}).$$
Since $\Gamma_{\textbf{j}}(0)=0$ for each $\textbf{j}\in \Sigma$, we obtain that
$$\sup_{x\in [0,1)} |\Gamma_{\textbf{u}^{(2)}}(x)-\Gamma_{\textbf{u}^{(1)}}(x)|= O(b^{-(i+k)}).$$
In particular, $\lambda^{\hat{n}}\left|\left(\Gamma_{\textbf{u}^{(2)}}-\Gamma_{\textbf{u}^{(1)}}\right)(\widetilde{x}_{j})\right|=O(b^{-(i+k+n)}).$
Since $$\Psi^{(2)}(\widetilde{z}_j)-\Psi^{(1)}(\widetilde{z}_j)=-\lambda^{\hat{n}} \left(\Gamma_{\textbf{u}^{(2)}}-\Gamma_{\textbf{u}^{(1)}}\right)(\widetilde{x}_{j})+ c^{(2)}-c^{(1)},$$ we also obtain that
 \begin{equation*}
 \big|c^{(2)}-c^{(1)}\big|=O(b^{-(i+k+n)}).
 \end{equation*}
By definition of $\mathcal{L}_{i+k}^{\mathcal{X}}$, we conclude the proof of the claim.

Since
$$H\big(\ F\eta,\mathcal{L}_{i+k+n}^{\mathbb{R}^{M_1}}\big)\le\sum\limits_{j=0}^{M_1-1}
H\big(\ \eta.\delta_{\widetilde{z}_j},\mathcal{L}_{i+k+n}\big),$$	
the claim implies that for at least one $\widetilde{z}_j$ we have
$$\frac{1}{k} H\big(\eta.\delta_{\widetilde{z}_j},\mathcal{L}_{i+k+n}\big)\ge\frac{\eps}{M_1}-\frac{\widetilde{C}}{kM_1}.$$
So (\ref{eqn:entxi}) follows provided that $k$ is sufficiently large.
\end{proof}

\section{Proof of Theorem B'}\label{sec:pfThmB'}
In this section, we shall apply Hochman's criterion on entropy increasing to complete the proof of Theorem B'. The basic idea is to introduce a discrete measure $$\theta^{\textbf{j}}_n=\frac{1}{b^{\hat{n}}}\sum_{\textbf{i}\in \varLambda^{\hat{n}}} \delta_{\pi_{\textbf{j}}g_{\textbf{i}}}
\in\boldsymbol{\mathscr{P}}(\mathcal{X})$$
for each $n\in \mathbb{Z}_+$ and analyze the entropy of $\theta^{\textbf{j}}_n$ with respect to the partitions $\mathcal{L}_i^{\mathcal{X}}$ and also the entropy of $$\pi_{\textbf{j}}\mu=\theta_n^{\textbf{j}}.\mu$$
with respect to the partitions $\mathcal{L}_i$.

\subsection{The entropy of $\theta_n^{\textbf{j}}$}
We start with analyzing the entropy of $\theta_n^{\textbf{j}}$ with respect to the partitions $\mathcal{L}_i^{\mathcal{X}}$.

\begin{lemma}\label{lem:thetanL0} For $\nu^{\mathbb{Z}_+}$-a.e. $\textbf{j}\in \Sigma$, $$\lim_{n\to\infty}\frac{1}{n}{H}\left(\theta^{\,\textbf{j}}_n,\mathcal{L}_0^{\mathcal{X}}\right)=\lim_{n\to\infty} \frac{1}{n} H(\pi_{\textbf{j}}\mu, \mathcal{L}_n)=\alpha.$$
\end{lemma}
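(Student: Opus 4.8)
The plan is to reduce the first limit to the second one, which is already under control via the exact dimensionality of $\pi_{\textbf{j}}\mu$, by showing that $H(\theta_n^{\textbf{j}},\mathcal{L}_0^{\mathcal{X}})$ and $H(\pi_{\textbf{j}}\mu,\mathcal{L}_n)$ differ by an amount bounded uniformly in $n$ and $\textbf{j}$.

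First I would unwind what $\mathcal{L}_0^{\mathcal{X}}$ records on $\theta_n^{\textbf{j}}$. Every map in the support of $\theta_n^{\textbf{j}}$ has height $\hat{n}$, so only atoms of the form $\overline{\pi}^{-1}(\{\hat{n}\}\times\mathbb{R}\times\cdots\times\mathbb{R}\times J)$ with $J\in\mathcal{L}_{[\hat{n}\log_b(1/\lambda)]}$ play a role; moreover $\lambda^{\hat{n}}\le b^{-n}$ gives $\hat{n}\log_b(1/\lambda)\ge n$, while $b^{-n}<\lambda^{\hat{n}-1}$ gives $\hat{n}\log_b(1/\lambda)<n+\log_b(1/\lambda)<n+1$ because $\lambda>1/b$, so $[\hat{n}\log_b(1/\lambda)]=n$. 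By Lemma~\ref{lem:TransformA} each $\pi_{\textbf{j}}g_{\textbf{i}}$ has the form $\lambda^{\hat{n}}(y-\Gamma_{{\textbf{i}}^*\textbf{j}}(x))+\pi_{\textbf{j}}g_{\textbf{i}}(0)$ with $\Gamma_{{\textbf{i}}^*\textbf{j}}(0)=0$, so the last coordinate of $\overline{\pi}(\pi_{\textbf{j}}g_{\textbf{i}})$ equals $\pi_{\textbf{j}}g_{\textbf{i}}(0)=\pi_{\textbf{j}}g_{\textbf{i}}(0,0)$. Hence the restriction of $\mathcal{L}_0^{\mathcal{X}}$ to the support of $\theta_n^{\textbf{j}}$ is exactly the pull-back of $\mathcal{L}_n$ under $\Psi\mapsto\Psi(0)$, and
$$H(\theta_n^{\textbf{j}},\mathcal{L}_0^{\mathcal{X}})=H(\nu_n^{\textbf{j}},\mathcal{L}_n),\qquad\text{where }\ \nu_n^{\textbf{j}}:=\frac{1}{b^{\hat{n}}}\sum_{\textbf{i}\in\varLambda^{\hat{n}}}\delta_{\pi_{\textbf{j}}g_{\textbf{i}}(0)}.$$

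Next I would compare $\nu_n^{\textbf{j}}$ with $\pi_{\textbf{j}}\mu=\frac{1}{b^{\hat{n}}}\sum_{\textbf{i}\in\varLambda^{\hat{n}}}\pi_{\textbf{j}}g_{\textbf{i}}\mu$ (obtained by iterating (\ref{eqn:muinv}) and projecting). Work on the probability space $\varLambda^{\hat{n}}\times[0,1)$ equipped with the product of the uniform measure and Lebesgue measure, and set $f(\textbf{i},x)=\pi_{\textbf{j}}g_{\textbf{i}}(x,W(x))$ and $g(\textbf{i},x)=\pi_{\textbf{j}}g_{\textbf{i}}(0)$. Then $f$ pushes this measure forward to $\pi_{\textbf{j}}\mu$ and $g$ pushes it to $\nu_n^{\textbf{j}}$. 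By Lemma~\ref{lem:TransformA}, $f(\textbf{i},x)-g(\textbf{i},x)=\lambda^{\hat{n}}\,\pi_{{\textbf{i}}^*\textbf{j}}(x,W(x))$, and since the measures $\pi_{\textbf{k}}\mu$, $\textbf{k}\in\Sigma$, are all supported in one fixed bounded interval $[-M_0,M_0]$, we get $\sup|f-g|\le M_0\lambda^{\hat{n}}\le M_0 b^{-n}$. Applying Lemma~\ref{lem:pfclose} at scale $n-\lceil\log_b M_0\rceil$ and then passing back to scale $n$ (which changes either entropy by at most $\lceil\log_b M_0\rceil$) produces a constant $C'$, independent of $n$ and of $\textbf{j}$, with $|H(\nu_n^{\textbf{j}},\mathcal{L}_n)-H(\pi_{\textbf{j}}\mu,\mathcal{L}_n)|\le C'$.

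Finally, by part (2) of Theorem~\ref{thm:ledrappier}, for $\nu^{\mathbb{Z}_+}$-a.e. $\textbf{j}$ the measure $\pi_{\textbf{j}}\mu$ is exact dimensional with $\dim(\pi_{\textbf{j}}\mu)=\alpha$, so Proposition~\ref{prop:Young} gives $\lim_{n\to\infty}\frac1n H(\pi_{\textbf{j}}\mu,\mathcal{L}_n)=\alpha$. Combining the three steps, $\bigl|\frac1n H(\theta_n^{\textbf{j}},\mathcal{L}_0^{\mathcal{X}})-\frac1n H(\pi_{\textbf{j}}\mu,\mathcal{L}_n)\bigr|\le C'/n\to 0$, so the first limit also exists and equals $\alpha$. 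I do not anticipate a genuine obstacle: the statement is essentially a bookkeeping identity (the coarsest partition $\mathcal{L}_0^{\mathcal{X}}$ retains only the translation part $\pi_{\textbf{j}}g_{\textbf{i}}(0)$ at resolution $b^{-n}$) glued to the already-established entropy--dimension dictionary; the only two points requiring a little care are the exact identity $[\hat{n}\log_b(1/\lambda)]=n$ and making sure the comparison constant in the middle step is independent of $n$ and $\textbf{j}$, which follows from the uniformity of the support bound $M_0$ and the fact that the constant in Lemma~\ref{lem:pfclose} is absolute.
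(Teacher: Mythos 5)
Your proposal is correct and follows essentially the same route as the paper: the paper also identifies $H(\theta_n^{\textbf{j}},\mathcal{L}_0^{\mathcal{X}})$ with the $\mathcal{L}_n$-entropy of the discrete measure of translation constants $\pi_{\textbf{j}}g_{\textbf{i}}(0,0)$, compares that measure with $\pi_{\textbf{j}}\mu$ via an $O(b^{-n})$ perturbation (Lemma~\ref{lem:pfclose}), and concludes with Theorem~\ref{thm:ledrappier} and Proposition~\ref{prop:Young}. Your verification that $[\hat{n}\log_b(1/\lambda)]=n$ is a detail the paper leaves implicit, but it is the same argument.
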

\begin{proof}	
Define $\pi_n, \pi: \Sigma\;\to\mathbb{R}^2$, by $\pi_n(\textbf{i})= g_{i_1i_2\cdots i_{\hat{n}}}(0,0)$ and $\pi(\textbf{i})=\lim_{n\to\infty} \pi_n(\textbf{i})$.
Then $\pi_n-\pi=O(b^{-n})$, and hence $\pi_{\textbf{j}}\pi_n-\pi_{\textbf{j}}\pi=O(b^{-n})$.
Therefore,
$$H(\pi_{\textbf{j}}\mu, \mathcal{L}_n)=H\left(\pi_{\textbf{j}}\pi \nu^{\mathbb{Z}_+}, \mathcal{L}_n \right)=H(\pi_{\textbf{j}}\pi_n \nu^{\mathbb{Z}_+}, \mathcal{L}_n )+O(1).$$	
For $\nu$-a.e. $\textbf{j}\in \Sigma$, $\lim_{n\to\infty} \frac{1}{n} H(\pi_{\textbf{j}}\mu,\mathcal{L}_n)=\alpha$, so
$$\lim_{n\to\infty} \frac{1}{n} H(\pi_{\textbf{j}}\pi_n \nu, \mathcal{L}_n)=\alpha.$$
Since $H(\theta_n^{\textbf{j}}, \mathcal{L}_0^\#)=H(\pi_{\textbf{j}}\pi_n \nu, \mathcal{L}_n)$, the lemma follows.
\end{proof}

\begin{lemma}\label{lem:thetajLcn}
There exists $C\in \mathbb{Z}_+$ such that for each $\textbf{j}\in \Sigma$, we have 	$$\lim_{n\to\infty}\frac{1}{n}H\left(\theta^{\,\textbf{j}}_n,\mathcal{L}_{C n}^{\mathcal{X}}\right)=\frac{\log b}{\log(1/\lambda)}.$$
\end{lemma}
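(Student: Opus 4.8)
The plan is to show that for a suitable integer $C$ the measure $\theta_n^{\textbf{j}}$ is, at the resolution of $\mathcal{L}_{Cn}^{\mathcal{X}}$, as spread out as it can possibly be: all of its atoms lie in pairwise distinct elements of $\mathcal{L}_{Cn}^{\mathcal{X}}$, so that $H(\theta_n^{\textbf{j}},\mathcal{L}_{Cn}^{\mathcal{X}})$ equals $\hat{n}$ exactly, and then to invoke the elementary fact $\hat{n}/n\to\log b/\log(1/\lambda)$.

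First I would record the asymptotics of $\hat{n}$. Taking logarithms in (\ref{eqn:n'}) gives $\hat{n}=\lceil n\log b/\log(1/\lambda)\rceil$, so $\hat{n}=n\log b/\log(1/\lambda)+O(1)$; in particular $\hat{n}/n\to\log b/\log(1/\lambda)$. For the upper bound, note that $\theta_n^{\textbf{j}}$ is supported on at most $b^{\hat{n}}$ points, so for every partition $\mathcal{Q}$ one has $H(\theta_n^{\textbf{j}},\mathcal{Q})\le\log_b b^{\hat{n}}=\hat{n}$; applying this to $\mathcal{Q}=\mathcal{L}_{Cn}^{\mathcal{X}}$ gives $\limsup_n\frac1n H(\theta_n^{\textbf{j}},\mathcal{L}_{Cn}^{\mathcal{X}})\le\log b/\log(1/\lambda)$, for any choice of $C$.

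The lower bound is where the separation enters. Let $C_0$ be the constant furnished by Lemma~\ref{lem:constantC}. Since $\hat{n}\le(\log b/\log(1/\lambda)+1)n$ for all $n\ge1$, I would fix an integer $C\ge C_0(\log b/\log(1/\lambda)+1)$, so that $Cn\ge C_0\hat{n}$ for every $n$; because the partitions $\mathcal{L}_i^{\mathcal{X}}$ are nested, $\mathcal{L}_{Cn}^{\mathcal{X}}$ refines $\mathcal{L}_{C_0\hat{n}}^{\mathcal{X}}$, hence $H(\theta_n^{\textbf{j}},\mathcal{L}_{Cn}^{\mathcal{X}})\ge H(\theta_n^{\textbf{j}},\mathcal{L}_{C_0\hat{n}}^{\mathcal{X}})$. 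Applying Lemma~\ref{lem:constantC} with $n$ replaced by $\hat{n}$, any two distinct $\textbf{u},\textbf{v}\in\varLambda^{\hat{n}}$ satisfy $\mathcal{L}_{C_0\hat{n}}^{\mathcal{X}}(\pi_{\textbf{j}}g_{\textbf{u}})\ne\mathcal{L}_{C_0\hat{n}}^{\mathcal{X}}(\pi_{\textbf{j}}g_{\textbf{v}})$; in particular the maps $\pi_{\textbf{j}}g_{\textbf{i}}$, $\textbf{i}\in\varLambda^{\hat{n}}$, are pairwise distinct, so $\theta_n^{\textbf{j}}$ has exactly $b^{\hat{n}}$ atoms, each of mass $b^{-\hat{n}}$, lying in $b^{\hat{n}}$ distinct elements of $\mathcal{L}_{C_0\hat{n}}^{\mathcal{X}}$. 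Therefore $H(\theta_n^{\textbf{j}},\mathcal{L}_{C_0\hat{n}}^{\mathcal{X}})=\hat{n}$, so $H(\theta_n^{\textbf{j}},\mathcal{L}_{Cn}^{\mathcal{X}})=\hat{n}$ for this $C$; dividing by $n$ and letting $n\to\infty$ yields the claim.

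I do not expect a genuine obstacle in this lemma: the only nontrivial input — the uniform separation of the maps $\pi_{\textbf{j}}g_{\textbf{u}}$, $\textbf{u}\in\varLambda^{\hat{n}}$, already at a partition level proportional to $\hat{n}$ — is precisely Lemma~\ref{lem:constantC}, which itself rests on the transversality estimates of Theorem~\ref{thm:estimate}. What remains is the bookkeeping relating $\hat{n}$ to $n$ and the monotonicity of entropy under refinement; the only point needing a little care is to take $C$ large enough relative to $C_0$ to absorb the ratio $\hat{n}/n$.
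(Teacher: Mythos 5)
Your proof is correct and takes essentially the same route as the paper: apply Lemma~\ref{lem:constantC} with $\hat{n}$ in place of $n$ (absorbing the ratio $\hat{n}/n$ into the choice of $C$) to conclude that the $b^{\hat{n}}$ equal-mass atoms of $\theta_n^{\textbf{j}}$ lie in pairwise distinct cells of $\mathcal{L}_{Cn}^{\mathcal{X}}$, so that $H(\theta_n^{\textbf{j}},\mathcal{L}_{Cn}^{\mathcal{X}})=\hat{n}$, and then use $\hat{n}/n\to\log b/\log(1/\lambda)$. Your write-up is in fact a little more careful than the paper's one-line version about the trivial upper bound and the choice of $C$ relative to the constant of Lemma~\ref{lem:constantC}.
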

\begin{proof}
By Lemma~\ref{lem:constantC}, there exists $C\in \mathbb{Z}_+$ such that for all $n\ge 1$ and any two distinct $\textbf{i}, \textbf{k}\in \varLambda^{\hat{n}}$, $\pi_{\textbf{j}} g_{\textbf{i}}$ and $\pi_{\textbf{j}}g_{\textbf{k}}$ lie in distinct elements of $\mathcal{L}_{Cn}^\#$. Therefore $H(\theta^{\textbf{j}}_n, \mathcal{L}_{Cn}^\#)=\hat{n}\log b$. Since $\lim_{n\to\infty} n/\hat{n}=\log_b 1/\lambda$, the lemma follows.
\end{proof}

From now on, we fix $\textbf{j}\in \Sigma$ so that the conclusion of Lemma~\ref{lem:thetanL0} holds.
We shall write $\theta_n=\theta_n^{\textbf{j}}$.
Let
\begin{equation}\label{eqn:eps0}
\eps_0=\frac{1}{C}\left(\frac{\log b}{\log \frac{1}{\lambda}}-\alpha\right)>0.
\end{equation}

\subsection{Decomposition of entropy}
In the following lemma, we decompose the entropy of $\theta_n$ and $\pi_{\textbf{j}}\mu$ into small scales.  

\begin{lemma}\label{lem:thetadec}
For any $\tau>0$, there exists $C_0(\tau)>0$ such that if $k,n$ are positive integers with $n> C_0(\tau)k$, then
\begin{equation}\label{eqn:thetadec}
\frac{1}{Cn}H(\theta_n, \mathcal{L}_{Cn}^{\mathcal{X}}|\mathcal{L}_0^{\mathcal{X}})\le \mathbb{E}_{0\le i<Cn}^{\theta_n} \left[\frac{1}{k}H((\theta_n)_{\Psi,i}, \mathcal{L}_{i+k}^{\mathcal{X}})\right]+\tau,
\end{equation}
\begin{equation}\label{eqn:thetamudec}
\frac{1}{Cn} H(\pi_{\textbf{j}}\mu, \mathcal{L}_{(C+1)n}|\mathcal{L}_{n})\ge \mathbb{E}^{\theta_n}_{0\le i<Cn}\left[\frac{1}{k} H((\theta_n)_{\Psi,i}.\mu, \mathcal{L}_{i+k+n}|\mathcal{L}_{i+n})\right]-\tau.
\end{equation}
\end{lemma}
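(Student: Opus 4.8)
The plan is to obtain both estimates from the routine ``telescoping over a window of length $k$'' mechanism that underlies the entropy bookkeeping of \cite{hochman2014self} and \cite{barany2019hausdorff}; the only ingredients are the chain rule for conditional entropy, the uniform boundedness of one-step conditional entropies, and --- for \eqref{eqn:thetamudec} --- concavity. Throughout, recall that $H$ is normalised with $\log_b$, so that a one-step refinement of the partitions $\mathcal{L}_n$ of $\R$ contributes at most $1$ to a conditional entropy, while a one-step refinement $\mathcal{L}_{j+1}^{\mathcal{X}}$ of $\mathcal{L}_j^{\mathcal{X}}$ contributes at most $\log_b A$, $A$ being the constant of Lemma~\ref{lem:boundspart}.

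To prove \eqref{eqn:thetadec}: since the $\mathcal{L}_i^{\mathcal{X}}$ are nested, the chain rule expresses $H(\theta_n,\mathcal{L}_{Cn}^{\mathcal{X}}|\mathcal{L}_0^{\mathcal{X}})$ and each $H(\theta_n,\mathcal{L}_{i+k}^{\mathcal{X}}|\mathcal{L}_i^{\mathcal{X}})=\mathbb{E}^{\theta_n}\big(H((\theta_n)_{\Psi,i},\mathcal{L}_{i+k}^{\mathcal{X}})\big)$ as sums of the one-step quantities $H_j:=H(\theta_n,\mathcal{L}_{j+1}^{\mathcal{X}}|\mathcal{L}_j^{\mathcal{X}})\le\log_b A$. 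Summing the latter over $0\le i<Cn$, each $H_j$ with $k-1\le j\le Cn-1$ is counted exactly $k$ times and the finitely many remaining ones fewer times, so $\sum_{i=0}^{Cn-1}H(\theta_n,\mathcal{L}_{i+k}^{\mathcal{X}}|\mathcal{L}_i^{\mathcal{X}})\ge k\,H(\theta_n,\mathcal{L}_{Cn}^{\mathcal{X}}|\mathcal{L}_0^{\mathcal{X}})-k(k-1)\log_b A$. Dividing by $Cnk$ gives \eqref{eqn:thetadec} with an error $O\big(k(\log_b A)/(Cn)\big)$, which is $<\tau$ once $n>C_0(\tau)k$ for $C_0(\tau)$ large enough.

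To prove \eqref{eqn:thetamudec}: the extra ingredient is the identity $\pi_{\textbf{j}}\mu=\theta_n.\mu=\sum_{Q\in\mathcal{L}_i^{\mathcal{X}}}\theta_n(Q)\,(\theta_n)_Q.\mu$, valid for every $i$; it is a \emph{finite} convex combination, obtained by iterating \eqref{eqn:muinv} and using that $\eta\mapsto\eta.\mu$ is affine. Since $\mathcal{L}_{i+k+n}$ refines $\mathcal{L}_{i+n}$, repeated application of Lemma~\ref{lem:concave} yields $H(\pi_{\textbf{j}}\mu,\mathcal{L}_{i+k+n}|\mathcal{L}_{i+n})\ge\mathbb{E}^{\theta_n}\big(H((\theta_n)_{\Psi,i}.\mu,\mathcal{L}_{i+k+n}|\mathcal{L}_{i+n})\big)$ for each $i$. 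Averaging over $0\le i<Cn$ and dividing by $k$ bounds the right-hand side of \eqref{eqn:thetamudec} by $\frac{1}{Cnk}\sum_{i=0}^{Cn-1}H(\pi_{\textbf{j}}\mu,\mathcal{L}_{i+k+n}|\mathcal{L}_{i+n})$, and the same window-counting as above --- now with one-step quantities at most $1$ --- bounds the latter by $\frac{1}{Cn}H(\pi_{\textbf{j}}\mu,\mathcal{L}_{(C+1)n}|\mathcal{L}_n)+O\big(k/(Cn)\big)$. Taking $C_0(\tau)$ to be the larger of the two thresholds completes the proof.

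I do not expect a genuine obstacle here; it is essentially bookkeeping. The two points one must not mishandle are: (i) that the partitions $\mathcal{L}_i^{\mathcal{X}}$ are genuinely nested with the uniform branching bound of Lemma~\ref{lem:boundspart} (so that the one-step conditional entropies are uniformly bounded and the telescoping is legitimate), and (ii) the \emph{direction} of the two estimates --- concavity only permits the entropy of $\pi_{\textbf{j}}\mu$ to dominate, not to be dominated by, the average of the component pushforwards, which is exactly why \eqref{eqn:thetamudec} is a lower bound for $\pi_{\textbf{j}}\mu$ whereas \eqref{eqn:thetadec} is an upper bound for $\theta_n$. This asymmetry is precisely what makes the pair usable when Hochman's entropy-increase criterion is invoked in \S\ref{sec:pfThmB'}.
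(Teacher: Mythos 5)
Your argument is correct and is essentially the paper's own proof: the authors likewise invoke the telescoping-window bookkeeping of \cite[Lemma 3.4]{hochman2014self} together with the branching bound of Lemma~\ref{lem:boundspart} for \eqref{eqn:thetadec}, and the decomposition $\pi_{\textbf{j}}\mu=\theta_n.\mu$ plus concavity of conditional entropy for \eqref{eqn:thetamudec}. You merely spell out the window-counting that the paper delegates to the citation, and your two cautionary remarks (nestedness with uniform branching, and the direction forced by concavity) match the logic of the paper's proof exactly.
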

\begin{proof}
Using Lemma~\ref{lem:boundspart} and arguing in the same way of \cite[Lemma 3.4]{hochman2014self}, we have
$$\frac{1}{Cn}H(\theta_n, \mathcal{L}_{Cn}^{\mathcal{X}}|\mathcal{L}_0^{\mathcal{X}})= \mathbb{E}^{\theta_n}_{0\le i<Cn} \left[\frac{1}{k}H((\theta_n)_{\Psi,i}, \mathcal{L}_{i+k}^{\mathcal{X}})\right]+O\left(\frac{k}{n}\right).$$
Therefore, when $k$ and $n/k$ are large enough, (\ref{eqn:thetadec}) holds.
Similarly, we also have
$$\frac{1}{Cn} H(\pi_{\textbf{j}}\mu, \mathcal{L}_{(C+1)n}|\mathcal{L}_{n})\ge \frac{1}{Cn}\sum_{0\le i<Cn}\left[\frac{1}{k} H(\pi_{\textbf{j}}\mu, \mathcal{L}_{i+k+n}|\mathcal{L}_{i+n})\right]-\tau.$$
Note that $\pi_{\textbf{j}}\mu=(\theta_n).\mu$. By concavity of conditional entropy, we have
$$H(\pi_{\textbf{j}}\mu, \mathcal{L}_{i+k+n}|\mathcal{L}_{i+n})=H((\theta_n).\mu,\mathcal{L}_{i+k+n}|\mathcal{L}_{i+n})\ge \mathbb{E}^{\theta_n}(H((\theta_n)_{\Psi,i}.\mu, \mathcal{L}_{i+k+n}|\mathcal{L}_{i+n})).$$ Thus (\ref{eqn:thetamudec}) holds.
\end{proof}

\subsection{Proof of Theorem B'}
To conclude the proof of Theorem B', we shall further decompose the entropy
$$Q_{\Psi_0, i,n,k}:=\frac{1}{k} H([(\theta_n)_{\Psi_0, i}].\mu, \mathcal{L}_{i+k+n}|\mathcal{L}_{i+n})$$
into smaller scales and compare it with
$$\widetilde{Q}_{\Psi_0,i,n,k}=
\frac{1}{b^{\hat{i}}}\sum_{\textbf{u}\in \varLambda^{\hat{i}}}\int_{\mathcal{X}} \frac{1}{k} H(\Psi g_{\textbf{u}}\mu, \mathcal{L}_{i+k+n}) d(\theta_{n})_{\Psi_0,i}(\Psi),$$
for each $\Psi_0$ in the support of $\theta_n$.

\begin{lemma}\label{lem:entlow}
For any $\tau>0$, the following holds provided that $k\ge K_1(\tau)$: For any $\Psi_0$ in the support of $\theta_n$,
\begin{equation}\label{eqn:etacond}
Q_{\Psi_0, i, n, k}\ge
\widetilde{Q}_{\Psi_0, i, n,k}-\tau.
\end{equation}
\end{lemma}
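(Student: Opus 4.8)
The plan is to reduce the claimed inequality to a comparison between a conditional entropy at scales $i+n$ to $i+k+n$ and an unconditional entropy, up to a controllable error. First I would unwind the definitions: $(\theta_n)_{\Psi_0,i}$ is a probability measure $\eta$ supported in a single element of $\mathcal{L}_i^{\mathcal{X}}$ whose atoms are among the maps $\pi_{\textbf{j}}g_{\textbf{w}}$ with $\textbf{w}\in \varLambda^{\hat n}$, so $\eta.\mu = \sum_{\Psi} \eta(\{\Psi\}) \Psi\mu = \sum_\Psi \eta(\{\Psi\}) \frac{1}{b^{\hat i}}\sum_{\textbf{u}\in\varLambda^{\hat i}} \Psi g_{\textbf{u}}\mu$, using the iterated formula (\ref{eqn:muinv}). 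Thus $[\eta].\mu$ is a convex combination of the measures $\Psi g_{\textbf{u}}\mu$, and by concavity of conditional entropy (Lemma~\ref{lem:concave}),
$$H([\eta].\mu,\mathcal{L}_{i+k+n}|\mathcal{L}_{i+n}) \ge \frac{1}{b^{\hat i}}\sum_{\Psi,\textbf{u}} \eta(\{\Psi\}) H(\Psi g_{\textbf{u}}\mu, \mathcal{L}_{i+k+n}|\mathcal{L}_{i+n}).$$
So it suffices to show, term by term, that $\frac1k H(\Psi g_{\textbf{u}}\mu, \mathcal{L}_{i+k+n}|\mathcal{L}_{i+n}) \ge \frac1k H(\Psi g_{\textbf{u}}\mu, \mathcal{L}_{i+k+n}) - \tau$ once $k$ is large.

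The key point for the term-by-term bound is a support-size estimate. By Lemma~\ref{lem:TransformA}, $\Psi g_{\textbf{u}} = \pi_{\textbf{j}}g_{\textbf{w}}g_{\textbf{u}}$ is, up to an additive constant, $\lambda^{\hat n+\hat i}$ times a flow projection, and since $\mu$ is supported on the graph of $W|_{[0,1)}$ which is bounded, the measure $\Psi g_{\textbf{u}}\mu$ is supported in an interval of length $O(\lambda^{\hat n+\hat i}) = O(b^{-(n+i)})$ by the definition (\ref{eqn:n'}) of $\hat n, \hat i$. Therefore $\Psi g_{\textbf{u}}\mu$ meets only a bounded number of atoms of $\mathcal{L}_{i+n}$, so $H(\Psi g_{\textbf{u}}\mu, \mathcal{L}_{i+n}) = O(1)$, with an absolute implied constant. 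Since $\mathcal{L}_{i+k+n}$ refines $\mathcal{L}_{i+n}$, we have $H(\Psi g_{\textbf{u}}\mu, \mathcal{L}_{i+k+n}|\mathcal{L}_{i+n}) = H(\Psi g_{\textbf{u}}\mu, \mathcal{L}_{i+k+n}) - H(\Psi g_{\textbf{u}}\mu, \mathcal{L}_{i+n}) \ge H(\Psi g_{\textbf{u}}\mu, \mathcal{L}_{i+k+n}) - O(1)$. Dividing by $k$ and choosing $K_1(\tau)$ so that $O(1)/k < \tau$ for $k\ge K_1(\tau)$ gives the pointwise inequality; summing against the convex combination and using concavity yields (\ref{eqn:etacond}) with $\widetilde{Q}_{\Psi_0,i,n,k}$ exactly the averaged unconditional entropy in the statement.

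The main obstacle — really the only thing requiring care — is pinning down that the support-length bound $O(b^{-(n+i)})$ holds with a constant independent of $\textbf{j},\textbf{w},\textbf{u},n,i$: this needs the uniform boundedness of $\Gamma_{\textbf{v}}$ on $[0,1)$ over all $\textbf{v}\in\Sigma$ (which follows from the uniform bound on $Y(\cdot,\textbf{v})$, as $\gamma<1$ and $\phi'$ is bounded) together with the fact that $|\textbf{w}|=\hat n$, $|\textbf{u}|=\hat i$ so the contraction factor is $\lambda^{\hat n+\hat i}\le b^{-n}b^{-i}$. Once that is in hand, the entropy bookkeeping is routine: it is just the standard estimate that a measure supported on $O(1)$ cells of a partition has $O(1)$ entropy with respect to that partition, combined with the refinement identity for conditional entropy and concavity. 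I would state the support bound as a short displayed estimate, invoke Lemma~\ref{lem:concave}, and conclude.
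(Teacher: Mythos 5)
Your proposal is correct and follows essentially the same route as the paper: decompose $\eta.\mu$ via the iterated self-affinity relation, apply concavity of conditional entropy to reduce to the individual measures $\Psi g_{\textbf{u}}\mu$, and use the $O(b^{-(i+n)})$ support bound (from Lemma~\ref{lem:TransformA} and the uniform boundedness of the $\Gamma_{\textbf{v}}$) to show $H(\Psi g_{\textbf{u}}\mu,\mathcal{L}_{i+n})=O(1)$, absorbing the error into $\tau$ for $k\ge K_1(\tau)$. You merely spell out the refinement identity step that the paper leaves implicit.
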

\begin{proof}
By concavity of conditional entropy, the left hand side of (\ref{eqn:etacond}) is at least
$$\frac{1}{b^{\hat{i}}}\sum_{\textbf{u}\in \varLambda^{\hat{i}}}\int_{\mathcal{X}}\left(\frac{1}{k} H(\Psi g_{\textbf{u}}\mu, \mathcal{L}_{i+k+n}|\mathcal{L}_{i+n})\right)d\eta(\Psi),$$
where $\eta=(\theta_n)_{\Psi_0, i}$.
For each $\Psi$ in the support of $\eta$ and each $\textbf{u}\in \varLambda^{\hat{i}}$, the measure $\Psi g_{\textbf{u}}\mu$ is supported in an interval of length $O(b^{-(i+n)})$, hence
$H(\Psi g_{\textbf{u}} \mu, \mathcal{L}_{i+n})$ is uniformly bounded. The lemma follows.
\end{proof}

The following lemma will be proved in the next section, using Hochman's criterion on entropy increase.
\begin{lemma}[Entropy Increasing]\label{lem:entinc}
Assume $\alpha<1$. For every $\eps>0$,
there exist $\delta_*(\eps)>0$ and $K_2(\eps)>0$ such that for each $k\ge K_2(\eps)$ there exists $I_2(k,\eps)$ with the following property. Assume $i\ge I_2(k,\eps)$.
If $\Psi_0$ is in the support of $\theta_n$ and
$$\frac{1}{k}H((\theta_n)_{\Psi_0,i}, \mathcal{L}_{i+k}^{\mathcal{X}})\ge \eps,$$
then
$$Q_{\Psi_0,i,n,k}
\ge \widetilde{Q}_{\Psi_0,i,n,k}+\delta_*(\eps).$$
\end{lemma}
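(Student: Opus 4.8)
# Proof Proposal for the Entropy Increasing Lemma

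The plan is to derive this statement as an application of Hochman's inverse theorem on the entropy of convolutions, following the scheme of \cite{barany2019hausdorff}. The starting observation is that $[(\theta_n)_{\Psi_0,i}].\mu$ is, up to negligible error, a convex average of convolutions: by Lemma~\ref{lem:TransformA}, decomposing $\mu = b^{-\hat i}\sum_{\textbf{u}\in\varLambda^{\hat i}} g_{\textbf{u}}\mu$ and using that each $g_{\textbf{u}}\mu$ is supported in an interval of length $O(b^{-i})$, one writes
$$[(\theta_n)_{\Psi_0,i}].\mu \approx \frac{1}{b^{\hat i}}\sum_{\textbf{u}\in\varLambda^{\hat i}} (\theta_n)_{\Psi_0,i}.\delta_{g_{\textbf{u}}(\textbf{0})} * \big(\text{rescaled } \Psi g_{\textbf{u}}\mu\big),$$
so that $Q_{\Psi_0,i,n,k}$ compares to a convolution entropy while $\widetilde Q_{\Psi_0,i,n,k}$ is the average of the ``$\Psi g_{\textbf u}\mu$ factor'' entropies. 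Thus proving $Q \ge \widetilde Q + \delta_*$ amounts to showing a definite entropy increase under this convolution, at scales from $i+n$ down to $i+n+k$.

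The key inputs are the three structural facts already assembled. First, entropy porosity: by Theorem~\ref{thm:entporous}, for most $\textbf{u}$ the measure $\pi_{\textbf{j}}g_{\textbf u}\mu$ (hence the rescaled factor) is $(\alpha,\eps',m)$-entropy porous across the relevant scale window, which is exactly the hypothesis allowing Hochman's criterion to be applied to the ``$\mu$-side'' factor. Second, the hypothesis $\frac1k H((\theta_n)_{\Psi_0,i},\mathcal L_{i+k}^{\mathcal X})\ge\eps$ together with Lemma~\ref{lem:entinceta} guarantees that with probability at least $p>0$ over $\textbf{u}$, the pushforward $(\theta_n)_{\Psi_0,i}.\delta_{g_{\textbf u}(\textbf{0})}$ carries entropy at least $\delta_*$ at scales down to $i+k+n$: this is the ``translation part'' of the convolution, and it is non-concentrated on a positive-probability set. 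Third, the partition $\mathcal L_i^{\mathcal X}$ has the right geometry (Lemmas~\ref{lem:boundspart}, \ref{lem:constantR}) so that entropy with respect to $\mathcal L_i^{\mathcal X}$ genuinely records the spatial spread of $\Psi(x,W(x))$ at scale $b^{-(n+i)}$ uniformly in $x$, which is what lets one transfer entropy of $(\theta_n)_{\Psi_0,i}$ into entropy of a genuine measure on $\R$.

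I would then run Hochman's entropy-increase argument component-wise: restrict to the positive-probability set of $\textbf{u}$ from Lemma~\ref{lem:entinceta}; on that set one factor of the convolution has entropy $\ge\delta_*$ at multiple scales while the other factor is entropy porous with exponent $\alpha<1$; Hochman's theorem (in the multiscale/conditional form used in \cite{barany2019hausdorff}, their Theorem~2.8 / Proposition~5.x) then yields that the convolution has entropy at least that of the porous factor plus a fixed $\delta'(\eps)>0$; averaging over $\textbf{u}$ (the complementary set of small probability only costs $O$ of its measure by concavity, Lemma~\ref{lem:concave}) and comparing with $\widetilde Q$ gives $Q_{\Psi_0,i,n,k}\ge\widetilde Q_{\Psi_0,i,n,k}+\delta_*(\eps)$ after relabeling constants and choosing $K_2(\eps)$, $I_2(k,\eps)$ large enough to absorb the $O(k/n)$, $O(1/k)$ and tail errors. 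The main obstacle I anticipate is the bookkeeping at the interface between the abstract space $\mathcal X$ and the real line: one must be sure that entropy of $(\theta_n)_{\Psi_0,i}$ with respect to $\mathcal L_i^{\mathcal X}$ can be ``realized'' as entropy of an honest measure $(\theta_n)_{\Psi_0,i}.\delta_{(x,W(x))}$ on $\R$ at a comparable scale \emph{for a controlled proportion of base points $x$} — this is precisely the content of Lemma~\ref{lem:entinceta}, and the delicate point is that the separation estimate of Theorem~\ref{thm:estimate} (via the $M=b^{\ell_0}$ sample points $\psi(k/M)$ defining $\overline\pi$) is what makes the $\mathcal X$-partition fine enough that no entropy is ``lost'' in this realization. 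Handling the nonlinearity of the $\Gamma_{\textbf u}$ — i.e.\ that $\Psi g_{\textbf u}$ is only approximately affine on the scale window — is where the real analyticity and condition (H) do their work, and I expect the estimates of \S\ref{sec:separation} to be invoked repeatedly to keep these errors below $b^{-(i+k)}$.
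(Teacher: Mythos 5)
Your proposal follows essentially the same route as the paper's proof: rewrite $Q_{\Psi_0,i,n,k}$ (up to $O(\tau)$) as an average over $\textbf{u}\in\varLambda^{\hat i}$ of entropies of convolutions $(\eta.\delta_{g_{\textbf{u}}(\textbf{0})})\ast(\Psi g_{\textbf{u}}\mu)$ via the paper's Lemma~\ref{lem:dot2convolution}, invoke Theorem~\ref{thm:entporous} for entropy porosity of the $\Psi g_{\textbf{u}}\mu$ factor and Lemma~\ref{lem:entinceta} for definite entropy of the translation factor on a $p$-fraction of the $\textbf{u}$'s, and apply Theorem~\ref{thm:hochmanentgrow} on that set before averaging. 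One small correction: the complementary set of $\textbf{u}$'s need not have small measure (Lemma~\ref{lem:entinceta} only gives probability $p$, which may be tiny), so concavity alone does not let you discard it — you also need that convolution does not decrease entropy, i.e.\ \cite[Corollary 4.10]{hochman2014self} (the paper's inequality (\ref{eqn:lowbd})), so that those terms still contribute their full share to $\widetilde{Q}_{\Psi_0,i,n,k}$.
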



\begin{lemma}\label{lem:sumhatQ}
For any $\tau>0$, $k\ge K_3(\tau)$ and $n\ge N_3(\tau,k)$, the following holds:
$$\mathbb{E}^{\theta_n}_{0\le i<Cn} (\widetilde{Q}_{\Psi, i,n,k}) > (\alpha-\tau)(1-\tau).$$
\end{lemma}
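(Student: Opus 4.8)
The plan is to unwind the definition of $\widetilde{Q}_{\Psi,i,n,k}$ and relate its average over $0\le i<Cn$, taken with respect to $\theta_n$, to the entropy decomposition of $\pi_{\textbf j}\mu$ that appears in Lemma~\ref{lem:thetadec}, together with the entropy-porosity / lower bound information contained in Lemma~\ref{lem:boundsinm}. Recall that
$$\widetilde{Q}_{\Psi_0,i,n,k}=\frac{1}{b^{\hat{i}}}\sum_{\textbf{u}\in\varLambda^{\hat{i}}}\int_{\mathcal{X}}\frac{1}{k}H(\Psi g_{\textbf{u}}\mu,\mathcal{L}_{i+k+n})\,d(\theta_n)_{\Psi_0,i}(\Psi),$$
and that each measure $\Psi g_{\textbf{u}}\mu$ with $\Psi\in\mathrm{supp}(\theta_n)$ and $\textbf{u}\in\varLambda^{\hat{i}}$ is, by Lemma~\ref{lem:TransformA}, an affine image $x\mapsto ax+c$ of some $\pi_{\textbf{w}}\mu$ with $a=\pm\lambda^{\hat{n}+\hat{i}}$, so that by Lemma~\ref{lem:affinetransform}, $\frac1k H(\Psi g_{\textbf{u}}\mu,\mathcal{L}_{i+k+n})=\frac1k H(\pi_{\textbf{w}}\mu,\mathcal{L}_{k})+O(1/k)$ — here I use that $\hat n+\hat i$ is, up to a bounded error, $\log_b(1/a)=(n+i)\log_b(1/\lambda)/\log_b(1/\lambda)$ placed so that the relevant scale index is $k$ after the shift. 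The point is that $\widetilde{Q}_{\Psi_0,i,n,k}$ is, up to $O(1/k)$, an average of quantities of the form $\frac1k H(\pi_{\textbf w}\mu,\mathcal{L}_k)$ over various $\textbf w\in\Sigma$, where the word making up the first $\hat n+\hat i$ coordinates of $\textbf w$ is distributed according to a product of uniform measures.

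The first step is to compute $\mathbb{E}^{\theta_n}_{0\le i<Cn}(\widetilde Q_{\Psi,i,n,k})$ by interchanging the three averages: over $0\le i<Cn$, over the $\mathcal{L}_i^{\mathcal X}$-components $(\theta_n)_{\Psi,i}$ integrated against $\theta_n$ (which collapses back to integration against $\theta_n$ itself), and over $\textbf u\in\varLambda^{\hat i}$. Because $\theta_n$ puts equal mass $b^{-\hat n}$ on the maps $\pi_{\textbf j}g_{\textbf v}$, $\textbf v\in\varLambda^{\hat n}$, and prepending $\textbf u$ to $\textbf v^*$ corresponds exactly to concatenating uniform words, the resulting distribution on the ``head'' $\textbf w|_{\hat n+\hat i}$ is precisely $\nu^{\hat n+\hat i}$. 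Hence, after the affine-invariance reduction above,
$$\mathbb{E}^{\theta_n}_{0\le i<Cn}(\widetilde Q_{\Psi,i,n,k})=\mathbb{E}_{0\le i<Cn}\Big[\mathbb{E}_{\textbf w}\big(\tfrac1k H(\pi_{\textbf w}\mu,\mathcal{L}_k)\big)\Big]+O(1/k),$$
where $\textbf w$ has head distributed by $\nu^{\hat n+\hat i}$ and arbitrary tail (the tail is irrelevant up to $O(b^{-(\hat n+\hat i)})$ by Lemma~\ref{lem:pfclose}). Now apply Lemma~\ref{lem:boundsinm}: for $k\ge M(\tau)$ and $n+i$ large enough (which holds for all $0\le i<Cn$ once $n\ge N_3(\tau,k)$, since $\hat n\ge n\log_b(1/\lambda)-O(1)$), a $(1-\tau)$-fraction of these $\textbf w$ satisfy $\frac1k H(\pi_{\textbf w}\mu,\mathcal{L}_k)>\alpha-\tau$. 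Since entropy is nonnegative, the expectation is at least $(1-\tau)(\alpha-\tau)$, and absorbing the $O(1/k)$ error into a slightly worse constant (by shrinking $\tau$ at the outset, or enlarging $K_3(\tau)$) gives the claim.

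The main technical obstacle I expect is bookkeeping the index shifts: verifying that $\Psi g_{\textbf u}\mu$, as an affine image of $\pi_{\textbf w}\mu$ with contraction ratio $\lambda^{\hat n+\hat i}$, really has its entropy at scale $i+k+n$ matching the entropy of $\pi_{\textbf w}\mu$ at scale $k$ up to a uniformly bounded additive error — this requires the defining inequality $\lambda^{\hat m}\le b^{-m}<\lambda^{\hat m-1}$ to be used to control $\big|(\hat n+\hat i)\log_b\lambda+(n+i)\big|=O(1)$, so that $i+k+n+\lfloor\log_b\lambda^{\hat n+\hat i}\rfloor=k+O(1)$, and then invoking Lemma~\ref{lem:affinetransform}. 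A secondary point, already handled by Lemma~\ref{lem:pfclose}, is that replacing $\pi_{\textbf w}\mu$ (depending on the full infinite word $\textbf w$) by a version depending only on the head $\textbf w|_{\hat n+\hat i}$ costs only a bounded entropy error. Once these uniform-error estimates are in place, the argument is a routine application of Fubini and Lemma~\ref{lem:boundsinm}, with all error terms made negligible by first fixing $\tau$, then taking $k$ large, then $n$ large.
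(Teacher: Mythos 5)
Your proposal is correct and follows essentially the same route as the paper: both collapse the average over $\mathcal{L}_i^{\mathcal X}$-components and over $\textbf{u}\in\varLambda^{\hat i}$ into a single uniform average over $\textbf{w}\in\varLambda^{\hat i+\hat n}$ of $\frac1k H(\pi_{\textbf j}g_{\textbf w}\mu,\mathcal{L}_{i+n+k})$, use Lemma~\ref{lem:TransformA} together with Lemma~\ref{lem:affinetransform} to identify this with $\frac1k H(\pi_{\textbf w^*\textbf j}\mu,\mathcal{L}_k)$ up to a uniformly bounded error, and then invoke Lemma~\ref{lem:boundsinm} plus nonnegativity of entropy to conclude. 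The only cosmetic difference is where the $O(1/k)$ error is absorbed (the paper folds it into the threshold $\alpha-\tau/2\rightsquigarrow\alpha-\tau$ before taking the expectation, you absorb it at the end), which is immaterial.
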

\begin{proof}
First, we notice that
\begin{align*}
\mathbb{E}^{\theta_n^{\textbf{j}}}_{0\le i<Cn} (\widetilde{Q}_{\Psi, i, n,k})& =\frac{1}{Cn}\sum_{0\le i<Cn}
\frac{1}{b^{\hat{i}}}\sum_{\textbf{u}\in\varLambda^{\hat{i}}}\int_{\mathcal{X}} \frac{1}{k}H(\Psi g_{\textbf{u}}\mu, \mathcal{L}_{i+n+k})d\theta_n^{\textbf{j}}(\Psi)\\
&= \frac{1}{Cn} \sum_{0\le i<Cn} \frac{1}{b^{\hat{i}}}\sum_{\textbf{u}\in \varLambda^{\hat{i}}}\frac{1}{b^{\hat{n}}}\sum_{\textbf{v}\in \varLambda^{\hat{n}}}\frac{1}{k} H(\pi_{\textbf{j}}g_{\textbf{v}}g_{\textbf{u}}\mu, \mathcal{L}_{i+n+k})\\
&= \frac{1}{Cn}\sum_{0\le i<Cn}\frac{1}{b^{\hat{i}+\hat{n}}}\sum_{\textbf{w}\in \varLambda^{\hat{i}+\hat{n}}} \frac{1}{k}H(\pi_{\textbf{j}}g_{\textbf{w}}\mu,\mathcal{L}_{i+n+k}).
\end{align*}
By Lemma~\ref{lem:boundsinm}, for each $k\ge M(\tau/2)$, the following holds for all $n$ large enough:
$$\inf_{\textbf{j}\in \Sigma} \nu^{\hat{i}+\hat{n}} \left(\left\{\textbf{w}\in \varLambda^{\hat{i}+\hat{n}}: \frac{1}{k}H(\pi_{{\textbf{w}}^*\textbf{j}}\mu, \mathcal{L}_k)>\alpha-\tau/2 \right\}\right)>1-\tau.$$
By Lemma~\ref{TransformA} and Lemma~\ref{lem:affinetransform}, for $\textbf{w}\in \varLambda^{\hat{i}+\hat{n}}$, $|H(\pi_{{\textbf{w}}^*\textbf{j}}\mu,\mathcal{L}_k)-H(\pi_{\textbf{j}}g_{\textbf{w}}\mu, \mathcal{L}_{i+n+k})|$ is uniformly bounded. So when $k$ is large enough, the above displayed inequality implies that
 $$\inf_{\textbf{j}\in \Sigma} \nu^{\hat{i}+\hat{n}} \left(\left\{\textbf{w}\in \varLambda^{\hat{i}+\hat{n}}: \frac{1}{k}H(\pi_{\textbf{j}}g_{\textbf{w}}\mu, \mathcal{L}_{i+n+k})>\alpha-\tau \right\}\right)>1-\tau.$$
The lemma follows. 
\end{proof}
\begin{proof}[Proof of Theorem B'] Arguing by contradiction, assume that $\alpha<1$.
Let $\eps_0$ be given by (\ref{eqn:eps0}) and $\eps=\eps_0/2$. Let $\delta_*=\delta_*(\eps_0/2)$ be given by Lemma~\ref{lem:entinc} and let $\tau\in (0,\delta_*)$ be a small constant to be determined.
Fix $$k\ge \max(K_1(\tau), K_2(\eps), K_3(\tau)),$$ where $K_1(\tau)$ is given by Lemma~\ref{lem:entlow}, $K_2(\eps)$ is given by Lemma~\ref{lem:entinc} and $K_3(\tau)$ is given by Lemma~\ref{lem:sumhatQ}. Assume that $n$ is large enough. Then the left hand side of (\ref{eqn:thetadec}) tends to $\eps_0>0$.  By Lemma~\ref{lem:boundspart}, for any $i\ge 0$, any $\mathcal{L}_i^{\mathcal{X}}$-component $\eta$ of $\theta_n$, $\frac{1}{k} H(\eta, \mathcal{L}_{i+k}^{\mathcal{X}})$ is bounded from above by a constant.  Thus
$$\xi_0:=\mathbb{P}^{\theta_n}_{0\le i<Cn} \left(\frac{1}{k}H((\theta_n)_{\Psi,i}, \mathcal{L}_{i+k}^{\mathcal{X}})>\eps\right)$$
is bounded from below by a positive constant $2p$. By Lemma~\ref{lem:entinc},
$$\xi:=\mathbb{P}^{\theta_n}_{0\le i<Cn} \left(
Q_{\Psi,i,n,k}> \widetilde{Q}_{\Psi, i, n,k}
+\delta_*\right)\ge \xi_0-\frac{I_2(k,\eps)}{Cn}\ge p.$$
Therefore, by Lemmas~\ref{lem:sumhatQ} and \ref{lem:entlow},
$$
\mathbb{E}^{\theta_n}_{0\le i<Cn} \left(Q_{\Psi, i, n,k}
\right)
\ge \mathbb{E}^{\theta_n}_{0\le i<Cn} \left(\widetilde{Q}_{\Psi, i, n,k}\right)+\xi \delta_*-(1-\xi)\tau
\ge (\alpha-\tau)(1-\tau)+\xi\delta_*-(1-\xi)\tau.
$$

Choosing $\tau>0$ small enough, we obtain
$$\mathbb{E}^{\theta_n}_{0\le i<Cn} \left(Q_{\Psi, i,n,k}\right)\ge \alpha+ p\delta_*/2.$$
However, as $n\to\infty$, the left hand side of (\ref{eqn:thetamudec}) converges to $\alpha$, a contradiction!
\end{proof}

\subsection{Proof of the Entropy Increasing Lemma}
In the rest of this section, we shall prove Lemma~\ref{lem:entinc}. The following is a version of Hochman's entropy increasing criterion, see  ~\cite[Theorem 2.8]{hochman2014self} and~\cite[Theorem 4.1]{barany2019hausdorff}.
\begin{theorem}[Hochman]\label{thm:hochmanentgrow}
	For any $\varepsilon>0$ and $m\in\mathbb{Z}_+$ there exists $\delta=\delta(\varepsilon,m)>0$ such that for
	$k>K(\varepsilon,\delta,m)$, $n\in\mathbb{N}$, and $\tau,\theta\in\boldsymbol{\mathscr{P}}(\mathbb{R})$,
	if
\begin{enumerate}
\item[(1)] $\text{diam}(\text{supp}(\tau)),\text{diam}(\text{supp}(\theta))\le b^{-n}$,
\item[(2)] $\tau$ is $(1-\varepsilon,\frac{\varepsilon}2,m)$-entropy porous from scales $n$ to $n+k$,
\item [(3)]$\frac{1}{k}H(\theta,\mathcal{L}_{n+k})>\varepsilon$,
\end{enumerate}	
	then
	$$\frac{1}{k}H(\theta\ast\tau,\mathcal{L}_{n+k})\ge\frac{1}{k}H(\tau,\mathcal{L}_{n+k})+\delta,$$
where $\ast$ denotes the convolution. 
\end{theorem}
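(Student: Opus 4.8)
The plan is to obtain this statement as a consequence of Hochman's inverse theorem for the entropy of convolutions, which is the real content behind every entropy-increase result of this type; in the scale-localized form stated here the assertion is essentially \cite[Theorem~4.1]{barany2019hausdorff}, itself deduced from \cite[Theorems~2.7 and 2.8]{hochman2014self}. Concretely I would use the inverse theorem in the following form: for every $\eta>0$ and $m\in\mathbb{Z}_+$ there is $\delta_0=\delta_0(\eta,m)>0$ such that for $k$ large and $\sigma,\theta\in\mathscr{P}(\mathbb{R})$ whose supports have diameter $\le b^{-n}$, if $\frac1k H(\theta\ast\sigma,\mathcal{L}_{n+k})<\frac1k H(\sigma,\mathcal{L}_{n+k})+\delta_0$, then there exist disjoint sets $I,J\subseteq\{n,n+1,\ldots,n+k\}$ with $\#(I\cup J)>(1-\eta)k$ such that at each scale $i\in I$ a $(1-\eta)$-fraction (in $\sigma$) of the components $\sigma_{x,i}$ satisfy $\frac1m H(\sigma_{x,i},\mathcal{L}_{i+m})>1-\eta$, while at each scale $i\in J$ a $(1-\eta)$-fraction (in $\theta$) of the components $\theta_{x,i}$ satisfy $\frac1m H(\theta_{x,i},\mathcal{L}_{i+m})<\eta$. (If one starts from Hochman's normalization for measures on $[0,1)$, one first conjugates by the affine map carrying the level-$n$ cell containing the two supports onto $[0,1)$ and invokes Lemma~\ref{lem:affinetransform}; hypothesis~(1) is exactly what legitimizes this.) Given $\varepsilon$ and $m$ I would then fix $\eta$ small compared with $\varepsilon$ (say $\eta=\varepsilon/100$), set $\delta=\min\{\delta_0(\eta,m),\varepsilon/100\}$, and take $K(\varepsilon,\delta,m)$ large enough both for the inverse theorem to apply and to make the $O(m/k)$ errors below negligible. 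Assume for contradiction that $\frac1k H(\theta\ast\tau,\mathcal{L}_{n+k})<\frac1k H(\tau,\mathcal{L}_{n+k})+\delta$ while (1)--(3) hold, and apply the inverse theorem with $\sigma=\tau$ to produce $I$ and $J$.

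\textbf{Step 1: the set $I$ is small.} Since $\eta<\varepsilon/2$, a component $\tau_{x,i}$ with $\frac1m H(\tau_{x,i},\mathcal{L}_{i+m})>1-\eta$ in particular has $\frac1m H(\tau_{x,i},\mathcal{L}_{i+m})\ge 1-\varepsilon/2=(1-\varepsilon)+\varepsilon/2$, so at every $i\in I$ the event defining $(1-\varepsilon,\varepsilon/2,m)$-entropy porosity has $\tau$-probability less than $\eta$. Averaging this against the porosity hypothesis $\mathbb{P}^{\tau}_{n\le i\le n+k}\bigl(\frac1m H(\tau_{x,i},\mathcal{L}_{i+m})<1-\varepsilon/2\bigr)>1-\varepsilon/2$ and rearranging gives $\#I<\frac{\varepsilon/2}{1-\eta}(k+1)<\frac{2\varepsilon}{3}k$ for $k$ large. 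As $I$ and $J$ are disjoint with $\#(I\cup J)>(1-\eta)k$, we conclude $k-\#J<\#I+\eta k$, i.e. all but an $O(\varepsilon)$-fraction of the scales between $n$ and $n+k$ lie in $J$.

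\textbf{Step 2: contradicting (3).} Using the telescoping identity $H(\theta,\mathcal{L}_{i+m}\mid\mathcal{L}_i)=\sum_{j=i}^{i+m-1}H(\theta,\mathcal{L}_{j+1}\mid\mathcal{L}_j)$ together with $H(\theta,\mathcal{L}_n)=O(1)$ (from hypothesis~(1)), one gets $H(\theta,\mathcal{L}_{n+k})=\sum_{i=n}^{n+k-1}\mathbb{E}^{\theta}\bigl(\frac1m H(\theta_{x,i},\mathcal{L}_{i+m})\bigr)+O(m)$. For $i\in J$ the $i$-th summand is at most $2\eta$ (a $(1-\eta)$-fraction of components contribute less than $\eta$, the remaining fraction at most $1$), and for $i\notin J$ it is at most $1$; since $k-\#J=O(\varepsilon k)$ this yields $\frac1k H(\theta,\mathcal{L}_{n+k})\le O(\eta)+\frac{2\varepsilon}{3}+O(m/k)<\varepsilon$ for $k$ large, contradicting hypothesis~(3). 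Hence $\frac1k H(\theta\ast\tau,\mathcal{L}_{n+k})\ge\frac1k H(\tau,\mathcal{L}_{n+k})+\delta$, as required.

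I expect the only genuinely difficult ingredient to be Hochman's inverse theorem, which I would quote rather than reprove; everything else is bookkeeping. The one delicate point in that bookkeeping is the choice of quantifiers making ``near-uniform $\tau$-component'' incompatible with the porosity bound: this is exactly why the porosity parameter must be the specific value $1-\varepsilon$ with tolerance $\varepsilon/2$, and why the inverse theorem has to be applied with a parameter $\eta$ that is a small multiple of $\varepsilon$ rather than $\varepsilon$ itself, so that the factor $\varepsilon/2$ gained in Step~1 dominates the $O(\eta)$ losses in Step~2. A second, entirely routine, point is the translation between Hochman's normalization and the ``diameter $\le b^{-n}$, levels $n$ to $n+k$'' normalization used here, handled by the affine rescaling of Lemma~\ref{lem:affinetransform}.
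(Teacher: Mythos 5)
Your proposal is correct. The paper itself offers no proof of this statement --- it is quoted directly from \cite[Theorem 2.8]{hochman2014self} and \cite[Theorem 4.1]{barany2019hausdorff} --- and your derivation from Hochman's inverse theorem (porosity of $\tau$ forces the ``uniform'' scale-set $I$ to have size $O(\varepsilon k)$, whence the ``atomic'' set $J$ for $\theta$ covers almost all scales and drives $\frac{1}{k}H(\theta,\mathcal{L}_{n+k})$ below $\varepsilon$, contradicting (3)) is exactly the argument used in those references, with the quantifier bookkeeping and the rescaling via Lemma~\ref{lem:affinetransform} handled correctly.
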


For $\eta:=(\theta_n)_{\Psi_0, i}$ as in Lemma~\ref{lem:entinc}, we decompose $\eta.\mu$ as follows:
$$\eta.\mu=\frac{1}{b^{\hat{i}}} \sum_{\textbf{u}\in \varLambda^{\hat{i}}} \eta. g_{\textbf{u}}\mu.$$
We first show that the entropy of each term in the right hand side can be represented by entropy of convolutions of line measures.
\begin{lemma}\label{lem:dot2convolution}
There is a constant $C_1>0$ and for each $\tau>0$ there exists $K(\tau)$ such that when $i\ge C_1k$, $k\ge K(\tau)$ the following holds:
$$\left|\frac{1}{k} H(\eta. g_{\textbf{u}}\mu,\mathcal{L}_{i+k+n}|\mathcal{L}_{i+n}) - \frac{1}{k}H((\eta.\delta_{g_{\textbf{u}}(0)})\ast (\Psi_0 g_{\textbf{u}}\mu),\mathcal{L}_{i+k+n})\right| <\tau.$$
\end{lemma}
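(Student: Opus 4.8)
The plan is to unwind the definitions on both sides and show that the two quantities differ by a quantity controlled by entropy of measures supported on short intervals, which is $O(1)$ and hence $<\tau k$ once $k$ is large. First I would recall that $\eta = (\theta_n)_{\Psi_0,i}$ is supported in a single element of $\mathcal{L}_i^{\mathcal{X}}$, and every $\Psi$ in the support of $\eta$ has height $\hat n$, so by Lemma~\ref{lem:TransformA} we may write $\Psi(x,y) = \lambda^{\hat n}(y - \Gamma_{\mathbf{u}^*\mathbf{j}}(x)) + c_\Psi$ for a suitable first-coordinate data and constant $c_\Psi$. The key structural point is that for a fixed $\textbf{u}\in\varLambda^{\hat i}$, the map $\Psi \mapsto \Psi g_{\textbf{u}}$ followed by pushing forward $\mu$ produces, on the one hand, the measure $\eta.g_{\textbf{u}}\mu$, and on the other hand, since $g_{\textbf{u}}\mu$ is a fixed measure, $\eta.g_{\textbf{u}}\mu$ is exactly the distribution of $\Psi(z)$ where $\Psi\sim\eta$ and $z\sim g_{\textbf{u}}\mu$ independently. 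Writing $\Psi(z) = \lambda^{\hat n}(w - \Gamma_{\mathbf{u}^*\mathbf{j}}(t)) + c_\Psi$ with $z=(t,w)$, we see $\eta.g_{\textbf{u}}\mu$ is the convolution of the push-forward of $\eta$ under $\Psi\mapsto c_\Psi = \pi_{\textbf{j}}g_{\textbf{u}}g_{\textbf{v}}(0)$-type constants — i.e. $\eta.\delta_{g_{\textbf{u}}(0)}$ after the appropriate affine identification — with the fixed line measure $\Psi_0 g_{\textbf{u}}\mu$, up to the error coming from replacing the varying $\Gamma_{\mathbf{u}^*\mathbf{j}}$ part by that of $\Psi_0$.

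The second step is to estimate that error. Since $\Psi$ and $\Psi_0$ lie in the same element of $\mathcal{L}_i^{\mathcal{X}}$ and both have height $\hat n$, Lemma~\ref{lem:constantR} gives $|\Psi(x,y) - \Psi_0(x,y)| \le R b^{-(i+\hat n)}$ for all $(x,y)\in[0,1)\times\R$; in particular, after precomposing with $g_{\textbf{u}}$ (which maps into $[0,1)\times\R$) and writing $n$ in place of $\hat n$ modulo the bounded comparison $n\le\hat n\le n\log_b(1/\lambda)+O(1)$, the measures $\Psi g_{\textbf{u}}\mu$ and $\Psi_0 g_{\textbf{u}}\mu$ are within $O(b^{-(i+n)})$ of each other uniformly. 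Feeding this through Lemma~\ref{lem:pfclose} at scale $i+n+k$ (or really, comparing each conditional-entropy term) shows that replacing $\Psi_0 g_{\textbf{u}}\mu$ by the "true" line component costs only an additive constant $C$ in $H(\cdot,\mathcal{L}_{i+n+k})$. Next, for the conditional entropy $H(\eta.g_{\textbf{u}}\mu,\mathcal{L}_{i+k+n}|\mathcal{L}_{i+n})$: the fixed measure $g_{\textbf{u}}\mu$ is supported in an interval of length $O(b^{-\hat i}) = O(b^{-i})$ by construction of $g_{\textbf{u}}$ with $\textbf{u}\in\varLambda^{\hat i}$ (here I would use $\hat i \ge i$ and choose $C_1$ accordingly so that $b^{-\hat i}\le b^{-(i+n)}$ is not what we need — rather, the diameter of $\Psi g_{\textbf{u}}\mu$ is $\lambda^{\hat n}\cdot O(b^{-\hat i}) = O(b^{-(n+i)})$), hence $\eta.g_{\textbf{u}}\mu$ is a convex combination of measures each supported in an interval of length $O(b^{-(i+n)})$, so conditioning on $\mathcal{L}_{i+n}$ removes only a bounded amount of entropy: $|H(\eta.g_{\textbf{u}}\mu,\mathcal{L}_{i+k+n}|\mathcal{L}_{i+n}) - H(\eta.g_{\textbf{u}}\mu,\mathcal{L}_{i+k+n})| \le C$.

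Putting these together: $\frac1k H(\eta.g_{\textbf{u}}\mu,\mathcal{L}_{i+k+n}|\mathcal{L}_{i+n})$ equals $\frac1k H(\eta.g_{\textbf{u}}\mu,\mathcal{L}_{i+k+n}) + O(1/k)$, and $\eta.g_{\textbf{u}}\mu = (\eta.\delta_{g_{\textbf{u}}(0)}) \ast (\Psi_0 g_{\textbf{u}}\mu) + (\text{error of size } O(b^{-(i+n)}) \text{ uniformly})$, so by Lemma~\ref{lem:pfclose} again at scale $i+n+k$ we get $\frac1k H(\eta.g_{\textbf{u}}\mu,\mathcal{L}_{i+k+n}) = \frac1k H((\eta.\delta_{g_{\textbf{u}}(0)})\ast(\Psi_0 g_{\textbf{u}}\mu),\mathcal{L}_{i+k+n}) + O(1/k)$. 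Choosing $K(\tau)$ so that all the accumulated $O(1/k)$ terms are $<\tau$, and $C_1$ so that the height comparisons ($\hat i$ vs $i$, $\hat n$ vs $n$) and the interval-length estimates go through, finishes the proof. The main obstacle is bookkeeping the several affine reparametrizations — going from $\Psi$ acting on $(x,y)\in[0,1)\times\R$ to a genuine convolution on $\R$ requires identifying $\eta.\delta_{g_{\textbf{u}}(0)}$ (a measure on $\R$ recording the "constant parts" $c_\Psi$ of the maps $\Psi$) correctly and checking that the linear part $\lambda^{\hat n}(\cdot)$ is common to all terms so that it factors out as a single affine image, to which Lemma~\ref{lem:affinetransform} applies with a uniform shift $[\log_b\lambda^{\hat n}]$; this is exactly where the $O(1)$ in Lemma~\ref{lem:affinetransform} and the uniform closeness in Lemma~\ref{lem:constantR} must be combined carefully.
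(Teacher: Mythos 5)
Your structural setup is right --- identifying $\eta.g_{\textbf{u}}\mu$ as the law of $\Psi(z)$ with $(\Psi,z)\sim\eta\times g_{\textbf{u}}\mu$, and the convolution $(\eta.\delta_{g_{\textbf{u}}(0)})\ast(\Psi_0 g_{\textbf{u}}\mu)$ as (a translate of) the law of $\Psi(z_0)+\Psi_0(z)-\Psi_0(z_0)$ with $z_0=g_{\textbf{u}}(0)$ --- and your handling of the conditioning on $\mathcal{L}_{i+n}$ via the $O(b^{-(i+n)})$ diameter of the supports is fine. But the quantitative heart of the proof has a genuine gap. You estimate the discrepancy between the two random variables by the zeroth-order bound from Lemma~\ref{lem:constantR}, namely $\sup|\Psi-\Psi_0|=O(b^{-(i+n)})$, and then invoke Lemma~\ref{lem:pfclose} \emph{at scale $i+k+n$}. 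Lemma~\ref{lem:pfclose} at scale $i+k+n$ requires the two couplings to agree up to $O(b^{-(i+k+n)})$; a uniform error of $O(b^{-(i+n)})$ is too coarse by a factor of $b^{k}$, and $k$ is eventually taken large, so the comparison simply does not close. A symptom of the problem is that your argument never genuinely uses the hypothesis $i\ge C_1k$: the ``height comparisons'' $\hat{i}$ vs.\ $i$ and $\hat{n}$ vs.\ $n$ hold unconditionally, whereas the lemma's hypothesis $i\ge C_1 k$ must be doing real work somewhere.

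The missing idea is that the relevant error is not $\Psi(z)-\Psi_0(z)$ but the \emph{difference of increments}
$$\bigl(\Psi(z)-\Psi(z_0)\bigr)-\bigl(\Psi_0(z)-\Psi_0(z_0)\bigr)
=-\lambda^{\hat{n}}\int_{x_0}^{x}\bigl(\Gamma_{\textbf{v}}'-\Gamma_{\textbf{v}_0}'\bigr)(s)\,ds ,$$
where $z=(x,y)$, $z_0=(x_0,y_0)$. Since $\Gamma'$ is uniformly bounded over $\Sigma$, this is $b^{-n}\cdot O(|x-x_0|)$, and since $z,z_0\in g_{\textbf{u}}([0,1)\times\R)$ with $\textbf{u}\in\varLambda^{\hat{i}}$, one has $|x-x_0|\le b^{-\hat{i}}$ with $\hat{i}= i\,\tfrac{\log b}{\log(1/\lambda)}+O(1)$ and $\tfrac{\log b}{\log(1/\lambda)}>1$. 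This is exactly where $i\ge C_1k$ enters: it forces $\hat{i}\ge i+k$, so the coupling error becomes $O(b^{-(i+k+n)})$, which is what Lemma~\ref{lem:pfclose} at scale $i+k+n$ actually needs. With this replacement your outline becomes the paper's proof; without it, the step from the $O(b^{-(i+n)})$ closeness to the entropy identity at scale $i+k+n$ is invalid.
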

\begin{proof}
Write $z_0:=g_{\textbf{u}}(0)=(x_0,y_0)$.
Define $F, G: \text{supp}(\eta) \times \text{supp}(g_{\textbf{u}}\mu)\to \mathbb{R}$ by
$$F(\Psi, z)=\Psi(z),\,\,\, G(\Psi,z)=\Psi(z_0)+\Psi_0(z)-\Psi_0(z_0).$$
Note that
$F(\eta\times g_{\textbf{u}}\mu)=\eta.g_{\textbf{u}}\mu$
and $G(\eta\times g_{\textbf{u}}\mu)$ is a translation of the convolution of $\eta.\delta_{z_0}$ and $\Psi_0. g_{\textbf{u}}\mu$. By Lemma~\ref{lem:constantR}, $\eta.\delta_{z_0}$ is supported in an interval of length $O(b^{-(i+n)})$. The same is also true for $\Psi_0. g_{\textbf{u}}\mu$, and hence for the measure $G(\eta\times g_{\textbf{u}}\mu)$. It follows that $H(G(\eta\times g_{\textbf{u}}\mu), \mathcal{L}_{i+n})$ is bounded from above by a constant.
Thus it is enough  to show that
$$F(\Psi,z)-G(\Psi,z)=O(b^{-(i+k+n)})$$
under the assumption that $i/k$ is large enough.


To this end, write $\Psi(x,y)=\lambda^{\hat{n}}(y-\Gamma_{\textbf{v}}(x))+c$ and $\Psi_0(x,y)=\lambda^{\hat{n}}(y-\Gamma_{\textbf{v}_0}(x))+c_0$.
Then for $z=(x,y)$, we have
$$\big|F(\Psi,z)-G(\Psi,z)\big|=\lambda^{\hat{n}}\big|\int_x^{x_0}(Y_{\textbf{v}}-Y_{\textbf{v}_0})(s)ds\big|=b^{-n}\cdot O(|x-x_0|).$$
Note that $|x-x_0|\le b^{-\hat{i}}=O(b^{-\frac{\log b}{\log {\sfrac1{\lambda}}} i})$. So when $i/k$ is sufficiently large, $|x-x_0|=O(b^{-(i+k)})$, and hence $\left|F(\psi,z)-G(\psi,z)\right|=O(b^{-(i+k+n)})$.
\end{proof}

The measure $\eta.\delta_{g_{\textbf{u}}(\textbf{0})}$ plays the role of $\theta$, and $\Psi_0 g_{\textbf{u}}\mu$ plays the role of $\tau$ in Hochman's theorem. Lemma~\ref{lem:entinceta} shows that for a definite amount of $\textbf{u}$, $\eta.\delta_{g_{\textbf{u}}(\textbf{0})}$ has definite entropy.

\begin{proof}[Proof of Lemma~\ref{lem:entinc}]
First, by concavity of conditional entropy,
$$\frac{1}{k} H(\eta.\mu, \mathcal{L}_{i+k+n}|\mathcal{L}_{i+n})\ge b^{-\hat{i}}\sum_{\textbf{u}\in \Lambda^{\hat{i}}} \frac{1}{k}H(\eta. g_{\textbf{u}}\mu, \mathcal{L}_{i+k+n}|\mathcal{L}_{i+n}).$$
By Lemma~\ref{lem:dot2convolution}, for any $\tau>0$,
\begin{equation}\label{eqn:etamu2conv}
\frac{1}{k}H(\eta.\mu, \mathcal{L}_{i+k+n}|\mathcal{L}_{i+n})\ge \frac{1}{b^{\hat{i}}}\sum_{\textbf{u}\in \Lambda^{\hat{i}}}\frac{1}{k} H((\eta.\delta_{g_{\textbf{u}}(\textbf{0})})\ast (\Psi g_{\textbf{u}}\mu),\mathcal{L}_{i+k+n})-\tau
\end{equation}
holds for each $\Psi$ in the support of $\eta$, provided that $k$ is large enough and $i\ge C_1k$.
By \cite[Corollary 4.10]{hochman2014self}, increasing $K(\tau)$ if necessary, we have
\begin{equation}\label{eqn:lowbd}
\frac{1}{k} H((\eta.\delta_{g_{\textbf{u}}(\textbf{0})})\ast (\Psi g_{\textbf{u}}\mu),\mathcal{L}_{i+k+n})\ge \frac{1}{k} H(\Psi g_{\textbf{u}}\mu, \mathcal{L}_{i+k+n})-\tau,
\end{equation}
for any $\Psi$ and $\textbf{u}$.

Next, let us prove the following

{\bf Claim.} There exist $p, \delta_o>0$ and for each $k$ large enough, there exists $I(\eps, k)$ such that the following holds when $i\ge I(\eps,k)$. For each $\Psi\in \text{supp}(\eta)$, there is a subset $\Omega^{\Psi}$ of $\varLambda^{\hat{i}}$ with $\nu^{\hat{i}}(\Omega^\Psi)>p$ such that for $\textbf{u}\in \Omega^\Psi$, we have an entropy growth:
\begin{equation}\label{eqn:entinceta}
\frac{1}{k}H((\eta.\delta_{g_{\textbf{u}}(\textbf{0})})\ast (\Psi g_{\textbf{u}}\mu),\mathcal{L}_{i+k+n})\ge\frac1k H(\Psi g_{\textbf{u}}\mu,\mathcal{L}_{i+k+n})+\delta_o.
\end{equation}

Take $\xi=\min(1-\alpha, \delta_*,p)$, where $\delta_*=\delta_*(\eps)$ and $p=p(\eps)$ are as in Lemma~\ref{lem:entinceta}. So the set
$$\Omega_0=\left\{\textbf{u}\in \varLambda^{\hat{i}}: \frac{1}{k}H(\eta. \delta_{g_{\textbf{u}}(\textbf{0})},\mathcal{L}_{i+k+n})>\xi\right\}$$
satisfies $\nu^{\hat{i}}(\Omega_0)>p$, provided that $i, k$ are large enough.
%
By Theorem~\ref{thm:entporous}, there exists $m$, and for each $k$ large enough there exists $I_k$ such that when $i\ge I_k$, for any $\Psi$ in the support of $\eta$, we have
$\nu^{\hat{i}}(\Omega^{\Psi}_1)>1-\frac{\xi}{2},$
where
$$\Omega^{\Psi}_1=\{\textbf{u}\in \varLambda^{\hat{i}}: \Psi g_{\textbf{u}}\mu \mbox{ is } (\alpha, \xi/2, m)-\mbox{ entropy porous from scale } n+i \mbox{ to } n+k+i\}. $$
Thus $\nu^{\hat{i}}(\Omega^{\Psi})\ge p/2$, where $\Omega^{\Psi}=\Omega_1^{\Psi}\cap \Omega_0$.
As we have seen before, for any $\textbf{u}\in \varLambda^{\hat{i}}$, the measures $\eta.\delta_{g_{\textbf{u}}(\textbf{0})}$ and $\Psi g_{\textbf{u}}\mu$ are supported in intervals of length $O(b^{-(i+n)})$. 
Applying Theorem~\ref{thm:hochmanentgrow} (with $\xi$ in the place of $\eps$, $i+n$ in the space of $n$), we complete the proof of the claim.

Let us now complete the proof of Lemma~\ref{lem:entinc}. By (\ref{eqn:lowbd}) and (\ref{eqn:entinceta}), we have
$$\frac{1}{b^{\hat{i}}} \sum_{\textbf{u}\in \varLambda^{\hat{i}}} \frac{1}{k} H(\eta. g_{\textbf{u}}\mu, \mathcal{L}_{i+k+n})\ge \frac{1}{b^{\hat{i}}}\sum_{\textbf{u}\in \varLambda^{\hat{i}}} \frac{1}{k}H(\Psi g_{\textbf{u}}\mu, \mathcal{L}_{i+k+n})+ p\delta_o-\tau.$$
By (\ref{eqn:etamu2conv}), this gives us
$$\frac{1}{k}H(\eta.\mu, \mathcal{L}_{i+k+n}|\mathcal{L}_{i+n})\ge \frac{1}{b^{\hat{i}}}\sum_{\textbf{u}\in \varLambda^{\hat{i}}} \frac{1}{k}H(\Psi g_{\textbf{u}}\mu, \mathcal{L}_{i+k+n})+ p\delta_o-2\tau.$$
Integrating over $\Psi$ with respect to $\eta$ gives us
$$\frac{1}{k}H(\eta.\mu, \mathcal{L}_{i+k+n}|\mathcal{L}_{i+n})\ge \widehat{Q}_{\Psi_0, i, n,k}
+\frac{p\delta_o}{2},$$
provided that we had chosen $\tau$ small enough.
\end{proof}
\bibliographystyle{plain}             

\end{document}